\DeclareMathOperator{\E}{\mathbvb{E}}
\DeclareMathOperator*{\Span}{span}
\def \P {\mathbb{P}}
\def \R {\mathbb{R}}
\def \G {\mathcal{G}}
\def \Z {\mathbb{Z}}
\def \DD {\mathbb{D}}
\def \UU {\mathbb{U}}
\def \EE {\mathcal{E}}
\def \NN {\mathcal{N}}
\def \d {\delta}
\def \< {\langle}
\def \> {\rangle}
\def \one {{\bf 1}}
\def\fatnorm#1{|\kern-.2ex|\kern-.2ex| #1 |\kern-.2ex|\kern-.2ex|}
\newcommand{\twonorm}[1]{\left\lVert#1\right\rVert_2}
\newcommand{\shnorm}[1]{\lVert#1\rVert}
\newcommand{\fnorm}[1]{\left\lVert#1\right\rVert_F}
\newcommand{\norm}[1]{\left\lVert#1\right\rVert}
\newcommand{\abs}[1]{\left\lvert#1\right\rvert}
\newcommand{\Sp}{\mathbb{S}}
\newcommand{\M}{\mathcal{M}}
\newcommand{\sparse}{\texttt{sparse}}
\newcommand{\N}{{\mathcal N}}
\newcommand{\cov}{\textsf{Cov}}
\newcommand{\half}{\ensuremath{\frac{1}{2}}}
\newcommand{\inv}[1]{\frac{1}{#1}}
\def\conv{\mathop{\text{\rm conv}\kern.2ex}}
\newcommand{\RE}{\textnormal{\textsf{RE}}}
\newcommand{\ip}[1]{\;\langle{\,#1\,}\rangle\;}
\newcommand{\size}[1]{\ensuremath{\left|#1\right|}}
\newcommand{\onenorm}[1]{\ensuremath{\left|#1\right|_1}}
\newcommand{\maxnorm}[1]{\ensuremath{\left\|#1\right\|_{\max}}}
\newcommand{\expct}[1]{\ensuremath{\mathbb E}#1}
\newcommand{\silent}[1]{}
\newcommand{\mvec}[1]{\rm{vec}\left\{\,#1\,\right\}}
\newcommand{\ve}{\varepsilon}
\newcommand{\ul}{\underline}
\newcommand{\ol}{\overline}
\def\qed{\hskip1pt $\;\;\scriptstyle\Box$}
\def\Ber{\mathop{\text{Bernoulli}\kern.2ex}}
\def\supp{\mathop{\text{supp}\kern.2ex}}
\def\corr{\mathop{\text{corr}\kern.2ex}}
\def\prec{\mathop{\text{precision}\kern.2ex}}
\def\recall{\mathop{\text{recall}\kern.2ex}}
\def\cov{\mathop{\text{Cov}\kern.2ex}}
\def\mnorm{\mathcal{N}_{f,m}\kern.2ex}
\def\var{\mathop{\text{Var}\kern.2ex}}
\def\ess{\mathop{\text{ess}\kern.2ex}}
\def\dom{\mathop{\text{dom}\kern.2ex}}
\def\lin{\mathop{\text{lin}\kern.2ex}}
\newcommand{\event}{\mathcal{E}}
\newcommand{\func}[1]{\ensuremath{\mathrm{#1}}}
\newcommand{\diag}{\func{diag}}
\newcommand{\thresh}{\func{thresh}}
\newcommand{\offd}{\func{offd}}
\newcommand{\mask}{\func{mask}}
\newcommand{\overall}{\func{overall}}
\newcommand{\spin}{\textsf{span}}
\newcommand{\up}{{\upsilon}}
\newcommand{\bs}{{\bf s}}
\newcommand{\bv}{{\bf v}}
\newcommand{\dm}{{\diamond}}
\newcommand{\vecp}{\ensuremath{{\bf p}}}
\let\hat\widehat
\let\tilde\widetilde
\newcommand{\vecone}{{\bf 1}}
\newcommand{\vecb}{{\ensuremath{\bf{b}}}}
\newcommand{\tr}{{\rm tr}}
\def\E{{\mathbb E}}
\def\supp{\mathop{\text{\rm supp}\kern.2ex}}
\def\argmin{\mathop{\text{arg\,min}\kern.2ex}}
\newcommand{\prob}[1]{\ensuremath{\mathbb P}\left(#1\right)}
\newcommand{\beq}{\begin{equation}}
\newcommand{\eeq}{\end{equation}}
\newcommand{\ben}{\begin{eqnarray}}
\newcommand{\een}{\end{eqnarray}}
\newcommand{\bnum}{\begin{enumerate}}
\newcommand{\enum}{\end{enumerate}}
\newcommand{\bit}{\begin{itemize}}
\newcommand{\eit}{\end{itemize}}
\newcommand{\bens}{\begin{eqnarray*}}
\newcommand{\eens}{\end{eqnarray*}}
\newcommand{\X}{{\mathcal X}}
\newcommand{\bE}{{\bf E}}
\newcommand{\be}{{\bf e}}
\newcommand{\F}{\ensuremath{\mathcal F}}
\newcommand{\TM}{\ensuremath{\mathbb M}}
\newcommand{\calX}{\ensuremath{\mathcal X}}
\newcommand{\HH}{\ensuremath{\mathbb H}}
\newcommand{\Sc}{\ensuremath{S^c}}
\newcommand{\distinct}{\ensuremath{\text{distinct}}}
\newcommand{\cone}{\ensuremath{\text{Cone}}}
\newcommand{\QA}{\ensuremath{p}}
\newcommand{\minus}{\ensuremath{-}}
\newcommand{\ga}{\gamma}
\newcommand{\z}{p}
\newcommand{\onen}{\textstyle \frac{1}{n}}
\newcommand{\D}{{\mathcal D}}
\newcommand{\Ball}{{B}}
\newcommand{\B}{\mathcal{B}}
\newtheorem{theorem}{Theorem}[section]
\newtheorem{lemma}[theorem]{Lemma}
\newtheorem{proposition}[theorem]{Proposition}
\newtheorem{definition}[theorem]{Definition}
\newtheorem{remark}[theorem]{Remark}
\newtheorem{corollary}[theorem]{Corollary}
\def\qed{\hskip1pt $\;\;\scriptstyle\Box$}
\newenvironment{proofof}[1]{\hspace*{20pt}{\it Proof}{ of #1}.\hskip10pt}{\qed\vskip5pt}
\newenvironment{proofof2}{\hskip10pt}{\qed\vskip5pt}
\begin{document}

\title{Concentration of measure bounds for matrix-variate data
    with missing values}

  \author{Shuheng Zhou\\
    University of California, Riverside, CA 92521}

\date{}

\maketitle

\begin{abstract}
We consider the following data perturbation model, where the 
covariates incur multiplicative errors.
For two $n \times m$ random matrices $U, X$, we denote by $U \circ X$
the Hadamard or Schur product, which is defined as $(U \circ X)_{ij} = (U_{ij}) \cdot (X_{ij})$.
In this paper, we study the subgaussian matrix variate model,
where we observe the matrix variate data $X$ through a random mask $U$:
\bens
\label{eq::Xgenabs}
\X = U \circ X \; \; \; \text{ where} \; \; \;X = B^{1/2} \Z A^{1/2},
\eens
where $\Z$ is a random matrix with independent subgaussian entries, and
$U$ is a mask matrix with either zero or positive entries, where $\E
U_{ij} \in [0, 1]$ and all entries are mutually independent.
Under the assumption of independence between $U$ and $X$,
we introduce componentwise unbiased estimators for estimating covariance $A$ and 
$B$, and prove the concentration of measure bounds in the sense of guaranteeing the restricted eigenvalue($\RE$) 
conditions to hold on the unbiased estimator for $B$, when columns of data
matrix $X$ are sampled with different rates.
We further develop multiple regression methods for estimating the
inverse of $B$ and show statistical rate of convergence.
Our results provide insight for sparse recovery for relationships
among entities (samples, locations, items) when features (variables,
time points, user ratings) are present in the observed data matrix
$\X$ with heterogeneous rates. Our proof techniques can certainly be extended
to other scenarios. We provide simulation evidence illuminating the theoretical predictions.
\end{abstract}

\section{Introduction}
\label{sec::intro}
In this paper, we study the multiplicative measurement errors on
matrix-variate data in the presence of missing values,
and sometimes entirely missed rows or columns.
Missing value problems appear in many application areas such as
energy, genetics, social science and demography, 
and spatial statistics;
see~\cite{DLR77,HWang86,Full:1987,Pig01,LR02,HK10,CW11,CCZ15} and references therein.
For complex data arising from these application domains, 
missing values is a norm rather than an exception.
For example, in spatio-temporal models in geoscience, it is common
some locations will fail to observe certain entries, or at different
time points, the number of active observation stations varies~\citep{Smith03,SKS15}. 
In social science and demography, the United States Census Bureau was
involved in a debate with the U.S. Congress  and the U.S. Supreme
Court over the handling of the undercount in the 2000
U.S. Census~\citep{Pig01}.
In addition to missing values, data are often contaminated with an
additive source of noise on top of the multiplicative noise such as
missing values~\citep{HWang86,CGL93,carr:rupp:2006}.

For two $n \times m$ random matrices $U, X$, denote by $U \circ X$
the Hadamard or Schur product, which is defined as $(U \circ X)_{ij} =
(U_{ij}) \cdot (X_{ij})$.
Let $U$ be a random mask with
either zero or positive entries. Consider the following data
perturbation model, where the covariates incur multiplicative errors;
that is, instead of $X  =  [x^1, \ldots, x^{m}]$,  we observe
\ben
\label{eq::perturb}
\X = U \circ X \quad \text{ where} \; \;  U =[u^1, \ldots, u^m]
\; \text{ and } \; \E U_{ij} \in [0, 1];
\een
Subsampling in rows, or columns, or random sampling
of entries of $X$ are special cases of this
model~\cite{RT10,LW12,RT13,BRT14}.

Consider a space-time model $X(\bs,t)$ where $\bs$ denotes spatial location and
$t$ denotes time. In the space-time model literature,
a common assumption on the covariance of $X$ is separability, namely
\ben
\label{eq::separable}
\cov(X(\bs,t),X(\bs',t')) = A_0(\bs,\bs')B_0(t,t')
\een
where $A_0$ and $B_0$ are each covariance functions~\cite{CW11}.
In this model, a mean zero column vector $x^j$
corresponds to values observed across $n$ spatial locations at a
single time point $t_j, j=1, 2, \ldots$.
When $X$ is observed in full and free of noise, the theory is already in 
place on estimating matrix variate Gaussian graphical models:
under sparsity conditions,~\cite{Zhou14a} is the first
in literature to show with theoretical guarantee that one can estimate
the graphs, covariance and inverse covariance matrices well using only
one instance from the matrix-variate normal distribution.
See~\cite{Dawid81,GV92,AT10,LT12,THZ13,KLLZ13,Horns19,GZH19} and 
references therein for more applications of the matrix variate models. 

In this paper, we consider this separable covariance model defined 
through the tensor product of $A_0, B_0$, however,
now under the much more general subgaussian distribution, 
where we also model the sparsity in data with a random mask in
\eqref{eq::perturb}.
While the general aim is to recover the full rank
covariance matrices $A_0, B_0 \succ 0$ in the tensor-product,
we specifically focus on estimating $B_0$ and its inverse $\Theta_0$ for the bulk
of this paper. The reason is because in space-time applications, we
are often interested in discovering the relationships between spatial coordinates,
where for each node (row), there is a continuous data stream over
time. In such data sets, the time dimension $m$ can dominate the spatial
dimension $n$ as they may  differ by many orders of magnitude, e.g., ten thousand time points versus one hundred locations, while at each time point, we may only have a subset of the $n$ observations. 

\subsection{Our approach and contributions}
Our main task is on deriving concentration of measure bounds 
for a componentwise oracle estimator $\tilde{B}_0$ in the space-time 
context~\eqref{eq::separable} under the observation 
model~\eqref{eq::perturb}.
An oracle estimator that provides componentwise unbiased estimate
for covariance $B_0$ was introduced in~\cite{Zhou19},
\bens 
\label{eq::BestIntro}
\tilde{B}_0 = \X \X^T \oslash \M \; \text{ for } \; \M \; \text{ as
  defined in }~\eqref{eq::Bmask},
\eens 
where $\oslash$ denotes componentwise division and we use the
convention  of $0/0=0$. The matrix $\M$ is a linear combination of rank-one matrices
$M_1, \ldots, M_m$
\ben 
\label{eq::Bmask}
\M & := & \sum a_{jj} M_j \; \; \text{ where} \; \; 
M_j = \expct{(u^j \otimes u^j)} \in \R^{n \times n}
\een
and $a_{11}, a_{22}, \ldots$ are diagonal entries of matrix
$A_0$. Specifically, suppose we observe $\X = U \circ X$, where data is randomly 
subsampled.  Here and in the sequel, we assume that each column $u^j =(u^j_1,
\ldots, u^j_n)^T \in \{0,1\}^{n}$
of the mask matrix $U$ is composed of independent Bernoulli random variables such 
that
\bens
\E u^j_k = p_j,  \forall j, k; \; \text{and instead of $x^j$, we
  observe  } \; x^j \circ u^j, j=1,  \ldots, m; 
\eens
Moreover, $X$ and $U$ are   independent of each other.
In Theorem~\ref{thm::main-intro},
we prove new concentration of measure inequalities for
quadratic forms involving sparse and nearly sparse vectors,
in particular,
\ben
\label{eq::quadorig}
&& q^T(\X \X^T \oslash \M- \E  (\X \X^T \oslash \M))q \quad
\text{ over a class of vectors } \quad q \in \R^{n}
\een
satisfying the following cone constraint,
for $0< s_0 \le n$ and $\M$ as defined in~\eqref{eq::Bmask},
\ben
\label{eq::cone}
\cone(s_0) := \{v: \onenorm{v} \le \sqrt{s_0} \twonorm{v}\}, \; \text{where } 
\; \onenorm{v} =: \sum \abs{v_i}, \text {and } \; \twonorm{v}^2 = \sum v_i^2.
\een
This enables us to obtain operator norm-type of bounds on estimating 
submatrices of $B_0$ with the whole $B_0$ as a special case, using
novel matrix concentration of measure analyses.
Theorem~\ref{thm::RE} shows that certain restricted eigenvalue ($\RE$) conditions
hold on $\tilde{B}_0$ with high probability for the perturbation model
as in Definition~\ref{def::RMS}.
Moreover, we introduce an estimator $\hat{\TM}$
for matrix $\M$ and show that entries of $\hat{\TM}$ (cf.~\eqref{eq::MHat}) 
are
tightly concentrated around their mean values in $\M$ under 
Definition~\ref{def::RMS}.

We make the following theoretical contributions:
{\bf [a]} concentration of measure bounds on quadratic forms and certain functionals of large random matrices
$\X \X^T$ and $\X^T \X$;
{\bf [b]} consistency and the rate of convergence in the operator norm of
the inverse covariance $\Theta_0 =B_0^{-1}$: Under the $\RE$ conditions which we show to hold for $\tilde{B}_0$ and 
$\hat{B}^{\star} :=  \X \X^T \oslash \hat{\TM}$
(cf.~Theorems~\ref{thm::RE} and~\ref{thm::main-coro}),
we further propose using the multiple regression method~\cite{MB06} in 
combination with techniques subsequently developed in~\cite{Yuan10,LW12} to construct an estimator $\hat{\Theta}$ for the 
inverse covariance $\Theta_0$ under the more general sparse
subgaussian matrix-variate model in Definition~\ref{def::RMS};
{\bf [c]} combining the results we obtain on \eqref{eq::quadorig} and on estimating $\M$ with estimator
$\hat{\TM}$ (cf.~\eqref{eq::MHat}), we obtain an error bound on  the quadratic form
\ben
\label{eq::quadpreview}
\abs{q^T(\X \X^T \oslash \hat{\TM}- \E  (\X \X^T \oslash \M))q} \; \; \text{for 
  all } q \in \Sp^{n-1} \cap \cone(s_0),
\een
where $\Sp^{n-1}$ is the unit sphere;
cf. Lemma~\ref{lemma::unbiasedmask} and Theorem~\ref{thm::main-coro}.
For completeness, in Theorem~\ref{thm::gramsparse} and Corollary~\ref{coro::Bhatnorm}, 
we control the quadratic forms~\eqref{eq::quadorig}
and~\eqref{eq::quadpreview} over the entire sphere. 

{ \bf Roadmap.}
In this paper, we develop a set of new tools and ideas in order to 
prove tight concentration of measure bounds for matrix variate data
with missing values.
To establish the RE conditions for
random matrices as in Definition~\ref{def::RMS}, 
we had to introduce a new approach based on the conditioning arguments 
to deal with randomness in matrix variate data $X$ and mask $U$
simultaneously.
Conditioned on the {\it good events} related to $U$, we can gain 
uniform control over a family of random matrices, 
cf.~\eqref{eq::tensor} and \eqref{eq::defineADM}, in the sense that we obtain a uniform bound on 
the operator and the Frobenius norm for this family of random
matrices; cf. Theorems~\ref{thm::mainop2} and~\ref{thm::uninorm2}.
This in turn allows a uniform control over the quadratic form in 
\eqref{eq::quadorig} over the cone or the entire sphere, thanks to the 
Hanson-Wright inequality~\cite{RV13}, and the sparse Hanson-Wright 
inequalities as developed in~\cite{Zhou19} and the present work. 
We introduce
Theorem~\ref{thm::Bernmgf} and its Corollary~\ref{coro::offdn}, and give a proof sketch of
Theorems~\ref{thm::main-intro} and~\ref{thm::gramsparse} in
Sections~\ref{sec::prelim} and~\ref{sec::reduction} respectively, while highlighting where such
inequalities are being applied.
From these initial estimators, we establish consistency and
obtain the rate of convergence in the operator norm 
for the penalized estimator $\hat\Theta$, because the RE conditions are also established 
for $\hat{B}^{\star}$
in  Theorem~\ref{thm::main-coro}.

{ \bf Organization.}
The rest of the paper is organized as follows.
In Section~\ref{sec::related}, we review the related work to place our work in context.
In Section~\ref{sec::method}, we define our model and the method.
Section~\ref{sec::theory} presents  in Theorems~\ref{thm::main-intro}
and~\ref{thm::RE} our main technical results on analyzing the random quadratic
form~\eqref{eq::quadorig}.
We present in
Theorems~\ref{thm::AD},~\ref{thm::XX^T} and  
Corollary~\ref{coro::tartan} properties regarding the gram matrix $X X^T$ 
for the fully observed matrix variate data.
Section~\ref{sec::Bhatmain} presents error bounds on the quadratic 
form \eqref{eq::quadpreview}. 
In Section~\ref{sec::strategy},
we elaborate upon the proof strategies for 
Theorems~\ref{thm::main-intro} and~\ref{thm::gramsparse},
highlighting the structural complexity emerging from the sparse
matrix-variate model due to complex dependencies when analyzing the
quadratic forms. Section~\ref{sec::inverse} presents theoretical results on inverse
covariance estimation, where we develop convergence bounds
in Theorem~\ref{coro::thetaDet}.
Section~\ref{sec::examples} shows numerical results that validate
our theoretical predictions.
Section~\ref{sec::mainresult} presents
a complete exposition on the main proof ideas for 
Theorems~\ref{thm::main-intro} and~\ref{thm::main}.
Section~\ref{sec::mainproofs} contains proofs of
Proposition~\ref{prop::projection} and Theorem~\ref{thm::main-intro}.
We conclude in  Section~\ref{sec::conclude}.
We place all technical proofs in the supplementary material.

\subsection{Related work}
\label{sec::related}
Restricted eigenvalue conditions ($\RE$)
have been widely explored in the literature for various families
of random design matrices, see for
example~\cite{BRT09,GB09,RWY10,RZ13,LW12,RZ17} and references
therein. See~\cite{HW71,HW73,RV13,BVZ19} and references therein,
for classical results and recent expositions on quadratic forms over dense random vectors.
In this work, we address the key challenges arising from the intricate interactions 
between $U$ and $X$ by developing a new proof architecture, as elaborated in 
Sections~\ref{sec::strategy} and~\ref{sec::mainresult}.
Because of the sparsity and complex dependencies, 
our analyses on~\eqref{eq::quadorig} and~\eqref{eq::quadpreview} draw 
upon and yet significantly extend the concentration of measure
inequalities on quadratic forms involving (sparse) subgaussian random
vectors as studied in~\cite{RV13,Zhou19}, which we refer to as the
(sparse) Hanson-Wright (HW) inequalities.
Under sparsity and neighborhood stability conditions,  it was first shown
in~\cite{MB06} that the graph corresponding to $\Theta_0 = B_0^{-1}$
can be estimated efficiently using the penalized multiple regression
approach with a sample size that dominates the maximum node degree (sparsity).
Subsequent line of work illustrates that certain restricted
eigenvalue conditions imposed on the design matrices not only
guarantee sparse recovery in high dimensional linear
models~\cite{BRT09,GB09} but also enable inverse covariance estimation under additional spectral
conditions~\cite{ZRXB11,LW12}.
See also~\cite{RBLZ08,RWRY08,ZLW08,Yuan10,RSZZ15} and 
references therein for theoretical developments and results on 
Gaussian graphical modeling using either graphical Lasso or nodewise regression type of 
estimators~\cite{FHT07,YL07,BGA08}.

We emphasize that our inverse covariance estimation works under much more general
subgaussian distribution as we will elaborate in Section~\ref{sec::method}.
It turns out that such concentration of measure properties are also essential to ensure
algorithmic convergence, and hence to bound both optimization and
statistical errors, for example, when approximately 
solving optimization problems such as the corrected Lasso
using the gradient-descent type of algorithms
\citep[cf.][]{ANW12,LW12}. These estimators were introduced to tackle high 
dimensional errors-in-variables regression problems including the 
missing values. See~\cite{RT10,RT13,BRT14}, where Dantzig 
selector-type of estimators have been designed and studied.
Such quadratic forms in errors-in-variables
models were also analyzed in~\cite{RZ17}, where data $X$ is contaminated
with a perturbation matrix $\Delta \in \R^{n \times m}$ such that $\E
\Delta_{ij} = 0, \forall i, j$
that consists of spatially correlated subgaussian noise as column vectors;
there we introduced the additive errors in the covariates,
resulting in a non-separable class of space-time covariance models for
the observation matrix.

The problem we study here is different from matrix completion,
which focuses on recovering low-rank structures;
See~\cite{CR09,CT10,PVY16,Vers18,Wain19} and references therein.
We focus here on recovering the full rank covariance matrices
$A_0, B_0 \succ 0$ in the matrix variate model,
but with incomplete data, or intentionally subsampled data.
One can use our analysis to significantly subsample the data matrix, for example, in an
i.i.d. manner by setting column sampling rate $p_j = p, \forall j$, while still ensuring
that the gram matrices $XX^T$ and $X^TX$  (upon adjustment) can be
used for statistical analysis, for instance, regression or inverse
covariance estimation. This idea has been explored before in the
context of high dimensional linear regression;
See for instance~\cite{ZLW09} and references therein for 
other examples of data perturbation model, where $X$ is subject to a random affine transformation characterized by multiplicative noise and additive noise: $\X := \Phi X + \Delta$, 
where $\Phi$ is a random matrix with i.i.d. Gaussian or symmetric
Bernoulli entries. More broadly, our work is also related to the {\it data masking } literature in the context of privacy~\citep[cf.][]{duncan:91}, which allows the possibility 
of deleting records, sampling, and suppressing subsets of variables.
See also~\cite{AT10,SB12,SSB14}, where EM based method for sparse
inverse covariance estimation and missing value imputation algorithms
in the matrix-variate  normal model were considered. In contrast to
the methods studied here, there were no theoretical guarantee on
statistical convergence.

\subsection{Definitions and notations}
\label{sec::notation}
Let $e_1, \ldots, e_n$ be the canonical basis of $\R^n$.
For a set $J \subset \{1, \ldots, n\}$, denote
$E_J = \Span\{e_j: j \in J\}$.
We denote by $[n]$ the set $\{1, \ldots, n\}$.
We refer to a vector $v \in \R^n$ with at most 
$d, d \in [n]$ nonzero entries as an $d$-sparse vector.
Let $\Ball_2^n$ and $\Sp^{n-1}$ be the unit Euclidean ball and the
unit sphere, respectively.
 Let $B$ be an $n \times n$ matrix.
A $k \times k$ submatrix of $B$
formed by deleting $n-k$ rows of $B$, and the same $n-k$ columns of 
$B$, is called a principal submatrix of $B$.
For two subsets $I, J \in [n]$, denote by $B_{I, J}$ the submatrix of 
$B$ with rows and columns indexed by set $I$ and $J$ respectively.
For a symmetric matrix $A$, let $\lambda_{\max}(A)$ and
$\lambda_{\min}(A)$ be the largest and the smallest
eigenvalue of $A$ respectively.
For a matrix $A$, we use $\twonorm{A}$ to denote its operator norm
and $\fnorm{A}$ the Frobenius norm, given by $\fnorm{A} = (\sum_{i, j} a_{ij}^2)^{1/2}$.  For a vector
$x = (x_1, \ldots, x_n) \in \R^n$, denote by $\twonorm{x} = \sqrt{
  \sum_{i=1}^n x_i^2}$ and $\onenorm{x} := \sum_{j} \abs{x_j}$.
For a matrix $A = (a_{ij})$ of size $m \times n$,
let  $\norm{A}_{\infty} = \max_{i} \sum_{j=1}^n |a_{ij}|$ and  $\norm{A}_1 = \max_{j} \sum_{i=1}^m |a_{ij}|$ denote  the maximum absolute row and column sum of the matrix $A$ respectively.
Let $\norm{A}_{\max} = \max_{i,j} |a_{ij}|$ denote the componentwise matrix maximum norm.
Let $\diag(A)$ be the diagonal of $A$. Let $\offd(A) = A - \diag(A)$.
Let $r(A) = {\tr(A)}/{\twonorm{A}}$ denote the effective
rank of $A$.
For a given vector $x \in \R^m$, $\diag(x)$ denotes the diagonal matrix whose  
main diagonal entries are the entries of $x$.  
For a finite set $V$, the cardinality is denoted by $\abs{V}$. 
Let $\kappa(A) = \lambda_{\max}(A)/\lambda_{\min}(A)$ denote the condition number for matrix $A$.
For two numbers $a, b$, $a \wedge b := \min(a, b)$, and 
$a \vee b := \max(a, b)$.
We write $a \asymp b$ if $ca \le b \le Ca$ for some positive absolute
constants $c,C$ that are independent of $n, m$, sparsity, and
sampling parameters.
We write $f = O(g)$ or $f \ll g$ if $\abs{f} \le C g$ for some absolute constant
$C< \infty$ and $f=\Omega(g)$ or $f \gg g$ if $g=O(f)$.
We write $f = o(g)$ if $f/g \to 0$ as $n \to \infty$, where the
parameter $n$ will be the size of the matrix under consideration.
In this paper, $C, c, c', C_1, C_2, \ldots$, etc, denote various absolute positive constants which may change line by line.

\section{The model and the method}
\label{sec::method}
We first introduce our data generative model and estimators.
Let $B_0 = (b_{ij}) \in \R^{n \times n}$ and $A_0 = (a_{ij}) \in \R^{m
  \times m}$ be positive definite matrices.
Denote by $B_0^{1/2}$ and $A_0^{1/2}$ the unique square root 
of $B_0, A_0 \succ 0$ respectively.
Denote by $X  =  [x^1, \ldots, x^{m}]$
the full (but not fully observed) $n \times m$ data matrix with column 
vectors $x^1, \ldots, x^m \in \R^{n}$ and row vectors $y^1, \ldots, 
y^n$. For a random variable $Z$, the subgaussian (or $\psi_2$) 
 norm of $Z$ denoted by $\norm{Z}_{\psi_2}$ is defined as 
$\norm{Z}_{\psi_2} = \inf\{t > 0\; : \; \E \exp(Z^2/t^2) \le 2 \}.$
 \begin{definition}{(Random mask sparse model)}
   \label{def::RMS}
Consider the data matrix $X_{n \times m}$ generated from a subgaussian
random matrix $\Z_{n \times m} = (Z_{ij})$ with independent mean-zero unit variance components whose $\psi_2$ norms are uniformly bounded:
\ben
\label{eq::missingdata}
X & = & B_0^{1/2} \Z A_0^{1/2}, \; \; \text{ where } \; \expct{Z_{ij}} = 0, \; \E Z_{ij}^2 =1,
\; \text{ and} \; \norm{Z_{ij}}_{\psi_2} \leq K, \forall i,j.
\een
Without loss of generality, we assume $K=1$. Suppose we observe 
\ben
\label{eq::Xgen}
\X & = & U\circ X, \; \text{ where $U \in \{0,1\}^{n \times m}$ is a
  mask matrix and } \\
\label{eq::maskUV}
U & = & [u^1| u^2| \ldots |u^m] =  [v^1| v^2| \ldots |v^n]^T
\text{ is independent of $X$}, 
\een
with independent row vectors $v^1, \ldots, v^n \sim {\bf v}  \in
\{0,1\}^{m}$, where ${\bf v}$ is composed of independent Bernoulli random variables with $\E v_k = p_k, 
k=1,\ldots, m$. 
\end{definition}
Since the tensor product $A_0 \otimes B_0 = A_0 \eta \otimes 
\inv{\eta} B_0$ for any $\eta>0$, we can only estimate $A_0$ and $B_0$ up to 
a scaled factor.
Hence, without loss of generality,  we propose to estimate 
\ben 
\label{eq::starscale}
A_{\star} := A_0 \tr(B_0)/n \quad \text{ and } \quad B_{\star} := n 
B_0/\tr(B_0) 
\een
with the following set of oracle estimators $\tilde{B}_{0}$ and
$\tilde{A}_{0}$ as well as the sample based plug-in
estimators $\hat{A}_{\star}$ and  $\hat{B}^{\star}$ as defined in 
\eqref{eq::Astar} and~\eqref{eq::Bstar}, which are completely
data-driven.
\begin{definition}
\label{def::RMSet}
For the rest of the paper, we assume $B_0 \succ 0$ is scaled such 
that $\tr(B_0) = n$ in view of \eqref{eq::starscale}; hence 
$\twonorm{B_0} \ge 1$.  Let $\X = U \circ X$.
In order to estimate $B_0$, we define the following oracle estimator:
\ben
\label{eq::entrymean}
\quad \tilde{B}_0 = \X \X^T \oslash
\M \; \text{ where } \;
\M_{k \ell} = \left\{\begin{array}{rl} \sum_{j=1}^m a_{jj} \z_j  
& \text{ if }  \; \ell = k, \\
\sum_{j=1}^m a_{jj} \z_j^2 & \text{ if }  \; \ell \not= k.
\end{array}\right.
\een
Clearly, $\E \tilde{B}_0 = B_0$, 
where expectation denotes the componentwise expectation.
\end{definition}
For completeness, we also present the corresponding oracle estimator $\tilde{A}_{0}$ as 
studied in~\cite{Zhou19}:
\ben
\label{eq::baseN}
\tilde{A}_{0}  & = & \calX^T \calX \oslash \N
 \; \; \text{ where } \; \; \N := \tr(B_0) \expct{v^i \otimes v^i}, \\
\label{eq::baseNBer}
&&
 \;\text{ and }  \; \N_{ij} = \tr(B_0)\left\{
\begin{array}{rl} {\z}_i & \text{ if }  \; i  = j, \\
{\z}_i {\z}_j  & \text{ if }  \; i  \not= j,
\end{array}\right.
\een
Similar to~\eqref{eq::Bmask},~\eqref{eq::baseN} works for general 
distributions of $U$, while~\eqref{eq::baseNBer} works for the 
model~\eqref{eq::maskUV} under consideration.
Denote by  $\vecp := (\z_1, \ldots, \z_m)$ the vector of column sampling 
probabilities. 
Let $\hat\vecp = (\hat{\z}_1, \hat{\z}_2,\ldots, \hat{\z}_m)$ 
denote the estimate of sampling probabilities $\vecp$,
where $\hat{\z}_j = \onen \sum_{k=1}^n u^j_k$ is the average number
of non-zero (observed) entries for column $j$. 
Let $\hat{M}$ be as defined in~\eqref{eq::Astar}.
Construct
\ben
\label{eq::Astar}
\hat{A}_{\star} & = & 
 \onen \X^T \X \oslash \hat M \; \;
 \text{ where }
 \; \; \hat{M}_{ij} = \left\{
\begin{array}{rl} \hat{\z}_i & \text{ if }  \; i  = j, \\
\hat{\z}_i \hat{\z}_j  & \text{ if }  \; i  \not= j; \\
\end{array}\right. \\
\label{eq::Bstar}
\; \text{ and } \; \; 
\hat{B}^{\star} & = &  \X \X^T \oslash \hat{\TM}  \; \; \text{ where } \\
\label{eq::MHat}
\hat{\TM}_{k \ell} & = &    \left\{
\begin{array}{rl} \onen \tr(\X^T \X)  & \text{ if }  \; k = \ell, \\
  \frac{1}{n-1} \tr(\X^T \X \circ \hat{M}) - \inv{n(n-1)} \tr(\X^T \X)
& \text{ if }  \; k \not= \ell.
\end{array}\right. 
\een
By using the pair of plug-in estimators~\eqref{eq::Astar}
and~\eqref{eq::Bstar},
based on the observed data as well as their sparsity patterns as elaborated above, 
we are able to estimate $B_{\star}$ and $A_{\star}$ as specified 
in~\eqref{eq::starscale}: (a) Clearly $\E \hat{M} = D_{\vecp} +
\offd(\vecp \otimes \vecp)$, where $D_{\vecp} = \diag(\vecp) =
\diag(\z_1, \ldots, \z_m)$
denotes the  diagonal matrix with entries of  $\vecp$ along its 
main diagonal;
(b) It is also straightforward to check that $\hat{\TM}$  is a componentwise unbiased 
estimator for $\M$ when $\tr(B_0) = n$; (c) The design of $\hat{B}^{\star}$ makes it scale-free 
as we divide $\X \X^T$ by the mask matrix $\hat{\TM}$:
$\tr(\X^T \X) =\tr(\X \X^T)\;\; \text{and hence by
  construction} \; \; \tr(\hat{B}^{\star}) = n.$
Further justifications for the oracle estimators \eqref{eq::entrymean}
and~\eqref{eq::baseN}, 
as well as~\eqref{eq::Astar} and~\eqref{eq::Bstar},
appear in Lemma~\ref{lemma::unbiasedmask}, Section~\ref{sec::strategy}, and
the supplementary Sections~\ref{sec::maskMN} and~\ref{sec::maskest}.

\subsection{The multiple regression functions}
\label{sec::inverselasso}
The nodewise regression method was introduced in~\cite{MB06} using the 
relations in~\eqref{eq::ggm1}, which we now review in the context of
matrix variate normal model \eqref{eq::matrix-normal-rep-intro}.
When $\Z$ in~\eqref{eq::missingdata}
is a Gaussian random ensemble,
with i.i.d. $N(0, 1)$ entries,  we say the random matrix $X$ as defined in~\eqref{eq::missingdata}
follows the matrix-variate normal distribution
\ben 
\label{eq::matrix-normal-rep-intro}
X_{n \times m} \sim \N_{n,m}(0, A_{0, m \times m} \otimes B_{0, n \times n}). 
\een
This is equivalent to say $\mvec{X}$ follows a multivariate normal distribution with mean
${{\bf 0}}$ and a separable covariance $\Sigma = A_0 \otimes B_0$,
where $\mvec{X}$ is formed by stacking the columns of $X$ into a vector in $\R^{mn}$.
To ease the exposition, we first
consider~\eqref{eq::matrix-normal-rep-intro} and assume that $A_0$ is
a correlation matrix and $B_0 \succ 0$.
Denote by $(X_1, X_2, \ldots, X_n) \in \R^n$ any column vector
from the data matrix $X$~\eqref{eq::matrix-normal-rep-intro},
then $(X_1, X_2, \ldots, X_n) \sim \N_n(0, B_0)$ and we have by the
Gaussian regression model,
for each $j =1, \ldots, n$, 
\ben
\label{eq::ggm1}
 X_{j} &=&
 \sum_{k \not=j} X_{k} \beta_k^{j*} + V_j,
 \text{ where}  \; \;
\forall j, V_{j} \sim \N(0, \sigma^2_{V_j}) \; \text{ is independent 
  of } \{X_k; k \not=j\}.
\een
There exist explicit relations between the regression coefficients,
error variances and the concentration matrix $\Theta_0 =B_0^{-1} :=
(\theta_{ij}) \succ 0$: for all $j$, we have $\sigma^2_{V_j} =
1/{\theta_{jj}} >0$, $\beta_k^{j*} = -{\theta_{jk}}/{\theta_{jj}}$,
and $\theta_{jk}=  \beta^{j*}_k=  \beta^{k*}_j= 0$  if $X_j$ is
independent of $X_k$ given the other variables~\citep{laur96}.

For the subgaussian model as considered in the present work, 
instead of independence, we have by the Projection Theorem, $\cov(V_j, 
X^k) = 0, \;\forall k \not=j$; see for example Theorem 2.3.1.~\cite{BD06}.
Proposition~\ref{prop::projection} illuminates this zero correlation 
condition for the general model~\eqref{eq::missingdata} as well
as the explicit relations between the regression coefficients $ \beta_k^{j*}$, error
variances for $\{V_j, j \in [n]\}$, and the inverse covariance
$\Theta_0= (\theta_{ij}) \succ 0$. This result may be of independent
interests and holds for general covariance $A_0, B_0 \succ 0$. We
prove Proposition~\ref{prop::projection} in Section~\ref{sec::proofofprojection}.

\begin{proposition}{\textnormal{\bf (Matrix subgaussian regression model)}}
\label{prop::projection} 
Let $\{X^j, j =1, \ldots, n\}$ be row vectors of the data matrix $X =
B_0^{1/2} \Z A_0^{1/2}$ as in Definition~\ref{def::RMS},
where we assume that $A_0, B_0\succ 0$. Let $\Theta_0 = B_0^{-1} = (\theta_{ij}) \succ
0$, where $0< 1/{\theta_{jj}} < b_{jj}, \forall j$.
Consider many regressions, where we regress one row vector $X^j$
against all other row vectors.
Then for each $j \in [n]$,
\ben
\label{eq::regress}
&& X^j = \sum_{k \not=j} X^{k} \beta_k^{j*} + V_j (j = 1, \ldots,  n),
\text{ where }   \beta_k^{j*} = - {\theta_{jk}}/{\theta_{jj}},  \\
\label{eq::residual}
& & \cov(V_j, V_j) = A_0/{\theta_{jj}} 
\; \text{ and} \; \cov(V_j, X^k) = 0 \; \; \forall k \not=j, \forall j \in [n].
\een
\end{proposition}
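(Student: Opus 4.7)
The plan is to verify the three claims of the proposition by a direct covariance calculation that exploits the separable structure of $X = B_0^{1/2}\Z A_0^{1/2}$. Because $\Z$ has only subgaussian (not Gaussian) entries, I cannot appeal to conditional expectation: instead, following the remark about Theorem 2.3.1 of \cite{BD06}, I interpret the decomposition \eqref{eq::regress} through the Projection Theorem, so that $\{\beta_k^{j*}\}_{k\neq j}$ is the unique linear combination making the residual row vector $V_j\in\R^m$ uncorrelated with every other row $X^k$. Thus the whole task reduces to exhibiting coefficients with that orthogonality property and then reading off the residual covariance.

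The workhorse is the cross-covariance of rows of $X$. Writing $X_{i\ell} = \sum_{k,p}(B_0^{1/2})_{ik}Z_{kp}(A_0^{1/2})_{p\ell}$ and using independence and unit variance of the $Z_{kp}$,
\[
\expct{X_{i\ell}X_{i'\ell'}} \;=\; \sum_{k,p}(B_0^{1/2})_{ik}(B_0^{1/2})_{i'k}(A_0^{1/2})_{p\ell}(A_0^{1/2})_{p\ell'} \;=\; (B_0)_{ii'}(A_0)_{\ell\ell'},
\]
so the cross-covariance of the row vectors $X^i,X^{i'}\in\R^m$ is the $m\times m$ block $(B_0)_{ii'}A_0$. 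Separability means that the $A_0$-factor can always be pulled outside any linear combination taken among the rows.

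Next I consolidate the coefficients: define $\gamma^j\in\R^n$ by $\gamma^j_j=1$ and $\gamma^j_k=-\beta_k^{j*}=\theta_{jk}/\theta_{jj}$ for $k\neq j$, so that $\gamma^j=\Theta_0 e_j/\theta_{jj}$ and the candidate residual becomes $V_j=(\gamma^j)^T X$. For any $k\in[n]$, the covariance identity above gives
\[
\cov(V_j,X^k) \;=\; \sum_{i}\gamma^j_i (B_0)_{ik}\,A_0 \;=\; (B_0\gamma^j)_k\,A_0 \;=\; \frac{(B_0\Theta_0 e_j)_k}{\theta_{jj}}\,A_0 \;=\; \frac{(e_j)_k}{\theta_{jj}}\,A_0,
\]
using $B_0\Theta_0=I$. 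This equals the zero $m\times m$ matrix whenever $k\neq j$, which both establishes the orthogonality in \eqref{eq::residual} and, by the Projection Theorem, confirms uniqueness of the choice $\beta_k^{j*}=-\theta_{jk}/\theta_{jj}$ in \eqref{eq::regress}. The identity $\cov(V_j,V_j)=A_0/\theta_{jj}$ then follows by taking $k=j$ in the same display and using orthogonality to reduce $\cov(V_j,V_j)$ to $\gamma^j_j\cov(X^j,V_j)$.

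The only conceptual care needed, and the most likely source of slips, is bookkeeping between row vectors in $\R^m$ and scalar linear combinations across rows: every ``covariance'' in this proof is an $m\times m$ matrix that carries the common factor $A_0$, and the argument works precisely because separability lets $A_0$ factor out so that only the $B_0$ structure determines $\beta^{j*}$. This is not a serious obstacle, merely a notational one. The positivity of $1/\theta_{jj}$ and the bound $1/\theta_{jj}<b_{jj}$ used in the statement follow from $B_0,\Theta_0\succ 0$ and the Schur complement relation $b_{jj}-1/\theta_{jj}=\Theta_{0,j,-j}^T\Theta_{0,-j,-j}^{-1}\Theta_{0,-j,j}/\theta_{jj}^2\ge 0$, so nothing beyond standard linear algebra is needed to close the argument.
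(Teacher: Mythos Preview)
Your proof is correct and takes a genuinely cleaner route than the paper's. The paper expands $\cov(V_j,V_j)$ and $\cov(X^\ell,V_j)$ bilinearly, writing each as a combination of the scalar inner products $\ip{B_{j\cdot},\Theta_{j\cdot}}$ and $\ip{B_{\ell\cdot},\Theta_{j\cdot}}$ and then simplifying the linear and quadratic terms by hand. Your device of packaging the coefficients as $\gamma^j=\Theta_0 e_j/\theta_{jj}$ and using $B_0\Theta_0=I$ to obtain $B_0\gamma^j=e_j/\theta_{jj}$ collapses all of that bookkeeping into one line, yielding both $\cov(V_j,X^k)=\delta_{jk}A_0/\theta_{jj}$ and then $\cov(V_j,V_j)$ immediately. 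The paper's approach is more explicit but yours is shorter and makes the role of $B_0^{-1}$ transparent; both rely on exactly the same separability observation $\cov(X^i,X^{i'})=(B_0)_{ii'}A_0$.

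One small correction: the Schur complement identity you wrote at the end has the wrong denominator. The correct relation is $b_{jj}-1/\theta_{jj}=b_{jj}\,\Theta_{j,-j}\Theta_{-j,-j}^{-1}\Theta_{-j,j}/\theta_{jj}$ (or, equivalently and more simply, $\theta_{jj}-1/b_{jj}=\Theta_{j,-j}\Theta_{-j,-j}^{-1}\Theta_{-j,j}\ge 0$). This does not affect your conclusion that $1/\theta_{jj}\le b_{jj}$, and in any case that inequality is really part of the proposition's setup rather than its content.
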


\subsection{Inverse covariance estimator}
Our method on  inverse covariance estimation corresponds to the 
proposal in \cite{Yuan10,LW12},  only now dropping the i.i.d. or 
Gaussian assumptions, as we use the relations~\eqref{eq::regress}
and~\eqref{eq::residual} which are valid for the much more general 
subgaussian matrix-variate model~\eqref{eq::missingdata}. 
We need to introduce some notation. Fix $j \in [n]$.
We use $B_{\minus j, j}$ as a shorthand to denote $B_{I, J}$
when $I = n \setminus \{j\}$ and $J = \{j\}$, that is, the $j^{th}$
column of $B$ without the diagonal entry $B_{jj}$.
We use ${B}_{\minus j, \minus j}$ to denote the submatrix 
of $B$ formed by throwing away row $j$ and column $j$. 
Let $\X_j$ denote the $j^{th}$ row vector of $\X$ and $\X_{\minus j}$ 
the submatrix of $\X$ with row $j$ removed. 
Let $\hat{B}^{\star}$ be as defined in~\eqref{eq::Bstar},
\ben
\label{eq::GammaMain}
\; \hat\Gamma^{(j)} &:= & \hat{B}^{\star}_{\minus j, \minus j} =
\X_{\minus j}  \X_{\minus j}^T \oslash \hat{\TM}_{\minus j, {\minus 
    j}}  \in \R^{(n-1) \times (n-1)},\\
\label{eq::gammaMain}
\; \text{ and } \; \hat\gamma^{(j)} &:= & \hat{B}^{\star}_{\minus j,
  j} =\X_{\minus j} \X_j^T /\hat{\TM}_{\minus j, j}  \in \R^{(n-1) \times 1}.
\een
Let $\hat \beta^{(j)}  := \{\hat \beta_k^{(j)}, k \in [n], k 
\not=j\}$. For a chosen penalization parameter $\lambda > 0$ and a fixed
$\ell_1$-radius $b_1 > 0$, we consider the following  variant of the
Lasso estimator~\cite{Tib96,Chen:Dono:Saun:1998,LW12} for
the nodewise regression on the design $\X$, for each $j =1, \ldots, n$,
\begin{eqnarray}
\label{eq::origin} \; \; 
\hat \beta^{(j)} & = & \arg\min_{\beta \in \R^{n-1} \onenorm{\beta} \le b_1} \left\{ \frac{1}{2} \beta^T \hat\Gamma^{(j)} \beta - \ip{\hat\gamma^{(j)}, \beta} + \lambda \onenorm{\beta}\right\},
\end{eqnarray}
and the constraint parameter $b_1$ is to be chosen as an upper bound
on the $\ell_1$-norm of vector $\beta^{j*}$, namely, $b_1 \ge
\onenorm{\beta^{j*}} \; \forall j$, for $\beta^{j*}$ as in
\eqref{eq::regress}. See~\cite{LW12} for justifications of
\eqref{eq::origin}.
\begin{definition}{\textnormal{(\bf Estimating inverse  covariance)}}
\label{def::TopHat}
To construct an estimator for $\Theta = B_{\star}^{-1}$ with $\X$ as defined in~\eqref{eq::Xgen}:\\
\noindent{\bf Step 1.}
Obtain $n$ vectors of  $\hat{\beta}^{(j)}, j \in [n]$ by
solving \eqref{eq::origin} with $\hat\Gamma^{(j)}$ and
$\hat\gamma^{(j)}$ as in~\eqref{eq::GammaMain}
and~\eqref{eq::gammaMain}; \\
\noindent{\bf Step 2.}
Obtain an estimate for each row of $\Theta_0$ as follows:
$\forall j \in [n]$,
\ben
\label{eq::digest}
\quad  \tilde{\Theta}_{jj} =  (\hat{B}^{\star}_{jj} - \hat{B}^{\star}_{j, 
    \minus j} \hat{\beta}^{(j)}  )^{-1}
\;\text{ and }\; \tilde{\Theta}_{j, \minus j} = 
  -\tilde{\Theta}_{jj} \hat\beta^{(j)},
\een
where $\tilde{\Theta}_{j, \minus j}$ denotes the $j^{th}$ row of
$\tilde{\Theta}$ with diagonal entry $\tilde{\Theta}_{jj}$ removed,
and $\hat\beta^{(j)} = \{\hat{\beta}^{(j)}_k; \ k \in [n], k \not =j \}$.
Thus for $\hat{\NN}_{\beta}$ as defined in~\eqref{eq::negbeta}, we
have in the matrix form,
\ben
\label{eq::negbeta}
\Tilde\Theta
& = &
\diag(\tilde{\Theta}_{11}, \ldots, \tilde{\Theta}_{nn}) \cdot
\hat\NN_{\beta}; \; \; \text{ where} \; \; 
%Denote by
\hat{\NN}_{\beta}  =
\left[
\begin{array}{cccc} 
1 & -\hat{\beta}^{(1)}_{2} & \ldots & -\hat{\beta}^{(1)}_{n}\\
-\hat{\beta}^{(2)}_{1}  & 1 & \ldots & -\hat{\beta}^{(2)}_{n}\\
% \cdot & \cdot & \ldots & \cdot \\
  \vdots & \vdots & \ddots & \vdots \\
%  \cdot & \cdot & \ldots & \cdot \\
  -\hat{\beta}^{(n)}_{1}  & -\hat{\beta}^{(n)}_{2}  & \ldots & 1 
\end{array} \right]_{n \times n };
\een
\noindent{\bf Step 3.}
Set $\hat\Theta =\arg\min_{\Theta \in S^{n}} \shnorm{\Theta -
  \tilde\Theta}_{\infty}$, where $S^n$ is the set of $n \times n$ symmetric matrices.
\end{definition}

\subsection{Preliminary results}
\label{sec::prelim}
Theorem~\ref{thm::Bernmgf} shows a concentration of measure bound on a
quadratic form with non-centered Bernoulli random variables where an explicit
dependency on $p_i, i =1, \ldots, m$ is shown.
Theorem~\ref{thm::Bernmgf} is crucial in proving 
Theorems~\ref{thm::gramsparse} and~\ref{thm::main}, as we need it to derive a concentration bound 
on $S_{\star}(q)$ as  defined in \eqref{eq::starship} and
$S_{\star}(q, h)$ as in~\eqref{eq::stardust}.
The setting here is different from Theorem~\ref{thm::HW} as we deal 
with a quadratic form that involves non-centered Bernoulli random 
variables, and the bound is especially useful in the present work as 
we allow $p_j \to 0$ for some or all coordinates of $j \in \{1,
\ldots, m\}$.
Naturally, the proof for Theorem~\ref{thm::Bernmgf} builds upon that
of the Hanson-Wright inequality in Theorem~\ref{thm::HW}~\cite{RV13}.
Moreover, although Theorem~\ref{thm::Bernmgf} follows the same line of arguments 
as Theorem 2.10 in~\cite{Zhou19}, we state the bound differently
here. Hence we prove Theorem~\ref{thm::Bernmgf} in the supplementary 
Section~\ref{sec::proofofbasemgf} for self-containment.
The constants presented in Theorem~\ref{thm::Bernmgf} statement are 
arbitrarily chosen. 
\begin{theorem}{\textnormal{\bf (Moment generating function for Bernoulli
      quadratic form)}}
\label{thm::Bernmgf}
Let $\xi = (\xi_1, \ldots, \xi_m) \in \{0, 1\}^m$ be a random vector 
with independent Bernoulli random variables $\xi_i$ such that 
$\xi_i = 1$ with probability $p_i$ and $0$ otherwise. 
Let $A = (a_{ij})$ be an $m \times m$ matrix.
Let $\sigma_i^2 = p_i(1-p_i)$.
Let $S_{\star} :=  \sum_{i,j} a_{ij} \xi_i \xi_j - \E 
\sum_{i,j} a_{ij} \xi_i \xi_j.$
Denote by  $D_{\max} := \norm{A}_{\infty} \vee \norm{A}_1$.
Then, for every $\abs{\lambda} \le 1/{(16(\norm{A}_1 \vee \norm{A}_{\infty}))}$,
\bens
\lefteqn{\E \exp(\lambda S_{\star})
\le   \exp\big(32.5 \lambda^2 D_{\max} e^{8 \abs{\lambda} D_{\max}}
  \sum_{i\not=j} \abs{a_{ij}} \sigma^2_j \sigma^2_i \big)  \cdot}\\
&& \exp\big(2 \lambda^2 D_{\max} e^{4\abs{\lambda} D_{\max}} 
\big(\sum_{i=1}^m \abs{a_{ii}} \sigma^2_i + 2 \sum_{i\not=j} \abs{a_{ij}} p_j p_i \big)  \big) 
\eens
\end{theorem}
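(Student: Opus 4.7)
The plan is to center the Bernoulli variables and decompose $S_\star$ into a linear piece and a purely bilinear piece in the centered variables, then bound the moment generating function of each piece separately. Writing $\xi_i = p_i + \eta_i$, the variables $\eta_i$ are independent, centered, satisfy $\abs{\eta_i}\le 1$ and $\E\eta_i^2 = \sigma_i^2 = p_i(1-p_i)$. Expanding $\xi_i\xi_j = p_ip_j + p_i\eta_j + p_j\eta_i + \eta_i\eta_j$ for $i\ne j$ and using $\xi_i^2 = \xi_i$ on the diagonal yields
\[
S_\star \;=\; \sum_i c_i\,\eta_i \;+\; \sum_{i\ne j} a_{ij}\,\eta_i\eta_j \;=:\; L + Q,
\qquad c_i \;=\; a_{ii} + \sum_{j\ne i}(a_{ij}+a_{ji})p_j,
\]
with $\abs{c_i}\le 2 D_{\max}$ by the row/column-sum bound. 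By Cauchy--Schwarz, $\E e^{\lambda S_\star}\le (\E e^{2\lambda L})^{1/2}(\E e^{2\lambda Q})^{1/2}$, reducing the problem to controlling the linear and bilinear MGFs separately.

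For $L$, I would apply the elementary inequality for any centered random variable with $\abs{\eta}\le b$ and variance $\sigma^2$, namely $\E e^{t\eta}\le \exp(\tfrac{1}{2}t^2\sigma^2 e^{\abs{t}b})$, obtained by Taylor expansion together with the moment bound $\E\eta^k \le b^{k-2}\sigma^2$ for $k\ge 2$. Multiplying over independent $\eta_i$ with coefficient $2\lambda c_i$ gives $\E e^{2\lambda L}\le \exp(2\lambda^2 e^{4\abs{\lambda}D_{\max}}\sum_i c_i^2\sigma_i^2)$. Expanding $c_i^2$ and bounding the cross term by Cauchy--Schwarz, $\sigma_i^2(\sum_{j\ne i}(a_{ij}+a_{ji})p_j)^2 \le 2D_{\max}\sigma_i^2\sum_{j\ne i}(\abs{a_{ij}}+\abs{a_{ji}})p_j$, and using $\sigma_i^2\le p_i$ in the cross-index reassembly, I recover the second factor of the theorem, namely $\exp(2\lambda^2 D_{\max}e^{4\abs{\lambda}D_{\max}}(\sum_i\abs{a_{ii}}\sigma_i^2 + 2\sum_{i\ne j}\abs{a_{ij}}p_ip_j))$.

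The bilinear term $Q$ is the main obstacle, since one must retain the product variance $\sigma_i^2\sigma_j^2$ rather than collapse it to a uniform sub-Gaussian proxy. My approach is decoupling in the sense of de la Pe\~na--Montgomery-Smith, giving $\E e^{2\lambda Q}\le \E e^{8\lambda\tilde Q}$ with $\tilde Q = \sum_{i,j}a_{ij}\eta_i\eta'_j$ for an independent copy $\eta'$. Conditioning on $\eta$, the sum $\tilde Q = \sum_j b_j(\eta)\,\eta'_j$ with $b_j(\eta) = \sum_i a_{ij}\eta_i$ satisfying $\abs{b_j(\eta)}\le D_{\max}$, so the conditional MGF is controlled by the same linear inequality as in the previous paragraph, producing $\E[e^{8\lambda\tilde Q}\mid\eta]\le \exp(32\lambda^2 e^{8\abs{\lambda}D_{\max}}\sum_j b_j(\eta)^2\sigma_j^2)$. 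Taking expectation over $\eta$ and expanding $b_j(\eta)^2 = \sum_{i,k}a_{ij}a_{kj}\eta_i\eta_k$, the diagonal $\eta_i^2$ contribution contributes (using $\E\eta_i^2 = \sigma_i^2$) the leading term $\sum_{i,j}a_{ij}^2\sigma_i^2\sigma_j^2$, which by $a_{ij}^2\le D_{\max}\abs{a_{ij}}$ is dominated by $D_{\max}\sum\abs{a_{ij}}\sigma_i^2\sigma_j^2$ as required; the residual off-diagonal bilinear form in $\eta$, with coefficients $\sum_j\sigma_j^2 a_{ij}a_{kj}$, is absorbed by a second application of the same linear MGF bound (applied to $\eta_i^2 - \sigma_i^2$ on the diagonal and to $\eta_i\eta_k$ off-diagonal), which converges thanks to the constraint $\abs{\lambda}\le 1/(16 D_{\max})$ that keeps every exponential prefactor $e^{c\abs{\lambda}D_{\max}}$ uniformly bounded. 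Reassembling via Cauchy--Schwarz produces the stated MGF estimate with the constants $32.5$ and $2$ coming from the bookkeeping above. The hardest step will be step three --- tracking the $\sigma_i^2\sigma_j^2$ structure through the decoupling and conditioning so as to avoid the lossy $\ell_2$-type bound $\sum a_{ij}^2$ on its own.
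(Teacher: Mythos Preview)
Your overall architecture---center to $\eta_i=\xi_i-p_i$, split $S_\star=L+Q$, Cauchy--Schwarz, decouple $Q$, condition on one copy---is exactly the paper's proof. The linear piece $L$ is also handled the same way. The divergence, and the gap, is in how you close the bilinear piece after conditioning.

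After conditioning on $\eta$ you correctly reach
\[
\E\bigl[e^{8\lambda\tilde Q}\,\big|\,\eta\bigr]\;\le\;\exp\Bigl(32\lambda^2 e^{8|\lambda|D_{\max}}\sum_j b_j(\eta)^2\,\sigma_j^2\Bigr),
\qquad b_j(\eta)=\sum_i a_{ij}\eta_i.
\]
At this point the paper does \emph{not} expand $b_j(\eta)^2$ into a quadratic form in $\eta$. Instead it uses the deterministic bound $|b_j(\eta)|\le D_{\max}$ to write $b_j(\eta)^2\le D_{\max}\,|b_j(\eta)|\le D_{\max}\sum_i|a_{ij}|\,|\eta_i|$. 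This one inequality linearizes the exponent in $|\eta_i|$, so the outer expectation over $\eta$ factors as $\prod_i \E\exp(\tau'|\bar a_i|\,|\eta_i|)$ with $|\bar a_i|=\sum_j|a_{ij}|\sigma_j^2$; then $\E|\eta_i|=2\sigma_i^2$ and the elementary bound give the factor $\exp(2.03\,\tau'|\bar a_i|\sigma_i^2)$, and the $\sigma_i^2\sigma_j^2$ structure drops out with the constant $2.03\times 32\approx 65$ (hence $32.5$ after the Cauchy--Schwarz square root).

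Your proposal keeps $b_j(\eta)^2$ and tries to take $\E_\eta$ of an exponential of a genuine quadratic form $\eta^T M\eta$. The sentence ``a second application of the same linear MGF bound \ldots\ to $\eta_i\eta_k$ off-diagonal'' is where this breaks: $\eta_i\eta_k$ is bilinear, not linear, so the linear MGF lemma does not apply; you would need another decoupling and another conditioning, i.e.\ a recursion. That recursion can be made to converge (since the new coefficient is $O(\lambda^2 D_{\max})$ and $\|M\|_\infty\le D_{\max}^2/4$), but it is substantially messier, does not yield the stated constant $32.5$, and is not what you wrote. The clean fix is the paper's linearization $b_j^2\le D_{\max}|b_j|$ before taking the outer expectation.
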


\begin{theorem}{\textnormal{(Hanson-Wright inequality)~\textnormal{\cite{RV13}}}}
\label{thm::HW}
Let $Z \sim \mvec{\Z^T}$ for $\Z$ as defined in
\eqref{eq::missingdata}.
Let $A$ be an $mn \times mn$ matrix. Then, for every $t > 0$, 
\bens 
\prob{\abs{Z^T A Z - \expct{Z^T A Z} } > t} 
\leq  2 \exp \big(- c\min\big({t^2}/{(K^4\fnorm{A}^2)}, {t}/{(K^2\twonorm{A})} \big)\big). 
\eens 
\end{theorem}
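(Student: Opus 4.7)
The plan is to decompose the centered quadratic form $Z^\tran A Z - \E Z^\tran A Z$ into diagonal and off-diagonal contributions and control each via a moment generating function (MGF) bound, then apply Chernoff's inequality. Concretely, write
\[
Z^\tran A Z - \E Z^\tran A Z \;=\; \sum_{i} A_{ii}(Z_i^2 - \E Z_i^2) \;+\; \sum_{i\ne j} A_{ij} Z_i Z_j \;=:\; D + O,
\]
so that a $t/2$ splitting reduces matters to tail bounds for $D$ and for $O$ separately. I will use the subgaussian assumption $\|Z_i\|_{\psi_2}\le K$ and the fact that $Z_i^2 - \E Z_i^2$ is subexponential with $\psi_1$-norm $\lesssim K^2$.

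For the diagonal piece $D$, the summands are independent centered subexponentials, so Bernstein's inequality gives
\[
\P\bigl(|D|>t/2\bigr) \le 2\exp\Bigl(-c\min\bigl(t^2/(K^4\textstyle\sum_i A_{ii}^2),\; t/(K^2 \max_i |A_{ii}|)\bigr)\Bigr),
\]
which already yields the claimed form since $\sum_i A_{ii}^2 \le \fnorm{A}^2$ and $\max_i |A_{ii}| \le \twonorm{A}$. This part is routine and does not exploit any delicate structure of $A$.

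For the off-diagonal piece $O$, the plan is a two-step argument. First, apply the de~la~Pe\~na–Montgomery-Smith decoupling inequality to bound $\E e^{\lambda O}$ by $\E\exp\bigl(c\lambda\sum_{i,j} A_{ij} Z_i Z_j'\bigr)$ where $Z'$ is an independent copy of $Z$; this removes the diagonal constraint $i\ne j$ at the cost of an absolute constant. Second, condition on $Z$: the decoupled form is a weighted sum of independent subgaussians in $Z'$, so its conditional MGF is bounded by $\exp(c'\lambda^2 K^2 \twonorm{AZ}^2)$. To take an outer expectation, I would diagonalize $A^\tran A$ and use that $\twonorm{AZ}^2 = \sum_k \sigma_k^2 \langle v_k, Z\rangle^2$ where $\langle v_k, Z\rangle$ is subgaussian with norm $\le K$; then bound $\E\exp(c'\lambda^2 K^2 \twonorm{AZ}^2)$ by expanding the exponential, using moment bounds on subgaussian squares, and restricting to $|\lambda|\le c/(K^2\twonorm{A})$ so the geometric series converges. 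The resulting MGF is $\le \exp(c''\lambda^2 K^4 \fnorm{A}^2)$, and Chernoff's inequality delivers the mixed sub-Gaussian/sub-exponential tail with the same rate as above.

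The main obstacle is closing the loop on the unconditional MGF for the off-diagonal part: after conditioning on $Z$ one obtains an exponential of a quadratic form $\twonorm{AZ}^2$, which is itself the kind of object Hanson–Wright is meant to control. The clean workaround is to exploit the spectral decomposition of $A^\tran A$ together with independence of the $Z_i$'s and a uniform $|\lambda|$ truncation, so that the tail bound for $\twonorm{AZ}^2$ is obtained by direct moment estimation rather than by an appeal to the very inequality being proved. Once this bookkeeping is done, combining the diagonal Bernstein bound with the off-diagonal decoupled MGF bound and optimizing $\lambda$ over the admissible range $|\lambda|\le c/(K^2\twonorm{A})$ yields the stated inequality.
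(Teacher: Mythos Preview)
Your proposal is a correct outline of the standard Rudelson--Vershynin proof of the Hanson--Wright inequality. Note, however, that the paper does not actually prove this theorem: it is stated as a cited result from~\cite{RV13} and used as a black-box tool (in the proofs of Theorems~\ref{thm::samplesize}, \ref{thm::gramsparse}, and related results), so there is no ``paper's own proof'' to compare against. Your sketch follows the same architecture as the original Rudelson--Vershynin argument---diagonal/off-diagonal split, Bernstein for the diagonal, decoupling plus conditioning for the off-diagonal---with the minor variation that you handle the resulting $\E\exp(c\lambda^2 K^2\twonorm{AZ}^2)$ by spectral decomposition and direct moment bounds rather than by Gaussian comparison; both routes work and lead to the same conclusion.
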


Next, we need the following definitions on the maximum sparse eigenvalue 
for $B_0$ and $\abs{B_0}$ respectively. 
We then state in Lemma~\ref{lemma::converse} an upper bound on 
$\psi_B(s_0)$ that depends on $s_0$ rather than $n$. 
\begin{definition}{\textnormal{(Sparse eigenvalue for $\abs{B_0}$)}}
\label{def::C0}
Let $1\le s_0 \le n$.
 Let $\abs{B_0} := (\abs{b_{ij}})$, the entrywise absolute value of
 a matrix $B_0$.
 Denote by $\psi_B(n) = {\twonorm{\abs{B_0}}}/{\twonorm{B_0}}$.
 Let 
 \bens
 \psi_B(s_0) := {\rho_{\max}(s_0, \abs{B_0})}/{\twonorm{B_0}} , \; \text{ where } \;
\rho_{\max}(s_0, \abs{B_0}) := \max_{q \in \Sp^{n-1}, s_0-\sparse} \sum_{i, j} \abs{b_{ij}}
\abs{q_i}\abs{q_j}.
\eens
\end{definition}

\begin{definition}
  \label{def::sparse-eigen}
For $1\le s_0 \leq  n$, we define the largest $s_0$-sparse eigenvalue
of an $n \times n$ matrix $B_0 \succ 0$ to be:  $\rho_{\max}(s_0, B_0) := 
\max_{v \in \Sp^{n-1}; s_0-\text{sparse}} \; \; v^T B_0 v$.
As a consequence of the Rayleigh-Ritz theorem,
\ben 
\label{eq::eigen-Sigma}
\max_{j} b_{jj} =: b_{\infty} & \le &\rho_{\max}(s_0, B_0) \le 
\twonorm{B_0} \le \twonorm{ (\abs{b_{ij}})}
\een
\end{definition}

\begin{lemma}
\label{lemma::converse}
Let $\abs{B_0}_{S,S}$ denote the principal submatrix of 
  $\abs{B_0}$ with rows and columns indexed by $S \subseteq [n] = \{1, 
  \ldots, n\}$. Let $1 \le s_0 \le n$.  Denote by $b_{\infty} = \max_{j} b_{jj}$.
   Then for $\psi_B(s_0)$ and $\rho_{\max}(s_0, \abs{B_0})$ as in 
Definition~\ref{def::C0}, we have $\rho_{\max}(s_0, \abs{B_0}) \ge  b_{\infty} \ge 1$, 
\bens 
\rho_{\max}(s_0, \abs{B_0})
& = &
 \max_{S \subset [n]: \abs{S} = s_0}\lambda_{\max}(\abs{B_0}_{S,S}) 
 \le \sqrt{s_0} \twonorm{B_0},\;  \text{ and }  \; \psi_B(s_0) 
\le \sqrt{s_0}.
\eens 
\end{lemma}
Here and in the sequel, denote by $a_{\infty} = \max_{j}
a_{jj}$, $a_{\min} = \min_{j} a_{jj}$, and $\eta_A =
\sqrt{{a_{\infty}}/{a_{\min}}}$.
Let $W :=\diag(b^{1/2}_{11}, \ldots, b^{1/2}_{nn})$ have bounded positive
entries. We state Assumption (A1).\\
\noindent{\bf[A1.]}
Denote by $\rho(B) = W^{-1} B_0 W^{-1}$
the correlation matrix for $B_0$ and $\Omega$  its inverse.
Suppose $1\le  \twonorm{\rho(B)} \le  M_{\rho} \;  \text{ and } \; 1\le
\twonorm{\Omega}  \le M_{\Omega}$. Let $\kappa_{\rho} :=
\twonorm{\rho(B)} \twonorm{\Omega}$ and $\tilde{\kappa}_{\rho} =
M_{\rho} M_{\Omega}$ be an upper  estimate.
\section{The main theorems}
\label{sec::theory}
We now present our first main result in Theorem~\ref{thm::main-intro}.
We are not optimizing over the logarithmic factors in this paper.
Similar results for the plug-in estimator $\hat{B}^{\star}$ are
presented in Theorem~\ref{thm::main-coro}.
\begin{theorem}{\textnormal{\bf (Overall bounds for oracle $\tilde{B}_0$)}} 
\label{thm::main-intro}
Consider the data generating random matrices as in \eqref{eq::Xgen}
and \eqref{eq::maskUV}.
Let $B_0 \succ 0$ and $\tilde{B}_0$ be given in
Definition~\ref{def::RMSet}.
Set $1 \le  s_0 \le n$. 
Let $\psi_B(s_0)$ be as in Definition~\ref{def::C0}.
Let $0< \ve < 1/2$.
Let $c, c', C_1, C_2, C_4>1$ be absolute constants.
Suppose
\ben 
\label{eq::baseline}
{\sum_{j=1}^m a_{jj} p_j^2}/{ \twonorm{A_0}} \ge C_4 \eta_A^2 ( \psi_B(d) \vee 1)
 s_0 \log (n \vee m), \; \text{where} \; d = 2s_0 \wedge n.
 \een
 Then with probability at least $1-{c'}/{(n \vee m)^4} -4 \exp(-c s_0 \log  (3e n/(s_0 \ve)))$, we  have 
\ben
\label{eq::ratemain}
\sup_{q \in \sqrt{s_0} B_1^n \cap B_2^n} 
\inv{\twonorm{B_0} }\abs{q^T (\tilde{B}_0 -B_0) q}
  \le C_1 \eta_A  r_{\offd}(s_0) \ell^{1/2}_{s_0, n} + C_2
  r_{\offd}^2(s_0) \psi_B(d),
  \een   
where $B_1^n, B_2^n$ denote the unit $\ell_1$ and $\ell_2$ balls
respectively, 
\ben
\label{eq::offdrate}
\quad
r_{\offd}(s_0) =
\sqrt{s_0 \log \big(\frac{3e n}{s_0\ve}\big) 
  \frac{\twonorm{A_0}}{\sum_{j} a_{jj}  p_j^2}}, \; \text{ and }
 \ell_{s_0, n} = \frac{\log (n \vee m)}{\log (3en/(s_0 \ve) )}.
\een
\end{theorem}
\noindent{\bf Remarks.}
We now unpack Theorem~\ref{thm::main-intro}
and show its connection to the $\RE$ conditions in Definition~\ref{def::lowRE}.
% the Lower-and-Upper-
\begin{definition}{\textnormal{(Lower-and-Upper-$\RE$ conditions)~\cite{LW12}}}
\label{def::lowRE}
The matrix $\Gamma$ satisfies a Lower-$\RE$ condition with curvature
$\alpha >0$ and tolerance $\tau > 0$ if 
$\theta^T \Gamma \theta \ge \alpha \twonorm{\theta}^2 - \tau \onenorm{\theta}^2 \; \;  \forall
\theta \in \R^m.$
The matrix $\Gamma$ satisfies an upper-RE condition with smoothness $\tilde\alpha >0$ and tolerance $\tau > 0$ if 
$\theta^T \Gamma \theta \le \tilde\alpha \twonorm{\theta}^2 + \tau 
\onenorm{\theta}^2, \forall \theta \in \R^m$.
\end{definition}
As $\alpha$ becomes smaller, or as $\tau$ becomes larger, 
the Lower-RE condition is easier to be satisfied. 
Similarly, as $\tilde\alpha$ or $\tau$ becomes larger, 
the Upper-RE condition is easier to be satisfied. 

Clearly, the set of vectors as defined in \eqref{eq::cone} satisfy 
\bens 
\cone(s_0) \cap \Sp^{n-1} \subset \sqrt{s_0} B_1^n \cap B_2^n.
\eens
Therefore, we have for $\d \asymp
\eta_{A} r_{\offd}(s_0)  \ell^{1/2}_{s_0, n} + r_{\offd}^2(s_0) \psi_B(2s_0 \wedge n) $, 
\ben 
\label{eq::RElower}
q^T \tilde{B}_0 q \ge q^T B_0 q - \delta \twonorm{B_0} \ge 
\lambda_{\min}(B_0) - \delta \twonorm{B_0}> 0,
\een
for all $q \in \cone(s_0) \cap \Sp^{n-1}$ so long as the condition number $\twonorm{B_0}
/\lambda_{\min}(B_0)<\infty$ is bounded and the lower bound on
the sampling rate is sufficiently strong in the sense
of~\eqref{eq::baseline}.
The lower bound in~\eqref{eq::RElower} ensures that the
RE condition as defined in~\cite{BRT09} holds.
Here and in the sequel, denote by
\ben
\label{eq::diagr}
r_{\diag} & := &\eta_A \big({\twonorm{A_0} \log (m \vee n) 
}/{\norm{\M}_{\diag}}  \big)^{1/2}, \; \text{ and}   \\
 \label{eq::paritydual}
 \ul{r_{\offd}}
 & := & \eta_A \big(\twonorm{A_0} \log (m \vee 
   n)/\norm{\M}_{\offd} \big)^{1/2}, \; \text{ where} \\
\label{eq::definmaxM}
\norm{\M}_{\diag} & := & \sum_{j=1}^m  a_{jj} \z_j \; \text{ and } \; \norm{\M}_{\offd} := \sum_{j=1}^m   a_{jj}
\z^2_j
\een
denote the componentwise matrix max norm for $\diag(\M)$ and $\offd(\M)$  respectively.
As we show in Theorems~\ref{thm::diagmain} and~\ref{thm::main-coro}
and Lemma~\ref{lemma::pairwise},
$r_{\diag}$ and $\ul{r_{\offd}}$ dominate the maximum of entrywise errors for estimating the
   diagonal and the off-diagonal elements of $B_0$ respectively.
The diagonal component has a tighter concentration than the
off-diagonal component since the probability that we {\it observe }
two entries $y^i_k, y^j_k, \forall i \not=j, k \in [m]$ simultaneously is much smaller than that of
any single entry, where recall $y^j, j \in [n]$ are row vectors of
$X$. Hence a rate slower than $r_{\diag}$ is expected for
estimating $b_{ij}$ with $\tilde{B}_{0, ij}, \forall i \not=j$;
cf. Lemma~\ref{lemma::pairwise} and Theorem~\ref{thm::main-coro}.
Consequently, the RE conditions are predominantly controlled by the
concentration bounds with regard to the off-diagonal component of
$\tilde{B}_0$, and the first term in~\eqref{eq::ratemain} is closely related to $\ul{r_{\offd}}$, namely,    
   \bens
   (I): \quad
   r_{\offd}(s_0) \ell^{1/2}_{s_0, n} =\sqrt{s_0 \log (n \vee m)}
\sqrt{\frac{\twonorm{A_0}}{\sum_{j} a_{jj}  p_j^2} }\asymp \sqrt{s_0} \ul{r_{\offd}}
   \eens
for $r_{\offd}(s_0)$ and $\ell_{s_0, n}$ as defined in
\eqref{eq::offdrate}. The effective sample size~\eqref{eq::baseline} ensures that in the
missing value setting, the second term in \eqref{eq::ratemain} also
converges to 0:
\bens
(II): \quad r^2_{\offd}(s_0) \psi_B(2s_0 \wedge n)
\asymp s_0 \psi_B(2s_0 \wedge n) \log \big(\frac{3e n}{s_0\ve}\big)
\frac{\twonorm{A_0}}{\sum_{j} a_{jj}  p_j^2} \to 0;
\eens
As we will illustrate in our numerical examples in  
Section~\ref{sec::examples}, when the sample size is small, the second
term (II) dominates; otherwise, the first term
(I) dominates. Moreover, in order for the first term, namely,
$r_{\offd}(s_0)$ to completely dominate the second term of $O(r_{\offd}^2(s_0)  \psi_B(2s_0 \wedge n))$ in
\eqref{eq::ratemain}, the sample size needs to satisfy: ${\sum_{j=1}^m a_{jj} p_j^2
}/{\twonorm{A_0}} = \Omega\big(\psi_B^2(2s_0 \wedge n) s_0
\log\big(\frac{3en}{s_0\ve}\big)\big)$,
which is clearly more restrictive than~\eqref{eq::baseline}. Hence 
both terms are needed in characterizing the final
rate~\eqref{eq::ratemain}.
We will discuss the error rate~\eqref{eq::ratemain} and sample 
size lower bound~\eqref{eq::baseline} further in Section
\ref{sec::RE}.

\subsection{The restricted eigenvalue conditions}
\label{sec::RE}
We now state in Theorem~\ref{thm::RE} that, under the slightly stronger condition \eqref{eq::forte}
on the effective sample size,  the Lower and  Upper-$\RE$ conditions hold for $\tilde{B}_0$
with suitably chosen curvature $\alpha$, smoothness $\tilde{\alpha}$, 
and tolerance $\tau$ parameters.
Let $\kappa_B :=  \twonorm{B_0}/\lambda_{\min}(B_0)$.
Let $C_{\diag}, C_{\sparse}$ be absolute constants which we use 
throughout this paper; cf. Theorems~\ref{thm::diagmain}
and~\ref{thm::main}.
\begin{theorem}
  \label{thm::RE}
  Set $0< \ve < 1/2$.  Set $1\le s_0 \le n$.
  Suppose $n \ge 3e/(2\ve)$.  Suppose (A1) holds. 
Let $c, C$, $C_{\RE} = 2 (C_{\diag} \vee C_{\sparse} \vee 1)$ be absolute constants.
Suppose all conditions in Theorem~\ref{thm::main-intro} hold and
\ben 
\label{eq::forte}
\quad \quad {\sum_{j} a_{jj} p_j^2}/{\twonorm{A_0}} \ge 12^2 C^2_{\RE} \kappa_B
s_0 \log (m \vee n) \eta^2_A
\big(\psi_B(2s_0 \wedge n) \vee  \kappa_B \big).
\een
Then with probability at least $1-{C}/{(m \vee n)^4} - 4\exp(-c
s_0 \log (3e n/(s_0 \ve)))$, for all $q \in \R^{n}$,
\ben
\label{eq::BDlow}
\quad q^T \tilde{B}_0 q & \ge & \frac{5}{8} \lambda_{\min}(B_0) 
 \twonorm{q}^2 -\frac{3 \lambda_{\min}(B_0)}{8 s_0} \onenorm{q}^2, \;
 \text{ and } \\
 \label{eq::BDup}
\quad q^T \tilde{B}_0 q & \le & ( \lambda_{\max}(B_0) + 
 \frac{3}{8} \lambda_{\min}(B_0)) 
 \twonorm{q}^2  +\frac{3\lambda_{\min}(B_0)}{8 s_0} \onenorm{q}^2.
 \een
\end{theorem}

\noindent{\bf Discussions.}
It was shown previously in~\cite{Zhou19} that 
$\tilde{A}_{0}$ and $\tilde{B}_{0}$ provide accurate 
componentwise estimates for $A_0$ and $B_0$, once the sample size is
sufficiently large, namely,  {\bf (a)} $\forall i \not=j$,
$p_i p_j =\Omega\big({\log m  \twonorm{B_0}}/{\tr(B_0)} \big)$ and
 {\bf (b)} $\sum_{j} a_{jj} p_j^2  =\Omega\big({\twonorm{A_0} \eta_A^2 \log
  m}\big)$.
However, it is not at all clear that one can obtain convergence in the operator norm
from such results for estimating the covariance $A_{0}$
and $B_{0}$ as a whole as they may not be close to the
positive-semidefinite cones of appropriate dimensions.
This issue is both inherent in the matrix variate model \eqref{eq::separable} due to the scarcity in the
samples available to estimate the larger covariance matrix as
mentioned earlier, and also clearly exacerbated by the overwhelming
presence of missing values~\cite{Zhou14a,Zhou19}.

To prove uniform concentration of measure bounds for
the quadratic form we pursue in Theorems~\ref{thm::main-intro} and~\ref{thm::RE}, we
drop condition {\bf (a)} above while strengthening condition {\bf (b)}, because
condition  {\bf (a)} is only needed in order for $\tilde{A}_0$ to have componentwise convergence.
More precisely, ignoring the logarithmic factors, an additional factor 
of $s_0 \psi_B(2s_0 \wedge n)$ is now needed in both~\eqref{eq::baseline}
and~\eqref{eq::forte} in order to control the quadratic form~\eqref{eq::quadorig} over the cone
$\cone(s_0) \cap \Sp^{n-1}$. When $p_j = 1, \forall j$, such concentration bounds
were shown before for random design matrix with independent subgaussian rows or columns; see for
example~\cite{RZ13,LW12,RZ17} and references therein.
Specifically, we will state in Theorem~\ref{thm::AD} a result for
 the fully observed matrix variate subgaussian data
 $X$~\eqref{eq::missingdata}.
 It is evident that due to the mathematical complexities arising from missing 
 values, as considered in~\eqref{eq::missingdata}
 and~\eqref{eq::Xgen}, one would not expect to derive 
 results identical to the complete data scenarios, even when $p_j$s 
 are close to 1; however, one can hope these are close.
\begin{theorem}{\textnormal{\bf (RE conditions for full data matrix $X$)}}
\label{thm::AD}
Set $1\le s_0 \le n$. 
Let $B_0 \succ 0$ be given in Definition~\ref{def::RMSet}, and
$\rho_{\max}(s_0, B_0)$ be as in Definition~\ref{def::sparse-eigen}.
Let $A_0 \in \R^{m \times m}$ be symmetric positive definite
and $X = B_0^{1/2} Z A_0^{1/2}$ as in~\eqref{eq::missingdata}.
Let $c_2, c' > 1$, and $C$ be some absolute constants.
Suppose 
\ben
\label{eq::trB}
\frac{\tr(A_0)}{\twonorm{A_0}} & \ge & c' \frac{s_0}{\ve^2}
\log\left(\frac{3e n}{s_0 \ve}\right) \;
\text{ where} \; \; \ve \le \frac{3 \lambda_{\min}(B_0)}{128C \rho_{\max}(s_0, B_0)}.
\een
Then with probability at least $1- 4 \exp(-c_2 \ve^2 r(A_0))$,
the Lower and Upper $\RE$ conditions \eqref{eq::BDlow} and
\eqref{eq::BDup} hold with $\hat{B} = X X^T/{\tr(A_0)}$ replacing
$ \tilde{B}_0$. \silent{Then clearly,
the lower and upper $\RE$ conditions hold for the gram matrix
 $XX^T/\tr(A_0)$ with probability at least $1- 4 \exp(-c_2 \ve^2 r(A_0))$, 
\bens
\text{curvature} && 
\alpha = \frac{5}{8}\lambda_{\min}(B_0), \;\text{smoothness} \; \; \tilde\alpha
= \frac{11}{8}\lambda_{\max}(B_0) \\
\text{ and tolerance } && 
 \tau :=  \frac{\lambda_{\min}(B_0) - \alpha}{s_0}  = \frac{3
   \lambda_{\min}(B_0)}{8 s_0}, \; \text{ where} \; \alpha,
 \tilde\alpha, \tau \; \text{ are as in Definition~\ref{def::lowRE}}.
 \eens}
 \end{theorem}

\silent{for all $q \in \R^n$, 
\ben
\label{eq::BDloworig}
q^T \hat{B} q & \ge & \frac{5}{8} \lambda_{\min}(B_0) 
 \twonorm{q}^2 -\frac{3 \lambda_{\min}(B_0)}{8 s_0} \onenorm{q}^2 \\
 \label{eq::BDuporig}
 q^T \hat{B} q & \le & ( \lambda_{\max}(B_0) +\frac{3}{8} \lambda_{\min}(B_0)) 
 \twonorm{q}^2  +\frac{3\lambda_{\min}(B_0)}{8 s_0} \onenorm{q}^2.
 \een}

\silent{
for $M_B \asymp
\rho_{\max}(s_0, B_0)/\lambda_{\min}(B_0)$ for $\lambda_{\min}(B_0) >0$ and $\tr(B_0) \asymp n$,
\ben
\label{eq::fulldata}
\frac{\tr(A_0)}{\twonorm{A_0}} \ge \frac{c' s_0}{\ve^2} \log
\big(\frac{3e n}{s_0\ve}\big) \; \; \text{ where  } \;
\; \ve = \inv{2M_B} \asymp \frac{\lambda_{\min}(B_0)}{2\rho_{\max}(s_0, B_0)}
\een}

\noindent{\bf Implications.}
The key difference between \eqref{eq::trB} and \eqref{eq::baseline} is
the extra $\psi_B(2s_0 \wedge n)$ factor appearing in
\eqref{eq::baseline}.
When $\psi_B(2s_0 \wedge n)$ grows only mildly with $s_0$, 
the sample lower bounds~\eqref{eq::baseline} and~\eqref{eq::trB}
are almost identical when we set $p_j  = 1, \forall j$; in particular,
this holds when $B_0 \succ 0$ is a diagonal matrix or a matrix with
all positive entries.
However, when $\psi_B(2s_0 \wedge n)$ grows with $s_0$, then potentially we have 
superlinear (but sub-quadratic) dependency on $s_0$, since $\psi_B(2s_0 \wedge n) \le \sqrt{2s_0 
  \wedge n}$ by Lemma~\ref{lemma::converse}.
We mention in passing that the potential superlinear dependency on $s_0$
as in \eqref{eq::forte} is the cost associated with our rather complex
probabilistic arguments when dealing with missing values in the matrix
variate model as in Definition~\ref{def::RMS}.
We will discuss key proof strategies for 
Theorem~\ref{thm::main-intro} in Section~\ref{sec::strategy} and defer 
its proof to Section~\ref{sec::appendintromain}. 
We elaborate upon the sample size requirements further in
Remark~\ref{rem::threefactor}, and
Sections~\ref{sec::reduction} and~\ref{sec::mainresult}.
We prove Theorem~\ref{thm::RE} and the related Lemma~\ref{lemma::converse} in the supplementary 
Section~\ref{sec::appendproofofRE}. 
We prove Theorem~\ref{thm::AD} in Section~\ref{sec::proofofthmAD} for
 the sake of self-containment.

\subsection{Error bounds on $\hat{\ensuremath{\mathbb M}}$ and $\hat{B}^{\star}$}
\label{sec::Bhatmain}
Our main goal in this section is to  show in
Theorem~\ref{thm::main-coro}
the error bounds on the quadratic form $\abs{q^T(\X \X^T \oslash
  \hat{\TM}- \E  (\X \X^T  \oslash \M))q}$ as defined
in~\eqref{eq::quadpreview}.
For completeness,  we also state the large deviation bounds on 
$\hat{\TM}$ in Lemma~\ref{lemma::unbiasedmask}, which is 
crucial for proving Theorem~\ref{thm::main-coro} and Corollary~\ref{coro::Bhatnorm}. 
As we now elaborate in Theorem~\ref{thm::diagmain},
for diagonal matrices, the elementwise matrix maximum norm and the 
operator norm coincide.
For general matrices, however, they do not 
match and hence new tools must be developed to obtain sharp 
operator norm bounds.
Theorem~\ref{thm::diagmain} follows from~\cite{Zhou19}, cf. Theorems 
5.3, 5.4 and eq. (33) therein, upon adjusting the constants.
The proof is omitted.
On the other hand,
the componentwise rates of convergence for $\offd(B_0)$ are
restated in Lemma~\ref{lemma::pairwise}.
Let $c, C, C', C_{\diag},  C_{\offd}, \ldots$ be absolute constants which may change line by 
line.  Let $r_{\diag}$ and   $\underline{r_{\offd}}$ be as defined in 
\eqref{eq::diagr} and~\eqref{eq::paritydual} respectively.

\begin{theorem}{\textnormal{\cite{Zhou19}}}
  \label{thm::diagmain}
Suppose ${\sum_{j=1}^m a_{jj} p_j}/{\twonorm{A_0}} =
\Omega\big(\eta_A^2 \log (n \vee m)\big)$.
Then
\bens
\prob{\maxnorm{\diag(\X \X^T) -  \E\diag(\X 
     \X^T)}/{\norm{\M}_{\diag}} >  C_{\diag} b_{\infty} r_{\diag}} =: \prob{\F_{\diag}} 
\le {C}/{(n \vee m)^d}
\eens
for $d \ge 4$. Moreover, we have on $\F_{\diag}^c$,
\ben
\label{eq::convex}
\twonorm{\diag(\tilde{B}_0 - B_0)}
:=   \sup_{q \in \Sp^{n-1}} \abs{q^T (\diag(\X \X^T)-
  \E \diag(\X  \X^T))    q }/{\norm{\M}_{\diag}} \le C_{\diag} b_{\infty} r_{\diag}
  \een
\end{theorem}

\begin{lemma}{\textnormal{\cite{Zhou19}}}
  \label{lemma::pairwise}
 Suppose ${\sum_{i=1}^m a_{jj} p_i^2}/{\twonorm{A_0}}=
 \Omega(\log (m \vee n))$. Suppose (A1) holds. Suppose $n$ is sufficiently large.
Then
\bens
\label{eq::pairwise}
\prob{\F_{6}}  & := &
\prob{\max_{i, j, i \not=j} \; \; 
\abs{b_{ij} - {\ip{X^i \circ v^i, X^{j} \circ v^j}}/{\shnorm{\M}_{\offd}}}
>  C_{\offd}\sqrt{b_{ii} b_{jj}}   \underline{r_{\offd}}} \le  {c_6}/{(n \vee   m)^4}.
\eens
\silent{Suppose all conditions in Lemma~\ref{lemma::unbiasedmask} hold.
Then, we have on event $\F_5^c \cap \F_{6}^c$, where
$\prob{\F_5^c \cap \F_{6}^c} \ge 1-{c_8}/{(n \vee m)^4}$,
\bens
\label{eq::pairwisemask}
\forall i \not=j, \; \; \abs{ b_{ij} -    \ip{X^i \circ v^i, X^{j}
        \circ v^j}/{\shnorm{\hat{\TM}}_{\offd}}  }\le  C_{\offd}(1+o(1)) \sqrt{b_{ii} b_{jj}}  \ul{r_{\offd}}.
\eens}
\end{lemma}
We now state in Lemma~\ref{lemma::unbiasedmask}
that $\hat{\TM}$ as defined in~\eqref{eq::Bstar} is a componentwise unbiased 
estimator for $\M$ as defined in \eqref{eq::entrymean};
Moreover, the entries $\hat{\TM}_{k \ell}, \forall k,
\ell$ are tightly concentrated around their mean values under
Definition~\ref{def::RMS}.
Here $\hat{\TM}$  has only two unique entries.
We prove Lemmas ~\ref{lemma::pairwise},~\ref{lemma::unbiasedmask}, 
Theorem~\ref{thm::main-coro}, and Corollary~\ref{coro::Bhatnorm}
in the supplementary Sections~\ref{sec::proofofpairwise}, ~\ref{sec::maskest}, and~\ref{sec::Bstarthm}
respectively. 
We will elaborate on the plug-in estimator $\hat{\TM}$ 
and prove concentration bounds for $\hat{\ensuremath{\mathbb M}}$ in
Sections~\ref{sec::maskMN} and~\ref{sec::TMunbiased}
respectively.
\begin{lemma}
  \label{lemma::unbiasedmask}
  Suppose that $\sum_{s=1}^m a^2_{ss} p_s^2 > C a^2_{\infty} \log (m\vee 
  n)$. 
  Denote by
\bens
\hat{\TM}_{k \ell} & =: & \shnorm{\hat{\TM}}_{\offd} \text{ in case } k
\not= \ell, \text{ and }  \hat{\TM}_{\ell \ell} =:
\shnorm{\hat{\TM}}_{\diag} =\onen \tr(\X^T \X) \; \; \forall \ell,
\eens
the off-diagonal and diagonal components in $\hat{\TM}$ as defined in~\eqref{eq::Bstar},
respectively. 
Moreover, $\E \hat{\TM} = \M$, for $\M$ as defined in \eqref{eq::entrymean}.
Then we have with probability at least $1 - {C'}/{(n \vee m)^4}$,
\ben
\label{eq::delmask}
&& \quad \quad  \delta_{\mask}
:= {\abs{\shnorm{\M}_{\offd} -\shnorm{\hat{\TM}}_{\offd}}}/{\shnorm{\M}_{\offd}}
=  O\big({\underline{r_{\offd}} }/{\sqrt{r(B_0)}}\big), \\
\label{eq::maskorder}
&&
\delta_{m,\diag} :=
{\abs{\shnorm{\M}_{\diag}    -\shnorm{\hat{\TM}}_{\diag}}}/{\shnorm{\M}_{\diag}}
=  O\big( {r_{\diag}}/{ \sqrt{r(B_0)}}\big),
\een
where $r(B_0)={\tr(B_0)}/{\twonorm{B_0}} = \Omega(n)$ for $\tr(B_0) =n$.
\end{lemma}
Lemma~\ref{lemma::unbiasedmask} guarantees that 
the relative errors in the operator norm for the oracle $\tilde{B}_0$ and the plug-in
estimator $\hat{B}^{\star}$ as in~\eqref{eq::Bstar} are controlled 
essentially at the same order so long as $r(B_0) =\Omega(n)$;
cf. Theorem~\ref{thm::gramsparse}, and Corollary~\ref{coro::Bhatnorm};
Moreover, $\RE$ conditions hold for both estimators with the same
parameters as we now show in Theorem~\ref{thm::main-coro}.

\begin{theorem}{\textnormal{\bf (Overall bounds with $\hat{B}^{\star}$)}} 
  \label{thm::main-coro}
  Let $\hat{B}^{\star}$ be as defined in~\eqref{eq::Bstar}.
  Suppose (A1) holds and $n$ is sufficiently large. Let $s_0 \in [n]$.
Suppose all conditions in Theorem~\ref{thm::RE} hold and the lower
bound~\eqref{eq::forte} holds with $C_{\RE} \ge 2  (C_{\diag} \vee
C_{\sparse}\vee 1)(1+o(1))$. Then, the following statements hold with
probability at least $1- {C}/{(n \vee m)^4} -4 \exp(-c s_0  \log  (3e
n/(s_0 \ve)))$, \\
(a) $\shnorm{\diag(\hat{B}^{\star} - B_0)}_{\max} \le  C_{\diag} (1+o(1))
b_{\infty} r_{\diag}$; \\
(b)
$\shnorm{\offd(\hat{B}^{\star} - B_0)}_{\max} \le  C_{\offd} (1+o(1)) b_{\infty} \ul{r_{\offd}}$; \\
(c) The Lower-and-Upper-$\RE$ conditions as  
in~\eqref{eq::BDlow} and \eqref{eq::BDup} hold with $\hat{B}^{\star}$ 
replacing $\tilde{B}_0$, and
\bens
\label{eq::eventBstar}
&& \sup_{q \in \sqrt{s_0} B_1^n \cap B_2^n} 
\abs{q^T (\hat{B}^{\star} - B_0)  q}/{\twonorm{B_{0}} } =O\big(\ul{r_{\offd}}
\sqrt{s_0} + \psi_B(2s_0 \wedge n) r_{\offd}^2(s_0)\big).
\eens
\end{theorem}
\subsection{The operator norm error bounds for $\tilde{B}_0$ and
   $\hat{B}^{\star}$}
 \label{sec::opnorm}
In Theorem~\ref{thm::gramsparse} and Corollary~\ref{coro::Bhatnorm},
we control the quadratic forms~\eqref{eq::quadorig}
and~\eqref{eq::quadpreview} over the entire sphere.
In the proof of Theorem~\ref{thm::gramsparse}, we 
  show a more refined bound: $\shnorm{\tilde{B}_0 -B_0}_2 
  =O_P(\delta_q(B))$; cf~\eqref{eq::Bhatop}. 
The same error bound holds for $\hat{B}^{\star}$ as 
we state in Corollary~\ref{coro::Bhatnorm} and illustrate numerically
in Section~\ref{sec::examples}.
\begin{theorem}{\textnormal{\bf (Operator norm bound)}} 
  \label{thm::gramsparse}
  Suppose all conditions in Theorem~\ref{thm::main-intro} hold with
  $s_0 = n$. Suppose that $n = \Omega(\log (m \vee n))$.
Let  $r_{\offd}(n) := \sqrt{{n \twonorm{A_0}}/{(\sum_{j}
    a_{jj}  p_j^2)}}$. Let $\psi_B(n) =
{\twonorm{\abs{B_0}}}/{\twonorm{B_0}}$.
Then we have for some absolute constants $C, C_1, C_2$,
 with probability at least $1-{C}/{(n \vee m)^4}$,  
 \ben
\label{eq::opnorm} 
&& \twonorm{\tilde{B}_0 -B_0}/{\twonorm{B_0} }  \le 
C_1 \eta_A r_{\offd}(n)  + C_2 r^2_{\offd}(n) \psi_B(n)
\log^{1/2} (n 
\vee m)\big).
\een
\end{theorem}
\begin{corollary}
  \label{coro::Bhatnorm}
Let  $p_{\max} := \max_{i} p_i$.
Under the conditions in Theorems~\ref{thm::main-coro} and  \ref{thm::gramsparse}, we have
with probability at least $1-C'/(n \vee m)^4$,
$\quad \shnorm{\hat{B}^{\star}  -B_0}_2/{\twonorm{B_0} }  =O(\delta_q(B))$, where
 \ben
\nonumber
\lefteqn{\delta_q(B) \asymp \eta_A r_{\offd}(n) +   \big(1 +
  {a_{\infty} \psi_B(n)}/{\twonorm{A_0}} \big) r^2_{\offd}(n) + \tau_p,}\\
\label{eq::Bhatoplocal}
& &\text{ for } \quad \tau_p \asymp \eta^{1/2}_A \sqrt{p_{\max} } r^{3/2}_{\offd}(n) \psi_B(n)
\big({\log (n \vee  m) }/{n}\big)^{1/4}.
\een
\end{corollary}

\begin{remark}
  \label{rem::threefactor}
When $\psi_B(n) = O(\sqrt{n/\log (n \vee m)})$, $\tau_p$ in \eqref{eq::Bhatoplocal} can be absorbed into
  the other two and
\ben
\nonumber
\label{eq::classic}
&&
\twonorm{\hat{B}^{\star}  -B_0}/{\twonorm{B_0} }
=  O_P\Big(r_{\offd}(n)  + \big(p_{\max} + 1/\twonorm{A_0}\big) 
r^2_{\offd}(n) \psi_B(n) \Big).
\een
\end{remark}

Clearly, in the setting of missing values under consideration, 
\bens
x = 1/{r_{\offd}(n)^2}
\; \text{is used to replace} \; {r(A_0)}/{n} := {\tr(A_0)}/{(\twonorm{A_0}  n)} 
\; \text{ when} \; p_j = 1, \forall j.
\eens
In the classical setting of covariance estimation with independent
columns, the rate of convergence is 
\bens
\shnorm{\tilde{B}  -B_0}_2/{\twonorm{B_0} }
=O_P\big(1/\sqrt{x} + 1/{x} \big), \; \; \text{ where } \; x =
r(A_0)/n = m/n
\eens
since the effective rank $r(A_0) = m$ in case $A_0 = I_m$; See
Exercise 4.7.3~\cite{Vers18}.
Hence, we can immediately recover the classical result in case $B_0 = I_n$, or 
more generally,  when $\psi_B(n) \asymp 1$, using the error bounds in
Theorem~\ref{thm::gramsparse} in  view of \eqref{eq::classic}.

Next, we compare with the case when the matrix-variate data $X$
is observed in full and free of noise, that is,  when $p_j =1,
\forall j$.
We will present Theorem~\ref{thm::XX^T} by Rudelson~\cite{Rudelson13} and its
Corollary~\ref{coro::tartan}, which state that $XX^T /\tr(A_0)
\approx B_0$ and the relative error for covariance estimation is
guaranteed to be bounded by $O(\ve)$,
for any $1> \ve>0$, so long as the effective rank $r(A_0)$ is proportional to $n$,
or more precisely, so long as $r(A_0) = \Omega({n \log(3/\ve)}/{\ve^2})$.
We prove Corollary~\ref{coro::tartan} in
Section~\ref{sec::proofoftartan}, following the same line of arguments in~\cite{Rudelson13}.
Let $U,V$ be $n \times n$ symmetric matrices. We will write $U \le V$
if the matrix $V-U$ is positive semidefinite.
\begin{theorem}{{\textnormal{\bf (Operator norm bound for full matrix $X$)}}~\cite{Rudelson13} }
  \label{thm::XX^T}
  Let $n \le m$, and let $X$ be an $n \times m$ random matrix as
  defined in~\eqref{eq::missingdata}.  Then for any $1> \ve>0$,
 \bens
   \P \left( (1-\ve)B_0 \le XX^T /\tr(A_0)\le (1+\ve) B_0 \right)
   \ge 1 - \exp \left( -c \ve^2 {\tr(A_0)}/\twonorm{A_0} \right).
   \eens
   provided that $\tr(A_0)/\twonorm{A_0} \ge c'  {n
     \log(3/\ve)}/{\ve^2}$, where $c, c'$ are absolute constants.

\end{theorem}

\begin{corollary}{{\textnormal{\bf (Relative error for covariance
        estimation)}}}
\label{coro::tartan}
Under the conditions in Theorem~\ref{thm::XX^T},
we have for some absolute constants $c, c', C$, with probability at least $1-\exp(c \ve^2 r(A_0))$,  
\bens
\twonorm{XX^T/\tr(A_0) - B_0} /\twonorm{B_0}
\le  4 C \ve\; \; \text{ where } \; \ve^2 r(A_0) \ge c' n \log(3/\ve).
\eens
\end{corollary}
\silent{Notice that since $1> \ve >0$ and $\ve^2 =\Omega( n/r(A_0))$, we have
\bens
1/{\ve} =O\left(\sqrt{{r(A_0)}/{n}}\right), \; \text{  and hence }\;
\; \log (3/\ve) = O\left(\log\left({r(A_0)}/{n}\right) \right).
\eens}
Therefore, it is sufficient to set for $r(A_0) := \tr(A_0)/\twonorm{A_0}$, 
\ben 
\label{eq::rateB}
\ve \asymp \sqrt{{n}/{r(A_0)} \log\left({r(A_0)}/{n}\right) }
\een 
which
is clearly comparable to the error rate $r_{\offd}(n)$ as in~\eqref{eq::opnorm}, where we
replace the effective rank $r(A_0)$ with the effective sample size parameter ${\sum_{j=1}^m a_{jj} p_j^2
}/{\twonorm{A_0}}$.

\noindent{\bf Discussions.}
As is well-understood in the literature, results such as those in
Theorems~\ref{thm::main-intro},~\ref{thm::RE},~\ref{thm::main-coro},
~\ref{thm::gramsparse}, and~\ref{thm::XX^T} can not be derived using the entrywise
deviation directly.
In the present setting, using entrywise bounds to control quadratic
forms~\eqref{eq::quadorig} or~\eqref{eq::quadpreview} will result in
a stronger dependency between the effective sample size and the sparsity parameter $s_0$ through
$\psi_B(s_0)$.
Elementwise bounds such as $\shnorm{\tilde{B}_0 -
  B_0}_{\max}$ (and $\shnorm{\tilde{A}_0 -A_0}_{\max}$)
were proved and thoroughly discussed in~\cite{Zhou19}, cf. Theorems 5.1 to 5.4 therein and
Lemma~\ref{lemma::pairwise}.
These bounds are sufficient to obtain convergence for the graphical 
Lasso type of estimators, cf.~\cite{Zhou14a}, but not sufficient to 
obtain the tight error bounds in the operator norm as we elaborate in
Theorem~\ref{coro::thetaDet} for inverse covariance estimation.
Similar to the discussions in  Section~\ref{sec::RE},
we do incur an {\it extra factor} $\psi_B(n) \sqrt{\log (m \vee n) }=
\tilde{O}(\sqrt{n})$ in the second term~\eqref{eq::opnorm}, namely,
$r^2_{\offd}(n) \psi_B(n) \log^{1/2} (n \vee m)$, which dictates the sample complexity.
\silent{\bens
r^2_{\offd}(n) \psi_B(n) \log^{1/2} (n \vee m)
\asymp \frac{ n \twonorm{A_0} \psi_B(n)}{\sum_{j=1}^m a_{jj} p_j^2 } \log^{1/2} (m \vee n)
 \eens}
Lemma~\ref{eq::avgrowsum} provides a refined lower bound on 
$\twonorm{\abs{B_0}}$ that depends on the average row sum of 
$B_0$.  Moreover, for non-negative symmetric matrix $\abs{B_0} \ge 0$, 
its operator norm is bounded by the maximum row sum of $B_0$. 
\begin{lemma}
  \label{eq::avgrowsum}
  Let $\psi_B(n)$ be as in Definition~\ref{def::C0}.
For symmetric matrices $B_0$ and $\abs{B_0}$, 
their spectral radii (and hence operator norms) must obey the following relations: 
  \bens
   \lambda_{\max}(\abs{B_0}) = \rho(\abs{B_0}) =  \twonorm{\abs{B_0}}
& \ge & \rho(B_0) = \twonorm{B_0} \\
 \text{ and }
\sqrt{n} \twonorm{B_0} \ge \norm{B_0}_{\infty} \ge \twonorm{\abs{B_0}} & \ge &  \onen \sum_{i, j} \abs{b_{ij}}.
 \eens
 Finally, if the rows of $\abs{B_0}$ have the same sum $r$, then $
 \rho(\abs{B_0})=\twonorm{\abs{B_0}} = r$.
 Moreover, $\psi_B(k) = 1, \forall k \in [n]$ in case $B_0 \succ 0$ is
 a diagonal matrix, or trivially, when $B_0 \ge 0$ is a matrix with
 all positive entries.
\end{lemma}
We elaborate on this extra term $\psi_B(n)$ in Section~\ref{sec::reduction}.
To ensure convergence,
 potentially we need the effective sample size to have superlinear
 dependency on $n$, rather than the linear dependency
 in~Theorem~\ref{thm::XX^T}, since by Lemma~\ref{eq::avgrowsum} and Definition~\ref{def::C0},
\ben
\label{eq::phiben}
1 \le \psi_B(n) =
{\twonorm{(\abs{b_{ij}})}}/{\twonorm{B_0}}  \le  \sqrt{n} \;
\text{ where } \; \twonorm{(\abs{b_{ij}})} \le \norm{(\abs{b_{ij}})}_{\infty}
=\norm{B_0}_{\infty} \le \sqrt{n} \twonorm{B_0};
\een 
However, the bound in \eqref{eq::phiben} can be rather crude as shown in our numerical 
examples. 

\section{Randomized quadratic forms}
\label{sec::strategy}
In this section, we provide an outline for the proof of
Theorems~\ref{thm::main-intro} and~\ref{thm::gramsparse}.
First, we present our reduction strategy to analyze the quadratic form as defined
in \eqref{eq::quadorig}.
Let $\M$ be as  defined in~\eqref{eq::entrymean}.
Let $\norm{\M}_{\diag}$ and
$\norm{\M}_{\offd} $ be as defined in~\eqref{eq::definmaxM}.
Consider $\tilde{B}_0$ as in~\eqref{eq::entrymean}.
Denote by $\Delta(B) := \X \X^T- \E \X \X^T$.
Now for all $q \in \Sp^n$,
\ben
  \nonumber
\lefteqn{
\abs{q^T  (\tilde{B}_0 - B_0) q} =\abs{q^T ( \X \X^T \oslash 
  \M -\E (\X \X^T \oslash \M) )q }} \\
\label{eq::MB1}
& = &
\abs{q^T  (\X \X^T \oslash \M -B_0) q} =
\abs{q^T   [(\X \X^T- B_0 \circ \M) \oslash \M] q} =:q_0.
\een
We emphasize that the only assumption we make in the preceding derivation is 
on the independence of mask $U$ from data matrix $X$. 
Recall for the random mask model~\eqref{eq::maskUV},
we are given $m$ dependent samples to estimate $B_0$,
namely, $(x^j \otimes x^j), j \in [m]$ with each one applied an independent
random mask $u^j \otimes u^j$.
Hence $\Delta(B) :=  \X \X^T- \M \circ B_0$,
since by definition
\bens 
\label{eq::separate}
&& \quad \X \X^T :=  (U \circ X) (U \circ X)^T =  \sum_{j=1}^m {(u^j \otimes
  u^j) \circ (x^j \otimes x^j)}\; \text{ and } \\
\nonumber 
&&\E \X \X^T = \sum_{j=1}^m {\E (u^j \otimes u^j) \circ \E 
    (x^j \otimes x^j)}  = \sum_{j=1}^m a_{jj} M_j \circ B_0 =: \M 
  \circ B_0
  \eens
for $M_j = \E u^j \otimes u^j$.
Next we break the quadratic form~\eqref{eq::MB1} into two parts and obtain
\ben
\nonumber
\lefteqn{q_0 
 \le 
 \abs{q^T   \diag[\Delta(B) \oslash \M] q}  + \abs{q^T   \offd[\Delta(B) \oslash \M] q}} \\
\label{eq::MB2}
& = &
\frac{\abs{q^T  \diag(\X  \X^T  -B_0 \circ \M)q} }{\norm{\M}_{\diag}}+
\frac{\abs{q^T  \offd(\X  \X^T  -B_0 \circ \M) q}}{\norm{\M}_{\offd}},
\een
where in \eqref{eq::MB2}, we use the fact that $\diag(\M) =\norm{\M}_{\diag} I_n$ and $\offd(\M) 
=\norm{\M}_{\offd}  (\one \one^T - I_n)$ where $\one = (1, \ldots,
1)^T \in \R^n$ by definition.
Hence, we have essentially reduced the original problem that involves quadratic form with 
mask $\M \in \R^{n \times n}$ embedded inside~\eqref{eq::MB1} to ones
not involving the masks as we pull the masks outside of the quadratic forms in~\eqref{eq::MB2}.
This decomposition in \eqref{eq::MB2} enables us to treat the diagonal
and the off-diagonal parts of the original quadratic
form~\eqref{eq::MB1} separately as we elaborate in Section~\ref{sec::mainsketch}.
In Sections~\ref{sec::reduction} and~\ref{sec::mainresult},
we provide roadmaps for proving the results in 
Theorems~\ref{thm::gramsparse}  and~\ref{thm::main} respectively.
We prove Theorem~\ref{thm::gramsparse}, Lemma~\ref{eq::avgrowsum},
and the related Corollary~\ref{coro::offdn} in Section~\ref{sec::appendcoroproof}.

\subsection{Proof sketch of Theorem~\ref{thm::main-intro}}
\label{sec::mainsketch}
For diagonal matrices, as mentioned, we can prove a 
uniform bound over all quadratic forms so long as we  prove 
coordinate-wise concentration of measure bounds as already stated in 
Theorem~\ref{thm::diagmain}.
Hence we now focus on developing proof strategies to obtain an upper 
bound on the off-diagonal component \eqref{eq::MB2} that are useful to 
prove Theorem~\ref{thm::main} (and Theorem~\ref{thm::gramsparse}).
Theorem~\ref{thm::main} is the key technical result for bounding the 
off-diagonal component of \eqref{eq::MB2} over  the class of $s_0$-sparse vectors on 
$\Sp^{p-1}$. For the rest of the section, set $1 \le s_0 \le n$. Denote by $E = \cup_{\abs{J} = s_0} E_J$.
For $s_0=1$, we control \eqref{eq::rmn} by providing the matrix max 
norm on  $\offd(\tilde{B}_0) - \offd(B_0)$ as given 
in Lemma~\ref{lemma::pairwise}. 
We then apply Lemma~\ref{lemma::quadreduction} to provide a uniform
bound for the quadratic form over all $q \in  (\sqrt{s_0} B_1^n \cap  B_2^n)$.
\begin{lemma}{\textnormal{\bf (Reduction to sparse vectors)}}
\label{lemma::quadreduction}
Let $\delta > 0$. Set $0< s_0 \le n$.
Let $\Delta$ be an $n \times n$ matrix such that
$\abs{q^T \Delta h} \le \delta,\;\forall q, h \in E \cap
\Sp^{n-1}$. Then for all $\nu \in (\sqrt{s_0} B_1^n \cap B_2^n)$, $\abs{\nu^T \Delta \nu} \le 4 \delta$.
\end{lemma}

\begin{theorem}{\textnormal{\bf (Control the quadratic form over sparse vectors)}}
  \label{thm::main}
Set $1/2 \ge \ve >0$. Suppose all conditions in
Theorem~\ref{thm::main-intro} hold. Set $0< s_0 \le n$.
Denote by $d=2s_0 \wedge n$.
Let $\ul{r_{\offd}}$ and $\norm{\M}_{\offd} = \sum_{j} a_{jj} p^2_j$ be as
in \eqref{eq::paritydual} and~\eqref{eq::definmaxM} respectively.
Let $C_{\sparse}, C, c$ be absolute constants.
Then on event $\F_4^c$, which holds with probability at least $1-C/{(n \vee m)^4} -4 \exp(-c s_0 \log 
(3en/(s_0\ve))$,
\ben
\label{eq::rmn}
  \sup_{q, h \in \Sp^{n-1} \cap E}
  \frac{\abs{q^T \offd(\X\X^T  -B_0 
      \circ \M) h}}{\twonorm{B_0}\norm{\M}_{\offd}}  \le
  C_{\sparse} \Big(\ul{r_{\offd}} \sqrt{s_0} +r_{\offd}^2 (s_0) \psi_B(d)\Big).
  \een
\end{theorem}
Theorem~\ref{thm::main-intro} follows from
Theorems~\ref{thm::diagmain} and~\ref{thm::main} in view of
Lemma~\ref{lemma::quadreduction}.  We prove
Theorem~\ref{thm::main-intro} in Section~\ref{sec::appendintromain}.
Section~\ref{sec::mainresult} further elaborates upon the proof strategies for 
Theorem~\ref{thm::main}, where we also define event $\F_4^c$, with complete proof in 
Section~\ref{sec::appendproofoffdmain}.
Lemma~\ref{lemma::quadreduction} follows from the proof of Lemma 
37~\cite{RZ17}, and hence its proof is omitted. 

\subsection{Proof Strategy for Theorems~\ref{thm::gramsparse} and~\ref{thm::main}}
\label{sec::reduction}
In this section, we will present the conditioning arguments and
some important results on a family of random matrices, which are crucial for
Theorem~\ref{thm::gramsparse} to hold.
Theorem~\ref{thm::main} involves more 
technical developments, while following a similar line of arguments, 
and hence deferred to  as Section~\ref{sec::mainresult}.
Let $\tilde{\Delta} := \offd(\X \X^T  -\M \circ B_0)$.
Denote the off-diagonal component in \eqref{eq::MB2} by
\bens 
Q_{\offd}  &  =  & 
\abs{q^T  \offd(\X  \X^T  -B_0 \circ 
  \M) q}/{\norm{\M}_{\offd}} =: \abs{q^T  \tilde{\Delta} q} /{\norm{\M}_{\offd}}.
\eens
To prove Theorem~\ref{thm::gramsparse},
we need to obtain a 
uniform bound for the quadratic form
$Q_{\offd} \norm{\M}_{\offd} = \abs{q^T  \tilde{\Delta} q}$ over all $q \in \Sp^{n-1}$ in view of 
\eqref{eq::MB2} and Theorem~\ref{thm::diagmain}.
Denote the unique symmetric square root of the positive definite matrix $A_0$ by 
\ben
\label{eq::dm}
A_{0}^{1/2} = [d_1, d_2, \ldots, d_m],  \text{ where}  \; d_1, d_2, \ldots, d_m \in \R^{m}
\een
are the column vectors of $A_0^{1/2}$, and $\ip{d_i, d_j} = a_{ij}$,
for all $i, j$.
Denote the unique symmetric square root of the positive definite matrix $B_0$ by 
\ben
\label{eq::cf}
B_{0}^{1/2} = [c_1, c_2, \ldots, c_n],  \text{ where}  \; c_1, c_2,
\ldots, c_n \in \R^{n} \text{  and} \; \ip{c_i, c_j} = b_{ij} \;
\forall i, j.
\een
For each  $q\in \Sp^{n-1}$, let $A_{qq}^{\diamond} \in \R^{mn \times mn}$
be a random matrix that can be expressed as a quadratic form over the
set of independent Bernoulli random variables $\{u^k_j, k=1, \ldots, m, j=1, \ldots, n\}$
in the mask $U$
\ben
\label{eq::tensor}
A_{qq}^{\diamond} &=&
\sum_k \sum_{i\not= j} u^k_i u^k_j 
q_i q_{j} (c_j c_i^T) \otimes  (d_k d_k^T),
\een
where the coefficient for a pair of random variables $u^k_i u^k_j$ is a 
tensor product $q_i q_j ( c_j c_i^T) \otimes (d_k d_k^T)$ that changes 
with each choice of $q \in \Sp^{n-1}$.
Now $Q_{\offd}$ has the following expression using the random matrix
$A_{qq}^{\diamond}$ and a subgaussian vector  $Z \sim \mvec{\Z^T}$:
\bens
Q_{\offd} 
&  \sim & 
\inv{\norm{\M}_{\offd}}
\abs{Z^T A^{\diamond}_{qq} Z  - \E (Z^T 
  A^{\diamond}_{qq} Z)}, \; \text{ where  $\Z$ is as defined in \eqref{eq::missingdata} }
\eens
with $K=1$ and $\sim$ represents that two vectors follow the same distribution.\\
\noindent{\bf The conditioning arguments.}
We now illustrate the general strategy we use in this work to
deal with randomness  in $X$ and $U$ simultaneously.
First, we decompose the error into two parts,
\ben 
\label{eq::decompmain}
\lefteqn{\quad \quad \quad \forall q \in \Sp^{n-1}, \quad \abs{q^T
    \tilde{\Delta} q} :=
  \abs{Z^T  A^{\diamond}_{q q} Z  - \E (Z^T     A^{\diamond}_{q q} Z )} \le  }\\
\nonumber 
& &
\abs{  Z^T  A^{\diamond}_{qq} Z- \E (Z^T  A^{\diamond}_{qq } Z |U)}  
+ \abs{\E (Z^T  A^{\diamond}_{q q} Z |U) - \E (Z^T  A^{\diamond}_{q q} Z)} =: 
{\bf I} + {\bf II}.
\een
Construct an $\ve$-net $\N \subset \Sp^{n-1}$ such that $\size{\N} \le
(3/\ve)^n$. We then present a uniform concentration bound on Part I followed by 
that of Part II for all $q  \in \N$.
Let $U$ be as in~\eqref{eq::maskUV} and condition on $U$ being
fixed. Then the quadratic form $Q_{\offd}$ can be treated as a
subgaussian quadratic form with  $A^{\diamond}_{qq}$ taken to be
deterministic for each $q$. Theorem~\ref{thm::mainop2} states
that the operator norm of $A_{qq}^{\diamond}$ is uniformly and  
deterministically bounded for all realizations  of $U$ and for all $q
\in \Sp^{n-1}$.
Theorem~\ref{thm::uninorm2} states a probabilistic 
uniform bound on the Frobenius norm of $A_{qq}^{\diamond}$
for all $s_0$-sparse vectors, where $s_0 \in [n]$.
\begin{theorem}
\label{thm::mainop2}
Let $A^{\diamond}_{qq}$ be as defined in \eqref{eq::tensor}.
Then for all  $q\in  \Sp^{n-1}$,
\bens
\twonorm{A_{qq}^{\diamond}} \le \twonorm{A_0}\twonorm{B_0}.
\eens
\end{theorem}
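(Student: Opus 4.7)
My plan is to exploit two features of $A_{qq}^{\diamond}$: it is symmetric, and once the diagonal terms $i=j$ are added back in, it splits as a difference of two positive semidefinite matrices whose operator norms are easy to estimate. First I would check symmetry directly from \eqref{eq::tensor} by swapping the dummy indices $i \leftrightarrow j$, so that $\twonorm{A_{qq}^{\diamond}} = \sup_{\twonorm{z}=1} |z^T A_{qq}^{\diamond} z|$ and it suffices to bound the quadratic form.

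Writing $z = \mvec{Z}$ with $Z \in \R^{n \times m}$ and using the identity $\mvec{Z}^{T}[(c_j c_i^T)\otimes(d_k d_k^T)]\mvec{Z} = (c_i^T Z d_k)(c_j^T Z d_k)$, set $\alpha_{k,i} := u^k_i q_i (c_i^T Z d_k)$. Then $u^k_i\in\{0,1\}$ gives the clean completion-of-squares identity
\begin{equation*}
z^T A_{qq}^{\diamond} z \;=\; \sum_k \sum_{i\ne j}\alpha_{k,i}\alpha_{k,j} \;=\; \sum_k \Big(\sum_i \alpha_{k,i}\Big)^{2} - \sum_k \sum_i \alpha_{k,i}^{2}.
\end{equation*}
The upper bound then comes for free: drop the non-negative second term, note that $\sum_i \alpha_{k,i} = \xi_k^T\eta_k$ with $\xi_k := B_0^{1/2}(u^k\circ q)$ and $\eta_k := Z d_k$, and apply Cauchy--Schwarz. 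The key estimates are $\twonorm{\xi_k}^{2} = (u^k\circ q)^{T} B_0 (u^k\circ q) \le \twonorm{B_0}\twonorm{u^k\circ q}^{2} \le \twonorm{B_0}$ (using $|u^k_i|\le 1$ and $\twonorm{q}=1$), and $\sum_k \twonorm{\eta_k}^{2} = \tr(Z^T Z A_0)\le \twonorm{A_0}\fnorm{Z}^{2} = \twonorm{A_0}\twonorm{z}^{2}$. Together these give $z^T A_{qq}^{\diamond} z \le \twonorm{A_0}\twonorm{B_0}\twonorm{z}^{2}$.

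For the matching lower bound, drop the first (non-negative) term and use $u^k_i \le 1$, $q_i^{2}\le \twonorm{q}_{\infty}^{2}\le 1$, together with $\sum_i c_i c_i^{T} = B_0$ to write
\begin{equation*}
\sum_{k,i}\alpha_{k,i}^{2} \;\le\; \sum_k \eta_k^{T}\Big(\sum_i q_i^{2} c_i c_i^{T}\Big)\eta_k \;\le\; \twonorm{B_0}\sum_k \twonorm{\eta_k}^{2} \;\le\; \twonorm{A_0}\twonorm{B_0}\twonorm{z}^{2},
\end{equation*}
where the middle inequality uses $\sum_i q_i^{2} c_i c_i^{T} = B_0^{1/2}\diag(q)^{2} B_0^{1/2} \preceq \twonorm{q}_{\infty}^{2} B_0 \preceq B_0$. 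Combining the two-sided bound yields $|z^T A_{qq}^{\diamond} z|\le \twonorm{A_0}\twonorm{B_0}$ uniformly in $\twonorm{z}=1$ and in all realizations of $U$, which is the claim. The only step that requires care is the bookkeeping in the split between the ``square of sums'' and the correction $\sum_i \alpha_{k,i}^{2}$; everything else is a direct application of Cauchy--Schwarz and the submultiplicativity bound $M^{T}SM \preceq \twonorm{S}\,M^{T}M$, so I expect no real obstacle beyond setting up the right vectorization convention in step two.
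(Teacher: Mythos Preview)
Your argument is correct. After fixing the vectorization convention (what you call $\mvec{Z}$ should be $\mvec{Z^T}$ for $Z\in\R^{n\times m}$, so that the $a$-th block of $z$ is the $a$-th row of $Z$), the identity $z^T[(c_j c_i^T)\otimes(d_k d_k^T)]z=(c_j^T Z d_k)(c_i^T Z d_k)$ holds, the completion of squares is exact, and both the upper and lower bounds go through as you describe.

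The paper takes a genuinely different route. It writes $w^T A_{qh}^{\diamond} w$ as the Frobenius inner product $\langle\tilde{\UU},\DD_0(w)\rangle$ between two block-diagonal matrices: $\tilde{\UU}$ carries the mask/weight data $\tfrac12 u^k_i u^k_j(q_i h_j+q_j h_i)$, and $\DD_0(w)$ has $(k,i,j)$-entry $w^T(c_ic_j^T\otimes d_kd_k^T)w$. It then applies the operator-norm/nuclear-norm H\"older inequality, shows $\twonorm{\tilde{\UU}^{(k)}}\le 1$ for every $k$, and proves $\DD_0(w)\succeq 0$ with $\norm{\DD_0(w)}_*=w^T(B_0\otimes A_0)w$, giving the bound. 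Your approach is more elementary---just Cauchy--Schwarz and $M^T S M\preceq\twonorm{S}M^T M$---and arguably cleaner for the special case $q=h$. The paper's duality framework, on the other hand, handles the bilinear version $A_{qh}^{\diamond}$ (Theorem~\ref{thm::mainop}) in one stroke and isolates reusable lemmas about $\tilde{\UU}$ and $\DD_0$; your method would need a small polarization step (e.g.\ $|\alpha_{k,i}\beta_{k,i}|\le\tfrac12(\alpha_{k,i}^2+\beta_{k,i}^2)$) to cover $q\ne h$, which is straightforward but not written.
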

\begin{theorem}
\label{thm::uninorm2}
Suppose ${\sum_{j=1}^m a_{jj} p_j^2}/{\twonorm{A_0}}
=\Omega(\psi^2_B(s_0) \log (n \vee m))$, where $s_0 \in [n]$.
Let $A^{\diamond}_{qq}$ be as defined in \eqref{eq::tensor}.
Then on event $\F_0^c$, which holds with probability at least $1-{c}/{(n \vee 
  m)^4}$ for some absolute constant $c$, we have for all integer $s_0 \in [n]$,
\bens
&& \sup_{q \in \Sp^{n-1}, s_0-\sparse}
\fnorm{A^{\diamond}_{qq}} \le W \cdot \twonorm{B_0}
\twonorm{A_0}^{1/2} \;  \text{ where }\;\\
&& \label{eq::Wlocal}
W \asymp  (a_{\infty} \sum_{s=1}^m p_s^2)^{1/2}
+\psi_B(s_0) \big(a_{\infty} \twonorm{A_0}  \sum_{j=1} p_j^4 \log (n \vee m)\big)^{1/4}.
\eens
\end{theorem}
By the union bound, the Hanson-Wright inequality in
Theorem~\ref{thm::HW}~\cite{RV13},
and the preceding estimates on the operator and Frobenius norms in Theorems~\ref{thm::mainop2}
and~\ref{thm::uninorm2}, we have for $\tau_0 \asymp n \log (3e/\ve) \twonorm{A_0}  \twonorm{B_0}+ \sqrt{n \log (3e/\ve)}  \twonorm{B_0} \twonorm{A_0}^{1/2}  W$,
\ben
\nonumber
\lefteqn{\prob{\exists q \in \N, \abs{Z^T A^{\diamond}_{qq} Z - \E(Z^T 
      A^{\diamond}_{qq} Z | U) } > \tau_0} =: \prob{\F_1}} \\
\nonumber
& = & \E_U  \prob{\exists q \in \N,  \abs{Z^T A^{\diamond}_{qq} Z -
    \E(Z^T A^{\diamond}_{qq} Z | U) } > \tau_0 | U}\\
\label{eq::defineF1}
& \leq &  2 \size{\N}^2 \exp
\big(-c\min\big(\frac{\tau_0^2}{{\twonorm{B_0}^2}  \twonorm{A_0} W^2},
\frac{\tau_0}{\twonorm{B_0}\twonorm{A_0}}\big) \big)\prob{\F_0^c}
+   \prob{\F_0}  
\een
In bounding {\bf Part II}, we need the sparse Hanson-Wright inequality
as stated in Corollary~\ref{coro::offdn}, which follows from 
Theorem~\ref{thm::Bernmgf}. These are the important technical results
in this paper.
\begin{corollary}
  \label{coro::offdn}
Fix $q \in \Sp^{n-1}$. Let $\tilde{a}^{k}_{ij}(q) = a_{kk} b_{ij} q_i q_j$.
For $U$ as in~\eqref{eq::maskUV}, denote by 
\ben 
\label{eq::starship}
\quad S_{\star}(q)  = 
\E (Z^T  A^{\diamond}_{q q} Z  | U)- \E (Z^T  A^{\diamond}_{q q} Z)  = \sum_{k=1}^m \sum_{i \not=j}^n \tilde{a}^{k}_{ij}(q)(u^k_i u^k_j - p_k^2).
\een 
For any $t > 0$ and for each $q \in \Sp^{n-1}$ and $S_{\star}(q)$ as in~\eqref{eq::starship},
\bens 
  \prob{\abs{S_{\star}(q)} > t}  \le
  \exp\big(- c\min\big(\frac{t^2}{ a_{\infty} \twonorm{B_{0}}
      \twonorm{(\abs{b_{ij}})} \sum_{k=1}^m  a_{kk}p_k^2}, \frac{t}{a_{\infty} \twonorm{B_{0}}}  \big) \big).
    \eens
    \end{corollary}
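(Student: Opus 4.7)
The plan is to exploit the mutual independence of the $m$ mask columns $u^1,\ldots,u^m$ and reduce the tail estimate to a product of per-column moment generating functions, each of which is controlled by Theorem~\ref{thm::Bernmgf}. First, decompose
\[
S_\star(q) = \sum_{k=1}^m S_k(q), \qquad S_k(q) = \sum_{i\neq j} a_{kk} b_{ij} q_i q_j \,(u^k_i u^k_j - p_k^2),
\]
so that each $S_k(q)$ depends only on $u^k$. By independence across $k$, $\E \exp(\lambda S_\star(q)) = \prod_{k=1}^m \E \exp(\lambda S_k(q))$, and I only need to control each factor individually.

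For each $k$, apply Theorem~\ref{thm::Bernmgf} to $S_k(q)$ viewed as a centered quadratic form in the $n$-dimensional Bernoulli vector $u^k$ with common parameter $p_k$ and coefficient matrix $A^{(k)}$ whose $(i,j)$-entry is $a_{kk} b_{ij} q_i q_j$ for $i\neq j$ and zero on the diagonal. Since $\diag(A^{(k)})=0$, the diagonal contribution in Theorem~\ref{thm::Bernmgf} vanishes. The remaining ingredients are $D^{(k)}_{\max} = \|A^{(k)}\|_\infty = \|A^{(k)}\|_1$ (equal by symmetry) together with the weighted off-diagonal sums
\[
\sum_{i\neq j}|A^{(k)}_{ij}|\sigma_i^2\sigma_j^2 \le a_{kk} p_k^2 \,|q|^T|B_0||q|, \qquad \sum_{i\neq j}|A^{(k)}_{ij}| p_i p_j \le a_{kk} p_k^2 \,|q|^T|B_0||q|.
\]
These I bound by invoking PSD structure in two ways: since both $qq^T$ and $B_0$ are PSD, the Schur product theorem yields $\twonorm{(qq^T)\circ B_0} \le (\max_i q_i^2)\twonorm{B_0} \le \twonorm{B_0}$, which together with $\max_i q_i^2 b_{ii} \le b_\infty \le \twonorm{B_0}$ (valid for PSD $B_0$) controls the per-column spectral scale by $2 a_\infty \twonorm{B_0}$; separately, $|q|^T|B_0||q| \le \twonorm{|B_0|}$.

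Multiplying the per-column MGF bounds across $k$ and absorbing $\sum_k a_{kk}^2 p_k^2 \le a_\infty \sum_k a_{kk} p_k^2$, one arrives at, for $|\lambda|$ in a range of order $1/(C a_\infty \twonorm{B_0})$,
\[
\E \exp(\lambda S_\star(q)) \le \exp\!\Big(C\lambda^2 \, a_\infty \twonorm{B_0}\twonorm{|B_0|} \sum_{k=1}^m a_{kk} p_k^2\Big),
\]
after which Chernoff's inequality with the usual quadratic/linear optimization over $\lambda$ (quadratic regime for small $t$, subexponential regime for large $t$) gives the stated Bernstein-type bound. The main technical obstacle is \emph{matching the exact scaling}---producing the tighter factor $\twonorm{B_0}$ (rather than the a priori larger $\twonorm{|B_0|}$) in the subexponential scale and in one factor of the subgaussian variance. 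This is precisely where PSD-ness of both $qq^T$ and $B_0$, via the Schur product bound $\twonorm{(qq^T)\circ B_0}\le \twonorm{B_0}$ and the inequality $b_\infty \le \twonorm{B_0}$, is essential; a naive induced-norm bookkeeping using only $|q|^T|B_0||q| \le \twonorm{|B_0|}$ would yield $\twonorm{|B_0|}$ in place of $\twonorm{B_0}$ throughout.
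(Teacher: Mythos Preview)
Your overall strategy---decompose $S_\star(q)=\sum_k S_k(q)$ across independent mask columns, bound each $\E\exp(\lambda S_k(q))$ via Theorem~\ref{thm::Bernmgf} with zero diagonal, multiply, and Chernoff---is correct and is precisely what the paper does. The paper phrases it slightly differently by applying Corollary~\ref{coro::Bernmgf} once to the full $mn\times mn$ block-diagonal matrix $\tilde A=\diag(A_0)\otimes\offd(B_0\circ(q\otimes q))$, but because $\tilde A$ is block-diagonal this is equivalent to your column-by-column factorization. The bound $\sum_{i\neq j}|\tilde a^k_{ij}|p_k^2\le a_{kk}p_k^2\,|q|^T|B_0||q|\le a_{kk}p_k^2\twonorm{|B_0|}$ for the variance proxy is exactly the paper's, and the final Chernoff step is routine (the paper defers it to Lemma~\ref{lemma::Bernmgf2}).

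There is one genuine slip in your argument for $D^{(k)}_{\max}$. Theorem~\ref{thm::Bernmgf} requires $D_{\max}=\|A^{(k)}\|_\infty\vee\|A^{(k)}\|_1$, the maximum absolute row/column sum, \emph{not} the spectral norm. Your Schur product argument only gives $\twonorm{(qq^T)\circ B_0}\le\twonorm{B_0}$, and the spectral norm does not control the $\ell_\infty$ matrix norm in general, so as written this step does not close. The fix is a one-line Cauchy--Schwarz (this is what the paper uses, inherited from the proof of Lemma~\ref{lemma::Bernmgf2}): for each row $i$,
\[
\sum_{j\neq i}|b_{ij}||q_i||q_j|\ \le\ |q_i|\,\big\|\vecb^{(i)}\big\|_2\,\|q\|_2\ \le\ \twonorm{B_0},
\]
whence $\|A^{(k)}\|_\infty=\|A^{(k)}\|_1\le a_{kk}\twonorm{B_0}\le a_\infty\twonorm{B_0}$. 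Note this uses only that column norms of $B_0$ are bounded by its operator norm; positive semidefiniteness of $B_0$ or $qq^T$ plays no role here, so your remark that the Schur product step is ``essential'' to get $\twonorm{B_0}$ (rather than $\twonorm{|B_0|}$) in the subexponential scale overstates the matter.
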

Set $\tau'  \asymp a_{\infty} \twonorm{B_0}  \psi_B(n) n \log (3e/\ve)$,
where  $\psi_B(n) = O(\sqrt{n})$(cf. \eqref{eq::phiben}).
Then by Corollary~\ref{coro::offdn},~\eqref{eq::baseline} for $d =n$, and the union bound, 
\ben
\label{eq::defineF2}
\prob{\F_2}  := \prob{\exists q \in \N, \abs{S_{\star}(q)} > \tau'} 
\le \exp(-c'n).
\een
Combining the preceding bounds with a standard approximation argument,
we have on event $\F_0^c  \cap \F_1^c \cap
\F_2^c$, by \eqref{eq::decompmain},  \eqref{eq::defineF1},
\eqref{eq::starship},  and \eqref{eq::defineF2}, 
\ben
\label{eq::netb}
\sup_{q\in \Sp^{n-1}}    \abs{q^T\tilde{\Delta} q} \le  \sup_{q \in \N}
  \abs{q^T\tilde{\Delta} q}/ (1- \ve)^2
 \le C {(\tau_0 +\tau')}.
\een
By~\eqref{eq::MB2}, \eqref{eq::netb} and Theorem~\ref{thm::diagmain},
we have on event $\F_0^c  \cap \F_1^c \cap \F_2^c \cap \F_{\diag}^c$, for $p_{\max} := \max_{j} p_j$,
\ben
\nonumber
\lefteqn{\twonorm{\tilde{B}_0
    -B_0}/{\twonorm{B_0}} \le C \eta_A r_{\offd}(n) +
  C' \big(1 + {a_{\infty} \psi_B(n)}/{\twonorm{A_0}} \big) r^2_{\offd}(n)}\\
\label{eq::Bhatop}
& &\; + C_3 \eta^{1/2}_A \sqrt{p_{\max} } r^{3/2}_{\offd}(n) \psi_B(n)
\big({\log (n \vee  m) }/{n}\big)^{1/4} =: \delta_q(B);
\een
The final expression in the theorem statement for the overall rate of
convergence follows the same line of arguments from
Lemma~\ref{lemma::finalrate} with a slight variation. We leave such details in the supplementary 
Section~\ref{sec::appendcoroproof}.

\section{Inverse covariance estimation bound}
\label{sec::inverse}
In this section, we state in Theorem~\ref{coro::thetaDet} a
deterministic result about $\hat\Theta$ as defined in
Definition~\ref{def::TopHat} under Assumption (A1) as stated in Section~\ref{sec::prelim}.

\begin{theorem}{\textnormal{\bf (Deterministic error bound)}}
  \label{coro::thetaDet}
  Suppose $B_{\star} = B_0$ as in Definition~\ref{def::RMSet}.
Let  $\Theta_0 = B_0^{-1} = (\theta_{ij})$.
Suppose (A1) holds. Suppose covariance $A_0 \succ 0$ and $\twonorm{A_0}
< \infty$.  Let $\lambda, b_1$ be as in~\eqref{eq::origin} and
$\beta^{j*}, \forall j$ be as in~\eqref{eq::regress}. 
Let $\ul{r_{\offd}}$ be as defined in  \eqref{eq::paritydual}.
Let $d_0$ denote the maximum row sparsity in $\Theta_0$, satisfying
$\ul{r_{\offd}} \sqrt{d_0} =o(1)$, and suppose
 \ben
\label{eq::BHatoffd}
\shnorm{\hat{B}^{\star} - B_{0}}_{\max} \le C_{\max} b_{\infty} \ul{r_{\offd}}.
 \een
Suppose for some absolute constant $c_{\gamma}$ and
$\hat\Gamma^{(j)}$ and $\hat\gamma^{(j)}$ as in~\eqref{eq::GammaMain} and \eqref{eq::gammaMain},
\ben
\label{eq::halflambda}
\forall j \in [n],  && \norm{ \hat\gamma^{(j)}  -  \hat\Gamma^{(j)} \beta^{j*}}_{\infty}
  \le    c_{\gamma}   b_{\infty} \kappa_\rho \ul{r_{\offd}} \le
  {\lambda}/{4}, \\
  \label{eq::lambdamain}
  \text{ and set }\; 
  \lambda  &\asymp & 4 C_{\gamma}   b_{\infty} \tilde\kappa_\rho   \ul{r_{\offd}}
  \;  \text{ where} \; C_{\gamma} \ge c_{\gamma} \vee  C_{\max}.
  \een
Suppose the Lower-$\RE$ condition holds uniformly over the
matrices $\hat\Gamma^{(j)}, j \in [n]$ with $(\alpha,
\tau)$ such that  
\ben
\label{eq::lassopen}
&&\text{curvature} \; \; \alpha \asymp  \lambda_{\min}(B_0), \quad {\alpha}/{\tau} \ge
120 d_0  \;\; \text{ and }\;  \lambda /\tau \ge 4 b_1,
\een
where $\tau$ is the tolerance parameter as in
Definition~\ref{def::lowRE} and $b_1 \ge \onenorm{\beta^{j*}}  \;
\forall j \in [n]$.
Following Definition~\ref{def::TopHat}, for some absolute constant
$c_{\overall}$,
\ben
\label{eq::inverseop}
\norm{\hat\Theta - \Theta_0}_1
\le c_{\overall} \twonorm{\Theta_0}
b_{\infty} \tilde\kappa_{\rho} \theta_{\max} 
\kappa_B \ul{r_{\offd}} d_0, \; \text{ where } \;  \theta_{\max} :=
\max_{j} \theta_{jj}.
\een
\end{theorem}
\noindent{\bf The choice of $\ell_1$ radius.}
Imposing an upper bound  on the 
operator norm of $\Omega$ and $\rho(B)$ is necessary to ensure $B_0 
\succ 0$ and $\twonorm{B_0} < \infty$. 
We choose an upper bound on $\onenorm{\beta^{j*}}$ that depends on the
matrix $\ell_1$ norm of inverse correlation $\Omega = W \Theta_0 W=
(\omega_{ij}) \succ 0$:  under (A1),
\ben
\label{eq::b1omega}
\quad \quad \quad \quad
\onenorm{\beta^{j*}} := {\norm{\Theta_j}_1}/{\theta_{jj}}
\le \norm{\Omega}_1 {\sqrt{b_{\infty}}}/{\sqrt{b_{\min}}}, \; \text{
  where} \; \norm{\Omega}_1 \le \sqrt{d_0} \twonorm{\Omega}
\een
where $b_{\min} = \min_{j} b_{jj}$ and for all $j \in [n]$, $\omega_{jj} = b_{jj} \theta_{jj}
> 1$ by  Proposition~\ref{prop::projection}.
Hence on $\F_{\diag}^c$, the data-dependent choice of $b_1$
as in \eqref{eq::b1radius} provides an upper bound for 
$\onenorm{\beta^{j*}}$, for all $j \in [n]$ in view of 
\eqref{eq::b1omega} and Theorem~\ref{thm::diagmain} (see also
Theorem~\ref{thm::main-coro}); Set for $\bar{d_0} \ge d_0$,
\ben
\label{eq::b1radius}
&& b_1 :=
M_{\Omega}\sqrt{2\bar{d_0}}\big( {\max_{j}
  \hat{B}_{jj}^{\star}}/{\min_{j} \hat{B}_{jj}^{\star}}\big)^{1/2} \ge \norm{\Omega}_1{\sqrt{b_{\infty}}}/{\sqrt{b_{\min}}}.
\een
\noindent{\bf Tuning parameters.}
Set $\delta \le 1/32$.
We set $C_{\lambda} \asymp  4 (1+2\delta) C_{\ga}$
for some absolute constant $C_{\ga} \ge (c_{\gamma} \vee
C_{\max} \vee C_{\RE})$ for $c_{\ga}, C_{\max}$ are as defined in
Theorem~\ref{coro::thetaDet} and $C_{\RE}$ as defined in Theorem~\ref{thm::RE}.
Imposing an upper bound on $\kappa_{\rho} \le 
\tilde{\kappa}_{\rho}$ allows us to set $\lambda$ as in
\eqref{eq::lambdamain}.
Lemma~\ref{lemma::Gammabounds} ensures that~\eqref{eq::halflambda}
holds with high probability in view of \eqref{eq::lambdamain}.

\noindent{\bf $\RE$-conditions.}
A sufficient condition for $\hat\Gamma^{(j)}, j \in [n]$ to satisfy 
the Lower-$\RE$ condition with $(\alpha, \tau)$
is to have $\hat\Gamma$ satisfy the same condition~\cite{LW12}.
Intuitively, \eqref{eq::lassopen} states that the $\RE$ condition needs to be 
strong with respect to the row sparsity $d_0$.
Hence to ensure $\alpha/\tau> 120 d_0$,  we impose an upper bound
$\bar{d}_0$ on ${d}_0$. Loosely speaking, we assume $\bar{d}_0
={O}(s_0/\psi_B(2s_0\wedge n))$  under~\eqref{eq::forte}.
These conditions  not only guarantee $\ul{r_{\offd}}\sqrt{d_0} \to 0$
but also ensure $\lambda \ge 4 b_1 \tau$ as required 
by~\eqref{eq::lassopen} for $b_1$ as in~\eqref{eq::b1radius}.
See Theorem~\ref{thm::main-coro}.
\begin{lemma}
  \label{lemma::Gammabounds}
   Let $\abs{B_0} = (\abs{b_{ij}})$. Suppose (A1) holds and $n$ is
   sufficiently large.
   Let $c_9, c_{\gamma}$, and $C_{14} >144$ be some absolute constants.
   Suppose
  \ben
  \label{eq::forte3}
  {\sum_{j=1}^m a_{jj} p_j^2}/{\twonorm{A_0}}
  \ge C_{14} \eta_{A}^2 \log (m \vee n)
\big(d_0  \vee \big({\rho_{\max}(d_0, \abs{B_0})}/{b_{\min}}\big)^2 \big).
\een
Then with probability at least $1-{c_9}/{ (n \vee m)^4}$,
\ben
\label{eq::doublesword}
\forall j \in [n], \quad  \norm{ \hat\gamma^{(j)}  -  \hat\Gamma^{(j)} \beta{^{j*}}}_{\infty} \le
 c_{\gamma} \ul{r_{\offd}} \eta_{A}   b_{\infty} \kappa_\rho.
 \een
\end{lemma}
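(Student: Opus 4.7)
My plan is to reduce the problem to a population orthogonality identity, followed by entrywise control of the resulting pure noise via polarization into sparse quadratic forms. Proposition~\ref{prop::projection} gives $\cov(V_j, X^k) = 0$ for $k\neq j$, and taking covariances on both sides of $X^j = \sum_{k\neq j}\beta_k^{j*}X^k+V_j$ yields the population identity
$$B_{0,-j,j} \;=\; B_{0,-j,-j}\,\beta^{j*},\qquad j\in[n].$$
Hence what we must control is a pure noise term,
$$\hat\gamma^{(j)} - \hat\Gamma^{(j)}\beta^{j*} \;=\; \Delta_{-j,j}-\Delta_{-j,-j}\beta^{j*},\qquad \Delta:=\hat B^{\star}-B_0,$$
and for each $k\neq j$,
$$|(\hat\gamma^{(j)}-\hat\Gamma^{(j)}\beta^{j*})_k|\;\le\;|\Delta_{kj}|+|e_k^T\Delta\,\tilde\beta^{j*}|,$$
where $\tilde\beta^{j*}$ denotes $\beta^{j*}$ zero-extended at coordinate~$j$.

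I would then control the two summands separately. For the componentwise term $|\Delta_{kj}|$, I would invoke the max-norm bound $\shnorm{\hat B^{\star}-B_0}_{\max}\lesssim b_{\infty}\ul{r_{\offd}}$, which combines Theorem~\ref{thm::diagmain} on the diagonal with the pairwise off-diagonal concentration of Lemma~\ref{lemma::pairwise} (equivalently, Theorem~\ref{thm::main-intro} evaluated at $s_0=1$). For the bilinear term, I would exploit the $d_0$-sparsity of $\tilde\beta^{j*}$ by polarization,
$$4\,e_k^T\Delta\,\tilde\beta^{j*} \;=\; (e_k+\tilde\beta^{j*})^T\Delta(e_k+\tilde\beta^{j*}) - (e_k-\tilde\beta^{j*})^T\Delta(e_k-\tilde\beta^{j*}),$$
noting that both $e_k\pm\tilde\beta^{j*}$ are $(d_0+1)$-sparse and therefore lie in $\sqrt{d_0+1}\,B_1^n\cap B_2^n$ after normalization. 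Applying the sparse-quadratic bound of Theorem~\ref{thm::main-intro} (in the $\hat B^{\star}$ form of Theorem~\ref{thm::main-coro}) at $s_0=d_0+1$ yields $|e_k^T\Delta\,\tilde\beta^{j*}|\le\tfrac12\,\delta\,\twonorm{B_0}(1+\twonorm{\beta^{j*}}^2)$ for the corresponding rate $\delta$. Under (A1), the explicit formula $\beta_k^{j*}=-b_{jj}^{1/2}\omega_{jk}/(\omega_{jj}b_{kk}^{1/2})$ together with $\omega_{jj}\ge 1$ gives $\twonorm{\beta^{j*}}^2\le(b_{\infty}/b_{\min})\twonorm{\Omega}^2$, and $\twonorm{B_0}\le b_{\infty}\twonorm{\rho(B)}$, so the structural factor $\twonorm{B_0}(1+\twonorm{\beta^{j*}}^2)$ is of order $b_{\infty}\kappa_{\rho}$ up to the $b_{\infty}/b_{\min}$ ratio that is absorbed by the sampling-rate hypothesis.

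The hard part will be rate matching. Under~\eqref{eq::forte3}, $r_{\offd}(d_0+1)=\sqrt{d_0+1}\,\ul{r_{\offd}}/\eta_A$, so the leading contribution $\eta_A r_{\offd}(d_0+1)$ to $\delta$ equals $\sqrt{d_0+1}\,\ul{r_{\offd}}$; the $\sqrt{d_0+1}$ amplification introduced by polarization is precisely what the $d_0$-factor in~\eqref{eq::forte3} is designed to reign in, so that after cancellation the effective rate is $\eta_A\ul{r_{\offd}}$. Likewise, the quadratic residual $r_{\offd}^2(d_0+1)\psi_B(d_0)$ is controlled by the alternative $\rho_{\max}(d_0,|B_0|)^2/b_{\min}^2$ appearing in~\eqref{eq::forte3}. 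The delicate bookkeeping is to verify that after combining these with the structural bounds above, the only surviving constants in the final estimate are $\eta_A,b_{\infty},\kappa_{\rho}$ and the desired rate $\ul{r_{\offd}}$, with no stray $\sqrt{d_0}$ or $b_{\min}^{-1}$ factor. A union bound over $k\in[n]\setminus\{j\}$, on the event $\F_{22}^c$ where Theorem~\ref{thm::main-coro}, the max-norm bound, and the auxiliary concentration inequalities simultaneously hold, completes the proof.
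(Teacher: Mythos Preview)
Your reduction to the population identity $B_{0,-j,-j}\beta^{j*}=B_{0,-j,j}$ and the decomposition into a componentwise piece $|\Delta_{kj}|$ plus a bilinear piece $|e_k^T\Delta\,\tilde\beta^{j*}|$ is fine and parallels the paper's split into $\hat S_1,\hat S_2,\hat S_3$. The gap is in the polarization step. After polarizing and applying Theorem~\ref{thm::main-intro} (or its $\hat B^{\star}$ version) at $s_0=d_0+1$, the leading contribution to $\delta$ is $\eta_A r_{\offd}(d_0{+}1)\ell_{d_0+1,n}^{1/2}\asymp\sqrt{d_0}\,\ul{r_{\offd}}$, so you obtain
\[
|e_k^T\Delta\,\tilde\beta^{j*}|\;\lesssim\;\twonorm{B_0}\bigl(1+\twonorm{\beta^{j*}}^2\bigr)\sqrt{d_0}\,\ul{r_{\offd}}\,,
\]
and there is no mechanism that turns $\sqrt{d_0}\,\ul{r_{\offd}}$ into $\ul{r_{\offd}}$. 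Hypothesis~\eqref{eq::forte3} only guarantees $\sqrt{d_0}\,\ul{r_{\offd}}=o(1)$; it does not \emph{cancel} the $\sqrt{d_0}$ from the bound. Your final estimate would therefore be off by a factor $\sqrt{d_0}$ from the claim of the lemma. The inefficiency is intrinsic to polarization here: it forces the $1$-sparse vector $e_k$ to be treated on the same footing as the $d_0$-sparse $\tilde\beta^{j*}$.

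The paper avoids this loss by \emph{not} appealing to the generic sparse quadratic bound for the bilinear term. It writes the off-diagonal piece as $Z^T A^{\diamond}(R_j,e_i)Z$ with
\[
A^{\diamond}(R_j,e_i)=\sum_{k\neq i,j}\beta_k^{j*}\,(c_ic_k^T)\otimes A_0^{1/2}\diag(v^i\circ v^k)A_0^{1/2},
\]
and proves tailored operator and Frobenius norm bounds (Lemmas~\ref{lemma::ABsum} and~\ref{lemma::AFbounds}). The key is that, using $\beta_k^{j*}=-\omega_{jk}/(\theta_{jj}\sqrt{b_{jj}b_{kk}})$ and the Cauchy--Schwarz identity $\sum_{k\neq i,j}|\beta_k^{j*}||b_{ik}|\le(\sqrt{b_{ii}b_{jj}}/\omega_{jj})\,\twonorm{\rho(B)^{(i)}}\twonorm{\Omega^{(j)}}$, the Frobenius bound carries only the factor $\sqrt{b_{ii}b_{jj}}\,\twonorm{\Omega^{(j)}}/\omega_{jj}$ with \emph{no} $\sqrt{d_0}$. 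Hanson--Wright then delivers the target rate $\kappa_\rho\,\ul{r_{\offd}}\,\sqrt{b_{ii}b_{jj}}/\omega_{jj}$ directly. The $d_0$ in~\eqref{eq::forte3} enters only to ensure that the sub-exponential tail (governed by the operator norm, which \emph{does} carry $\sqrt{d_0}$) is dominated by the sub-gaussian one, not to remove a $\sqrt{d_0}$ from the rate itself.
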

We prove Theorem~\ref{coro::thetaDet} and Lemma~\ref{lemma::Gammabounds} in 
the supplementary  Sections~\ref{sec::proofofinverses} and~\ref{sec::proofofGamma}. 
respectively. 
In summary, under conditions as stated in~\eqref{eq::paritydual} and Theorem~\ref{thm::RE} (also
Theorem~\ref{thm::main-coro}), following Definition~\ref{def::TopHat}, we have for $\eta_A  \asymp
1$, $\lambda$ as in \eqref{eq::lambdamain},
\ben
\label{eq::ICfinal}
\shnorm{\hat\Theta - \Theta_0}_2/\twonorm{\Theta_0}
= 
O_P\big(d_0  \theta_{\max} \tilde{\kappa}_\rho
\kappa_B\ul{r_{\offd}}\big) \;\;\text{where} \; 
 \ul{r_{\offd}} \asymp  \eta_A \Big(\frac{\twonorm{A_0} \log (m \vee 
n)}{\sum_{j=1}^m   a_{jj} \z^2_j }\Big)^{1/2}
\een
where $b_1$ is chosen as in \eqref{eq::b1radius}.
The proof is omitted as it is a direct consequence of Theorem~\ref{coro::thetaDet} and
the large deviation bounds we have derived in
Theorem~\ref{thm::main-coro} and
Lemma~\ref{lemma::Gammabounds}, which
ensure that all conditions imposed on $\hat{B}^{\star}$ hold with high
probability.

\noindent{\bf Discussions.}
Closely related to our work is that of~\cite{LW12}.
The rate of convergence in the operator norm in \eqref{eq::inverseop} is
directly comparable when we set $p_j = p, \forall j$.
In this case, our $\Theta_0 = B_0^{-1}$ is the same as their inverse covariance,
except that in their setting, samples are assumed to be independent, that is, $A_0 =I_m$ in~\eqref{eq::missingdata};
Proof of Theorem~\ref{coro::thetaDet} follows steps from Corollary 
5~\cite{LW12}, except that we now set the entrywise convergence for $\hat{B}^{\star} - B_0$ and
large deviation bounds on \eqref{eq::doublesword} at the order of
$O(\ul{r_{\offd}})$ in view of Theorem~\ref{thm::main-coro} and
Lemma~\ref{lemma::Gammabounds}.
In~\cite{LW12}, combining Corollaries 2, 5, eq. (3.1) and (3.2)
therein, and Lemmas 3 and 4~\cite{LW12supp},
the rate of convergence in the i.i.d. setting is
\ben
\label{eq::LWrate}
&& \shnorm{\hat\Theta - \Theta_0}_2/\twonorm{\Theta_0}
 =
{O}_P\Big(\kappa^2_B \twonorm{\Theta_0} d_0  r_m/p^2
\Big), \; \text{ where} \; r_m =  \sqrt{{\log  (m \vee n)}/{m}}\\
\label{eq::gammarate}
&& \text{ and } \; \forall j \in [n], \; \; 
\norm{ \hat\gamma^{(j)}  -  \hat\Gamma^{(j)} \beta^{j*}}_{\infty} =
\tilde{O}_P(r_m /{p^2}\big)
\text{ in case} \; p_j  = p;
\een
Here the $\tilde{O}(\cdot)_P$ notation hides the spectral parameters,
which are assumed to be all bounded.
The key difference between the two error bounds on $\shnorm{\hat\Theta - \Theta_0}$ are:
the relative error in~\eqref{eq::ICfinal} is bounded by
$O(\ul{r_{\offd}})$ in the present work, which is
inversely proportional to $p$, while in~\eqref{eq::LWrate}, the same error is inversely proportional to
$p^2$.
Hence our analysis results in a faster rate of convergence for
the relative error in the operator norm for
inverse covariance estimation, especially when $p$ is small.
The sub-optimal scaling in \eqref{eq::LWrate}  and \eqref{eq::gammarate} with respect to $p$ also 
results in the more restrictive scaling of
\ben
\label{eq::quadmp}
m =\Omega(\kappa_B^2 s_0 \log(m \vee n)/p^4) \; \; \text{ in case } \; p_j 
= p \; \; \forall j \in [m]
\een
for the aforementioned results, where $s_0 \asymp d_0$, even for the i.i.d. setting; cf. Corollaries 2,
5~\cite{LW12}.
\silent{Moreover, under the conditions~\eqref{eq::quadmp}, 
$\RE$ conditions are established with 
\bens
\alpha \asymp \lambda_{\min}(B_0),\;\;
\tilde\alpha \asymp \lambda_{\max}(B_0),\;\;
\text{ and tolerance } \tau =O(\kappa_B \twonorm{B_0}{\log(m \vee n)}/{(m p^2)})
\eens
respectively, in Lemma 3~\cite{LW12supp},
%in case $p_j = p \forall j$,
where $\alpha, \tilde\alpha, \tau$ are as in Definition~\ref{def::lowRE}; cf.
Corollaries 2, 5~\cite{LW12} and remarks that immediately follow.}
In the matrix variate setting, we require
\ben
\label{eq::fortemdp}
r(A_0) =\Omega(\kappa_B s_0 \big(\psi_B(2s_0 \wedge n) \vee \kappa_B \big) {\log(m \vee 
  n)}/{p^2} )\quad \; \; \text{ in case } \; p_j 
= p \; \; \forall j \in [m]
\een
in view of~\eqref{eq::forte} and \eqref{eq::forte3},
where we assume $d_0 =O(s_0/\psi_B(2s_0 \wedge n))$, which is possibly
rather conservative. Notice that for inverse covariance, we typically
deem the maximum node degree $d_0$ to be bounded or have slow growth
with respect to $n$ to benefit from the rate of convergence as
in~\eqref{eq::inverseop}.
Subsequently,
convergence in the $\ell_2^2$ error at the order of 
$\tilde{O}_P(\kappa^2_B \ul{r^2_{\offd}} d_0)$ was derived
for the corrected Lasso~\eqref{eq::origin} to recover a $d_0$-sparse 
vector in $B_2^n$ using sparse Gaussian random design with independent
entries~\cite{LW12b};  cf. Theorem 3. For this special case,
the sample lower bound was set essentially at the same order 
as~\eqref{eq::fortemdp}, where $\psi_B(k) = 1, \forall k\in
[n]$, and thus improves upon Corollary 2 in~\cite{LW12}.
Here the $\tilde{O}(\cdot)$ notation ignores differences in logarithmic factors.
To summarize, although these results are comparable when rows or
 columns, or all entries are independent, we deal with the more general
 subgaussian matrix-variate setting. Moreover,
the sample dependency on $p$ as set in the present paper is significantly more relaxed 
than~\cite{LW12} in view of \eqref{eq::quadmp} and~\eqref{eq::fortemdp}, despite the potentially superlinear dependency on $s_0$.
In essense, our primary focus is in the regime where sampling rates $p_j$s are
heterogeneous and small, so as to invoke the sparse Hanson-Wright
inequalities and the tight control we obtain on the operator and
Frobenius norm over the family of random matrices \eqref{eq::tensor}.

\silent{Moreover, under~\eqref{eq::forte},
$\hat\Gamma$ and $\hat\Gamma^{(j)}, \forall j \in [n]$
satisfy the same Lower-$\RE$ condition for \text{\it curvature }  $\alpha = \frac{5}{8}\lambda_{\min}(B_0)$ and
\text{ \it tolerance}
\bens
\tau := {3 \lambda_{\min}(B_0)}/{(8 s_0)}  \asymp
{\twonorm{B_0} (\kappa_B \vee \psi_B(2s_0 \wedge n))
  \log(m \vee n)}/{(r(A_0)  p^2)},
\eens
where $\psi_B(k) = \rho_{\max}(k, \abs{B_0})/\twonorm{B_0}, k \in [n]$ is as in Definition~\ref{def::C0}.
Hence $\tau$ depends on the condition number of $B_0$ and is inversely
proportional to $p^2$ in both papers; Moreover, in the present work,
it also depends on the sparse eigenvalue of $\abs{B_0}$, namely,
$\rho_{\max}(s_0, \abs{B_0})$, which may grow with $s_0$ at the
maximum rate of $O(\sqrt{s_0})$ in the worse case scenario.}

\section{Numerical results}
\label{sec::examples}
\begin{figure}[!tb]
    \centering
  \begin{subfigure}{.45\textwidth}
    \includegraphics[width=2.5in]{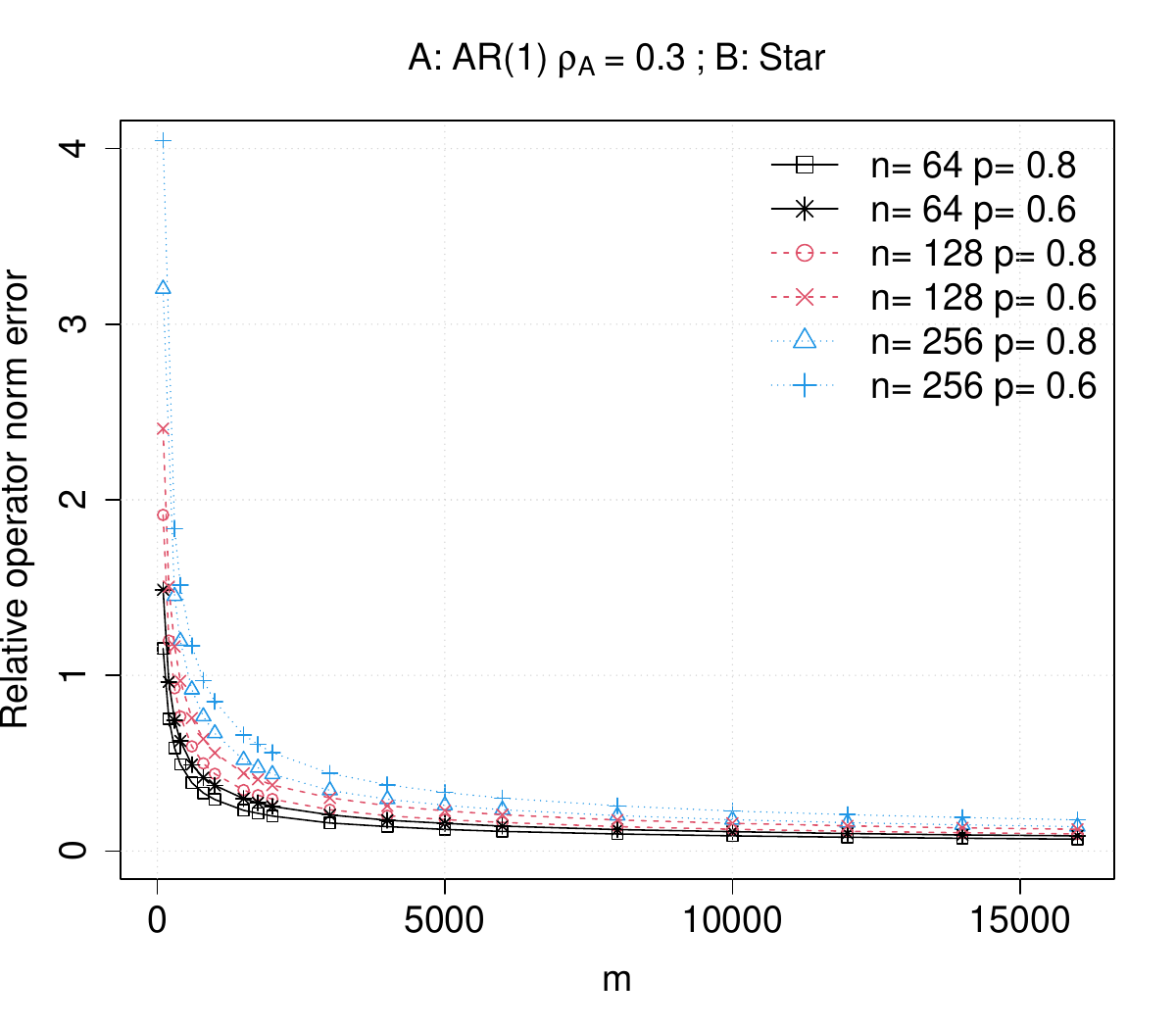}
  \end{subfigure}%
\hspace{5mm}
  \begin{subfigure}{.45\textwidth}
    \includegraphics[width=2.5in]{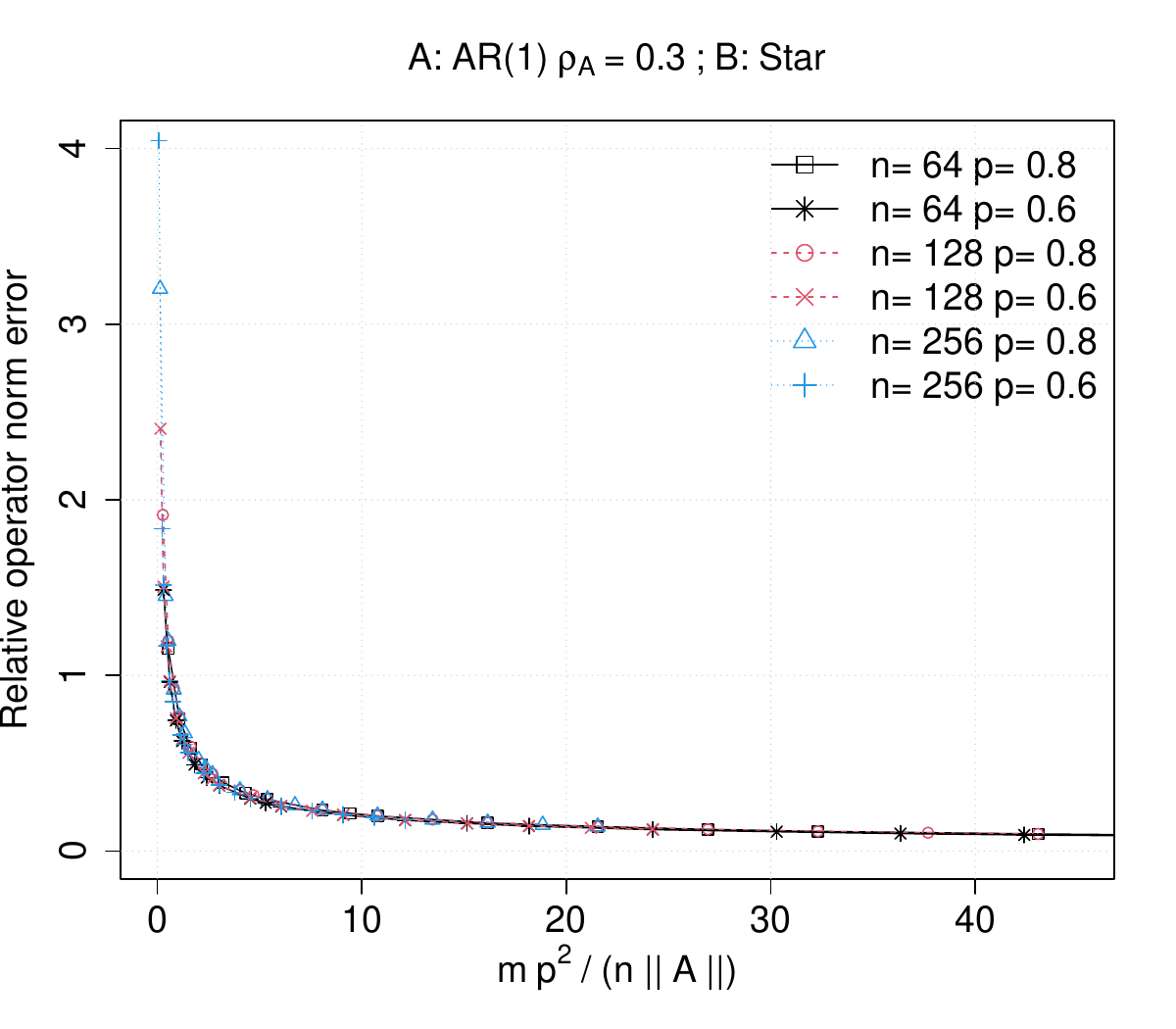}
  \end{subfigure}
  \bigskip
\begin{subfigure}{.45\textwidth}
    \includegraphics[width=2.5in]{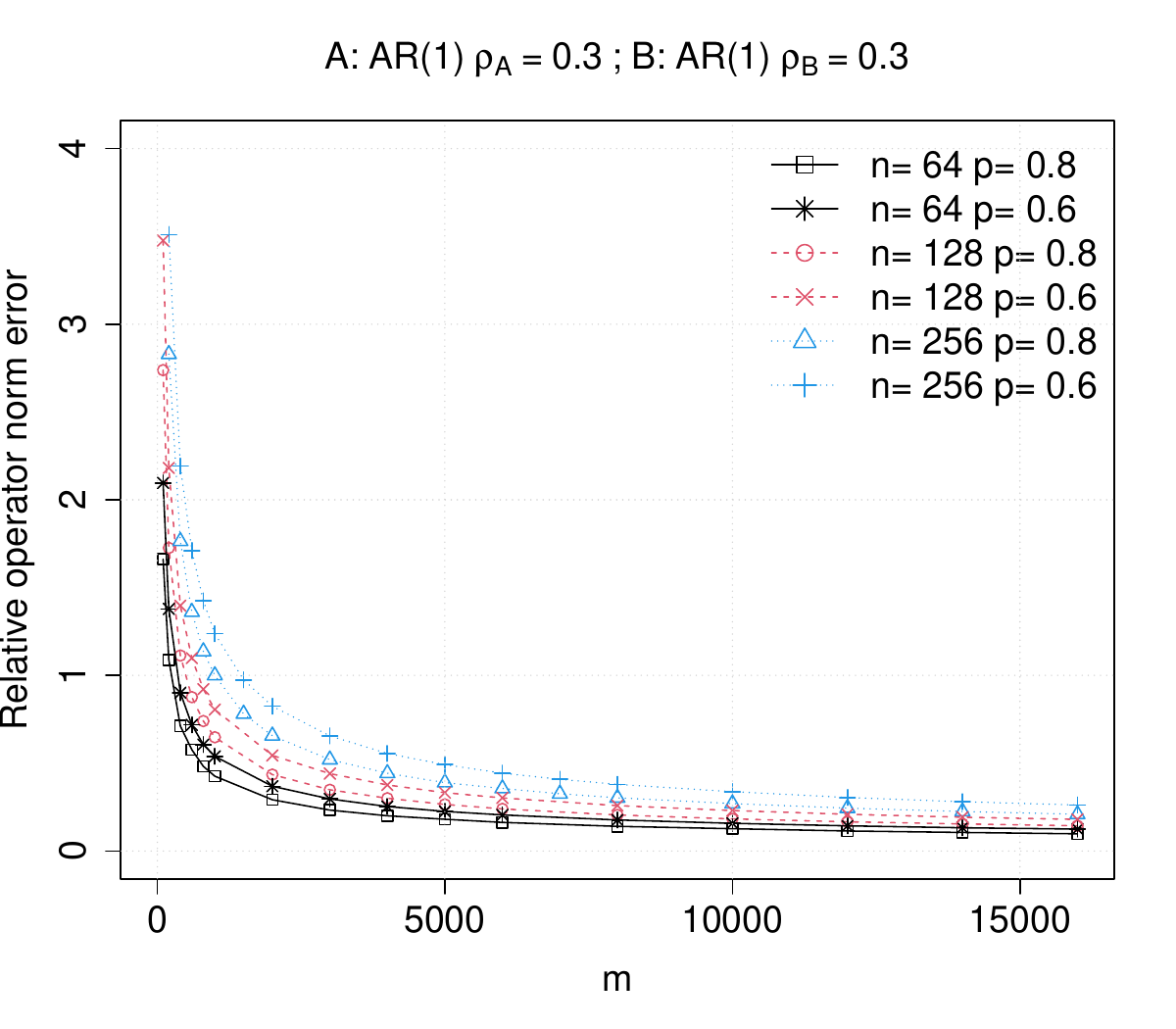}
 \end{subfigure}
\hspace{5mm}
\begin{subfigure}{.45\textwidth}
  \includegraphics[width=2.5in]{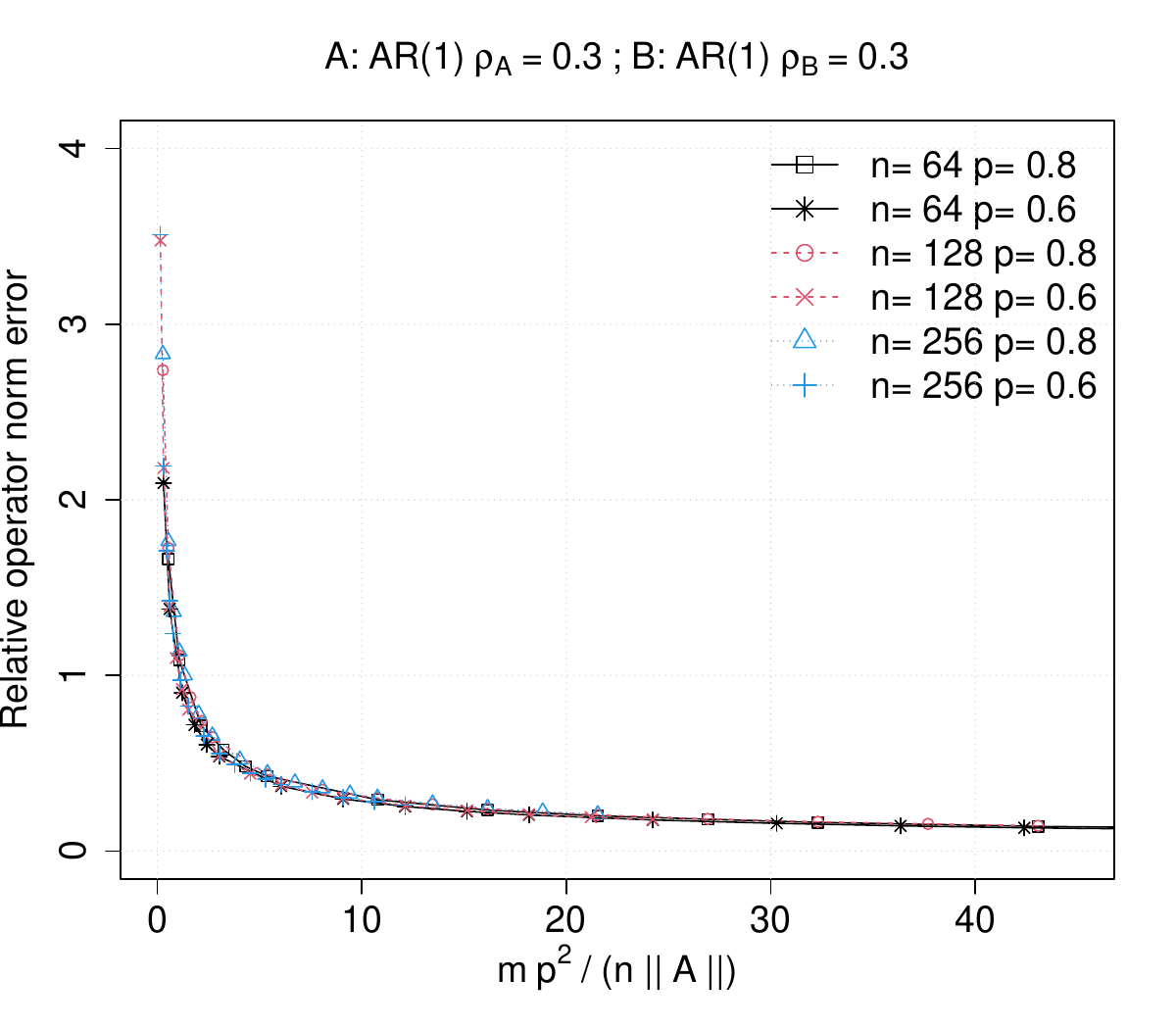}
 \end{subfigure}
\centering
  \caption{ 
    We set $\rho_A =0.3$ for two models of $B$. On the left panel,
    the average relative errors in the operator norm for estimating
    $B$  converge to $0$ as $m$ increases.
    For the same $m$, the
    errors increase with $n$ and decrease with sampling probability
    $p$ for both models of $B$.
    On the right panel, upon rescaling, all curves corresponding to different values 
  of $n=64, 128, 256$ align well for both values of $p \in \{0.6, 0.8\}$ 
  for $x \ge x_{\mbox{\scriptsize{thresh}}} \asymp  1$ and $\tilde\delta_{\mbox{\scriptsize{overall}}} \asymp r_{\mbox{\scriptsize{offd}}}(n)$.}
\label{fig::global}
\end{figure}

We use simulated data to validate the bounds as stated in Theorem~\ref{thm::gramsparse}.
We consider the following models:
(a) $\operatorname{AR}(1)$ model, where the covariance matrix is of the form
$A =\{\rho^{|i-j|}\}_{i,j}$. The graph corresponding to $A^{-1}$ is a chain;
(b) Star-Block model, where the covariance $B$ is block-diagonal with equal-sized
blocks whose inverses correspond to star structured graphs with 
$\diag(B) = I_n$.
Within each subgraph in $B^{-1}$, the only edges are
from leaf nodes to a central hub node.
The covariance for each block $S$ in $B$ is generated as in \cite{RWRY08}:
$S_{ ij} = \rho_B$ if $(i,j) \in E$ and $S_{ij} = \rho_B^2$
otherwise.
Throughout our experiments, we set $A$ to be a correlation matrix
following $\operatorname{AR}(1)$ with parameter $\rho_A = 0.3, 0.8$
and hence $\eta_A = 1$.
We choose $B$ to follow either $\operatorname{AR}(1)$ or a Star model,
that is, Star-Block model with a single block with $n-1$ edges and
$\rho_B = 1/{\sqrt{n}}$.
The dominating term in \eqref{eq::Bhatop} suggests that we rescale the effective sample size
${\sum_{j}a_{jj}   p_j^2}/{\twonorm{A} }$ by $n$ and set $x =
\sum_{j=1}^m a_{jj} p_j^2 /(\twonorm{A}  n)$. 
Hence $r_{\offd}(n) = 1/\sqrt{x}$. In our experiments, we set $p_1 =
\ldots =p_m =p$, and hence $x = m p^2/( n \twonorm{A})$.
Also, we have $m < n^4$, then $\log m  < 4 \log n$, where $n =
64, 128, 256$. As such, we will state  $\log (n)$ rather than $\log (m \vee n)$.
%for the rest of this section.  \\

\begin{figure}[!tb]
  \centering
\begin{subfigure}{.45\textwidth}
  \includegraphics[width=2.5in]{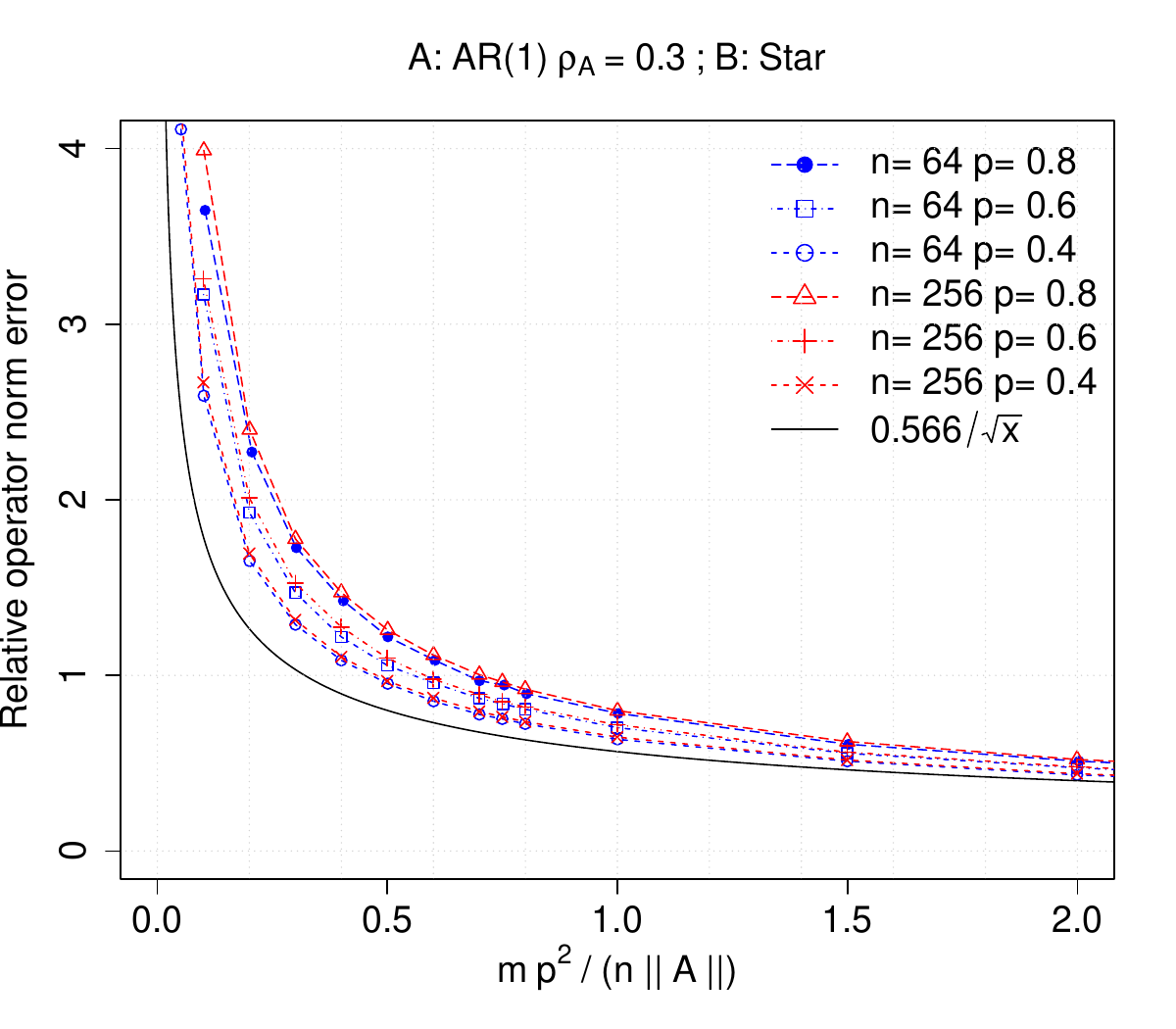}
  \end{subfigure}%
\hspace{5mm}
\begin{subfigure}{.45\textwidth}
    \includegraphics[width=2.5in]{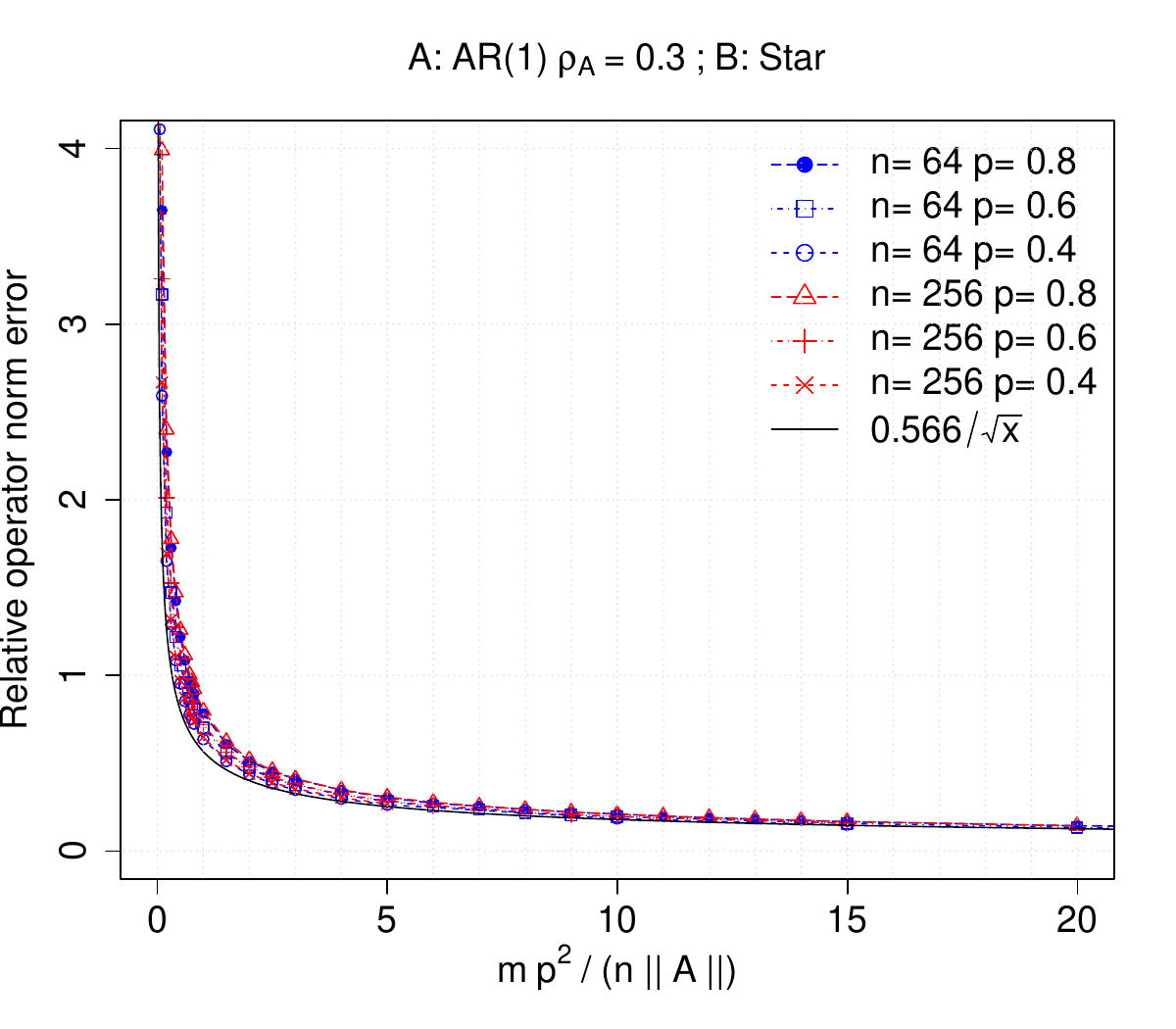}
  \end{subfigure}
  \bigskip
\begin{subfigure}{.45\textwidth}
      \includegraphics[width=2.5in]{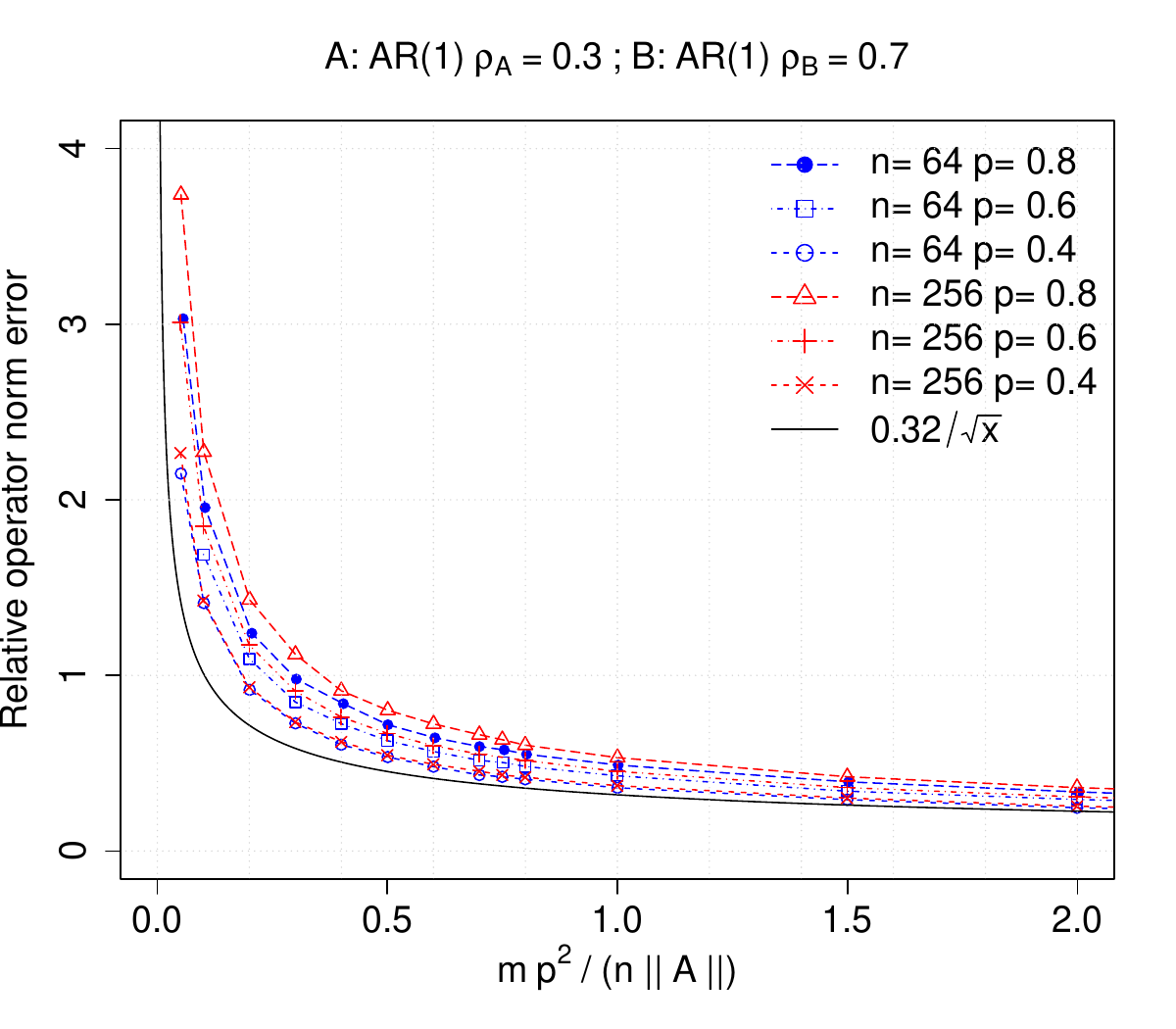}
  \end{subfigure}%
\hspace{5mm}
\begin{subfigure}{.45\textwidth}
      \includegraphics[width=2.5in]{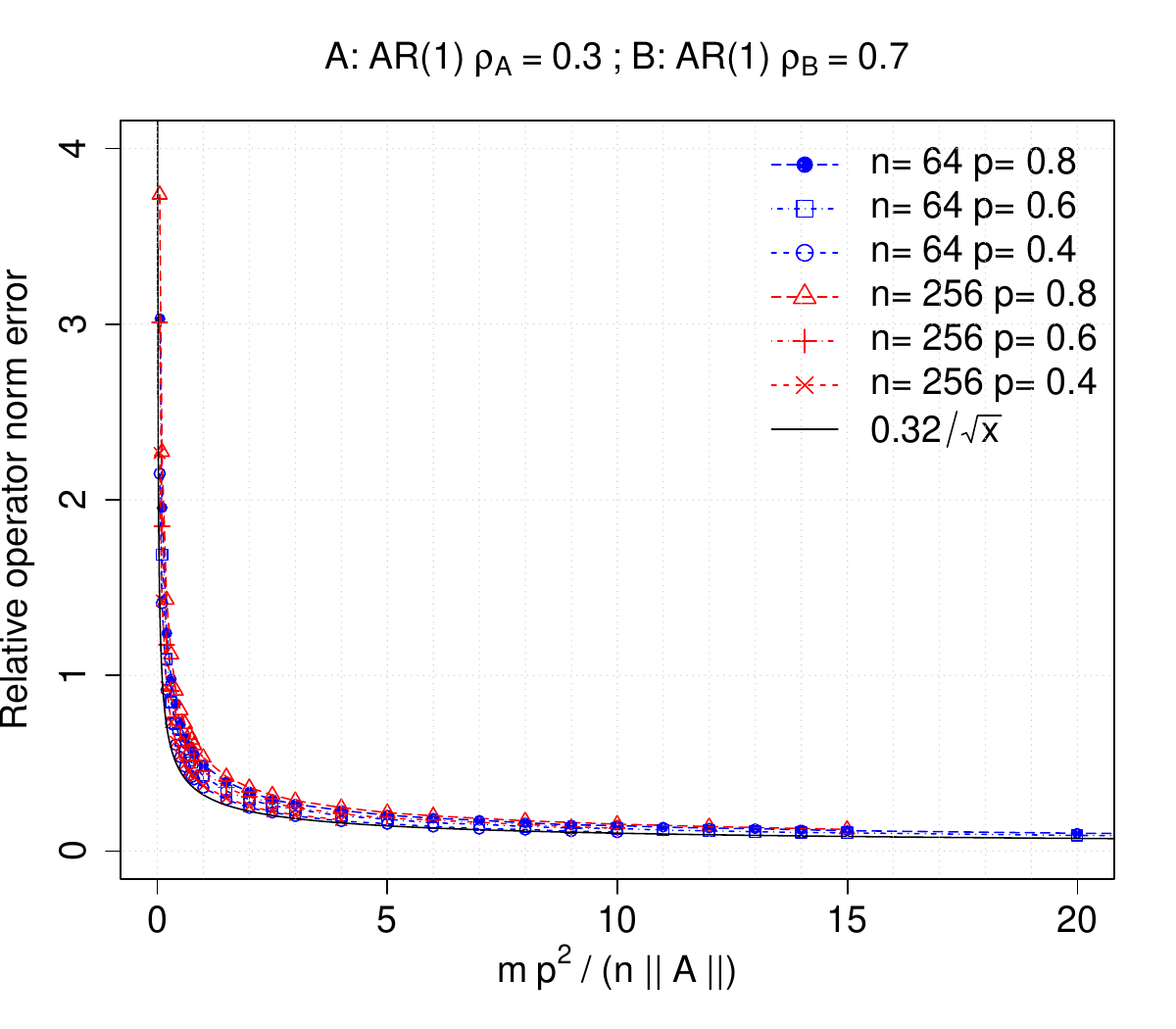}
  \end{subfigure}
  \caption{ For both models of $B$,
    we vary $p \in \{0.4, 0.6, 0.8\}$ and  $n \in \{64, 256\}$.
    On the left panel, differences between the relative errors $\tilde\delta_{\mbox{\scriptsize{overall}}}(x, 256, p)$ and $\tilde\delta_{\mbox{\scriptsize{overall}}}(x, 64, p)$ are greater for
    smaller values of $x$ and larger values of $p$ upon rescaling.
On the right panel, all error curves align with a baseline analytic curve 
of $t(x) = c_0/{\sqrt{x}}$ (solid line)
%for some suitably chosen $c_0$
when $x > 2$.}
\label{fig::pairs}
\end{figure}

\begin{figure}[!htb]
  \centering
\begin{subfigure}{.45\textwidth}
        \includegraphics[width=2.5in]{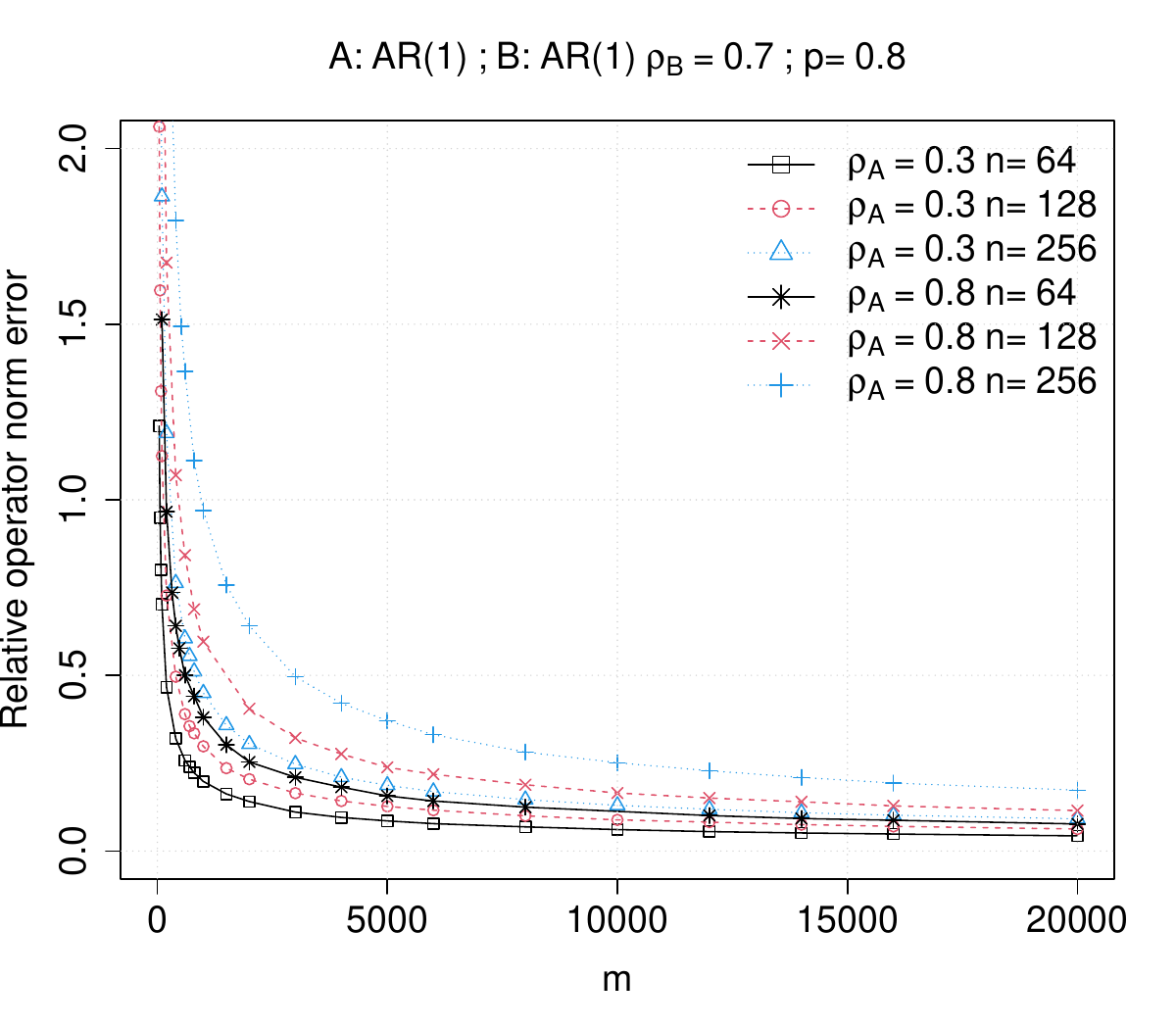}
  \end{subfigure}%
\hspace{5mm}
\begin{subfigure}{.45\textwidth}
      \includegraphics[width=2.5in]{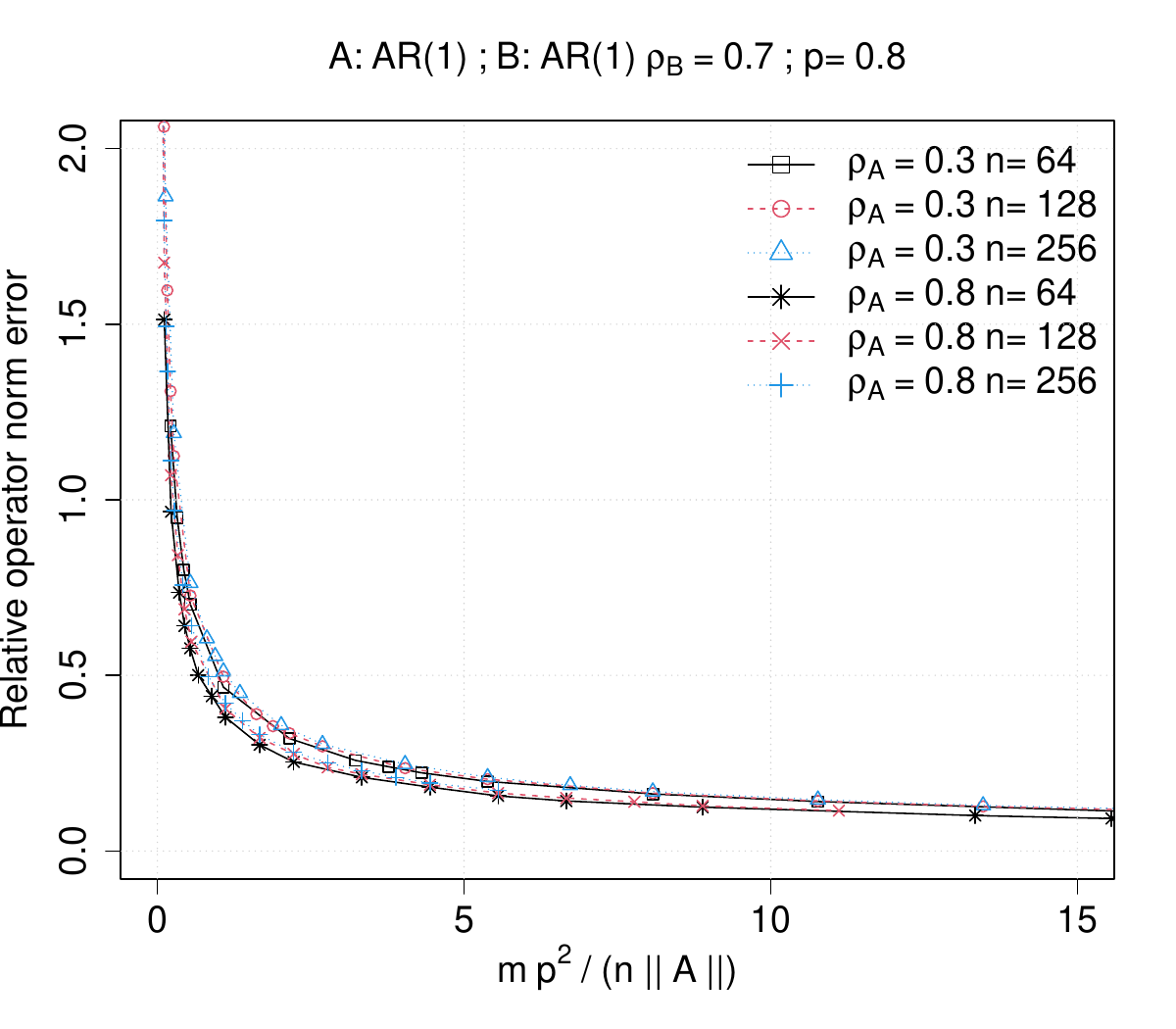}
  \end{subfigure}
\centering
\begin{subfigure}{.45\textwidth}
    \includegraphics[width=2.5in]{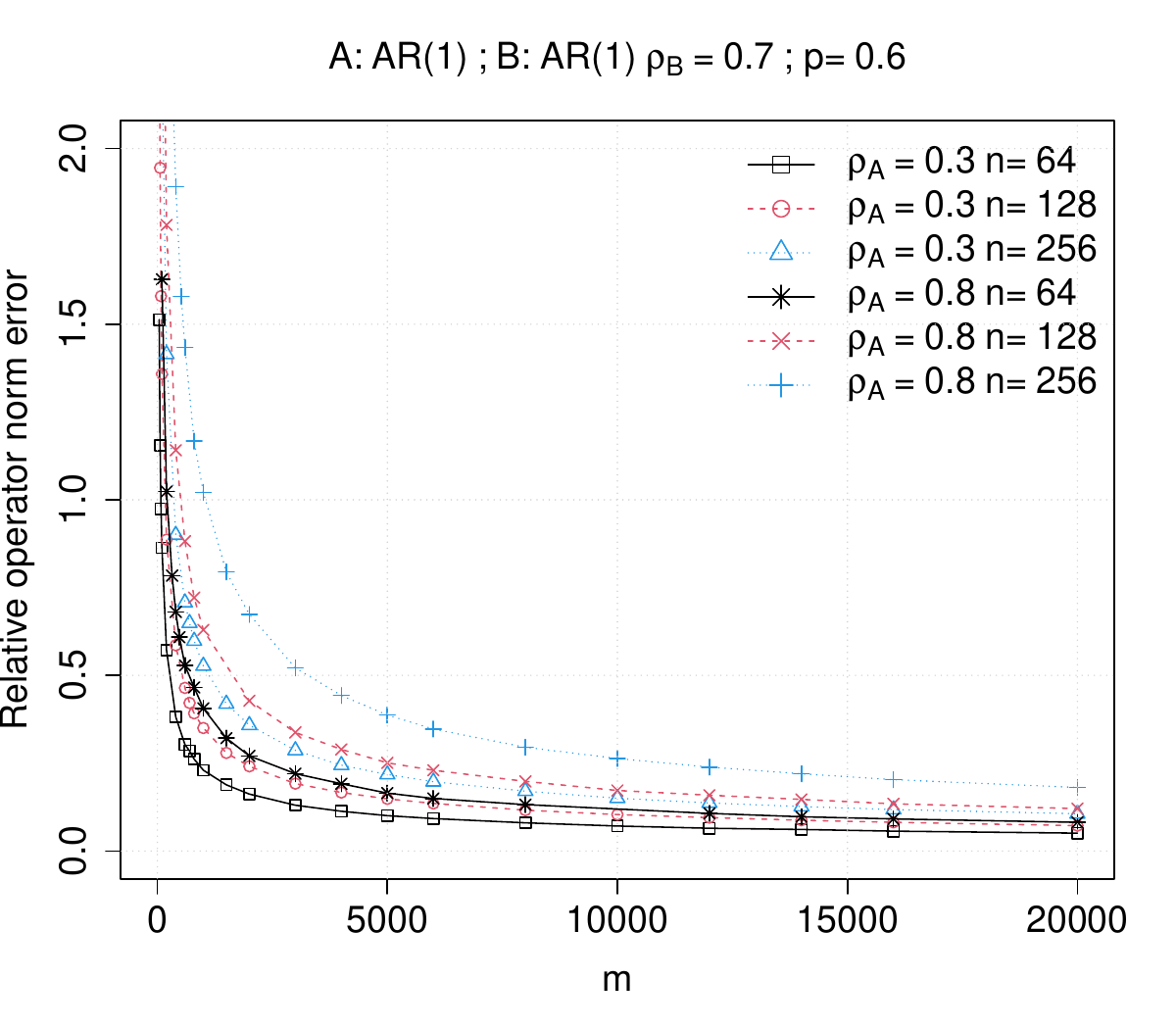}
\end{subfigure}%
\hspace{5mm}
\begin{subfigure}{.45\textwidth}
  \includegraphics[width=2.5in]{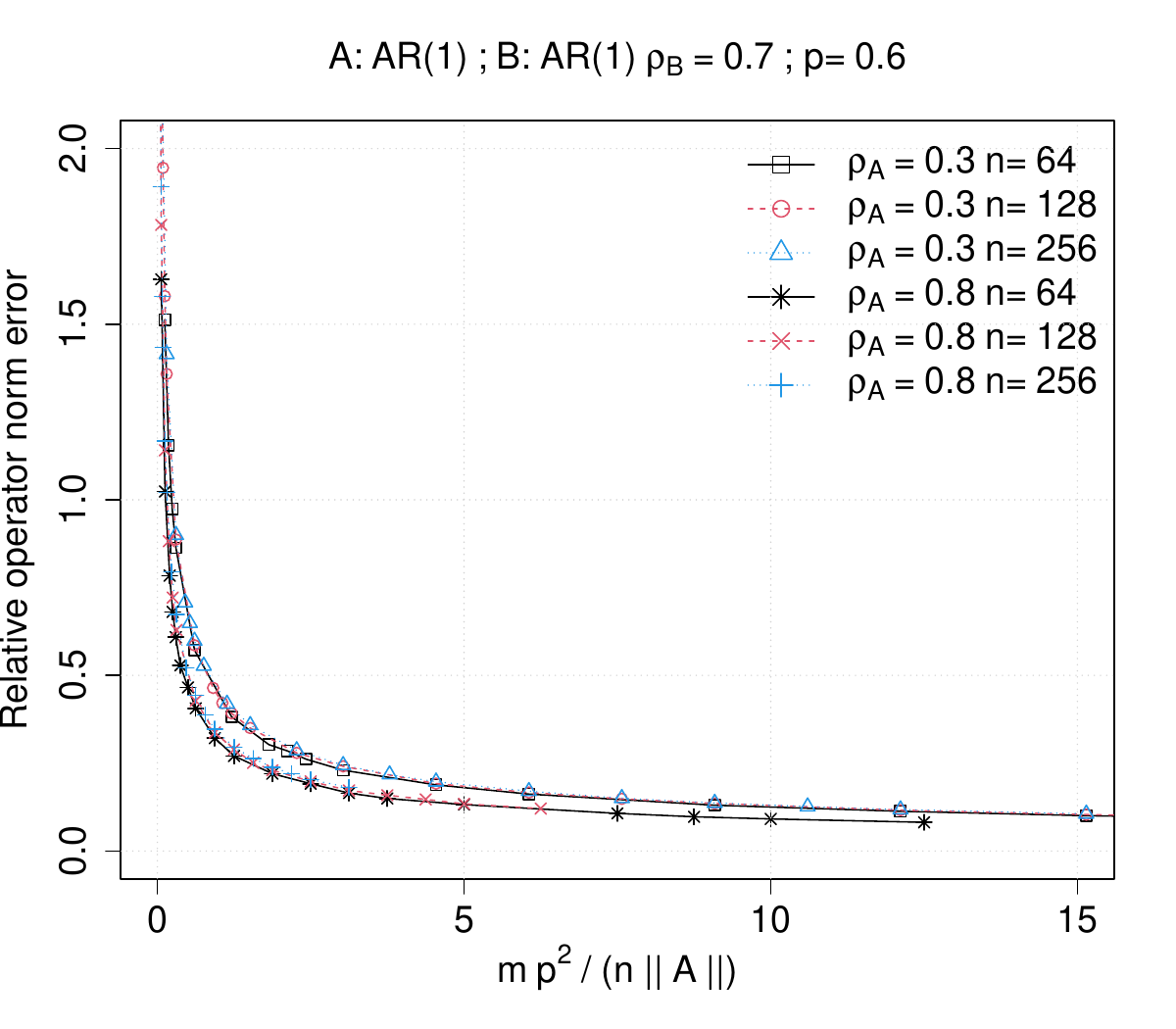}
  \end{subfigure}
\centering
\begin{subfigure}{.45\textwidth}
    \includegraphics[width=2.5in]{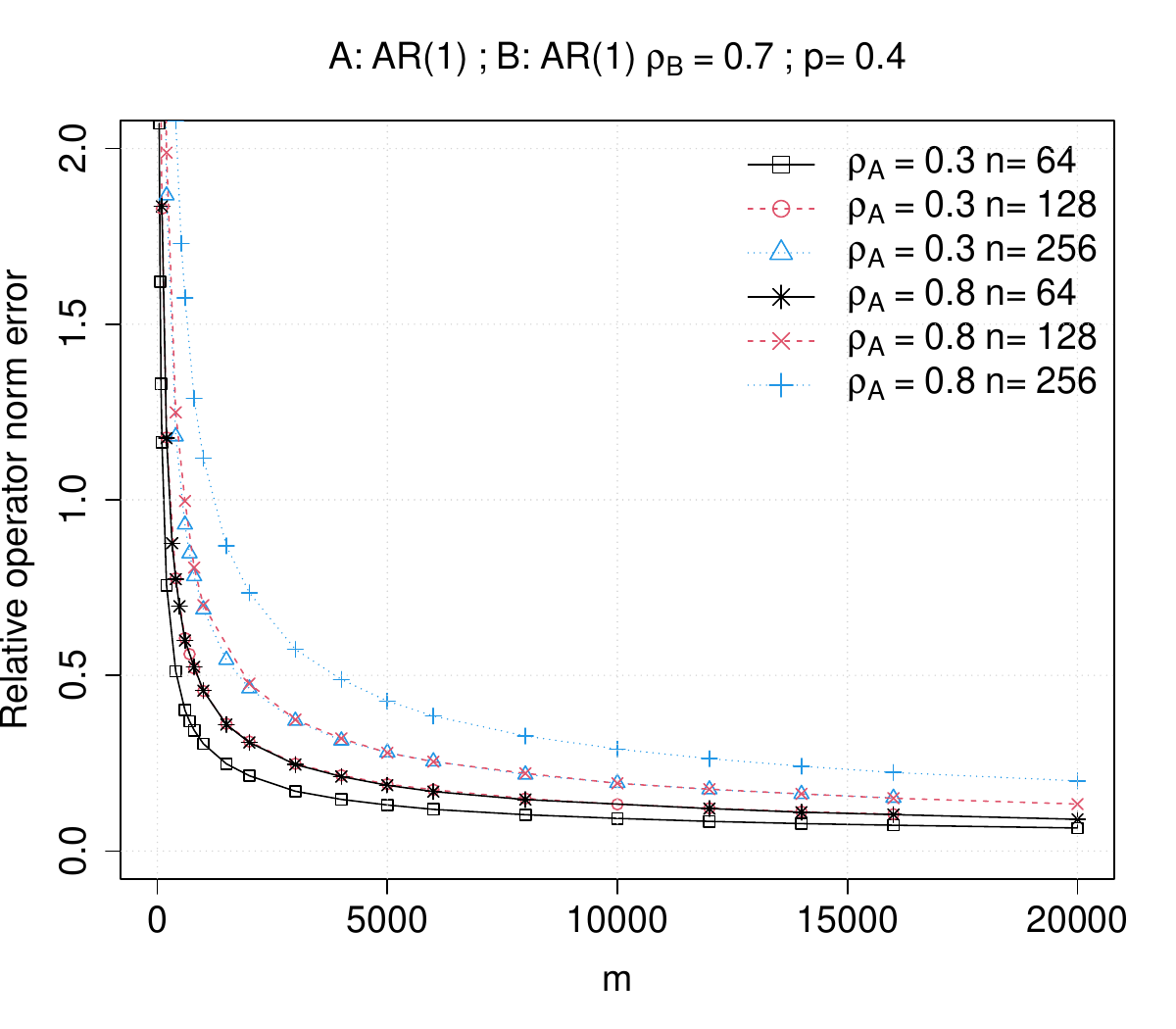}
  \end{subfigure}
\hspace{5mm}
\begin{subfigure}{.45\textwidth}
  \includegraphics[width=2.5in]{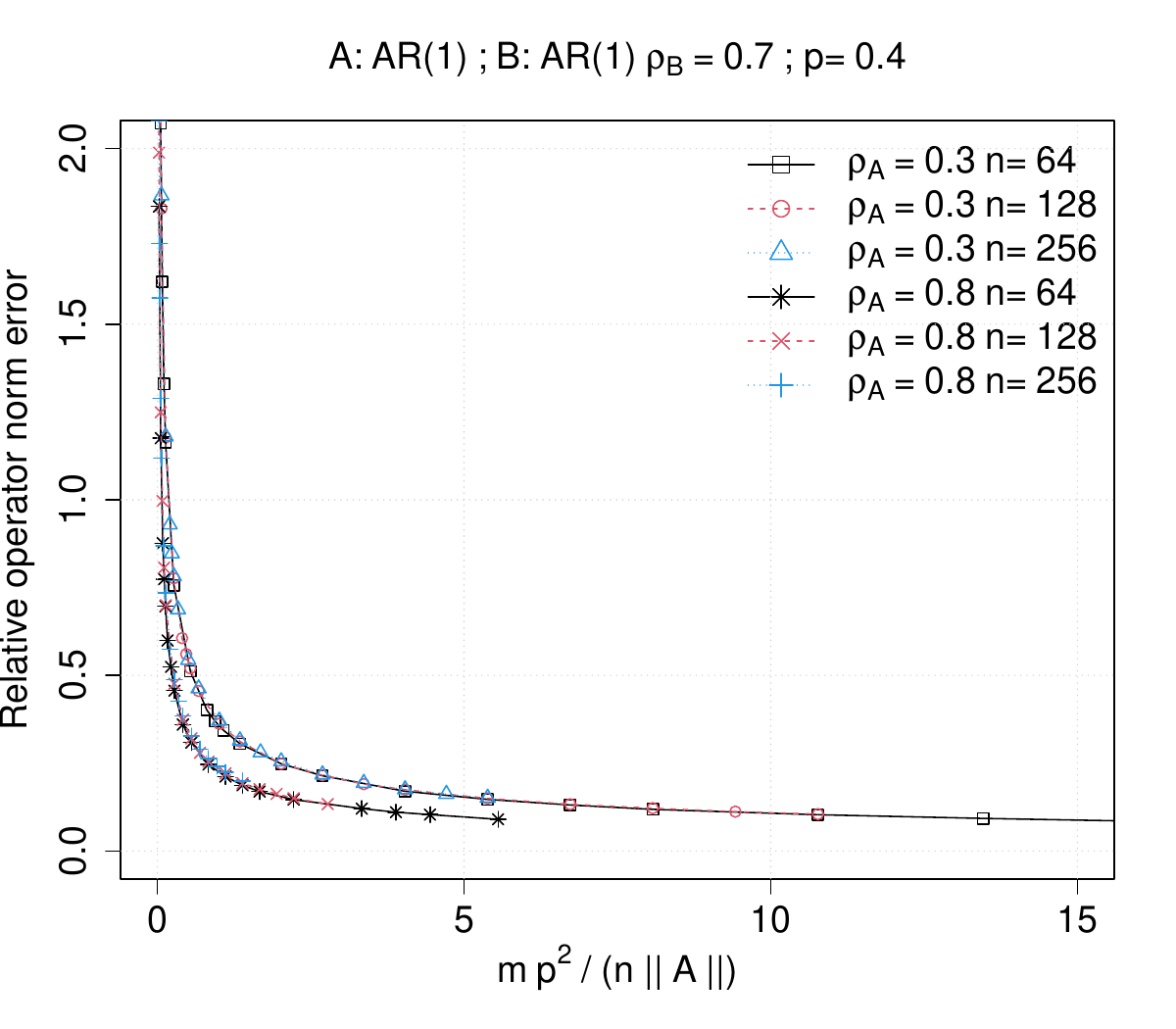}
  \end{subfigure}
  \caption{
    On the left panel, the relative errors  increase with $n$ and
    $\rho_A$ for three  choices of $p \in \{0.4, 0.6, 0.8\}$.
    On the rescaled plots (right panel), error curves $\tilde\delta(x, n, 
    p)$ across different $n$ align well for each pair of ($\rho_A$,
    $p$), with curves for $\rho_A =  0.8$ staying below those for $\rho_A = 0.3$. }
  \label{fig::twoA}
\end{figure}

\noindent{\bf Overall summary.}
We plot the relative errors in the operator norm,  
namely, $\delta_{\overall} := \shnorm{\hat{B}^{\star}
  -B}_2/\twonorm{B}$ for estimating  $B$, against $x$, for a variety  
of configurations of $(n, m, \bf{p})$.  
This is shown in Figure~\ref{fig::global} for Star and
$\operatorname{AR}(1)$ models for $B$. Each point on these curves, denoted by
$\tilde\delta_{\overall}(x, n, p)$, corresponds to an average over 100
trials. First, we note that the three cases of $B$ show the same trend when
$A$ is fixed in Figures~\ref{fig::global} and~\ref{fig::pairs}:
When $x$ is large, all error curves align with a baseline analytic curve 
of $t(x) \propto 1/{\sqrt{x}}$;
When $x$ is small, the observed average relative errors
$\tilde\delta_{\overall}(x, n, p)$  deviate from $t(x)$ in the rescaled plot, as shown
in Figure~\ref{fig::pairs}.
Both phenomena are as predicted by theory, where
the error is bounded by~\eqref{eq::Bhatop}: for $\psi_B(n) = O(\sqrt{n}/\log^{1/2} (n))$,
\ben 
 \label{eq::3terms}
   \delta_{\overall}(x, n, p) & \le &  c_0/\sqrt{x} +  f(x, n, p), \;
  \text{ where} \\
\nonumber
f(x, n, p)  & \le &  c_3 \sqrt{p \psi_B(n) } /x^{3/4} +C_4 (1 \vee (\psi_B(n) /{\twonorm{A}}))/{x};
\een
To characterize the transition between small and large values of $x$, define
$x_{\thresh}  :=   c_4 \psi^2_B(n) \Big(1   + c_5 p^2 \psi^2_B(n)\log (n)/n 
  \Big) \asymp \psi^2_B(n)$,  which holds  for all models of $B$. In fact, since all entries of $B$ are positive for the Star and $\operatorname{AR}(1)$
models we consider, we have $\psi_B(n) =1$ and hence $x_{\thresh}
\asymp 1$. For $x \le x_{\thresh}$,  the function $f(x, n, p)$ provides 
the explanation for the upward-shift of all relative error curves around the elbow region, compared to the baseline function of $t(x) := c_0/\sqrt{x}$  for a 
fixed $n$, as shown in Figure~\ref{fig::pairs}.
When $x > x_{\thresh}$,  the first term on the RHS of 
\eqref{eq::3terms} clearly dominates others, and hence all error 
curves  again align well and we have $\tilde{\delta}_{\overall} \asymp 
r_{\offd}(n)$ for all settings.
Hence, both models follow the trend set by the baseline $t(x) =
c_0/{\sqrt{x}}$, albeit with distinct  constants,
as long as the ratio $mp^2/(n\twonorm{A})$ stays to the right of $x_{\thresh} \asymp 1$. 
For the Star model, the bound $\norm{B}_{\infty}/\twonorm{B}$
increases from $3.429$ for $n=64$ to $6.488$ when  $n=256$ while
$\psi_B(n) =1$, which shows that the obvious upper bound~\eqref{eq::phiben} can be loose.\\
\noindent{\bf Dependence on $p$.}
\label{sec::dual}
Due to the influence of $f(x, n, p)$ as in~\eqref{eq::3terms},
we observe in Figure~\ref{fig::pairs} that upon rescaling
(a) the grouping of relative error curves $\{\tilde\delta_{\overall}(x, n, p), n \in \{64, 256\} \}$ by values of $p$, where the relative errors obey the order of $\tilde\delta_{\overall}(x, n, p_1) \ge \tilde\delta_{\overall}(x,
n, p_2)$ when $p_1 > p_2$  for fixed $(x, n)$;
(b) an upward shift of the error curves for $n =256$ compared to $n=64$
for the same $p$ when $x < x_{\thresh}$; and
(c) merging of all error curves when $x > 2$:
any separation due to the variations on $p, n$, is almost negligible.
Plots on the right panel of Figure~\ref{fig::pairs} (zoomed out 
versions of the left)  show that all curves for each pair $(n, p)$
again align well for $x > 2$,  as they follow the same baseline
function $\propto 1/\sqrt{x}$. For $x \le 2$, larger values of $p$ cause $f(x)$ to carry even more
weight, as evidenced by the vertical separation of the three sets of
error curves corresponding to $p \in \{0.4, 0.6,  0.8\}$
on the left panel in Figure~\ref{fig::pairs}, resulting in the
observed relative errors to further deviate from $t(x)$.
Additionally, the separation between the coupled pairs (grouped by 
the same value of $p$) for different $n$ as mentioned in (b)
is more prominent in the range of $x < 1$.
This can be explained by the functions corresponding to two values of $n_1, n_2$ for each fixed $(x, p)$: $\abs{f(x, n_1, p) -f(x, n_2, p) } \le   c_6 \sqrt{p}/x^{3/4}  + c_8 
/x$, which decreases with $x$ and increases with $p$.

\noindent{\bf Dependence on $\rho_A$.}
In Figure~\ref{fig::twoA}, $A$ and $B$ are both generated using the AR(1)
model with two choices of $\rho_A\in \{0.3, 0.8\}$ and $\rho_B=0.7$.
When $\rho_A$ increases from $0.3$ to $0.8$, we observe the relative
errors for estimating $B$ increase for the same values of $m, n, p$ on the left panel.
This is because $\twonorm{A}$ increases with the value of $\rho_A$ for a fixed value of 
$m$ and hence decreases the effective sample size by Theorem~\ref{thm::gramsparse}.
On the right panel, we observe the same phenomenon as before:
when we rescale the $x$-axis by $m 
p^2/(\twonorm{A} n)$, the error curves for all $n \in \{64, 128,
256\}$ again align well  for each configuration of $(p, \rho_A)$ when $x >
x_{\thresh}$, which shifts to the right as $p$ increases.
Moreover, upon rescaling, on the right panel,
we observe that (a) the error curves shift downwards slightly for $\rho_A = 0.8$
for fixed values of $x$; (b) the gap between the relative errors corresponding to two values of
$\rho_A$ appears to be widening as $p$ decreases,
which suggests that the influence of the operator norm 
$\twonorm{A}$ on the relative errors increases as $p$ decreases;
and (c) the error curves corresponding to all values of $n$ for
$p=0.4, 0.6$ for each $\rho_A$ align better than the case for $p =
0.8$. All phenomena can be explained through~\eqref{eq::Bhatoplocal}
and~\eqref{eq::classic}, since
the third factor in $\delta_q(B)$ as in ~\eqref{eq::classic}, namely,
$\tau_p$ becomes less influential as $p$
decreases in view of Remark~\ref{rem::threefactor}, in case $\psi_B(n) = O(\sqrt{n} /\log^{1/2} (m
\vee n))$.

\silent{
 \begin{remark}
%\eqref{eq::3terms},
  \label{rem::threefactor}
\ben
\nonumber
&& 
\twonorm{\hat{B}^{\star}  -B_0}/{\twonorm{B_0} }
= O_P\Big(r_{\offd}(n)  + \big(p + 1/\twonorm{A_0}\big) r^2_{\offd}(n)
\psi_B(n) \Big)
%\; \;\; \text{  where} \\
%\label{eq::addp}
%&&  \quad \quad 2 \sqrt{p}  r^{3/2}_{\offd}(n) \psi_B(n) \big({\log (m \vee n) }/{n}\big)^{1/4} \le   r_{\offd}(n) + p r_{\offd}^2 (n) \psi_B(n),
\een}

%namely,

%and the additional factor in \eqref{eq::addp}

\silent{As a concrete example, Theorem~\ref{thm::gramsparse} can not be
derived using the entrywise deviation, since we
have removed the $\log (m \vee n)$ factor from the leading factor,
namely, $O(\eta_A r_{\offd}(n))$ when estimating $B_0$ as a whole,
\bens
r_{\offd}(n) := \sqrt{\frac{n \twonorm{A_0} }{\sum_{j=1}^m a_{jj} p_j^2 }}
\eens
which will not be possible to obtain using the entrywise bounds
through union bound directly, as those bounds unavoidably involve an
logarithmic term on the leading term.}

\section{Proof sketch of Theorem~\ref{thm::main}}
\label{sec::mainresult}
In view of the discussion in Section~\ref{sec::mainsketch}, we focus 
on the off-diagonal component.
Notice that $\mvec{Y} = (B_0^{1/2} \otimes A_0^{1/2}) 
\mvec{\Z^T}$, where $Y = X^T = A_0^{1/2}  \Z^T B_0^{1/2}$
for $\Z$ as in \eqref{eq::missingdata}.  Let $\tilde{\Delta} =
\offd(\X \X^T  -B_0 \circ \M)$.
For a chosen sparsity $1 \le s_0 \le n$, let $E = \cup_{\abs{J} = s_0} E_J$. 
To control the quadratic form \eqref{eq::quad0}
over all $s_0$-sparse vectors $q, h \in \Sp^{n-1} \cap E$,
we first obtain a uniform large deviation bound for
$q^T\tilde{\Delta} h$ for all $q, h \in \N$, the $\ve$-net of
$\Sp^{n-1} \cap E$ as constructed in Lemma~\ref{lemma::net}; See for exmaple~\cite{MS86}.
\begin{lemma}
  \label{lemma::net}
Let $1/2> \ve > 0$.
For a set $J \in [n]$, denote $E_J = \Span\{e_j, j \in J\}$. For each
subset $E_J$, construct an $\ve$-net $\Pi_J$, which satisfies
$$\Pi_J \subset E_J \cap \Sp^{n-1} \; \text{ and }\; \abs{\Pi_J}  \le (1+2/\ve)^{s_0}.$$
If $ \N = \bigcup_{\abs{J} = s_0} \Pi_J$, then the previous estimate implies that
\ben
\label{eq::netsize}
\abs{\N} \le (3/\ve)^{s_0}{n \choose s_0} \le \big(
  \frac{3en}{s_0 \ve}\big)^{s_0}
= \exp \big(s_0 \log \big(\frac{3en}{s_0\ve}\big) \big).
\een
Clearly, when $s_0 =n$, we have $\abs{\N} \le (3/\ve)^n$.
\end{lemma}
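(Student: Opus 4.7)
The plan is to prove this standard covering lemma in two steps. First, I would establish the existence of the $\ve$-net $\Pi_J$ for each fixed $s_0$-element index set $J$ with the prescribed cardinality bound. Then I would union-bound over all ${n \choose s_0}$ such index sets and finish with two elementary estimates.

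For the first step, I would invoke the classical volumetric packing argument. Since $E_J \cap \Sp^{n-1}$ is the unit sphere in an $s_0$-dimensional Euclidean subspace, I take $\Pi_J$ to be a maximal $\ve$-separated subset of $E_J \cap \Sp^{n-1}$; by maximality, $\Pi_J$ is automatically an $\ve$-net. Placing disjoint open balls of radius $\ve/2$ centered at the points of $\Pi_J$ and observing that these balls all fit inside the ball of radius $1+\ve/2$ (in the $s_0$-dimensional ambient space $E_J$), the $s_0$-dimensional volume comparison yields
\begin{equation*}
\abs{\Pi_J} \left(\frac{\ve}{2}\right)^{s_0} \le \left(1+\frac{\ve}{2}\right)^{s_0},
\end{equation*}
which rearranges to $\abs{\Pi_J} \le (1+2/\ve)^{s_0}$ as claimed.

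For the second step, the union bound immediately gives
\begin{equation*}
\abs{\N} \le {n \choose s_0} (1+2/\ve)^{s_0}.
\end{equation*}
Two elementary estimates then complete the argument. Since $0 < \ve < 1/2 < 1$, we have $1 \le 1/\ve$ and hence $1+2/\ve \le 3/\ve$; combined with the standard binomial bound ${n \choose s_0} \le (en/s_0)^{s_0}$, this produces
\begin{equation*}
\abs{\N} \le \left(\frac{3}{\ve}\right)^{s_0} \left(\frac{en}{s_0}\right)^{s_0} = \left(\frac{3en}{s_0\ve}\right)^{s_0} = \exp\!\left(s_0 \log\!\left(\frac{3en}{s_0 \ve}\right)\right).
\end{equation*}
The special case $s_0 = n$ falls out immediately since ${n \choose n} = 1$.

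There is essentially no obstacle here, as this is a textbook covering bound (see e.g.\ Vershynin's treatment of nets on the sphere). The only subtleties are keeping track of the correct constant $(1+2/\ve)$ in the volumetric packing bound and verifying that the range $\ve < 1/2$ is enough to absorb $1+2/\ve$ into $3/\ve$; both are routine.
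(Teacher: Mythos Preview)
Your proof is correct and follows the standard volumetric packing argument; the paper states this lemma without proof, treating it as a well-known covering bound, so your write-up supplies exactly the routine details the paper omits.
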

\noindent{\bf Symmetrization.}
We write $y^i= c_i^T \otimes A_0^{1/2} Z$, where $Z \sim \mvec{\Z^T}$, and
\ben
\label{eq::quad0}
\lefteqn{q^T \offd(\X \X^T) h  = \sum_{i \not=j} q_i h_{j} \ip{v^i
    \circ y^i, v^j \circ y^j} = } \\ 
& &
\nonumber
Z^T\big(\sum_{i \not= j} q_i h_{j}  c_i c_j^T \otimes A_0^{1/2}
\diag(v^i \otimes v^j) A_0^{1/2} \big) Z, \; \text{where } \; \; c_1,
\ldots, c_n \in \R^n 
\een
are the column vectors of $B_0^{1/2}$ as in~\eqref{eq::cf};
Now $A_0^{1/2} \diag(v^i \otimes v^j) A_0^{1/2} = \sum_{k=1}^m u^k_i
u^k_j d_k \otimes d_k$, where $d_1, d_2, \ldots, d_m \in \R^{m}$ are the column vectors of 
$A_0^{1/2}$ as in~\eqref{eq::dm}. 
Clearly, by symmetry of the gram matrix $\X \X^T$, for any $q,
h\in \Sp^{n-1}$, $q^T \offd(\X \X^T) h = h^T \offd(\X \X^T) q$;
We now symmetrize the random quadratic form~\eqref{eq::quad0} and rewrite
\ben
\lefteqn{
  \label{eq::symmetry}
\forall   q, h \in \Sp^{n-1}, \; \; q^T \offd(\X \X^T) h  := Z^T
A^{\diamond}_{q h} Z \quad \text{ where}, } \\
\label{eq::defineADM}
A_{qh}^{\diamond}
& = &
\half \sum_{k=1}^m \sum_{i\not= j} u^k_i u^k_j  (q_i h_{j}  + q_j h_i) 
(c_i c_j^T) \otimes (d_k  d_k^T)
\een
is symmetric since for any index set  $(i, j, k)$, both $(c_i c_j^T) \otimes (d_k  d_k^T)$ and its transpose
$(c_j c_i^T) \otimes (d_k  d_k^T)$ appear in the sum with the same
coefficients. 
We have by~\eqref{eq::symmetry}, for $Z =\mvec{\Z^T}$,
\ben
\nonumber
\lefteqn{
\abs{q^T \tilde{\Delta} h}  :=\abs{Z^T  A^{\diamond}_{q h} Z  - \E (Z^T 
      A^{\diamond}_{q h} Z )} \le {\bf I} + {\bf II} :=}\\
    \label{eq::decomp}
   & &
\abs{  Z^T  A^{\diamond}_{q h} Z  - \E (Z^T  A^{\diamond}_{q h} Z |U)} + \abs{\E (Z^T  A^{\diamond}_{q h} Z |U) - \E (Z^T  A^{\diamond}_{q h} Z)} 
\een
Similar to the proof of Theorem~\ref{thm::gramsparse},
we will present a uniform bound on Part I followed by that of Part II
of~\eqref{eq::decomp} for all $q, h \in \N$
and show on event $\F_0^c \cap \F_1^c \cap \F_2^c$,
cf. Theorems~\ref{thm::samplesize} and~\ref{thm::mainlights},
\ben 
\label{eq::netbounds}
\sup_{h, q \in \N}  {\abs{q^T \tilde\Delta h}}/{\big({\norm{\M}_{\offd}}
    \twonorm{B_0}\big)}
  & \le &  \delta_{q},   \; \text{where } \;\; \delta_q \asymp \underline{r_{\offd}} \sqrt{s_0} +r_{\offd}^2 (s_0) \psi_B(s_0);
\een
A standard approximation argument shows that if
\eqref{eq::netbounds} holds, then
\ben
\label{eq::approx}
\sup_{h, q \in E \cap \Sp^{n-1}}
\abs{q^T  \tilde\Delta h}/\big(\norm{\M}_{\offd} \twonorm{B_0}\big)
\le {\delta_q}/{(1-\ve)^2}.
\een
\noindent{\bf Part I.}
We first condition on $U$ being fixed. Then the quadratic form~\eqref{eq::symmetry}
can be treated as a subgaussian quadratic form with 
$A^{\diamond}_{q h}$ taken to be deterministic.
Theorem~\ref{thm::mainop} in the supplementary material,
similar to Theorem~\ref{thm::mainop2}, states that the operator norm of $A_{qh}^{\diamond}$ is uniformly and
deterministically bounded by $\twonorm{A_0} \twonorm{B_0}$ for all
realizations  of $U$ and for all $q, h \in \Sp^{n-1}$. We then state a probabilistic uniform bound on $\shnorm{A_{qh}^{\diamond}}_F$ in 
Theorem~\ref{thm::uninorm-intro} on the event that $\F_0^c$ holds, similar to
Theorem~\ref{thm::uninorm2}.
These results and their proof techniques may be of independent interests.
%We may condition on the event that $\F_0^c$ holds. 
Applying the Hanson-Wright inequality~\cite{RV13}
(cf. Theorem~\ref{thm::HW}) with the preceding estimates on the
operator  and Frobenius norms  of 
$A_{qh}^{\diamond}, q, h \in \Sp^{n-1}$, cf.~\eqref{eq::defineADM},
and the union bound, we prove Theorem~\ref{thm::samplesize}.
\begin{theorem}
  \label{thm::samplesize}
Let $1/2> \ve > 0$. Fix $s_0 \in [n]$.
Denote by $\N$ the $\ve$-net for $\Sp^{n-1} \cap E$ as constructed in 
Lemma~\ref{lemma::net}.
Suppose
\ben
\label{eq::sample1local}
&&
{\sum_{j=1}^m a_{jj} p^2_j} \ge C_4 {\twonorm{A_0}} s_0  \log (n \vee
m) \eta_A^2\; \text{ and } \; r_{\offd}(s_0)  f_{\QA} \psi_B(s_0) <1 \\
\label{eq::QAratio}
&&\quad  \text{ where} \; \; f^2_{\QA}  := {\big(\log (n \vee m) \twonorm{A_0} a_{\infty} \sum_{s=1}^m 
  p_s^4\big)^{1/2}}/{\big(\sum_{j=1}^m a_{jj} p_j^2\big)}
\een
Then on event $\F_0^c \cap \F_1^c$, which holds with probability at least $1 -
\frac{4}{(n \vee m)^4} - 2 \exp(- c_1 s_0 \log (\frac{3e n}{(s_0 \ve)}))$,
\ben
\label{eq::omnistep1}
&& \sup_{q, h\in \N} 
\frac{ \abs{Z^T A^{\diamond}_{q h} Z-\E (Z^T A^{\diamond}_{q h} Z|
    U)}}{\norm{\M}_{\offd} \twonorm{B_0}} =O\big(\eta_{A}
r_{\offd}(s_0)   + r_{\offd}(s_0)  f_{\QA} \psi_B(s_0)\big)
\een
\end{theorem}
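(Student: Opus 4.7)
The plan is to condition on the mask matrix $U$ being fixed, treat $A^\diamond_{qh}$ as deterministic, and then apply Hanson-Wright (Theorem~\ref{thm::HW}) to the centered subgaussian quadratic form $Z^T A^\diamond_{qh} Z - \E(Z^T A^\diamond_{qh} Z \mid U)$ in $Z \sim \mvec{\Z^T}$. For each fixed $(q,h) \in \N$, this gives, conditionally on $U$, a tail of the form $2\exp(-c\, \min(\tau^2/\shnorm{A^\diamond_{qh}}_F^2,\; \tau/\shnorm{A^\diamond_{qh}}_2))$. I will then union bound over $\N \times \N$, whose cardinality is at most $\exp(2 s_0 \log(3en/(s_0 \ve)))$ by Lemma~\ref{lemma::net}, and finally integrate out $U$.

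\textbf{Norm control and threshold.} To exploit this tail uniformly I invoke two auxiliary bounds. The deterministic operator norm bound Theorem~\ref{thm::mainop} (the sparse counterpart of Theorem~\ref{thm::mainop2}) gives $\shnorm{A^\diamond_{qh}}_2 \le \twonorm{A_0}\twonorm{B_0}$ for every realization of $U$ and every $q,h \in \Sp^{n-1}$. The uniform Frobenius bound Theorem~\ref{thm::uninorm-intro} (the sparse counterpart of Theorem~\ref{thm::uninorm2}) gives, on an event $\F_0^c$ with $\prob{\F_0^c} \ge 1 - 2/(n\vee m)^4$, that $\shnorm{A^\diamond_{qh}}_F \le W \twonorm{B_0} \twonorm{A_0}^{1/2}$ uniformly over all $s_0$-sparse $q,h \in \Sp^{n-1}$, with $W$ as in Theorem~\ref{thm::uninorm2}. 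I would then choose
$$ \tau_0 \asymp \sqrt{s_0 \log(3en/(s_0\ve))}\; W \twonorm{B_0}\twonorm{A_0}^{1/2} + s_0 \log(3en/(s_0\ve))\; \twonorm{A_0}\twonorm{B_0}, $$
so that on $\F_0^c$ the exponent $c\,\min(\tau_0^2/\shnorm{A^\diamond_{qh}}_F^2, \tau_0/\shnorm{A^\diamond_{qh}}_2)$ dominates $3 s_0 \log(3en/(s_0\ve))$, absorbing the $|\N|^2$ union-bound factor and leaving a residual failure probability of order $\exp(-c_1 s_0 \log(3en/(s_0\ve)))$ on top of $\prob{\F_0}$ and any low-order bad events from invoking Theorem~\ref{thm::uninorm-intro}.

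\textbf{Matching the stated bound, and the main obstacle.} It remains to check that $\tau_0/(\norm{\M}_{\offd}\twonorm{B_0})$ matches the two summands in~\eqref{eq::omnistep1}. The second (deterministic) piece of $\tau_0$ collapses to $r_{\offd}^2(s_0) = s_0 \log(3en/(s_0\ve)) \twonorm{A_0}/\norm{\M}_{\offd}$, which under~\eqref{eq::sample1local} is at most $1/(C_4 \eta_A^2)$ and hence is $O(\eta_A r_{\offd}(s_0))$ since $\eta_A \ge 1$. The first piece of $W$, namely $(a_\infty \sum_s p_s^2)^{1/2}$, normalizes to $\eta_A r_{\offd}(s_0)$ after dividing by $\norm{\M}_{\offd}$ and using $\eta_A^2 = a_\infty/a_{\min}$ together with $\sum_j a_{jj} p_j^2 \ge a_{\min}\sum_j p_j^2$. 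The second piece of $W$, namely $\psi_B(s_0) (a_\infty \twonorm{A_0}\sum_j p_j^4 \log(n\vee m))^{1/4}$, rearranges exactly to $\psi_B(s_0)\, r_{\offd}(s_0)\, f_{\QA}$ by the definition of $f_{\QA}$ in~\eqref{eq::QAratio}. The main obstacle in this plan is not this final arithmetic but rather establishing the uniform Frobenius bound Theorem~\ref{thm::uninorm-intro}: because $\shnorm{A^\diamond_{qh}}_F^2$ is itself a quadratic form in the independent Bernoulli entries of $U$, controlling it uniformly over $\N$ requires the non-centered Bernoulli moment generating function bound of Theorem~\ref{thm::Bernmgf} (equivalently the sparse Hanson-Wright inequality of Corollary~\ref{coro::offdn}) together with its own approximation argument, and the lower bound~\eqref{eq::sample1local} is precisely what is needed so the sub-Gaussian tail survives the $|\N|^2$ union bound.
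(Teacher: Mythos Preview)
Your proposal is correct and mirrors the paper's proof essentially step for step: invoke the deterministic operator-norm bound (Theorem~\ref{thm::mainop}) and the uniform Frobenius bound on $\F_0^c$ (Theorem~\ref{thm::uninorm-intro}), apply Hanson--Wright conditionally on $U$, union-bound over $\N\times\N$ with exactly the same choice of $\tau_0$, and then unpack $\tau_0/(\norm{\M}_{\offd}\twonorm{B_0})$ into the two displayed summands. One small inaccuracy in your closing commentary: the uniform Frobenius bound of Theorem~\ref{thm::uninorm-intro} is not obtained via a net approximation or via Corollary~\ref{coro::offdn}; the paper instead decomposes $\fnorm{A^\diamond_{qh}}^2$ into homogeneous Bernoulli polynomials $W_2^\diamond, W_3^{\diag}, W_4^{\diag}, W_4^\diamond$, controls each by pointwise concentration over index tuples (Lemmas~\ref{lemma::W2devi}--\ref{lemma::S5devi}) under the much weaker hypothesis $\sum_s a_{ss}^2 p_s^2 = \Omega(a_\infty^2 \log(n\vee m))$, and then bounds the coefficient sums uniformly in $q,h$, so the result is already uniform over all $s_0$-sparse vectors without any $\ve$-net.
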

\noindent{\bf Part II.}
Let $\tilde{a}^{k}_{ij}$ be a shorthand for $\tilde{a}^{k}_{ij}(q, h) = \half 
a_{kk} b_{ij}(q_i h_{j}+ q_j h_i)$. 
Denote by
\ben 
\label{eq::stardust}
\quad \quad S_{\star}(q, h)  = 
\E (Z^T  A^{\diamond}_{q h} Z  | U)- \E (Z^T  A^{\diamond}_{q h} Z) 
= \sum_{k=1}^m \sum_{i \not=j}^n \tilde{a}^{k}_{ij}(u^k_i u^k_j -
p_k^2).
\een

\begin{theorem}
  \label{thm::mainlights}
Let  $S_{\star}(q, h)$ be as in~\eqref{eq::stardust}.
Suppose for $\psi_B(s_0)$ as in Definition~\ref{def::C0}, where $s_0
\in [n]$,
\ben 
\label{eq::samplecrux}
\sum_{j=1} a_{jj} p_j^2 & =& \Omega\big(a_{\infty} \psi_B(2s_0 \wedge
  n)   s_0 \log   (3en/(s_0\ve)) \big).
\een
Then for absolute constants $C_6, c_1$ and $\tau' = C_6 a_{\infty} \psi_B(2s_0 \wedge n) \twonorm{B_0} s_0 \log (3e n/(\ve 
s_0))$,
\bens
\prob{\F_2} = \prob{\exists q, h \in \N,  \abs{S_{\star}(q, h)}  \ge
  \tau'} \le 2 \exp(- c_1 s_0  (3en/(s_0\ve)) )
\eens
\end{theorem}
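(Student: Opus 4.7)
The plan is to combine a Bernstein-type tail bound for $S_\star(q, h)$ at each fixed pair $q, h \in \N$, derived from Theorem~\ref{thm::Bernmgf} applied to the relevant quadratic form in the Bernoulli variables, with a union bound over $\N \times \N$, whose cardinality is at most $\exp(2 s_0 \log(3en/(s_0\ve)))$ by Lemma~\ref{lemma::net}. This mirrors the proof of Corollary~\ref{coro::offdn} for the diagonal case $q = h$, but now allows $q \ne h$ and leverages the joint sparsity of $q$ and $h$.

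Fix $q, h \in \N$ and regard $S_\star(q, h)$ as a centered quadratic form in the independent Bernoulli vector $\xi = (u^k_i)_{k \in [m], i \in [n]}$, with coefficient matrix $\tilde A(q, h) \in \R^{mn \times mn}$ that is block-diagonal in $k$, zero on the diagonal, and with block-$k$ off-diagonal entry $\tilde a^k_{ij} = \tfrac{1}{2} a_{kk} b_{ij}(q_i h_j + q_j h_i)$. Crucially, $\tilde a^k_{ij}$ vanishes unless $(i, j) \in S \times S$ for the joint support $S := S_q \cup S_h$, of cardinality $\abs{S} \le 2 s_0 \wedge n$. This restriction is the sole mechanism by which the sparsity level $s_0$ enters the bound.

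The restricted estimate $\abs{q}^\tran \abs{B_0} \abs{h} \le \twonorm{\abs{B_0}_{S, S}} \le \rho_{\max}(2 s_0 \wedge n, \abs{B_0}) = \psi_B(2 s_0 \wedge n) \twonorm{B_0}$ then yields the row/column-sum bounds $\norm{\tilde A}_1 \vee \norm{\tilde A}_\infty \le c \, a_\infty \psi_B(2 s_0 \wedge n) \twonorm{B_0}$ and the pair sum $\sum_{(k,i) \ne (k',j)} \abs{\tilde a_{(k,i), (k',j)}} p_{(k,i)} p_{(k',j)} \le \psi_B(2 s_0 \wedge n) \twonorm{B_0} \sum_k a_{kk} p_k^2$, while $\sum_i \abs{\tilde a_{ii}} \sigma_i^2$ vanishes by construction. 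Substituting into Theorem~\ref{thm::Bernmgf} and optimizing $\lambda$ via Markov's inequality produces the per-pair Bernstein bound $\prob{\abs{S_\star(q, h)} > t} \le 2 \exp(-c \min(t^2/V, t/D))$, with $D \asymp a_\infty \psi_B(2 s_0 \wedge n) \twonorm{B_0}$ and $V \asymp D \cdot \psi_B(2 s_0 \wedge n) \twonorm{B_0} \sum_k a_{kk} p_k^2$, i.e., the sparse analogue of Corollary~\ref{coro::offdn}.

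Setting $t = \tau' = C_6 D \cdot s_0 \log(3en/(s_0\ve))$ gives $\tau'/D = C_6 s_0 \log(3en/(s_0\ve))$; the sample-size hypothesis~\eqref{eq::samplecrux} calibrates $V$ so that $(\tau')^2/V$ is also of the same order, whence the minimum is $\gtrsim C_6 s_0 \log(3en/(s_0\ve))$. Choosing $C_6$ large enough, the per-pair tail is at most $\exp(-3 c_1 s_0 \log(3en/(s_0\ve)))$, and a union bound over $\abs{\N}^2 \le \exp(2 s_0 \log(3en/(s_0\ve)))$ absorbs the entropy factor, yielding the stated bound on $\prob{\F_2}$. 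The main obstacle I expect is the sparse norm estimate for $\tilde A$: converting the $s_0$-sparsity of $q, h$ into operator-norm control of the restricted block $\abs{B_0}_{S, S}$, and carefully tracking how the symmetrization $\tfrac{1}{2}(q_i h_j + q_j h_i)$ and the support union $S_q \cup S_h$ interact with the row sums that feed into $D_{\max}$. Once the norms are pinned down, the Bernstein optimization and the union bound over the net are routine.
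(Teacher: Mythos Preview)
Your overall strategy matches the paper's proof (Section~\ref{sec::appendmainlights}, via Lemma~\ref{lemma::Bernmgf2} and Lemma~\ref{lemma::S2bounds}): form the block-diagonal coefficient matrix $\tilde A$, feed its norms into Corollary~\ref{coro::Bernmgf} to obtain a Bernstein tail for each $S_\star(q,h)$, then union-bound over $\N\times\N$. Your bound $\abs{q}^{\tran}\abs{B_0}\,\abs{h}\le \twonorm{\abs{B_0}_{S,S}}\le\rho_{\max}(2s_0\wedge n,\abs{B_0})$ via the joint support $S=S_q\cup S_h$ is in fact a bit cleaner than the paper's polarization argument in Lemma~\ref{lemma::S2bounds} for controlling the \emph{pair sum} $\sum_k\sum_{i\ne j}\abs{\tilde a^k_{ij}}p_k^2$.

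There is, however, a real gap in your bound on $D_{\max}=\norm{\tilde A}_1\vee\norm{\tilde A}_\infty$. You upper-bound each row sum by the full bilinear form $\abs{q}^{\tran}\abs{B_0}\,\abs{h}$, arriving at $D\asymp a_\infty\,\psi_B(2s_0\wedge n)\,\twonorm{B_0}$. The paper instead takes $D_{\max}\le a_\infty\breve\rho(s_0,\abs{B_0})$ with $\breve\rho=\tilde\rho\wedge\twonorm{B_0}$ and, in the final computation, uses the Cauchy--Schwarz branch $\max_i\sum_{j}\abs{b_{ij}}\abs{h_j}\le\twonorm{\vecb^{(i)}}\le\twonorm{B_0}$, so effectively $D_{\max}\le a_\infty\twonorm{B_0}$. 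This is exactly what Corollary~\ref{coro::offdn}, which you cite as your template, does: note the $a_\infty\twonorm{B_0}$ in its subexponential denominator, with $\twonorm{\abs{B_0}}$ entering only through the variance term. Your extra factor $\psi_B$ in $D$ inflates $V\asymp D\cdot\psi_B\twonorm{B_0}\sum_k a_{kk}p_k^2$ by $\psi_B$, so your Gaussian exponent becomes $(\tau')^2/V\asymp a_\infty(s_0\log(3en/(s_0\ve)))^2/\sum_k a_{kk}p_k^2$, whereas the paper retains an extra $\psi_B$ in the numerator. At the threshold of~\eqref{eq::samplecrux} this costs you a factor of $\psi_B(2s_0\wedge n)$, which by Lemma~\ref{lemma::converse} can be as large as $\sqrt{2s_0}$; the resulting exponent $\asymp s_0\log(3en/(s_0\ve))/\psi_B$ is then too small to absorb the net entropy $\abs{\N}^2=\exp(2s_0\log(3en/(s_0\ve)))$. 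The fix is simply to use the Cauchy--Schwarz row-sum bound for $D_{\max}$ and reserve the sparse-eigenvalue estimate for the pair sum only.
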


Similar to the proof of Theorem~\ref{thm::gramsparse},
we also use sparse Hanson-Wright inequalities and the union bound to prove
Theorem~\ref{thm::mainlights}. 
Theorem~\ref{thm::Bernmgf} is crucial in  
deriving a concentration bound for $S_{\star}(q, h)$ as
in~\eqref{eq::stardust}.
Such results may be of independent interests.
On event $\F_2^c$ as defined in Theorem~\ref{thm::mainlights} we 
have
\ben 
\label{eq::mainF2}
&& \sup_{q, h \in \N} 
{\abs{S_{\star}(q,h)} }/{(\norm{\M}_{\offd} \twonorm{B_0})}
\le  C' r_{\offd}^2(s_0) \psi_B(2s_0 \wedge n).
\een
\noindent{\bf Putting things together.}
Combining the large deviation bounds~\eqref{eq::omnistep1} and
\eqref{eq::mainF2} with~\eqref{eq::decomp} and using the
approximation argument in the sense of~\eqref{eq::netbounds}
and~\eqref{eq::approx}, we have $\text{on event } \F_0^c \cap \F_1^c
\cap \F_2^c =: \F_4^c$,
\ben
\nonumber
\lefteqn{ \sup_{q, h \in \Sp^{n-1} \cap E}
{\abs{q^T\tilde{\Delta} h}}/{\big(\twonorm{B_0}\norm{\M}_{\offd}\big)} \le
\sup_{q, h \in \N} {\abs{q^T\tilde{\Delta} h}}/
{\big((1-\ve)^2  \twonorm{B_0}\norm{\M}_{\offd}\big)} } \\
%& \le & \nonumber
%\sup_{q, h \in \N} \Big(\abs{Z^T A^{\dm}_{q, h} Z-\E (Z^T A^{\dm}_{q, h} Z | U)}
%+ \abs{S_{\star}(q, h)} \Big)/ {\big((1-\ve)^2  \twonorm{B_0}\norm{\M}_{\offd}\big)} \\
&\le  &
\label{eq::bridge2}
C \big(\eta_A r_{\offd}(s_0)   + r_{\offd}(s_0)  f_{\QA} \psi_B(s_0) +
r_{\offd}^2(s_0) \psi_B(2s_0 \wedge n)\big) =:  \delta_{\overall} 
 \een
%It is helpful to simply the second factor in~\eqref{eq::omnistep1} as
%in Lemma~\ref{lemma::finalrate}.
Finally, $\prob{\F_4^c} \ge 1 - 4/(m \vee n)^4 - 4\exp(- c s_0 \log (3en/(s_0 \ve)))$ by
Theorems~\ref{thm::samplesize},~\ref{thm::mainlights} and the union bound.
Combining \eqref{eq::bridge2} and Lemma~\ref{lemma::finalrate}, we 
have~\eqref{eq::rmn}.  
Lemma~\ref{lemma::finalrate}
shows that when
$p_{\max}$ is small, the second term in the RHS of \eqref{eq::omnistep1}
will become less influential, and hence the overall rate of convergence
will be dominated by $r_{\offd}(s_0)$ and $r_{\offd}^2(s_0)
\psi_B(2s_0 \wedge n)$; cf. Remark~\ref{rem::threefactor}
in case $s_0 =n$.
\begin{lemma}
\label{lemma::finalrate}
Let $r_{\offd}(s_0)$ and $\ell_{s_0, n}$ be as defined
in~\eqref{eq::offdrate}.  Let $p_{\max} =\max_{j} p_j$.
Then
\bens
r_{\offd}(s_0)  f_{\QA} \psi_B(s_0) & \le & \sqrt{p_{\max}  } r_{\offd}(s_0)  (\ell_{s_0, n}^{1/2}  \eta_A+  r_{\offd}(s_0) \psi_B(s_0)) \\
\text{ and } \; \delta_{\overall}
& \le &
 C_{\sparse} \big(r_{\offd}(s_0) \eta_{A} \ell_{s_0, n}^{1/2} +
r^2_{\offd}(s_0)  \psi_B(2s_0 \wedge n) \big) \asymp \delta_q
\eens
where $C_{\sparse}$, $\delta_q$, and $f_{\QA}$ are as defined
in~\eqref{eq::rmn},~\eqref{eq::netbounds} and~\eqref{eq::QAratio} respectively.
\end{lemma}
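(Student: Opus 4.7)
The plan is to establish the lemma by a direct algebraic computation, exploiting the structural bound $\psi_B(s_0)\le\sqrt{s_0}$ from Lemma~\ref{lemma::converse} and the elementary comparison $\sum_s p_s^4\le \sum_s p_s^2$ that holds since $p_s\le 1$. The inequality $\delta_{\overall}\le \delta_q$ will follow immediately by substituting the first inequality into~\eqref{eq::bridge2}, so the entire work is concentrated in controlling $f_{\QA}\psi_B(s_0)$.

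For the first inequality, I would proceed by AM-GM. Observe that if one can show
\[
f_{\QA}^{\,2}\,\psi_B(s_0) \;\le\; 4\,\ell_{s_0,n}^{1/2}\,\eta_A\,r_{\offd}(s_0),
\]
then multiplying both sides by $\psi_B(s_0)$ yields $f_{\QA}^{\,2}\psi_B^2(s_0)\le 4\,\ell_{s_0,n}^{1/2}\eta_A\cdot r_{\offd}(s_0)\psi_B(s_0)$, and $2\sqrt{xy}\le x+y$ applied to $x=\ell_{s_0,n}^{1/2}\eta_A$ and $y=r_{\offd}(s_0)\psi_B(s_0)$ gives exactly the target bound $f_{\QA}\psi_B(s_0)\le \ell_{s_0,n}^{1/2}\eta_A+r_{\offd}(s_0)\psi_B(s_0)$. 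Substituting the definitions~\eqref{eq::offdrate} and~\eqref{eq::QAratio} into the displayed inequality and squaring reduces it to the cleaner statement
\[
\Big(\sum_{s=1}^m p_s^4\Big)\, a_{\min}\,\psi_B^2(s_0) \;\le\; 16\, s_0 \sum_{j=1}^m a_{jj}\,p_j^2,
\]
after the common factors $L_1\twonorm{A_0}a_{\infty}$ cancel. This is then proved by combining two elementary bounds: (i) $\sum_s p_s^4\le \sum_s p_s^2\le (\sum_s a_{ss}p_s^2)/a_{\min}$, since $p_s\in[0,1]$ and $a_{ss}\ge a_{\min}$; and (ii) $\psi_B^2(s_0)\le s_0$, which is exactly Lemma~\ref{lemma::converse}. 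Multiplying these gives $(\sum p_s^4)a_{\min}\psi_B^2(s_0)\le s_0 \sum a_{jj}p_j^2$, comfortably within the factor of $16$.

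For the second inequality, I would start from~\eqref{eq::bridge2} and apply the bound just proven, obtaining
\[
\delta_{\overall}\le C\bigl(\eta_A r_{\offd}(s_0)+\eta_A r_{\offd}(s_0)\ell_{s_0,n}^{1/2}+r_{\offd}^2(s_0)\psi_B(s_0)+r_{\offd}^2(s_0)\psi_B(2s_0\wedge n)\bigr).
\]
The third term is absorbed into the fourth by the monotonicity $\psi_B(s_0)\le \psi_B(2s_0\wedge n)$, which is immediate from the definition of $\rho_{\max}(\cdot,(\abs{b_{ij}}))$ as a maximum over a growing family of sparse vectors. The first term is absorbed into $\eta_A r_{\offd}(s_0)\ell_{s_0,n}^{1/2}$ in the regime $\ell_{s_0,n}\ge 1$; otherwise it is a lower-order logarithmic factor which, consistent with the paper's stated convention of not optimizing over logarithms, can be folded into $C_{\sparse}$.

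The main obstacle is purely bookkeeping: keeping track of the four-way product of roots $L_1^{1/2}, L_2^{1/2}, (a_\infty/a_{\min})^{1/2},$ and $\twonorm{A_0}^{1/2}$ after squaring $f_{\QA}^2\psi_B$ and $\ell_{s_0,n}^{1/2}\eta_A r_{\offd}(s_0)$. The non-trivial content is the appearance of $\psi_B^2(s_0)\le s_0$ at just the right place; without Lemma~\ref{lemma::converse}, no bound on $\Sigma_4\psi_B^2(s_0) a_{\min}$ by $s_0 T$ would be available, and one would be forced to carry the factor $\psi_B(s_0)$ alongside $f_{\QA}$ throughout subsequent arguments.
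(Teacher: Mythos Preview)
Your proposal is correct and follows essentially the same route as the paper's own proof: both reduce the first inequality to the algebraic bound $a_{\min}\psi_B^2(s_0)\sum_s p_s^4 \le C\,s_0\sum_j a_{jj}p_j^2$, verify it via $\psi_B^2(s_0)\le s_0$ (Lemma~\ref{lemma::converse}) together with $a_{\min}\sum_s p_s^4\le \sum_j a_{jj}p_j^2$, and then apply AM--GM to split the resulting geometric mean $\sqrt{\eta_A\ell_{s_0,n}^{1/2}\cdot r_{\offd}(s_0)\psi_B(s_0)}$. The only cosmetic difference is that the paper passes through the explicit intermediate expression $\sqrt{\eta_A}\,\psi_B(s_0)^{1/2}\ell_{s_0,n}^{1/4}r_{\offd}^{3/2}(s_0)$ before invoking AM--GM, whereas you square first; the second part (absorption by monotonicity of $\psi_B$ and the logarithmic convention) is handled identically.
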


In the supplementary material, we prove Theorem~\ref{thm::samplesize}
in  Section~\ref{sec::appendsamplesize}, where we define events $\F_1$
and $\F_0$.
We defer the rest of the proof on checking conditions in  
Theorems~\ref{thm::mainlights} and~\ref{thm::samplesize} to
Section~\ref{sec::appendproofoffdmain}.
We prove Theorem~\ref{thm::mainlights} and Lemma~\ref{lemma::finalrate},
and Theorem~\ref{thm::uninorm-intro}
in the supplementary 
Sections~\ref{sec::appendmainlights},~\ref{sec::bernchaos}, 
and~\ref{sec::uninormskye}, respectively.

{\bf Discussions.}
\label{rem::twofactors}
The result in Theorem~\ref{thm::mainlights} appears to be tight, since
when we ignore the logarithmic terms, the linear dependency on $s_0$
is correct in view of Theorem~\ref{thm::AD};
the extra term $\rho_{\max}(s_0, \abs{B_0})$ is unavoidable, 
because of our reliance on the sparse Hanson-Wright type of moment 
generating function bounds as stated in  Theorem~\ref{thm::Bernmgf}
(cf. Lemma \ref{lemma::Bernmgf2}). See also~\cite{RZ13, LW12b} for discussions
on the sample complexity in the i.i.d. settings and with missing
values in the linear models.
It is important that we separate the dependence on $s_0$ from
dependence on $\rho_{\max}(s_0, \abs{B_0})$ as defined
in~Definition~\ref{def::C0}.

Theorems~\ref{thm::mainop} and~\ref{thm::uninorm-intro} prove
corresponding results on matrix $A_{qh}^{\diamond}, q, h \in
\Sp^{n-1}$, cf.~\eqref{eq::defineADM}, from which
Theorems~\ref{thm::mainop2} and~\ref{thm::uninorm2} follow
respectively; we need to overcome major obstacles due to the
lack of existing tools. In particular, when we bound the Frobenius norm for
$A_{qh}^{\diamond}$ as a function of random matrix $U$,
we use a decomposition argument to express
$\shnorm{A^{\diamond}_{qh}}_F^2$ as a summation over homogeneous
polynomials of degree $2, 3$, and $4$ respectively.
We then prove the concentration of measure bounds for various homogeneous
polynomial functions in Lemmas~\ref{lemma::W2devi} to~\ref{lemma::S5devi}
respectively, in the supplementary Section~\ref{sec::uninormskye}.
%Hence a significant amount of work is devoted to the concentration of measure bounds
%on these degree-$2, 3, 4$  polynomials.
%in $W_2^{\diamond}, W_3(\diag),$ and $W_4^{\diag}$ respectively,
%where $i \not=j \not=k \not=\ell$ are being fixed.
The common theme is to: first, obtain an estimate on the moment
generating function of each unique polynomial function of
the non-centered Bernoulli random variables; then, obtain the
desired tail bounds by Markov’s inequality.
Finally, we use the triangle inequality and the union bound to
carefully join all pieces together to prove Theorem 18.2. These result
may be of independent interests.

Our analysis framework will extend, with suitable adaptation, to the 
general distributions of $U$ with independent nonnegative elements,
which are independent of the  (unobserved) matrix variate data $X$.
%Such a result may be of independent interests
%as more generally, the mask matrix $U$ may not be constrained to the
%family of Bernoulli random matrices.
Such a result may be of 
independent interests, as more generally, the mask matrix $U$ may not 
be constrained to the family of Bernoulli random matrices.
Instead, one may consider $U$ as
a matrix with independent  rows with arbitrary positive coefficients
drawn from some distributions~\citep[cf.][]{carr:rupp:2006,HWang86,ICF99}.
For instance, one may consider $U$ as a matrix with arbitrary positive 
coefficients belonging to $[0, 1]$.
Our proof for Theorems~\ref{thm::mainop2} and~\ref{thm::mainop} will
go through if one replaces $u^k, k=1, \ldots, m$ by independent
Gaussian random vectors; however, the statement will be probabilistic
subject to an additional logarithmic factor. In particular, 
Lemma~\ref{lemma::chaosop} in supplementary material holds for general
block-diagonal matrices with bounded operator norm, which can be deterministic.

%leads to a large deviation bound on the Frobenius norm as stated in

\section{Proofs}
\label{sec::mainproofs}
In this section, we prove Proposition~\ref{prop::projection} and
Theorem~\ref{thm::main-intro} in
Sections~\ref{sec::proofofprojection} and~\ref{sec::appendintromain} respectively.
We place all remaining technical proofs in the supplementary material.

\subsection{Proof of Proposition~\ref{prop::projection}}
\label{sec::proofofprojection}
\begin{proofof2}
Recall $\cov(V_j, V_j) = \E (V_j V_j^T)$ and $\E \ip{X^j, X^k} =
b_{jk} A_0$. Denote by $B_{j\cdot}$ and $\Theta_{j\cdot}$ the $j^{th}$ row
vector of $B_0 = (b_{ij})$ and $\Theta_0 = (\theta_{ij})$ respectively.
We have by linearity of covariance,
\ben
\nonumber
\lefteqn{\cov(V_j, V_j)
  =\cov(X^j- \sum_{k \not=j} X^{k} \beta_k^{j*}, X^j- \sum_{k \not=j}
X^{k} \beta_k^{j*})}\\
\nonumber
& =& 
\cov(X^j, X^j) +2 \cov(X^j,  \sum_{k \not=j} X^{k} \frac{\theta_{jk}}{\theta_{jj}}) 
+\cov(\sum_{k \not=j} X^{k} \frac{\theta_{jk}}{\theta_{jj}}, 
\sum_{\ell \not=j} X^{\ell} \frac{\theta_{j\ell}}{\theta_{jj}}) =\\
\label{eq::linearquad}
& & 
b_{jj} A_0 +2 (\sum_{k \not=j} b_{jk} 
\frac{\theta_{jk}}{\theta_{jj}}) A_0
+\big(\sum_{\ell \not=j} \sum_{k \not=j} b_{k\ell} \frac{\theta_{jk}}{\theta_{jj}}
\frac{\theta_{j \ell}}{\theta_{jj}}\big) A_0 = \\
\nonumber
& & b_{jj} A_0 -2 (b_{jj} - 1/{\theta_{jj}}) A_0+
(b_{jj}-1/{\theta_{jj}})A_0 = A_0/{\theta_{jj}} 
\een
where for the linear term and the quadratic term in
\eqref{eq::linearquad}, we have
\bens 
\sum_{k \not=j} b_{jk} {\theta_{jk}}/{\theta_{jj}} & = &
(\ip{B_{j\cdot}, \Theta_{j\cdot}} - b_{jj}\theta_{jj})/{\theta_{jj}} =
1/\theta_{jj} - b_{jj}  \; \; \text{ and } \\
\sum_{\ell\not=j} \big(\sum_{k \not=j} b_{k \ell} \frac{\theta_{jk}}{\theta_{jj}}\big) 
\frac{\theta_{j \ell}}{\theta_{jj}}  
& = &
\sum_{\ell\not=j} \inv{\theta_{jj}}\big(\ip{B_{\ell \cdot}, \Theta_{j \cdot}} -  b_{j\ell}\theta_{jj} \big)
\frac{\theta_{j \ell}}{\theta_{jj}}  = \sum_{\ell\not=j} -  b_{j\ell} \frac{\theta_{j \ell}}{\theta_{jj} }\\
& = & \ip{B_{j \cdot}, \Theta_{j\cdot}}/{\theta_{jj}}  + b_{jj}  = b_{jj}-1/{\theta_{jj}}  > 0
\eens
since $B_0 \succ 0$;
Now we verify the zero covariance condition: for all $\ell \not=j$,
\bens
\cov(X^{\ell}, V_j)  
& =& 
\cov(X^{\ell}, X^j- \sum_{k \not=j} X^{k} \beta_k^{j*})= 
\cov(X^{\ell}, X^j + \sum_{k \not=j} X^{k} {\theta_{jk}}/{\theta_{jj}}) \\
& =&
b_{\ell j} A_0 + A_0 \sum_{k \not=j} b_{\ell k} {\theta_{jk}}/{\theta_{jj}}  
= A_0 \big( b_{\ell j} {\theta_{jj}}/{\theta_{jj}} +
  \sum_{k \not=j} b_{\ell k} {\theta_{jk}}/{\theta_{jj}}  \big) \\
  & =&  \ip{B_{\ell \cdot}, \Theta_{j\cdot}} A_0/{\theta_{jj}} = 0
  \eens
  This proves Proposition~\ref{prop::projection}.
\end{proofof2}

\subsection{Proof of Theorem~\ref{thm::main-intro}}
\label{sec::appendintromain}
Let $\Delta = \tilde{B}_0  - B_0$.
Let $\overline{\Delta}_{B} = \offd(\tilde{B}_0 - B_0)/{\twonorm{B_0}}$.
By  Theorem~\ref{thm::main} and Lemma~\ref{lemma::quadreduction}, we have
on event $\F_4^c$, for all $q \in (\sqrt{s_0} B_1^n \cap B_2^n)$ and $d := 2s_0 \wedge n$,
\bens
{\abs{q^T \overline\Delta_B q}}
& = & {\abs{q^T  \offd(\X \X^T -B_0
    \circ \M) q}}/(\norm{\M}_{\offd} {\twonorm{B_0}}) \\
& \le &
4 C_{\sparse} \Big(\ul{r_{\offd}} \sqrt{s_0} +r_{\offd}^2 (s_0) 
\psi_B(d)\Big) =: 4\delta_q;
\eens
Hence by~\eqref{eq::convex},~\eqref{eq::MB2}, and~\eqref{eq::rmn},
we have on event $\F_{\diag}^c \cap \F_4^c$, $\forall q \in (\sqrt{s_0} B_1^n \cap B_2^n)$,
\bens
{\abs{q^T \Delta q}}/{\twonorm{B_0}}  & \le &
C_{\diag} r_{\diag} + {\abs{q^T \overline\Delta_B q}}
\le C_{\diag} r_{\diag} + 4 C_{\sparse} \Big(\ul{r_{\offd}}
\sqrt{s_0} +r_{\offd}^2 (s_0)  \psi_B(d)\Big) \\
& \asymp & \eta_A  r_{\offd}(s_0) \ell^{1/2}_{s_0, n} +
r_{\offd}^2(s_0) \psi_B(d),  \;\text{where } \\
{\abs{q^T \diag(\Delta) q}}/{\twonorm{B_0}}  & = &
\abs{q^T  \diag(\X  \X^T  -B_0 \circ \M) 
  q}/(\twonorm{B_0}\norm{\M}_{\diag})  \le  C_{\diag} r_{\diag} = o(\delta_q) 
\eens
The theorem thus holds.
\qed

\silent{
\subsection{Proof of Theorem~\ref{coro::thetaDet}}
\label{sec::proofofinverses}
Let $\tilde\Theta_{j \cdot}$ denote the $j^{th}$ row vector of 
$\tilde\Theta$ following Definition~\ref{def::TopHat}.
In Lemma~\ref{lemma::tidebound}, we derive error bounds for estimating the  
diagonal entries of $\Theta_0$, as well as the error bounds for constructing row vectors $\{\Theta_{j \cdot}, j\in [n]\}$ of 
$\Theta_0$ with $\{\tilde\Theta_{j \cdot}, j \in [n]\}$. Let $\kappa_B := \twonorm{B_0}/\lambda_{\min}(B_0)$.
Let  $\tilde{\kappa}_{\rho} := M_{\rho} M_{\Omega} \ge \kappa_{\rho}$ be an upper estimate on the condition number of $\rho(B)$ under 
(A1). Let $\alpha$ be as in \eqref{eq::lassopen}. Suppose
\ben 
\label{eq::parityproof}
&& {\sum_{j} a_{jj} p_j^2}/{\twonorm{A_0}}  \ge 4 C_{\overall}^2
\eta_A^2\kappa_B^2 \twonorm{\Theta_0}^2 d_0 \log (m \vee n), 
\text{ where } \\
\nonumber
%\label{eq::defineCoverall}
&& C_{\overall}  := 16 C_{\alpha} C_{\gamma} b_{\infty} \tilde{\kappa}_{\rho}
\text{ for }  \; C_{\gamma} \ge c_{\gamma} \vee C_{\max}, \; \; C_{\alpha} := \lambda_{\min}(B_0)/{\alpha},
\een
and $C_{\max}$ and $c_{\gamma}$ are as defined in~\eqref{eq::BHatoffd} and~\eqref{eq::halflambda}, respectively.
Lemma~\ref{lemma::tidebound} is proved in supplementary
Section~\ref{sec::proofoftide},
which follows steps from Corollary 
5~\cite{LW12}. 
\begin{lemma}
  \label{lemma::tidebound}
  Suppose all conditions in Theorem~\ref{coro::thetaDet} hold. 
  Suppose~\eqref{eq::parityproof} holds.
Let $\alpha=\lambda_{\min}(B_0) /C_{\alpha}$, where $2> C_{\alpha} > 1$.
Then for each $j$, $(a) \abs{\tilde{\Theta}_{jj}} \le 2 \abs{\theta_{jj}}$;
\bens
(b) && \abs{\tilde{\Theta}_{jj} - \theta_{jj}} =\abs{\hat{a}_j - a_j}
\le  2 C_{\overall} \ul{r_{\offd}} \sqrt{d_0} \kappa_B
\theta^2_{\max},\; \; \text{ where }\; \; \theta_{\max} := \max_{jj} \theta_{jj};\\
(c)  && \onenorm{\tilde{\Theta}_{j \cdot} - \Theta_{j \cdot}} \le {2 C_{\overall} \ul{r_{\offd}} d_0 } \theta_{\max} (\kappa_B+3)/{\lambda_{\min}(B_0)}.
\eens
\end{lemma}

\begin{proofof}{Theorem~\ref{coro::thetaDet}}
Clearly, Condition~\eqref{eq::parityproof} holds under the assumption that 
$\ul{r_{\offd}} \sqrt{d_0} = o(1)$ as imposed 
in~\eqref{eq::paritydual} in Theorem~\ref{coro::thetaDet}; See~\eqref{eq::parity4}. 
  Following Lemma~\ref{lemma::tidebound}, 
\ben
\label{eq::tildeTheta}
\shnorm{\tilde{\Theta} - \Theta_{0}}_{\infty} & \le & 2 C_{\overall} \ul{r_{\offd}} d_0 \theta_{\max} (\kappa_B+3)
/{\lambda_{\min}(B_0)}\; \\
\nonumber
\; \text{ and hence } \; 
\shnorm{\hat{\Theta} - \Theta_0}_1 
& = & \shnorm{\hat{\Theta} - \Theta_0}_{\infty}  \le 
\nonumber
\shnorm{\hat{\Theta} - \tilde\Theta}_{\infty} +
\shnorm{\Theta_0 - \tilde\Theta}_{\infty}  \\
& \le & 
\nonumber
2 \shnorm{\Theta_0 - \tilde\Theta}_{\infty} =
2 \max_{j} \onenorm{\tilde{\Theta}_{j \cdot} - \Theta_{j \cdot}} 
\een
where the second inequality holds by optimality of $\hat{\Theta}$ in
minimizing $\shnorm{\Theta - \tilde\Theta}_{\infty}$ among all symmetric matrices
and the last  inequality holds by \eqref{eq::tildeTheta}.
\end{proofof}

}

\section{Conclusion}
\label{sec::conclude}
In this paper, we prove new concentration of measure bounds for quadratic
forms as defined in~\eqref{eq::quadorig} and~\eqref{eq::quadpreview}.
Convergence rate in the operator norm for estimating the inverse covariance is
also derived. 
These tools as developed in Section~\ref{sec::reduction}
are the key technical contributions of this paper for dealing
with the sparse quadratic forms such as~\eqref{eq::tensor}.
We mention in passing that the strategy we developed for dealing with 
$\offd(\X \X^T  -B_0 \circ \M)$ for~\eqref{eq::quadorig} will readily
apply when we deal with $\offd(\X^T \X - A_0 \circ \N)$ by symmetry of
the problem, while the bounds on diagonal components follow from the
theory in~\cite{Zhou19}.
One can similarly design an estimator for inverse covariance
$A^{-1}_0$ by adjusting the procedure as described in Definition~\ref{def::TopHat}, 
given the initial estimator $\hat{A}_{\star}$~\eqref{eq::Astar} as an
input. When $p_j = p, \forall j \in [n]$, the problems for estimating $B_0$
and $A_0$ are almost symmetrical, except that we need to make
suitable assumptions to ensure convergence of both inverse covariance
estimators. 
This approach has been thoroughly investigated
in~\cite{Zhou14a} in the context of complete data.
Moreover, it will be interesting to recover a low-rank mean
matrix with noise structure \eqref{eq::perturb} in the missing
value settings. For the complete matrix variate data,
this has been thoroughly explored in~\cite{Horns19},
where controlling the relative estimation error in the operator
norm for $B_0$ (covariance between and among the samples) is crucial for
establishing optimal bounds for mean estimation and for enabling subsequent
large-scale inference in the context of genomics studies. We leave this
investigation as future work.

\subsection*{Acknowledgement}
I would like to thank Tailen Hsing, Po-Ling Loh, and Mark Rudelson for helpful
discussions and my family for their support.
I thank Mark Rudelson for allowing me to present
Theorem~\ref{thm::XX^T} in this paper.
The author also thanks the Editor, the AE and two anonymous referees for
their valuable comments and suggestions.
Initial version of the manuscript entitled ``The Tensor Quadratic
Forms'' was posted as preprint arXiv:2008.03244 in August 2020.

\appendix

\section{Preliminary results}
\label{sec::appendprelim}
First, we need the following definitions and notations.
Let $\vecb^{(1)}, \ldots, \vecb^{(n)}$ denote the column (row) vectors of  
symmetric positive-definite matrix $B_0 \succ 0$.  
Let $u^s_1, \ldots, u^s_n, s=1, \ldots, m$ be independent random variables with two values
$0$ and $1$, and a polynomial $Y = \sum_{e \in \bE} w_e \prod_{(i,j) \in e}
u^i_j$, where $w_e$ is a weight which may have both positive and negative
coefficients, and $\bE$ is a collection of subsets of indices
$\{(i,j), i\in [m], j \in[n]\}$.
If the size of the largest subset in $\bE$ is $k$, $Y$ is called a polynomial of degree $k$. If
all coefficients $w_e$ are positive, then $Y$ is  called a positive polynomial of degree $k$.
A homogeneous polynomial is a polynomial whose nonzero terms all have
the same degree.
%Recall for a matrix $A$, the effective rank $r(A) ={\tr(A)}/{\twonorm{A}}$.
We use the following properties of the Hadamard product:
\bens
A \circ x x^T & = &D_x A D_x \;
\text { and } \; 
\tr(D_{x} A D_{x} A) =  x^T (A \circ A) x 
\eens
for $x \in \R^m$ and $A \in \R^{m \times m}$, from which a simple
consequence is $\tr(D_{x}  A D_{x}) = x^T(A \circ
I) x  =  x^T \diag(A) x$.
For matrix $A$, $r(A)$ denotes the effective rank
$\tr(A)/\twonorm{A}$.

We need the following shorthand notations.
\bnum
\item 
  Let $r_{\offd}(s_0)$ be as in~\eqref{eq::offdrate}:
 \bens
\label{eq::offdratesupp}
\quad
r_{\offd}(s_0) =
\sqrt{s_0 \log \big(\frac{3e n}{s_0\ve}\big) 
  \frac{\twonorm{A_0}}{\sum_{j} a_{jj}  p_j^2}} \; \text{ and }
 \ell_{s_0, n} = \frac{\log (n \vee m)}{\log (3en/(s_0 \ve) )};
\eens
\item
  Let $r_{\diag}$ be the as in Theorem~\ref{thm::diagmain}:
  \bens
  r_{\diag} = \eta_A \big({\twonorm{A_0} \log (m \vee n)
  }/{\norm{\M}_{\diag}}  \big)^{1/2};
  \eens
  \item 
 Let $\delta_q$  denote the rate in~\eqref{eq::rmn}:
 \bens
\delta_q = 
C_{\sparse} \Big(\eta_A \Big(s_0 \log (n \vee m)  \frac{\twonorm{A_0}}{\sum_{j} a_{jj}
  p_j^2}\Big)^{1/2} +r_{\offd}^2 (s_0) \psi_B(2s_0 \wedge n)\Big);
\eens
\item
  Let $\delta_q(B) =\Omega(r_{\offd}(n))$ be as in~\eqref{eq::Bhatop},
  for $r_{\offd}(n) \asymp \big(n \twonorm{A_0}/(\sum_{j} a_{jj}
  p_j^2)\big)^{1/2}$, where we show the rate of convergence on: $\twonorm{\tilde{B}_0 
    -B_0}/{\twonorm{B_0}}$,
  \bens
\delta_q(B) \asymp \eta_A r_{\offd}(n) +
 \Big(1 + \frac{a_{\infty} \psi_B(n)}{\twonorm{A_0}} \Big)
  r^2_{\offd}(n) + \eta^{1/2}_A \sqrt{p_{\max} } r^{3/2}_{\offd}(n) \psi_B(n)
\Big(\frac{\log (n \vee  m) }{n}\Big)^{1/4};
\eens
\item
  Let $\underline{r_{\offd}} := \eta_A  \big(\twonorm{A_0} \log  (m \vee n)/\big(\sum_{j} a_{jj} p_j^2)
  \big)^{1/2} = o(1)$ be as in~\eqref{eq::paritydual};
\item
Let $\delta_{m, \diag}=O(\ul{r_{\diag}}/\sqrt{r(B_0)})$ and  $\delta_{\mask}  =O(\ul{r_{\offd}}/\sqrt{r(B_0)})$ be as defined in Lemma~\ref{lemma::unbiasedmask}.
\enum

\subsection{Organization}
\bnum
\item
  We prove Lemma~\ref{lemma::converse} and
Theorem~\ref{thm::RE}   in Section~\ref{sec::lowerB0half};
\item
We prove Theorem~\ref{thm::AD} and Corollary~\ref{coro::tartan}  regarding the fully observed matrix variate data in Section~\ref{sec::proofofthmAD};
\item 
We prove Theorem~\ref{thm::main-coro} and 
Corollary~\ref{coro::Bhatnorm} in Section~\ref{sec::proofofBstarop};
\item 
We prove Lemma~\ref{lemma::unbiasedmask} in Section~\ref{sec::maskest}; 
\item 
  We prove Theorem~\ref{thm::gramsparse},
  Lemma~\ref{eq::avgrowsum}, and Corollary~\ref{coro::offdn}  in Section~\ref{sec::appendcoroproof};
\item 
We prove Theorem~\ref{thm::main} in  Section~\ref{sec::appendproofoffdmain};
%Sections~\ref{sec::mainresult} and its proof in
\item
We prove key lemmas for Theorem~\ref{coro::thetaDet} in Section~\ref{sec::inverseLemma};
\item 
We prove Lemma~\ref{lemma::Gammabounds} in Section~\ref{sec::proofofGamma}; 
\item
We prove Theorem~\ref{thm::samplesize} in Section~\ref{sec::appendsamplesize};
\item
We prove Theorem~\ref{thm::mainlights} in  Section~\ref{sec::appendmainlights};
\item
We prove Theorem~\ref{thm::mainop} (Theorem~\ref{thm::mainop2}) in Section~\ref{sec::bernchaos};
\item
We prove Theorem~\ref{thm::uninorm-intro} (Theorem~\ref{thm::uninorm2}) in 
Sections~\ref{sec::uninormskye} and~\ref{sec::polyconc};
\item
We prove Theorem~\ref{thm::Bernmgf} in Section~\ref{sec::proofofbasemgf};
\item
We prove Lemma~\ref{lemma::pairwise} in 
Section~\ref{sec::proofofpairwise} for the purpose of self-containment.
\enum

%For completeness,  we first state the entrywise bounds on $\offd(B_0)$
%in Lemma~\ref{lemma::pairwise}, which follows from 
%Lemma~\ref{lemma::unbiasedmask} and component-wise rate of convergence
%as derived in~\cite{Zhou19}(cf. Theorem 5.4 therein). 

\section{Proof of Lemma~\ref{lemma::converse} and Theorem~\ref{thm::RE}}
%and Lemma~\ref{eq::avgrowsum}}
\label{sec::lowerB0half}
\silent{First, we need the following definition.
\begin{definition}
  \label{def::sparse-eigen}
For $1\le s_0 \leq  n$, we define the largest $s_0$-sparse eigenvalue
of an $n \times n$ matrix $B_0 \succ 0$ to be:  $\rho_{\max}(s_0, B_0) := 
\max_{v \in \Sp^{n-1}; s_0-\text{sparse}} \; \; v^T B_0 v$. As a consequence of the Rayleigh-Ritz theorem,
\ben 
\label{eq::eigen-Sigma}
\max_{j} b_{jj} =: b_{\infty} & \le &\rho_{\max}(s_0, B_0) \le 
\twonorm{B_0} \le \twonorm{ (\abs{b_{ij}})}
\een
\end{definition}}

\begin{proofof}{Lemma~\ref{lemma::converse}}
 Now since $\abs{B_0} \ge 0$, that is, all entries $\abs{b_{ij}}$ are either
 postive or zero, we have by \eqref{eq::eigen-Sigma},
\bens
\rho_{\max}(s_0, \abs{B_0})
\nonumber 
& :=& \max_{q \in \Sp^{n-1}, s_0-\sparse} \sum_{i=1}^n \sum_{j=1}^n \abs{q_i} \abs{q_j} \abs{b_{ij}} \\
&  = & \max_{S \subset [n]: \abs{S} = s_0}
\lambda_{\max}(\abs{B_0}_{S,S})
\le   \max_{S \subset [n]: \abs{S} = s_0} \norm{\abs{B_0}_{S,S}}_{\infty}  \\
 &  = &
 \max_{S \subset [n]: \abs{S} = s_0}
 \norm{B_{0, S,S}}_{\infty}  \le \sqrt{s_0} \twonorm{B_0}
 \eens
 \text{ and } \;by \eqref{eq::eigen-Sigma}
 \bens
\rho_{\max}(s_0, (\abs{b_{ij}}))
& \ge &
\nonumber
\max_{q \in \Sp^{n-1}, s_0-\sparse} \sum_{i=1}^n   b_{ij} q_i q_j =:
\rho_{\max}(s_0, B_0)   \ge b_{\infty}.
\eens
\end{proofof}

\subsection{Proof of Theorem~\ref{thm::RE}}
\label{sec::appendproofofRE}
First, we state Theorem~\ref{thm::REdet}, which follows from
Corollary 25~\cite{RZ17}, adapted to our settings and allows us to use Theorems~\ref{thm::diagmain} and~\ref{thm::main}
to prove the lower and upper-$\RE$-conditions.
Denote by $\kappa_B$ the condition number for $B_0$.

\begin{theorem}{\textnormal{\cite{RZ17}}}
\label{thm::REdet}
Suppose $1/8> \delta > 0$. Let $1 \le s_0 < n$. Let $B_0$ be a
symmetric positive definite covariance matrice such that $\tr(B_0)   =n$. Let $\hat{B}$ be an $n \times n$ symmetric matrix and $\hat\Delta = \hat{B} - B_0$.
Let $E = \cup_{\abs{J} \le s_0} E_J$, where $E_J = \Span\{e_j, j \in J\}$.
Suppose that for all $q, h \in E \cap \Sp^{n-1}$
\ben
\label{eq::REcond}
\abs{q^T  \hat{\Delta} h} \le \delta \le 
3 \lambda_{\min}(B_0)/32.
\een
Then the Lower and Upper $\RE$ conditions hold: for all $q \in \R^m$,
\ben
\label{eq::BDlowlocal}
q^T \hat{B} q & \ge & \frac{5}{8} \lambda_{\min}(B_0) 
 \twonorm{q}^2 -\frac{3 \lambda_{\min}(B_0)}{8 s_0} \onenorm{q}^2 \\
 \label{eq::BDuplocal}
 q^T \hat{B} q & \le & ( \lambda_{\max}(B_0) +\frac{3}{8} \lambda_{\min}(B_0)) 
 \twonorm{q}^2  +\frac{3\lambda_{\min}(B_0)}{8 s_0} \onenorm{q}^2.
 \een
\end{theorem}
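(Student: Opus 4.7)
The plan is to prove this deterministic statement by using Lemma~\ref{lemma::quadreduction} (already established earlier in the paper) to lift the bound $|q^T \hat\Delta h| \le \delta$ from pairs of $s_0$-sparse unit vectors to all vectors in the convex body $\sqrt{s_0} B_1^n \cap B_2^n$, and then to extend it to arbitrary $q \in \R^n$ via a simple rescaling argument.

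First I would apply Lemma~\ref{lemma::quadreduction} directly to $\hat\Delta = \hat{B} - B_0$. The hypothesis \eqref{eq::REcond} gives exactly the input required by that lemma, so the conclusion is that $|\nu^T \hat\Delta \nu| \le 4\delta$ for every $\nu \in \sqrt{s_0} B_1^n \cap B_2^n$. The next step is the scaling trick: for an arbitrary nonzero $q \in \R^n$, set $T := \twonorm{q} \vee \onenorm{q}/\sqrt{s_0}$; then $q/T$ lies in $\sqrt{s_0} B_1^n \cap B_2^n$ because $\twonorm{q/T} \le 1$ and $\onenorm{q/T} \le \sqrt{s_0}$. Therefore
\begin{equation*}
\abs{q^T \hat\Delta q} \le 4 \delta T^2 \le 4\delta\Bigl(\twonorm{q}^2 + \tfrac{1}{s_0}\onenorm{q}^2\Bigr),
\end{equation*}
where the last inequality uses $(\max(a,b))^2 \le a^2 + b^2$ applied to $a=\twonorm{q}$ and $b=\onenorm{q}/\sqrt{s_0}$.

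Next I would invoke the hypothesis $\delta \le 3\lambda_{\min}(B_0)/32$ to rewrite the above bound as
\begin{equation*}
\abs{q^T \hat\Delta q} \le \tfrac{3\lambda_{\min}(B_0)}{8}\twonorm{q}^2 + \tfrac{3\lambda_{\min}(B_0)}{8 s_0}\onenorm{q}^2.
\end{equation*}
Now the Lower-$\RE$ bound \eqref{eq::BDlowlocal} follows from writing $q^T \hat{B} q = q^T B_0 q + q^T \hat\Delta q$ and using $q^T B_0 q \ge \lambda_{\min}(B_0)\twonorm{q}^2$ together with the preceding display, since $1 - 3/8 = 5/8$. Symmetrically, the Upper-$\RE$ bound \eqref{eq::BDuplocal} follows from $q^T B_0 q \le \lambda_{\max}(B_0)\twonorm{q}^2$ and the same error bound with the opposite sign.

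There is no real obstacle here since Lemma~\ref{lemma::quadreduction} has already done the heavy lifting by converting the bilinear bound on sparse pairs to a quadratic bound on the enlarged convex set $\sqrt{s_0} B_1^n \cap B_2^n$; what remains is purely algebraic. The only point requiring any care is the scaling identity $q/T \in \sqrt{s_0} B_1^n \cap B_2^n$ and the clean choice of the threshold $3\lambda_{\min}(B_0)/32$, which is exactly tuned so that $4\delta/\lambda_{\min}(B_0) \le 3/8$, yielding the stated constants $5/8$ and $3/8$ in \eqref{eq::BDlowlocal} and \eqref{eq::BDuplocal}.
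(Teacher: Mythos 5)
Your proof is correct. The paper does not reproduce the argument for this theorem at all; it simply states that it ``follows from Corollary 25 of~\cite{RZ17}, adapted to our settings,'' and the self-contained derivation you give — Lemma~\ref{lemma::quadreduction} to pass from sparse bilinear bounds to the quadratic bound over $\sqrt{s_0}B_1^n \cap B_2^n$, then the rescaling $q \mapsto q/T$ with $T = \twonorm{q} \vee \onenorm{q}/\sqrt{s_0}$ and the inequality $T^2 \le \twonorm{q}^2 + \onenorm{q}^2/s_0$, followed by $q^T\hat{B}q = q^T B_0 q + q^T\hat\Delta q$ and the eigenvalue bounds on $B_0$ — is precisely the argument that produces the stated constants $5/8$, $3/8$, and $3/(8s_0)$ from the threshold $\delta \le 3\lambda_{\min}(B_0)/32$.
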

\begin{proofof}{Theorem~\ref{thm::RE}}
  Let $E$ be as defined in Theorem~\ref{thm::REdet}.
  Let $d=2s_0 \wedge n$.
  Set $C_{\RE} = 2(C_{\diag} \vee C_{\sparse})$. 
 First note that
\bens 
r_{\diag}
& := & \frac{\eta_A \sqrt{ \twonorm{A_0} \log 
    (m \vee n) }}{\sqrt{\sum_{j=1}^m a_{jj}  p_j}} \le \ul{r_{\offd}}
\eens
Then on event $\F_{\diag}^c \cap \F_4^c$,
we have by Theorems~\ref{thm::diagmain} and~\ref{thm::main},
$r_{\diag} \le \ul{r_{\offd}}$,
\ben
\nonumber
\lefteqn{\forall q, h \in E \cap 
\Sp^{n-1} \; \;   \abs{q^T  (\tilde{B}_0 -B_0) h}  =  \abs{q^T ( (\X \X^T \oslash
  \M) -B_0 ) h}} \\
\label{eq::uniform3}
  & \le &   C_{\diag} b_{\infty}  r_{\diag} +   \delta_q
  \twonorm{B_0}  \le 3 \lambda_{\min}(B_0)/32
\een
where
$\delta_q$ is as defined in \eqref{eq::rmn} and $\ul{r_{\offd} }$ as
in~\eqref{eq::paritydual},
\ben
\nonumber
\delta_q & := &
C_{\sparse} \big(\ul{ r_{\offd}} \sqrt{s_0} +  r^2_{\offd}(s_0)
  \psi_B(d) \big) 
\een
Now by~\eqref{eq::forte}, we have for $\psi_B(2s_0 \wedge n) =:  \psi_B(d)$,
\bens
\frac{{\sum_{j} a_{jj} p_j^2} }{s_0 \eta_A^2 \twonorm{A_0} \log (m
  \vee n)}
&\ge&
\big(128 C_{\RE} \kappa_B \psi_B (d) \big) \vee
\big((128/11)^2 C_{\RE}^2 \kappa_B^2 \big) \text{ so that } \\
(C_{\diag}  + C_{\sparse})\ul{r_{\offd}}\sqrt{s_0}
& \le &
C_{\RE} \eta_A
\sqrt{\frac{\twonorm{A_0} s_0 \log  (m \vee n) }{\sum_{j} a_{jj}
    p_j^2}} \le {11}/{(128\kappa_B)}\;
\; \text{ and } \\ 
C_{\sparse} r^2_{\offd}(s_0) \psi_B(d)
& =& 
C_{\sparse} \frac{\twonorm{A_0}s_0 \psi_B(d) } {\sum_{j} a_{jj} p_j^2} 
\log\big(\frac{3en}{s_0 \ve}\big) \\
& \le& 
C_{\RE} \frac{\twonorm{A_0} s_0 \psi_B(d)}{\sum_{j} a_{jj} p_j^2}\log
(m \vee n) \le   1/{(128 \kappa_B)}
\eens
\text{ where }
$C_{\RE} \log (m \vee n) \ge  2 C_{\sparse} \log (m \vee n) \ge
C_{\sparse} \log\big(\frac{3en}{s_0 \ve}\big)$, which holds so long as $ (m \vee n)^2 \ge 3en/(\ve s_0)$, or simply $n
\ge 3e/(2\ve)$.
Theorem~\ref{thm::RE} follows from Theorem~\ref{thm::REdet} in view of~\eqref{eq::uniform3} immediately above. 
\end{proofof}

\section{Proof of Theorem~\ref{thm::AD}}
\label{sec::proofofthmAD}
Let $Z \in \R^{n \times m}$ be as in~\eqref{eq::missingdata}.
Denote by $r(A_0) = \tr(A_0)/\twonorm{A_0}$.
Fix $s_0 \in [n]$ and $0< \ve < 1/2$.
For a set $J \subset \{1, \ldots, n\}$,  denote by $E_J = \spin\{e_j:
j \in J\}$,
and set  $F_J=B^{1/2} E_J$.
Let $F =  B^{1/2} E$, where 
\bens 
\label{eq::spsetEF}
E = \bigcup_{\abs{J} =s_0} E_J 
\; \; \text{ and } \; \; F = \bigcup_{\abs{J} =s_0} F_J. 
\eens
For each subset $F_J \cap   \Sp^{n-1}$, construct  an $\ve$-net $\Pi_J$, which satisfies
\ben
\nonumber
\Pi_J \subset F_J \cap  \Sp^{n-1}  && \text{ and } \;\; \size{\Pi_{J}
} \le (1 + 2/\ve)^{s_0} \\
\label{eq::fishnet}
\text{ Set } \;  \Pi_F &= &\bigcup_{\abs{J} = s_0} \Pi_{J}.
\een
Then $\Pi_F \subset F \cap \Sp^{n-1}$ is an $\ve$-net of $F$; see for
example the proof of Theorem 17~\citep{RZ13}.

%Throughout this section, the choice of $C = C_0/\sqrt{c'}$ satisfies
%the conditions on $C$ in Lemma~\ref{lemma::normA}.
To prove Theorem~\ref{thm::AD},
we first state Lemmas~\ref{lemma::oneeventA} and~\ref{lemma::normA}.
Lemma~\ref{lemma::oneeventA} follows from Lemma 32~\cite{RZ17}, where
we set $K = 1$, which in turn follows from~Theorem~\ref{thm::HW}.
\begin{lemma}
\label{lemma::oneeventA}
Let $u, w \in \Sp^{n-1}$. 
Let $A \succ 0$ be an $m \times m$ symmetric positive definite matrix.
Let $Z$ be an $n \times m$ random matrix with independent entries $Z_{ij}$ satisfying
$\E Z_{ij} = 0$ and  $\norm{Z_{ij}}_{\psi_2} \leq 1$.
Then for every $t > 0$,
\bens
\prob{\abs{u^T Z A Z^T w - \E u^T Z A Z^T w } >  t}
& \le & 
2 \exp\left(-c\min\left(\frac{t^2}{\fnorm{A}^2}, \frac{t}{\twonorm{A}}\right)\right),
\eens
where $c$ is the same constant as defined in Theorem~\ref{thm::HW}.
\end{lemma}

We prove Lemma~\ref{lemma::normA} in Section~\ref{sec::proofofnormA}.
\begin{lemma}
\label{lemma::normA}
Suppose all conditions in Theorem~\ref{thm::AD} hold.
Let $c' > 1$ be the same as in~\eqref{eq::trB}.
Let $C = C_0/\sqrt{c'}$, where $C_0$ is an absolute constant chosen to satisfy $\min\{C_0, 
C_0^2\} \ge 4/c$ for $c$ as defined in Theorem~\ref{thm::HW}. 
\silent{Suppose that for some $c' > 0$ and $0< \ve \le \inv{C}$, where $C = C_0/\sqrt{c'}$}
Then on event $\G_2$, where $\prob{\G_2} \ge 1- 2 \exp\left(-c_2\ve^2 r(A_0)\right)$ for $c_2 \ge 2$, 
we have
\ben
\label{eq::RZM2}
\max_{y, w \in F \cap  \Sp^{n-1}} \abs{w^T ( Z A_0 Z^T - \E (Z A_0 Z^T)) y}
& \le & 4 C \ve \tr(A_0).
\een
\end{lemma}
We are now ready to prove Theorem~\ref{thm::AD}.

\subsection{Proof of Theorem~\ref{thm::AD}}
\begin{proofof2}
  % {Theorem~\ref{thm::AD}}
  For $u  \in E \cap \Sp^{n-1}$, denote by $h(u) := {B^{1/2}  u}/{\twonorm{B^{1/2} u}}$. 
Then clearly $h(u) \in \Sp^{n-1} \cap 
B^{1/2} E =: \Sp^{n-1} \cap  F$. 
Let 
\bens
\Delta :=  \inv{\tr(A_0)} X X^T - B_0  =: \hat{B} - B_0
& = &  \inv{\tr(A_0)}  [B_0^{1/2} Z A_0 Z^T B_0^{1/2}- B_0 \tr(A_0)]
\eens 
Now we have by definition of $\Delta$ and $X =  B_0^{1/2} Z A_0^{1/2}$,
\ben 
\nonumber
u^T \Delta v & := & u^T(\inv{\tr(A_0)} X X^T - B_0) v \\
& := &
\label{eq::defineAD}
\inv{\tr(A_0)}  \big(u^T B_0^{1/2} (Z A_0 Z^T) B_0^{1/2} v - \tr(A_0)
u^T B_0 v \big) =: Q_A(u, v)
\een
Now, we provide a uniform bound for all
$u, v \in E \cap \Sp^{n-1}$.
We have for $C$ as defined in Lemma~\ref{lemma::normA}, and
$\rho_{\max}(s_0, B_0)$ as in Definition~\ref{def::sparse-eigen},
on event $\G_2$, for all $u, v \in E \cap \Sp^{n-1}$,
\ben
\abs{Q_A(u, v)}
%& =: &  \inv{\tr(A_0)}
%\abs{u^T B_0^{1/2} Z A_0 Z^T B_0^{1/2} v - \tr(A_0) u^T B_0 v } \\
& := &
\nonumber
  \inv{\tr(A_0)}
\abs{h(u)^T [Z A_0 Z^T - I_n \tr(A_0)] h(v)} \shnorm{B_0^{1/2} v}_2
\shnorm{B_0^{1/2} u}_2 \\
%& \le &  \inv{\tr(A_0)} \abs{h(u)^T [Z A_0 Z^T - I_n\tr(A_0)] h(v)} \rho_{\max}(s_0, B_0) \\
& \le &
\nonumber
\sup_{w, y \in F \cap \Sp^{n-1}} \inv{\tr(A_0)} \abs{w^T [Z A_0 Z^T  -\E
  (Z A_0 Z^T)] y} \rho_{\max}(s_0, B_0) \\
\label{eq::uniform5}
& \le & 4 C \ve \rho_{\max}(s_0 ,B_0) 
\een
Thus so long as $\ve$ satisfies the condition in \eqref{eq::trB}, we
have by~\eqref{eq::defineAD} and~\eqref{eq::uniform5}, on event $\G_2$, 
\bens
\sup_{u, v \in E \cap \Sp^{n-1}} \abs{u^T \Delta v}
& = & \sup_{u, v \in E \cap \Sp^{n-1}}\abs{Q_A(u, v)} \le  4 C \ve \rho_{\max}(s_0, B_0) \\
& = & 
\frac{3 \lambda_{\min}(B_0)}{32\rho_{\max}(s_0, B_0)} \rho_{\max}(s_0, B_0) 
\le 3 \lambda_{\min}(B_0)/32 
\eens
Theorem~\ref{thm::RE} follows from Theorem~\ref{thm::REdet} in view
of the last inequality immediately above.
\end{proofof2}

\subsection{Proof of Corollary~\ref{coro::tartan}}
\label{sec::proofoftartan}
\begin{proofof2}
Choose an $\ve$-net $\N \subset \Sp^{n-1}$ such that $\abs{\N} \le (3/\ve)^{n}$.
First we prove concentration bounds for $u  \in \N$.

Let $t  = C \ve \tr(A_0)$.
We have by Lemma~\ref{lemma::oneeventA},
and the union bound,
\bens
\lefteqn{\prob{\exists u \in \N, \; 
\abs{u^T Z A_0 Z^T u - \E u^T Z A_0 Z^T u} >  C \ve \tr(A_0)} =: \prob{\G_3^c}} \\
& \leq &
2 \abs{\N} \exp \left[- c\min\left(\frac{t^2}{\fnorm{A_0}^2}, 
    \frac{t}{\twonorm{A_0}} \right)\right] \\
& \leq & 2  (3/\ve)^n \exp \left[- c\min\left(\frac{C^2 \ve^2
      \tr(A_0)^2}{\twonorm{A_0}\tr(A_0)},
    \frac{C \ve \tr(A_0)}{\twonorm{A_0}} \right)\right] \\
& \leq &
2  \exp(n \log(3/\ve)) \exp \left[- c\min\left(C^2, \frac{C}{\ve} \right) \ve^2
    r(A) \right]  \le  2 \exp\left(-c_3 \ve^2 r(A_0) \right),
\eens
where we use the fact that $\fnorm{A_0}^2 \le \twonorm{A_0}\tr(A_0)$, 
\bens
\abs{\N} \le (3/\ve)^n, &&  r(A_0) \ve^2 \ge c' n
\log(3/\ve) \; \text{ and for } \;  \ve \le \inv{C} \\
c \min\left(C^2, \frac{C}{\ve} \right) \ve^2 r(A_0) & \ge &
c c' C^2 n \log(3e /\ve) \ge 4 n \log(\frac{3e}{\ve}),
\eens
where $C$ is an absolute constant chosen so that $ c c' C^2 \ge 4$.
%We now control the quadratic form uniformly over the sphere $\Sp^{n-1}$.
A standard approximation argument shows that if
$\max_{w \in \N} \abs{w^T \inv{\tr(A_0)}(Z A_0 Z^T- \E (Z A_0 Z^T)) w} 
\le C \ve$ for $\ve \le 1/2$, then
\ben 
\label{eq::sphereL}
\sup_{w \in \Sp^{n-1}} \abs{w^T \inv{\tr(A_0)}(Z A_0 Z^T- \E (Z A_0 Z^T)) w} 
\le \frac{ C \ve}{(1-\ve)^2} \le 4 C  \ve. 
\een
Denote by 
\bens 
\abs{Q_A(u)} 
& =: & \abs{u^T  (\inv{\tr(A_0)}  X X^T - B_0) u}
\eens 
% = \inv{\tr(A_0)} \abs{u^T B_0^{1/2} (Z A Z^T - \E (Z A Z^T)) B_0^{1/2} u}
Thus we have on event $\G_3$, for all $u \in \Sp^{n-1}$, and $h(u) :=
{B_0^{1/2}  u}/{\shnorm{B_0^{1/2} u}_2} \in \Sp^{n-1}$
\bens
\abs{Q_A(u)}
 & =: &
\inv{\tr(A_0)} \abs{h(u)^T [Z A_0 Z^T - I_n \tr(A_0)] h(u)} \twonorm{B_0^{1/2} u}^2\\
& \le &
\sup_{w \in \Sp^{n-1}} \inv{\tr(A_0)} \abs{w^T [Z A_0 Z^T  -\E (Z A_0 Z^T)] w} \lambda_{\max}(B_0) \\
& \le & 4 C \ve \lambda_{\max}(B_0) 
\eens
\silent{Thus so long as $ \ve>0$ satisfies the condition that
\bens
\ve \le \frac{3 \lambda_{\min}(B_0)}{128C  \lambda_{\max}(B_0) } = \frac{3}{128C \kappa(B_0)}
\eens
where $\kappa(B_0) =  \lambda_{\max}(B_0)/\lambda_{\min}(B_0)$ denotes
the condition number of $B_0$,}
The corollary thus holds, since on event $\G_3$, the relative error
for covariance estimation:
\bens
\twonorm{X X^T/\tr(A_0)  - B_0}/\twonorm{B_0}
& =&
\sup_{u \in \Sp^{n-1}} \abs{u^T (X X^T/\tr(A_0)  - B_0) 
  u}/\twonorm{B_0} \\
& = &
\sup_{u \in \Sp^{n-1}} \abs{Q_A(u)} /\twonorm{B_0} 
\le 4 C \ve
  \eens
\end{proofof2}

\subsection{Proof of  Lemma~\ref{lemma::normA}}
\label{sec::proofofnormA}
The proof of Lemma~\ref{lemma::normA} essentially follows the same
argument as that of Lemma 40 in~\cite{RZ17}, where we construct an $\ve$-net
for $E$ rather than $F$.
In fact, as shown in~\cite{RZ17}, the same conclusion holds for all $u, v \in E \cap \Sp^{n-1}$;
On event $\G_1$,  where $\prob{\G_1} \ge 1- 2 \exp(-c_2\ve^2 r(A_0))$ for $c_2 \ge 2$, we have
\ben
\label{eq::RZM1}
\max_{u, v \in E \cap \Sp^{n-1}} \abs{u^T (Z A_0 Z^T - \E (Z A_0 Z^T) ) v } & \le & 4 C \ve \tr(A_0). 
\een
To prove uniform concentration of measure bounds for the quadratic form 
for all pairs of $u, v \in \Pi_F$,  where $\Pi_F \subset \Sp^{n-1}$ is an $\ve$-net of $F$ as constructed 
in \eqref{eq::fishnet} with size satisfying~\eqref{eq::Fnet}:
\ben
\label{eq::Fnet}
\abs{\Pi_F} \le {n \choose {s_0}}(3/\ve)^{s_0} \le \exp({s_0} \log(\frac{3 e
n}{{s_0} \ve})).
\een
Then $\Pi_F$ is an $\ve$-net for $F \cap \Sp^{n-1}$; See~\cite{RZ13},
proof of Theorem 17.

\begin{proofof}{Lemma~\ref{lemma::normA}}
Let $C = C_0/\sqrt{c'}$, where $C_0$ is an absolute constant chosen to satisfy $\min\{C_0, 
C_0^2\} \ge 4/c$, where $c$ is the same as in Lemma~\ref{lemma::oneeventA}.
Let $t  = C \ve \tr(A_0)$.
We have by Lemma~\ref{lemma::oneeventA}, and the union bound,
\ben
\nonumber
\lefteqn{
\prob{\exists u, v \in \Pi_F, \; 
\abs{u^T Z A_0 Z^T v - \E u^T Z A_0 Z^T v} >  C \ve \tr(A_0)}} \\
& \leq &
\nonumber
2 \abs{\Pi_F}^2 \exp \left[- c\min\left(\frac{t^2}{\fnorm{A_0}^2}, 
    \frac{t}{\twonorm{A_0}} \right)\right] \\
%& \leq & 2 \abs{\Pi}^2 \exp \left[- c\min\left(\frac{C^2 \ve^2 \tr(A)^2}{\twonorm{A}\tr(A)}, \frac{C \ve \tr(A)}{\twonorm{A}} \right)\right] \\
& \leq &
\label{eq::last}
2 \abs{\Pi_F}^2 \exp \left[- c\min\left(C^2, \frac{C}{\ve} \right) \ve^2
    r(A_0) \right]  \le  2 \exp\left(-c_2\ve^2 r(A_0) \right),
\een
where we use the fact that $\fnorm{A_0}^2 \le \twonorm{A_0}\tr(A_0)$;
In the last inequality~\eqref{eq::last}, we use the fact
\eqref{eq::Fnet}, and consider two cases:
\bit
\item
Suppose $0< \ve \le \inv{C}$,
\bens 
c \min\left(C^2, \frac{C}{\ve} \right) \ve^2 r(A_0) & = & c C^2 \ve^2 r(A_0) 
\ge  c c' C^2 s_0 \log(3e n/(s_0 \ve) ) \ge 4 s_0 \log\big(\frac{3e n}{s_0 \ve}\big), 
\eens
where $c c' C^2 = c C_0^2 \ge 4$ we have by \eqref{eq::trB},
\ben 
 \label{eq::ALocalkronsum}
\ve^2 r(A_0) & \ge & c' s_0 \log(3e n/(s_0  \ve)) \; \; \text{ where }
\; c' > 1
\een
\item
  Suppose $1>\ve > \inv{C}$ and hence $\frac{C}{\ve} \le C^2$.
Then we have by \eqref{eq::ALocalkronsum} and  $C = C_0/\sqrt{c'}$, 
\bens 
c \min\left(C^2, \frac{C}{\ve} \right) \ve^2 r(A_0) & = &
c c' \frac{C}{\ve}
s_0 \log(\frac{3e n}{(s_0  \ve)}) \ge  4 s_0
\log\big(\frac{3e n}{s_0 \ve}\big), \\
\text{ since } \;\; c c' \frac{C}{\ve} & = & \sqrt{c'} \frac{c C_0}{\ve} \ge  \sqrt{c'} \frac{4}{\ve} 
\eens
where $C_0$ is an absolute constant chosen so that $ c (C_0^2 \wedge C_0) \ge 4$.
\eit
Denote by $\G_2$ the event such that 
\ben
\label{eq::A2}
\max_{u, v \in \Pi_F} \abs{v^T \Lambda u} & \le & C  \ve =: 
\delta_{s_0,n}\; \text{ for } \; \Lambda := \inv{\tr(A_0)} (Z A_0 
Z^T - I_n \tr(A_0));
\een
A standard approximation argument shows that  \eqref{eq::A2} implies
that for $\ve \le 1/2$, 
\ben 
\label{eq::sphereL}
\sup_{w, y\in \Sp^{n-1} \cap F} \abs{y^T \Lambda  w} 
\le \frac{\delta_{s_0,n}}{(1-\ve)^2} \le 4 C  \ve
\een
on event $\G_2$. The lemma is thus proved.
\end{proofof}

\silent{

\subsection{Proof of Corollary~\ref{coro::tartan}}
\label{sec::proofoftartan}

\begin{proofof}{Corollary~\ref{coro::tartan}}
\label{coro::Bfull} 
Clearly~\eqref{eq::wyFnorm} implies that \eqref{eq::ALocalkronsum}
holds for $B=I$.
Clearly~\eqref{eq::BI}  holds following the analysis of 
Lemma~\ref{lemma::normA} by setting $B = I$, while replacing event $\B_1$ with
$\B_3$, which denotes an event such that 
\bens
\sup_{u, v \in \Pi} \onen \abs{v^T (Z^T Z - I) u} 
& \le &  C \ve.
\eens
The rest of the proof follows by replacing $E$ with $F$ everywhere.
The corollary thus holds.
\end{proofof}}

\silent{
  we first state Theorem~\ref{thm::REadapt},
which follows from Corollary 25~\cite{RZ17}, adapted to our settings and allows us to use Theorems~\ref{thm::diagmain} and~\ref{thm::main}
to prove the lower and upper-$\RE$-conditions.
Denote by $\kappa_B$ the condition number for $B_0$.
\begin{theorem}{\textnormal{\cite{RZ17}}}
  \label{thm::REadapt}
Let $E=\cup_{|J| \leq k} E_J$ for $1 \le k < m/2$. 
Suppose $1/8> \delta > 0$. Let $1 \le s_0 < n$.
Suppose that for all $q, h \in E \cap \Sp^{n-1}$
\ben
\label{eq::REcond}
\abs{q^T  \Delta h} \le \delta \le 3 \lambda_{\min}(B_0)/32.
\een
Then \eqref{eq::BDloworig} and \eqref{eq::BDuporig} hold for $\hat{B} =\inv{\tr(A_0)} X X^T$.
\end{theorem}}

\section{Error bounds for $\hat{B}^{\star}$} 
\label{sec::Bstarthm}
We summarize the rates we use in Theorem~\ref{thm::main-coro} and
their relations. Recall by~\eqref{eq::starscale} and
Definition~\ref{def::RMSet}, $B_{\star} = n B_0/\tr(B_0) = B_0 \succ 0$. 
Denote by $r(B_0) ={\tr(B_0)}/{\twonorm{B_0}}$ the effective rank of 
$B_0$. 

\subsection{Elaborations on estimators for $\mathcal{M}$}
\label{sec::maskMN}
Without knowing the parameters, we need to estimate $\M$~\eqref{eq::Bmask} and $\N$ as
used in~\eqref{eq::baseN} and~\eqref{eq::baseNBer}.
Recall $v^i \sim {\bf v}, i =1, \ldots, n$ are independent, and 
\ben
\label{eq::baseM}
&& \expct{v^i \otimes v^i} = 
 \left[
\begin{array}{ccccc} 
\z_1 & \z_1 \z_2 & \z_1 \z_3  &  \ldots &  \z_1 \z_m  \\
\z_2 \z_1 & \z_2 &  \z_2 \z_3 & \ldots &  \z_2 \z_m  \\
%\z_3 \z_1 & \z_3 \z_2 & \z_3 & \ldots &  \z_3 \z_m  \\
\ldots & \ldots & \ldots & \ldots & \ldots\\
 \z_m \z_1 & \z_m \z_2 & \z_m \z_3 & \ldots &  \z_m
\end{array}\right]_{m \times m} =: M 
\een
where expectation denotes the componentwise expectation of each entry  
of $v^i \otimes v^i$, for all $i$.
Clearly we observe $v^i \in \{0, 1\}^m$, the vector of indicator variables for
nonzero entries in the $i^{th}$ row of data matrix $\X$, for all
$i=1, \ldots, n$.
That is,
\bens
v^i_j =1 && \text{ if } \; \; \X_{ij} \not=0 \text{ and } \; \;v^i_j = 0 \quad \text{ if } \; \; \X_{ij} =0
\eens
Hence we observe $M^i :=  v^i \otimes v^i, i \in [n]$, upon which we define an estimator
\ben
\label{eq::hatM}
\hat{M} & = & \onen \sum_{i=1}^n v^i \otimes v^i =  \onen \sum_{i=1}^n M^i
\een
Clearly $\hat{M}$ as defined in \eqref{eq::hatM} is unbiased since $\E
\hat{M} =\E M^i = M, \forall i$, for $M$ as defined in~\eqref{eq::baseM}.
To justify~\eqref{eq::baseN} and~\eqref{eq::baseNBer}, we have by
independence of the mask $U$
and data matrix $X$ as defined in~\eqref{eq::missingdata},
\ben
\label{eq::gramA}
&& \calX^T \calX =(U \circ X)^T (U \circ   X) =  \sum_{i=1}^n { (v^i
  \otimes v^i) \circ (y^i \otimes y^i)} \\
\label{eq::gramAexp}
&& \E \onen \calX^T \calX = \onen \sum_{i=1}^n {\E (v^i \otimes v^i) \circ \E
  (y^i \otimes y^i)} = (\tr(B_0)/n) (A_0 \circ M)
\een
where $M$ is as defined  in~\eqref{eq::baseM} and hence
\ben
\label{eq::diagmask}
 \E \hat\TM_{jj} & = &  \E \tr(\X^T \X)/n = \frac{\tr(B_0)}{n}
 \sum_j a_{jj} \z_j \; \text{ for }\; \hat\TM \text{ as in }~\eqref{eq::Bstar},
\een
where expectation in~\eqref{eq::gramAexp} denotes the componentwise expectation of each entry 
of $\calX^T \calX$.
Clearly, $\diag(\hat\TM) =\onen \tr(\X^T \X) I_n$ provides a componentwise
unbiased estimator for $\diag(\M)$ as defined
in~\eqref{eq::entrymean}.
Denote by
\ben
\label{eq::defineSc}
S_c :=  \frac{n}{n-1} \tr(\X^T \X \circ \hat{M}) - \inv{n-1}\tr(\X^T 
\X).
\een
Notice that for $M^i = v^i \otimes v^i, i=1, \ldots, n$, we have by
\eqref{eq::gramA} and~\eqref{eq::Astar}, or equivalently, $\hat{M}$ as defined immediately above in \eqref{eq::hatM}, 
\bens
\nonumber
\lefteqn{\frac{n}{n-1}
\tr(\X^T \X \circ \hat{M})
=   \inv{n-1}\tr\big(\X^T \X \circ (\sum_{j=1}^n v^j \otimes v^j)\big) }\\
\nonumber
& = &
\nonumber
\inv{n-1}\sum_{k=1}^n  \tr\big(\big(\sum_{j=1}^n {(v^j \otimes v^j)
  \circ (y^j \otimes y^j)}\big)  \circ (v^k \otimes v^k)\big) \\
& = & 
\nonumber
\inv{n-1} \sum_{j=1}^n  \tr\big( {(v^j \otimes v^j) \circ (y^j \otimes
    y^j)} \big) + \inv{n-1}\sum_{j=1}^n
\sum_{k\not=j}^n  \tr\big(M^k \circ (y^j \otimes y^j ) \circ M^j\big) \\
& = &
\inv{n-1} \tr(\X^T \X) + 
\inv{n-1} \sum_{j=1}^n \sum_{k\not=j} \tr\big(M^k \circ M^j \circ (y^j \otimes y^j) \big)
\eens
Rearranging we obtain for $\hat{\TM}$ as defined in~\eqref{eq::Bstar},
\bens
\forall k \not= \ell,
 \; \hat{ \TM}_{k \ell}  & = & \frac{S_c}{n} =
\inv{n(n-1)} \sum_{j=1}^n \sum_{k\not=j}^n (y^j)^T  \diag(M^k \circ
M^j) {y^j}
\eens
We will show that $\hat{\TM}$ is a componentwise unbiased 
estimator for $\M$ as defined in \eqref{eq::entrymean} in case 
$\tr(B_0) =n$;
Moreover, we have
%on event $\F_5^c$, 
%$\E \hat{\TM} = \M$, for $\M$ as defined in \eqref{eq::entrymean}. Then 
\ben
\label{eq::delmask}
& &\text{ on event } \F_5^c, \; \; \delta_{\mask}
:= {\abs{\shnorm{\M}_{\offd} -\shnorm{\hat{\TM}}_{\offd}}}/{\shnorm{\M}_{\offd}}
=  O\big({\underline{r_{\offd}} }/{\sqrt{r(B_0)}}\big), \\
\label{eq::maskorder}
&& \text{ on event } \EE_8^c, \; \;
\delta_{m,\diag} :=
{\abs{\shnorm{\M}_{\diag}    -\shnorm{\hat{\TM}}_{\diag}}}/{\shnorm{\M}_{\diag}}
=  O\big( {r_{\diag}}/{ \sqrt{r(B_0)}}\big),
\een
where $r(B_0)={\tr(B_0)}/{\twonorm{B_0}} = \Omega(n)$ for $\tr(B_0) =n$.
Finally, $\prob{\EE_8^c \cap \F_5^c} \ge 1 - {C'}/{(n \vee m)^4}$.
In Section~\ref{sec::TMunbiased},  we elaborate on the plug-in estimator $\hat{\TM}$
and define events $\F_5^c$ and $\EE_8^c$ explicitly.
We also prove concentration of measure bounds in Section~\ref{sec::TMunbiased}.

\subsection{Proof of Theorem~\ref{thm::main-coro}}
\label{sec::proofofBstarop}
\begin{proofof2}
  Recall $\E (\X \X^T)= B_0 \circ \M$, where expectation is taken
  componentwise.
  Let $\EE_8^c$ and $\F_5^c$ be as in \eqref{eq::delmask}
  and \eqref{eq::maskorder}; cf. proof of Lemma~\ref{lemma::unbiasedmask}.
Let events $\F_{\diag}^c$ and $\F_4^c$ be the same 
as in Theorems~\ref{thm::diagmain} and~\ref{thm::main} respectively.
%Denote by $\F_{20}^c := \F_5^c \cap  \F_4^c\cap \EE_8^c \cap
%\F_{\diag}^c$. \\

 \noindent{\bf (a) Diagonal bounds.}
 First, on event $\EE_8^c \cap \F_{\diag}^c$,
 we have  by Theorem~\ref{thm::diagmain} and
 Lemma~\ref{lemma::unbiasedmask}, Part (a) holds since  for $\delta_{m, \diag} = O(r_{\diag}/\sqrt{r(B_0)})$,
\ben 
\nonumber
   \lefteqn{\sup_{q, h \in \Sp^{n-1}}{\abs{q^T  \diag(\hat{B}^{\star} - B_0) h}
 =\twonorm{\diag\big( \X \X^T \oslash \hat{\TM} -\E (\X \X^T
     \oslash \M) \big)}}} \\
\nonumber
 & \le &
 \frac{\shnorm{\M}_{\diag}}{\shnorm{\hat{\TM}}_{\diag}}
 \frac{\norm{\diag\big( \X \X^T - (B_0 \circ \M) \big)}_{2}}{
   \shnorm{\M}_{\diag}} + 
\abs{ \frac{\shnorm{\M}_{\diag}}{\shnorm{\hat{\TM}}_{\diag}}-1} \frac{
  \twonorm{\diag(B_0 \circ  \M)}}{\shnorm{\M}_{\diag}} \\
\label{eq::Wdiag}
&& \quad \le \frac{C_{\diag}}{1-\delta_{m,\diag}}\big(b_{\infty} r_{\diag} +
  \delta_{m,\diag} b_{\infty}\big) 
  =:  C'_{\diag} b_{\infty} r_{\diag} =o(\d_q  \twonorm{B_0});
  \een
\noindent{\bf (b) Off-diagonal bounds.}
The component-wise bounds follow from Lemma~\ref{lemma::pairwise}. \\
Moreover,
%following the proof of Theorem~\ref{thm::main-coro},
we have on event $\F_{6}^c \cap \F_5^c$,  where 
$\F_{6}^c$ is the same as defined in Lemma~\ref{lemma::pairwise},
%$= \F_{8}^c \cap \F_{7}^c \cap \F_{9}^c$
%and $\prob{\F_6^c} \ge 1- {c_6}/{(m \vee n)^4}$,
\bens
\lefteqn{\abs{b_{ij} - e_i^T \X_{\minus j} (X^{j} \circ v^j
    )/{\shnorm{\hat{\TM}}_{\offd}}} =: \abs{\hat{S}_1(e_i,
    \beta^{j*})}
\le \abs{1- {\norm{\M}_{\offd}}/{\shnorm{\hat{\TM}}_{\offd}}} \abs{b_{ij} }}\\
 && +
\abs{{\ip{X^i \circ v^i, X^{j} \circ v^j}}/\big(\sum_{i=1}^m a_{jj}
    p_i^2\big) - b_{ij}} {\norm{\M}_{\offd}}/{\shnorm{\hat{\TM}}_{\offd}} \\
&\le&
\frac{\delta_{\mask}}
{1-\delta_{\mask}} \abs{b_{ij}}  +\frac{C_{\offd}  \sqrt{b_{ii} b_{jj}}
  \ul{r_{\offd}}}{1-\delta_{\mask}}   \le
C_{\offd} \sqrt{b_{ii} b_{jj}} \ul{r_{\offd}} (1+ o(1))
 \eens
 where $\abs{b_{ij}} \le \sqrt{b_{ii} b_{jj}}$ and
 $\delta_{\mask} = \ul{r_{\offd}}/\sqrt{r(B_0)}$. \\
More generally, to bound the off-diagonal component, we have or all $q, h \in
\Sp^{n-1}$,
\ben
\nonumber
\lefteqn{\abs{q^T  \offd(\hat{B}^{\star} - B_0) h}=\abs{q^T \offd\big(
    \X \X^T \oslash \hat{\TM} - (B_0 \circ \M)      \oslash \M \big)
    h}} \\
\nonumber
  & \le &
  \abs{q^T \offd\big( \X \X^T \oslash \hat{\TM} - (B_0 \circ \M) 
      \oslash \hat{\TM} \big) h} + \\
    &  &
\label{eq::defineWlocal}
 \abs{q^T \offd\big( (B_0 \circ \M) \oslash \hat{\TM} - (B_0 \circ 
   \M) \oslash \M \big) h} =: W_1 + W_2
 \een
Now on event  $\F_5^c$, $\abs{\shnorm{\hat\TM}_{\offd} - \shnorm{\M}_{\offd} }
:=\delta_{\mask}   \shnorm{\M}_{\offd}$ and hence
\ben
 \nonumber
 \lefteqn{ \sup_{q, h \in  \Sp^{n-1}} W_2   = \sup_{q, h \in  
\Sp^{n-1}} \abs{\frac{\shnorm{\M}_{\offd}}{\shnorm{\hat{\TM}}_{\offd}} -1}
   \abs{q^T \offd((B_0 \circ  \M) \oslash \M ) h} } \\
 & \le &
  \label{eq::W2univadd}
\frac{\shnorm{\M}_{\offd} }{\shnorm{\hat{\TM}}_{\offd}}\delta_{\mask}  \twonorm{\offd(B_0)} \le
{\delta_{\mask}\twonorm{\offd(B_0)}}/{(1-\delta_{\mask})}  \\
\label{eq::maskbounds}
&& \text{where}\; {\shnorm{\M}_{\offd}}/{ \shnorm{\hat\TM}_{\offd} } \le   1/{(1-
  \delta_{\mask})}.
\een

 \noindent{\bf Overall bounds.}
For the rest of the proof, suppose event $\F_{20}^c := \F_5^c  \cap
\F_4^c \cap \EE_8^c \cap \F_{\diag}^c$ holds.
Let $E$ be as defined Lemma~\ref{lemma::quadreduction}. 
To ensure the lower and upper $\RE$
conditions hold for $\hat{B} :=  \hat{B}^{\star}$ in the sense of~\eqref{eq::BDlowlocal} and 
\eqref{eq::BDuplocal}, it is sufficient to show 
that~\eqref{eq::REcond}  holds for $\hat{\Delta}:= \hat{B}^{\star} -
B_0$ on event $\F_{20}^c$;
By Lemma~\ref{lemma::unbiasedmask},
Theorem~\ref{thm::main} and $\delta_q$ as defined therein, we have
\ben
\nonumber
\lefteqn{\quad
 \sup_{q, h \in E \cap \Sp^{n-1}}  W_1 \le  \sup_{q, h \in E \cap \Sp^{n-1}}
 {\abs{q^T \offd\big( \X \X^T - (B_0 \circ \M) \big) 
    h}}/{\shnorm{\hat{\TM}}_{\offd}} =}\\
\label{eq::sparseW1}
& &
\sup_{q, h \in E \cap \Sp^{n-1}}
\frac{\shnorm{\M}_{\offd}}{\shnorm{\hat{\TM}}_{\offd}}
\frac{\abs{q^T \offd\big( \X \X^T - (B_0 \circ \M) \big) 
    h}}{\shnorm{\M}_{\offd}} \le \frac{\delta_q \twonorm{B_0}}{1-\delta_{\mask}};
\een
Combining the error bounds on $W_1$, $W_2$,~\eqref{eq::Wdiag}, and the
effective sample size lower bound~\eqref{eq::forte},
we have for $q, h \in E \cap \Sp^{n-1}$,
\ben
\nonumber
\lefteqn{\abs{q^T (\hat{B}^{\star} - B_0) h}  \le
\abs{q^T   \diag(\hat{B}^{\star} - B_0) h} + \abs{q^T
  \offd(\hat{B}^{\star} - B_0) h} \le }\\
&  &
\label{eq::numeric}
\quad (1+o(1)) \big(C_{\diag} b_{\infty}  r_{\diag}  + \delta_{\mask}
\twonorm{\offd(B_0)} +  \delta_q \twonorm{B_0} \big) \le \frac{3}{32} \lambda_{\min}(B_0)
  \een
  following the proof of Theorem~\ref{thm::RE}, where $\delta_{\mask} = o(\delta_q)$, while  adjusting $C_{\RE} = 2(C_{\diag} \vee C_{\sparse})(1+o(1))$;
  Thus~\eqref{eq::REcond}  holds.
Hence the $\RE$ condition holds for $\hat{B}^{\star}$, following an
identical line of arguments as in the proof of Theorem~\ref{thm::RE}. 
Now, combining \eqref{eq::Wdiag},~\eqref{eq::defineWlocal},~\eqref{eq::W2univadd},
and~\eqref{eq::nearsparseW1}, by the triangle inequality, for all $q
\in (\sqrt{s_0} B_1^n \cap B_2^n)$, on event $\F_{20}^c$,
\bens
\abs{q^T (\hat{B}^{\star} - B_0) q} &  \le&
\abs{q^T \offd(\hat{B}^{\star} - B_0) q} + \abs{q^T
  \diag(\hat{B}^{\star} - B_0) q} \\
&  \le& 8 \delta_q \twonorm{B_0}  +  C'_{\diag} b_{\infty} r_{\diag},
\eens
where by Theorem~\ref{thm::main}, Lemma~\ref{lemma::quadreduction}, 
and~\eqref{eq::sparseW1}, we have on event  $\F_5^c \cap \F_4^c$, 
\ben 
\label{eq::nearsparseW1}
\sup_{q \in \sqrt{s_0} B_1^n \cap B_2^n} {\abs{q^T \offd\big( \X \X^T - (B_0 \circ \M) \big) 
    q}}/{\shnorm{\hat{\TM}}_{\offd}} 
\le  \frac{4 \delta_q \twonorm{B_0}}{(1-\delta_{\mask})}; 
\een
as desired.
Finally $\prob{\F_{20}^c} \ge 1-
{C}/{(n \vee m)^4} -4 \exp(-c s_0 \log  (3e n/(s_0 \ve)))$ by
the union bound.  
Thus the theorem is proved.
\end{proofof2}

\subsection{Proof of Corollary~\ref{coro::Bhatnorm}}
\label{sec::proofofBhatop}
\begin{proofof2}
  %{Corollary~\ref{coro::Bhatnorm}}
    The proof follows identical arguments as
    above, except that we now replace the bound regarding $W_1$
in~\eqref{eq::defineWlocal} using~\eqref{eq::netb} to obtain with probability at least $1 -{C}/{(n \vee m)^4}$, 
\bens 
\nonumber 
\lefteqn{
W'_1 :=  \sup_{q \in \Sp^{n-1}} \abs{q^T \offd\big( \X \X^T \oslash \hat{\TM} - (B_0 \circ \M) 
      \oslash \hat{\TM} \big) q} }  \\
 & = & \sup_{q \in \Sp^{n-1}}  {\abs{q^T \offd\big( \X \X^T - (B_0 \circ \M) \big) 
    q}}/{\shnorm{\hat{\TM}}_{\offd}} =\\
\label{eq::W1overall}
& &
\sup_{q \in \Sp^{n-1}}
\frac{\shnorm{\M}_{\offd}}{\shnorm{\hat{\TM}}_{\offd}}
\frac{\abs{q^T \offd\big( \X \X^T - (B_0 \circ \M) \big) 
    q}}{\shnorm{\M}_{\offd}} \le 
\frac{\delta_q(B) 
  \twonorm{B_0}}{1-\delta_{\mask}}
\eens
Then for all $q \in \Sp^{n-1}$, with probability at least $1 -{C}/{(n
  \vee m)^4}$ (cf. proof of Theorem~\ref{thm::gramsparse}),
we have by \eqref{eq::Wdiag}, \eqref{eq::defineWlocal},
\eqref{eq::W2univadd}, and the bound on $W'_1$ immediately above, 
\bens
\lefteqn{\twonorm{\hat{B}^{\star} -B_0}/{\twonorm{B_0} }   =
  \sup_{q \in \Sp^{n-1}} \abs{q^T (\hat{B}^{\star} - B_0) q} }\\
&  \le&
\sup_{q \in \Sp^{n-1}} \abs{q^T \offd(\hat{B}^{\star} - B_0) q} +
\sup_{q \in \Sp^{n-1}} \abs{q^T  \diag(\hat{B}^{\star} - B_0) q} \\
&  \le& W'_1 + \frac{\delta_{\mask} }{1-\delta_{\mask}}
\twonorm{\offd(B_0)}+ C'_{\diag} b_{\infty} r_{\diag} \\
&  \le& 8 \delta_q(B) \twonorm{B_0}  +  C'_{\diag} b_{\infty} r_{\diag},
\eens
where recall that for $n= \Omega(\log (m \vee n))$ and hence
$$\delta_{\mask} \asymp \underline{r_{\offd}} /\sqrt{r(B_0)}
= \eta_A (\twonorm{A_0} \log (m \vee n))^{1/2}/\sqrt{r(B_0)\sum_{j} a_{jj}  p_j^2} = o(r_{\offd}(n))$$ while $\delta_q(B) =
\Omega(r_{\offd}(n))$.
The corollary thus holds.
\end{proofof2}

\section{Proof of Lemma~\ref{lemma::unbiasedmask}}
\label{sec::maskest}
Throughout this section, let $Y = X^T$, where $X = B_0^{1/2} \Z  A_0^{1/2}$ is as defined in~\eqref{eq::missingdata}.
Denote by $D_0 = B_0^{1/2} \otimes  A_0^{1/2}$. We have $D_0^2 = B_0
\otimes A_0$.  Let $Z = \mvec{\Z^T}$ for $\Z$ as defined in~\eqref{eq::missingdata}.
Then $Z \in \R^{mn}$ is a subgaussian random vector 
with independent components $Z_j$ that satisfy $\expct{Z_j} = 0$,
$\expct{Z_j^2} = 1$, and $\norm{Z_j}_{\psi_2} \leq  1$. 
As shown in \eqref{eq::diagmask}, $\onen\tr(\X^T \X)$ provides an unbiased estimator for entries in 
$\diag(\M)$ for $\tr(B_0) =n$.
\bens
\onen \E \tr(\X^T \X) = \onen \E \tr(\X \X^T) =\onen \E \sum_{i=1}^n \twonorm{v^i \circ y^i}^2 = \frac{ \tr(B_0)}{n} \sum_{i=1}^m a_{ii} \z_i.
\eens

\subsection{Unbiased estimator for the mask matrix: off-diagonal
  component}
\label{sec::TMunbiased}
Let $\{e_1, \ldots, e_m \} \in R^{m}$ be the canonical basis of  
$\R^m$.   Denote by
\ben
\label{eq::DV}
\D_v & = & \sum_{k=1}^n \diag(e_k) \otimes D_k \quad \text{ where}  \\
\label{eq::DK}
D_k & = & \inv{n-1}\sum_{j \not=k} \diag(v^k \otimes v^j), \forall 
k=1, \ldots, n
\een
Then for $S_c$ as defined in \eqref{eq::defineSc}, we have for $M^i = v^{i} \otimes v^{i}$ and hence $\diag(M^i) =\diag(v^{i})$,
\bens
\lefteqn{
S_c := \inv{n-1}\sum_{j=1}^n \sum_{k\not=j}^n \tr\big(M^k \circ M^j \circ (y^j
  \otimes y^j) \big)} \\
& = &
\inv{n-1}\sum_{j=1}^n \sum_{k\not=j}^n (y^j)^T  \diag(v^k \circ v^j)
{y^j}  =  \sum_{j=1}^n (y^j)^T D_j y^j = \mvec{Y}^T \D_v  \mvec{Y} 
\eens
Hence we rewrite for $\mvec{Y} =B_0^{1/2} \otimes A_0^{1/2} Z =: D_0
Z$ and $ A_{v}  \; := \; D_0 \D_v D_0$,
\bens
  S_c = Z^T  B_0^{1/2} \otimes A_0^{1/2} \D_v B_0^{1/2}
\otimes A_0^{1/2} Z  & =: &
Z^T A_v Z
\eens
where $Z = \mvec{\Z^T}$ for $\Z$ as defined in
\eqref{eq::missingdata}. It is not difficult to verify that $\E S_c = \tr(B_0) \sum_{i=1}^m
a_{ii} \z^2_i$. Indeed, we can compute for $\E Z^2_j =1$ and $\E Z_j =0$,  
\bens
\E S_c & = & \E \E (Z^T D_0 \D_v D_0 Z|U) = \E \tr(D_0 \D_v D_0) \\
& = & \inv{n-1}\sum_{j=1}^n b_{jj} \sum_{i\not=j}^n \tr(A_0  \E \diag(v^i \circ v^j))  = \tr(B_0) \sum_{j=1}^m a_{jj} p_j^2
\eens
\noindent{\bf Decomposition.}
We now decompose the error into two parts:
\ben
  \nonumber 
  \lefteqn{\abs{S_c - \E S_c} = \abs{Z^T  A_{v} Z  - \E (Z^T 
      A_{v} Z )} \le    \abs{  Z^T  A_{v} Z  - \E (Z^T A_{v} Z |U)}} \\
    \label{eq::decompAV}
   & &
+ \abs{\E (Z^T  A_{v} Z |U) - \E (Z^T  A_{v} Z)} =:  I + II.
 \een
\noindent{\bf Part I.}
Since $\D_v$~\eqref{eq::DV} is a block diagonal matrix with the $k^{th}$
block along the diagonal being $D_k, \forall \; \; k=1, \dots,
m$~\eqref{eq::DK}, with entries in $[0, 1]$, we have its
its operator norm, row and column $\ell_1$-norms all bounded by $1$ and thus 
\ben
\label{eq::AVop}
\twonorm{A_v} & := & \twonorm{D_0 {\D}_v D_0} \le \twonorm{D_0}^2
\twonorm{\D_v} \le \twonorm{A_0}\twonorm{B_0} 
\een
Lemma~\ref{lemma::EAV} shows that  tight concentration of measure 
bound on $\fnorm{A_v}$ can be derived under a mild condition;
%such as~\eqref{eq::midi};
Since the proof follows a similar line of arguments  as in the proof of Theorem~\ref{thm::uninorm-intro}, we omit it here.
\begin{lemma}
\label{lemma::EAV}
Suppose that $\sum_{s=1}^m a_{ss}^2 p_s^2 =\Omega(a_{\infty}^2 \log  (n
\vee m))$.
%for some absolute constant $C$.
Then on event $\F_0^c$ as defined in Theorem~\ref{thm::uninorm-intro},  we have
\bens 
\fnorm{A_v}^2 
& \le &
\frac{C_1}{(n-1)} \fnorm{\diag(B_0)}^2 \sum_{s=1}^m a_{ss}^2  p_s^2 +
C_2 \fnorm{B_0}^2 \sum_{s=1}^m a_{ss}^2  p_s^3 \\
&+&
C_3 \fnorm{B_0}^2 a_{\infty} \twonorm{A_0} \sum_{s=1}^m p_s^4 +
C_4 \fnorm{B_0}^2 \twonorm{A_0}^2 \log (n \vee m)    =: W_v^2
\eens
\end{lemma}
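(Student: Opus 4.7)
\textbf{Proof proposal for Lemma~\ref{lemma::EAV}.}

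The plan is to reduce $\fnorm{A_v}^2$ to a quadratic form in the Bernoulli mask variables with the same algebraic structure as $\fnorm{A^{\diamond}_{qh}(r)}^2$ in the proof of Theorem~\ref{thm::uninorm-intro}, then invoke the large deviation bounds already established there (Lemmas~\ref{lemma::W2devi}, \ref{lemma::S3devi}, \ref{lemma::S4devi}, \ref{lemma::S5devi}) on event $\F_0^c$. First I would expand the tensor expression. Using $\D_v = \sum_{k=1}^n e_k e_k^T \otimes D_k$ and $D_0 = B_0^{1/2}\otimes A_0^{1/2}$, I would write
\begin{equation*}
A_v \;=\; \sum_{k=1}^n (c_k c_k^T)\otimes (A_0^{1/2} D_k A_0^{1/2}),
\end{equation*}
where $c_1,\ldots,c_n$ are the columns of $B_0^{1/2}$. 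Since $A_v$ is symmetric, the mixed-product property of Kronecker products gives
\begin{equation*}
\fnorm{A_v}^2 \;=\; \sum_{k,\ell=1}^n b_{k\ell}^2 \,\tr(A_0 D_k A_0 D_\ell)
\;=\; \frac{1}{(n-1)^2}\sum_{k,\ell}\sum_{s,t} b_{k\ell}^2 a_{st}^2 \sum_{i\neq k}\sum_{j\neq\ell} u^s_k u^s_i u^t_\ell u^t_j,
\end{equation*}
using $(D_k)_{ss}=\tfrac{1}{n-1}\sum_{i\neq k} u^s_k u^s_i$ from \eqref{eq::DK}.

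Next I would perform the same index bookkeeping used in Section~\ref{sec::uninormskye}. Splitting the quadruple $(k,i,\ell,j)$ with $i\neq k$, $j\neq \ell$ into the four cases distinct / triple-collision / pair-collision, and simultaneously splitting $(s,t)$ into $s=t$ vs $s\neq t$, the polynomial decomposes into analogues of $W_2^{\diamond}$, $W_3^{\diag}$, $W_4^{\diag}$, $W_4^{\diamond}$. The coefficients of each homogeneous piece depend only on entries of $B_0\circ B_0$ and on $\diag(B_0)$ rather than on the rank-one form $q_i h_j$, so the sums over these coefficients will be controlled by the simpler quantities $\fnorm{B_0}^2$ and $\fnorm{\diag(B_0)}^2$ via Cauchy-Schwarz, playing the role that $\twonorm{B_0}$ and $b_\infty$ played in Lemmas~\ref{lemma::W4dm}--\ref{lemma::W4distinct}.

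For expectations, I would compute directly: the pair-collision term contributes on the order of $\frac{1}{n-1}\fnorm{\diag(B_0)}^2 \sum_s a_{ss}^2 p_s^2$ (the $\frac{1}{(n-1)^2}$ prefactor absorbs one factor of $n-1$ via the $i$-sum), the triple collision contributes $\fnorm{B_0}^2 \sum_s a_{ss}^2 p_s^3$, the distinct-diagonal contributes $\fnorm{B_0}^2 \sum_s a_{ss}^2 p_s^4$, and the distinct off-diagonal contributes $\fnorm{B_0}^2 \sum_{s\neq t} a_{st}^2 p_s^2 p_t^2 \le \fnorm{B_0}^2 \twonorm{A_0}^2 \sum_s p_s^2$, which combined with the concentration term $\twonorm{A_0}^2 \log(n\vee m)$ yields the last summand $C_4\fnorm{B_0}^2\twonorm{A_0}^2\log(n\vee m)$. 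For deviations, I would invoke Lemmas~\ref{lemma::W2devi}--\ref{lemma::S5devi} term-by-term, using the same threshold $\sum_s a_{ss}^2 p_s^2 =\Omega(a_\infty^2\log(n\vee m))$ assumed here, so that the events $\event_2^c,\event_3^c,\event_4^c,\event_5^c$ constituting $\F_0^c$ control the fluctuations. Combining the expectation bound with each deviation bound via the triangle inequality gives the stated $W_v^2$.

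The main obstacle is the careful coefficient bookkeeping in the decomposition: unlike Theorem~\ref{thm::uninorm-intro}, the coefficients $b_{k\ell}^2$ are symmetric and non-negative but depend only on $B_0$ rather than on test vectors, so the upper bounds produced by the Lemma~\ref{lemma::triangleweight}-type arguments must be re-derived in terms of $\fnorm{B_0}^2$ and $\fnorm{\diag(B_0)}^2$. The combinatorial count of how many ordered quadruples collapse into each case, together with the $\frac{1}{(n-1)^2}$ normalization from $D_k$, is what produces the factor $\frac{1}{n-1}$ in front of the pair-collision contribution and is the one place where a clean bookkeeping error could change the leading rate; everything else follows the Theorem~\ref{thm::uninorm-intro} template.
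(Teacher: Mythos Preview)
Your proposal is correct and follows essentially the same route the paper indicates: the paper explicitly omits the proof of Lemma~\ref{lemma::EAV} with the remark that it ``follows a similar line of arguments as in the proof of Theorem~\ref{thm::uninorm-intro}'', and your expansion $A_v=\sum_k (c_kc_k^T)\otimes(A_0^{1/2}D_kA_0^{1/2})$, the identity $\fnorm{A_v}^2=\sum_{k,\ell}b_{k\ell}^2\tr(A_0D_kA_0D_\ell)$, and the subsequent decomposition into degree-$2,3,4$ polynomials controlled on $\F_0^c=\event_2^c\cap\event_3^c\cap\event_4^c\cap\event_5^c$ via Lemmas~\ref{lemma::W2devi}--\ref{lemma::S5devi} is exactly that template. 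One minor imprecision: the $C_4\fnorm{B_0}^2\twonorm{A_0}^2\log(n\vee m)$ term arises from the deviation bounds $\tau_4,\tau_5$ in Lemmas~\ref{lemma::S4devi}--\ref{lemma::S5devi}, not from the expectation of the off-diagonal piece (whose expectation is already absorbed into the $C_3$ term via $N\le a_\infty\twonorm{A_0}\sum_s p_s^4$); this does not affect the argument.
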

Hence we set the large deviation bound to be
\bens
\tau_0 
& = & C_6 \log^{1/2}(n \vee m)\big(\frac{2}{\sqrt{n-1}}
  \fnorm{\diag(B_0)}  \big(\sum_{s=1}^m a_{ss}^2     p_s^2\big)^{1/2}
  +\fnorm{B_0}\big(\sum_{s=1}^m a_{ss}^2 p_s^3\big)^{1/2} \big)
  + \\
&&
C_7 \log^{1/2}(n \vee m) \fnorm{B_0} \big(\big(a_{\infty}
\twonorm{A_0} \sum_{s=1}^m p_s^4 \big)^{1/2}+ C  \twonorm{A_0} \log^{1/2}(n \vee m)\big)
\eens
By Lemma~\ref{lemma::EAV} and Theorem~\ref{thm::HW},  for absolute
constants $c$ and $C_6, C_7$ sufficiently large,
\ben
\nonumber
\lefteqn{
\prob{\abs{Z^T A_{v} Z - \E(Z^T A_{v} Z | U) } > \tau_0 } =: \prob{\event_7}}\\
\nonumber
& = & 
\E_{V} \prob{\abs{Z^T A_{v} Z - \E(Z^T A_{v} Z | U) } > \tau_0 \big| U} \\
& \leq &
\nonumber
2 \exp \big(- c\min\big(\frac{\tau_0^2}{W^2_{v}},    \frac{\tau_0}{\twonorm{A_0}\twonorm{B_0}
} \big)\big)  \prob{\F^c_0} + \prob{\F_0}\\
& \leq &
\label{eq::tau0step}
2 \exp\big(- c\min\big(\frac{\tau_0^2}{W^2_{v}},
    \frac{\tau_0}{\twonorm{A_0}\twonorm{B_0}}  \big)\big)  + \prob{\F_0}  \leq  \frac{c}{(n \vee m)^4}
\een
where by Theorem~\ref{thm::HW} and~\eqref{eq::AVop}, we have for 
any $t>0$,
\bens
&& \prob{\abs{Z^T A_{v} Z - \E(Z^T A_{v} Z | U) } > t \big|  U \in \F_0^c} \leq 
2 \exp \big(- c\min\big(\frac{t^2}{ W_{v}^2},
\frac{t}{\twonorm{B_0}\twonorm{A_0}} \big) \big) \\
&&
\text{ since on event $\F_0^c$, where $U$ is being fixed,} \; \; \fnorm{A_{v}}^2 \le W_{v}^2.
\eens
\noindent{\bf Part II.}
Denote by 
\bens 
S_{\star} &:= &
\E (Z^T D_0  \D_v D_0 Z |U)  -\E \tr(D_0 {\D}_v D_0) \\
 &:= &\tr(D_0 {\D}_v D_0) - \E \tr(D_0 {\D}_v D_0) 
 \eens
Denote by
\bens
\nonumber
\D_{\star} & := &B' \otimes \diag(A_0) \text{  where } \; \; 
B' :=  \inv{n-1} \left[
\begin{array}{ccccc} 
0 & b_{11} & b_{11}  & \ldots &  b_{11}  \\
b_{22}  & 0 & b_{22} &  \ldots &  b_{22}  \\
\ldots & \ldots & \ldots & \ldots & \ldots\\
b_{nn} &b_{nn} &  \ldots & b_{nn}  &  0 \\
\end{array}\right]_{n \times n} 
\eens
It is straightforward to check the following holds:
\bens 
\norm{D_{\star}}_{\infty}  \le  b_{\infty} a_{\infty}
\quad\text{and} \quad \norm{D_{\star}}_1 \le  b_{\infty} a_{\infty}.
\eens 
\begin{corollary}
  \label{coro::expmgfS2II}
 Let $A_0 = (a_{ij})$ and $B_0 = (b_{ij})$.
Let $U$ be as defined in~\eqref{eq::maskUV} and $V =U^T$.
  Then for $D_{\star}$ as defined immediately above,
  \bens 
S_{\star} 
& =&  \mvec{V}^T \D_{\star} \mvec{V} -  \tr(B_0) \sum_{s=1}^m a_{ss} p_s^2 
\eens 
Let $b_{\infty} := \max_j b_{jj}$ an $a_{\infty} = \max_i a_{ii}$.
For all $t > 0$,
\bens  
\prob{\abs{S_{\star}} > t} & \le &   
2 \exp\big(- c\min\big(
\frac{t^2}{\lambda^2 a_{\infty} b_{\infty} \tr(B_0) \sum_{j =1}^m a_{jj} p_j^2},\frac{t}{a_{\infty} b_{\infty}} \big)\big)
\eens  
\end{corollary}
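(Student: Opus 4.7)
The plan is to recognize $S_{\star}$ as a centered Bernoulli quadratic form in $\mvec{V}$ with coefficient matrix $\D_{\star}$, and then invoke Theorem~\ref{thm::Bernmgf} directly. First I would establish the identity $\tr(D_0 \D_v D_0) = \mvec{V}^T \D_{\star} \mvec{V}$ by unfolding the trace. Writing $D_0^2 = B_0 \otimes A_0$ and $\D_v = \sum_{k=1}^n \diag(e_k) \otimes D_k$, the Kronecker multiplicativity $(X\otimes Y)(W \otimes Z) = XW \otimes YZ$ and $\tr(X \otimes Y) = \tr(X)\tr(Y)$ give
\bens
\tr(D_0 \D_v D_0) = \sum_{k=1}^n b_{kk} \tr(D_k A_0) = \frac{1}{n-1}\sum_{k \ne j} b_{kk} \sum_{s=1}^m a_{ss} v^k_s v^j_s.
\eens
Indexing $\mvec{V}$ by pairs $(k,s)$ with $\mvec{V}_{(k,s)} = v^k_s$ and using $(B' \otimes \diag(A_0))_{(k,s),(j,t)} = B'_{kj}\, a_{ss}\, \mathbb{I}(s=t)$, the right-hand side is exactly $\mvec{V}^T \D_{\star} \mvec{V}$. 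Taking expectations, $\E(v^k_s v^j_s) = p_s^2$ for $k \ne j$ and $\sum_{k\ne j} b_{kk}/(n-1) = \tr(B_0)$, which gives $\E \mvec{V}^T \D_\star \mvec{V} = \tr(B_0) \sum_s a_{ss} p_s^2$, matching the claimed centering.

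Next I would extract the inputs required by Theorem~\ref{thm::Bernmgf}. Because $B'$ has zero diagonal and constant off-diagonal row entries $b_{kk}/(n-1)$, the factorized Kronecker structure yields $\shnorm{\D_{\star}}_\infty = b_{\infty} a_{\infty}$ and $\shnorm{\D_{\star}}_1 = (\tr(B_0)/(n-1))\, a_{\infty}$. Under the normalization $\tr(B_0) = n$ used throughout the paper, this gives $D_{\max} := \shnorm{\D_{\star}}_\infty \vee \shnorm{\D_{\star}}_1 \le 2 a_{\infty} b_{\infty}$. Since $\diag(\D_\star) = 0$, the $\sum_i |\D_{\star,ii}| \sigma_i^2$ term in Theorem~\ref{thm::Bernmgf} drops out, and both remaining sums collapse using the fact that $\D_{\star,(k,s),(j,t)} \ne 0$ only when $k \ne j$ and $s = t$:
\bens
\sum_{(k,s) \ne (j,t)} |\D_{\star,(k,s),(j,t)}|\, p_s p_t = \sum_{k \ne j}\frac{b_{kk}}{n-1} \sum_{s} a_{ss} p_s^2 = \tr(B_0) \sum_{s} a_{ss} p_s^2,
\eens
and the analogous $\sigma^2_i \sigma^2_j$ sum is bounded by the same quantity via $\sigma_s^2 \le p_s$. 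Substituting into Theorem~\ref{thm::Bernmgf} yields, for all $|\lambda| \le 1/(16 D_{\max})$,
\bens
\E \exp(\lambda S_{\star}) \le \exp\bigl(C\, \lambda^2\, a_{\infty} b_{\infty}\, \tr(B_0) \sum\nolimits_{s} a_{ss} p_s^2\bigr).
\eens

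The last step is a standard Chernoff argument: apply Markov's inequality to $\exp(\pm \lambda S_{\star})$ and optimize $\lambda$ over $(0, 1/(16 D_{\max})]$. The optimum is $\lambda \asymp t/(a_{\infty} b_{\infty} \tr(B_0) \sum_s a_{ss} p_s^2)$ when this quantity stays below $1/(16D_{\max})$, producing the sub-Gaussian regime, and the boundary value $\lambda \asymp 1/D_{\max}$ otherwise, producing the sub-exponential regime; combining the two one-sided bounds via the union bound gives the stated two-regime tail. The main obstacle is bookkeeping rather than a new analytic idea: one must carefully reconcile the Kronecker index pairs $(k,s)$ with the flat Bernoulli indexing used in Theorem~\ref{thm::Bernmgf}, exploit the fact that the non-zero off-diagonal entries of $\D_{\star}$ all share the same $s$-coordinate to collapse the double sum, and check that the normalization $\tr(B_0) = n$ is exactly what keeps $\shnorm{\D_\star}_1$ at the same order as $\shnorm{\D_\star}_\infty$ so that $D_{\max}$ attains the claimed scale $a_{\infty} b_{\infty}$.
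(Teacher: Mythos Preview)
Your proposal is correct and follows essentially the same route as the paper: you identify $S_{\star}$ with the centered Bernoulli quadratic form $\mvec{V}^T \D_{\star}\mvec{V} - \E\mvec{V}^T \D_{\star}\mvec{V}$, compute the row/column $\ell_1$-norms of $\D_{\star}$ and the off-diagonal weighted sum $\sum_{i\ne j}|\D_{\star,ij}|p_ip_j = \tr(B_0)\sum_s a_{ss}p_s^2$, and then feed these into Theorem~\ref{thm::Bernmgf} followed by a Chernoff optimization. The paper does the same, except it invokes the zero-diagonal specialization Corollary~\ref{coro::Bernmgf} rather than Theorem~\ref{thm::Bernmgf} itself and states $\norm{\D_\star}_1 \le a_\infty b_\infty$ directly (your more careful bound $\norm{\D_\star}_1 \le (\tr(B_0)/(n-1))a_\infty \le 2a_\infty b_\infty$ differs only by an absorbable constant).
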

Set for $C$ large enough, $\tau_{\star} = C \lambda \log^{1/2} (n \vee m) \sqrt{a_{\infty}  b_{\infty} \tr(B_0) \sum_{j=1}^m a_{jj} p_j^2}$;
We have by Corollary~\ref{coro::expmgfS2II}, and the by assumption in
Lemma~\ref{lemma::unbiasedmask},
%namely~\eqref{eq::midi},
\bens 
\lefteqn{\prob{\abs{S_{\star}} > \tau_{\star}} =:  \prob{\event_6} \le }\\
&  & 2 \exp\big(- c\min\big(\frac{\tau_{\star}^2}{\lambda^2 a_{\infty}
      b_{\infty} \tr(B_0) \sum_{j =1}^m a_{jj}      p_j^2},\frac{\tau_{\star}} {a_{\infty} b_{\infty}}  \big)\big) \le \inv{(n \vee m)^4}.
\eens
Putting these two parts together, we have on event
$\F_0^c \cap \event_6^c \cap 
\event_7^c$,
\ben
\label{eq::SCtwo}
\abs{S_c - \E S_c} \le \abs{S_c - \E[S_c | U]} + \abs{S_{\star}} \le
\tau_0 + \tau_{\star}
\een
\subsection{Proof of Corollary~\ref{coro::expmgfS2II}}
\begin{proofof2}
%  First we compute for
Denote by $B_0^{1/2} \otimes A_0^{1/2} =: D_0$.  Then
\bens
\lefteqn{ \E [S_c|U] = \tr(D_0 {\D}_v D_0)  =  \inv{n-1}\sum_{t=1}^n
  b_{tt} \tr( A_0 D_t )}\\
& = &
\inv{n-1} \sum_{t=1}^n b_{tt}  \sum_{s \not=t} \tr( A_0
\diag(v^t \otimes v^s) ) \\
& = &
\inv{n-1}\sum_{t=1}^n b_{tt}  \sum_{s \not=t} (v^t)^T (A_0 \circ I)  v^s =  \mvec{V}^T \D_{\star} \mvec{V}
\eens
Notice that by definition of $D_{\star}$ we have for $\E(\mvec{V})
=:\vecp   \in \R^{mn}$,
\bens
\sum_{i\not=j} \abs{D_{\star,ij}} \vecp_j \vecp_i 
& = & 
\sum_{i=1}^n \inv{n-1} \sum_{j\not=i} \abs{b_{ii}} \sum_{q=1}^m \abs{a_{jj}} p_j^2  = 
\tr(B_0)\sum_{j=1}^m a_{jj} p_j^2
\eens
By Corollary~\ref{coro::Bernmgf}, where we substitue $A$ with
$D_{\star}$, and set $D_{\max} := a_{\infty} b_{\infty}$, we have the following estimate on the moment generating function for $S_{\star}$.
We have for
$\abs{\lambda} < \inv{16 a_{\infty}  b_{\infty}}$ and $$e^{8\abs{\lambda} a_{\infty} b_{\infty}} \le  e^{1/2} \le 1.65,$$
\bens 
\E (\exp( \lambda S_{\star})) 
&\le &
\exp\big(36.5 \lambda^2 D_{\max} e^{1/2}   \tr(B_0) \sum_{j =1}^m a_{jj} p_j^2 \big).
\eens
%where 
The rest of the proof follows that of Lemma~\ref{lemma::Bernmgf2} and
hence omitted.
\end{proofof2}

\subsection{Proof of Lemma~\ref{lemma::unbiasedmask}}
\begin{proofof2}
We use the following bounds:
 $\fnorm{\diag(B_0)}^2 \le b_{\infty} \tr(B_0)$, $\tr(B_0) \le \sqrt{n}\fnorm{\diag(B_0)}$ and 
$\fnorm{B_0} \le \sqrt{\twonorm{B_0} \tr(B_0)}$.\\
  \noindent{\bf Off-diagonal component}
  %Let $\F_3^c = \event_6^c \cap \event_7^c$.
  Clearly,   On event $\event_6^c \cap \event_7^c$,
  \bens
\; \; \forall k \not= \ell, \; \; \; 
\delta_{\mask} :=
\frac{\abs{\hat{\TM}_{k \ell} -  \M_{k \ell} }}{\M_{k \ell}}=
\frac{\abs{S_c - \E S_c}}{\E S_c} \le \frac{\tau_0 + \tau_{\star}}{\E S_c}
\eens
Putting things together we have on event
$\F_5^c := \F_0^c \cap \event_6^c \cap 
\event_7^c$,
%$\F_0^c \cap \F_3^c$,
which holds with probability at least $1- \frac{C}{(n \vee 
  m)^4}$,  where $C  \le 6$, by \eqref{eq::SCtwo},~\eqref{eq::tau0step} and Corollary~\ref{coro::expmgfS2II},
\bens
\delta_{\mask}
  := \frac{\abs{S_c - \E S_c}}{\E S_c}
  \le  \frac{\tau_0 + \tau_\star}{\tr(B_0)\sum_{s=1}^m a_{ss} p_s^2} 
\asymp
\underline{r_{\offd}}\frac{\twonorm{B_0}^{1/2}}{\sqrt{\tr(B_0)}}(1+o(1)) \; 
\eens
\noindent{\bf Diagonal component}
Recall $V = U^T$.
Let $\xi = \mvec{V} :=(\xi_1, \ldots, \xi_{m n}) \in \{0, 1\}^{m n}$ be a random vector
independent of $X$, as defined in \eqref{eq::maskUV}.
Let $A_{\xi} = D_0 D_{\xi} D_0$ where $D_0 = B_0^{1/2} \otimes  A_0^{1/2}$.
Thus we can write for $\mvec{Y} = B_0^{1/2} \otimes A_0^{1/2} Z$
and ${\bf v} := \mvec{V}$,
\bens
\lefteqn{
\tr(\X^T \X)  = 
\mvec{Y}^T \diag(\mvec{V} \otimes \mvec{V}) \mvec{Y} } \\
 & = & 
Z^T B_0^{1/2} \otimes A_0^{1/2} \diag(\bv \otimes \bv) 
B_0^{1/2} \otimes A_0^{1/2} Z =: Z^T D_0 D_{\xi} D_0 Z
\eens
Now
$$D_0^2 \circ D_0^2 = (B_0 \otimes A_0) \circ (B_0 \otimes A_0) =(B_0 \circ B_0) \otimes (A_0 \circ A_0).$$
We can apply Theorem~1.2~\citep{Zhou19} here directly to argue that for every $t > 0$,
\ben
\label{eq::deviQ}
\lefteqn{
\prob{\abs{Z^T A_{\xi} Z  - \expct{Z^T A_{\xi} Z}} > t} \leq  2\exp \big(-c \min\big(\frac{t^2}{K^4 Q}, \frac{t}{K^2
  \twonorm{A_0}\twonorm{B_0}} \big)\big) }\\
\nonumber
\; \text{ where }  Q & =& \sum_{j=1}^n  b_{jj}^2 \sum_{i=1}^m a_{ii}^2 p_i + 
\E \big(\xi^T \offd((B_0 \circ B_0) \otimes (A_0 \circ A_0))  \xi\big)^T  \\
&  \le&
\nonumber
\fnorm{B_0}^2 a_{\infty} \twonorm{A_0} \onenorm{\vecp} \; \text{
  where} \; \vecp = (p_1, p_2, \ldots, p_m), \text{ and } \;  \onenorm{\vecp} =\sum_{j=1}^m p_j
\een
Thus by choosing for some absolute constants $C_1, C_2, c$,
\bens
\tau_{\diag} 
& = & C_1 \log (n \vee m)
\twonorm{A_0} \twonorm{B_0} + C_2 \log^{1/2} (n \vee m) \sqrt{a_{\infty}
  \twonorm{A_0} \onenorm{\vecp}} 
\fnorm{B_0} 
\eens
We have by \eqref{eq::deviQ}
\bens
\lefteqn{\prob{\abs{Z^T A_{\xi} Z  - \expct{Z^T A_{\xi} Z}} > \tau_{\diag}} =:
\prob{\event_8} }\\
&\le & 2\exp \big(-c \min\big(\frac{\tau_{\diag}^2}{Q},
    \frac{\tau_{\diag}}{\twonorm{A_0}\twonorm{B_0}} \big)\big) \leq
\frac{c}{(n \vee m)^4}
\eens
Hence on event $\event_8^c$, for all $\ell$,
\bens
\lefteqn{
\frac{\abs{\hat{\TM}_{\ell \ell} -
    \M_{\ell \ell} }}{\M_{\ell \ell}} =  \frac{\abs{\fnorm{\X}^2 - \E \fnorm{\X}^2}}{\E \fnorm{\X}^2}
    = \frac{\abs{\tr(\X^T \X) - \E \tr(\X^T \X) } }{ \E \tr(\X^T \X) }}\\
  &\le &   \frac{\tau_{\diag}}{{\tr(B_0) \sum_{j=1}^m  a_{jj} p_j} } \asymp 
  \frac{r_{\diag} \sqrt{\twonorm{B_0}} }{\sqrt{\tr(B_0)} } (1+o(1))
  \eens
where  $r_{\diag}$ is as defined in~Theorem~\ref{thm::diagmain}. The lemma is proved.
\end{proofof2}

\section{Proof of Theorem~\ref{thm::gramsparse}}
\label{sec::appendcoroproof}
We now give a complete proof of Theorem~\ref{thm::gramsparse}.
Throughout this section, we denote by $Z \in \R^{mn}$  a subgaussian random vector 
with independent components $Z_j$ that satisfy $\expct{Z_j} = 0$, 
$\expct{Z_j^2} = 1$, and $\norm{Z_j}_{\psi_2} \leq 1$. 
Recall 
\bens
A_{qq}^{\diamond} &=&
\sum_k \sum_{i\not= j} u^k_i u^k_j 
q_i q_{j} (c_j c_i^T) \otimes  (d_k d_k^T).
\eens
Recall that we denote by $Q_{\offd}$ the off-diagonal component in \eqref{eq::MB2},
\bens
Q_{\offd} 
&  =  & \inv{\norm{\M}_{\offd}} \abs{q^T  \offd(\X  \X^T  -B_0 \circ  \M) q} =:\inv{\norm{\M}_{\offd}} \abs{q^T  \tilde{\Delta} q} \\
&  \sim & 
\inv{\norm{\M}_{\offd}} \abs{Z^T A^{\diamond}_{qq} Z  - \E (Z^T A^{\diamond}_{qq} Z)}, \; \text{ where }\; Z \sim \mvec{\Z^T}
\eens
for $\Z$ as defined in \eqref{eq::missingdata} with $K=1$.

\begin{proofof}{Theorem~\ref{thm::gramsparse}}
Following~\eqref{eq::MB2}, we decompose the error into two parts:
for $\tilde{\Delta} = \offd(\X \X^T  -B_0 \circ \M)$ and $q \in \Sp^{n-1}$,
\ben
  \nonumber 
  \lefteqn{\abs{q^T \tilde{\Delta} q} = \abs{Z^T  A^{\diamond}_{q q} Z  - \E (Z^T 
      A^{\diamond}_{q q} Z )} \le    \abs{  Z^T  A^{\diamond}_{qq} Z
      - \E (Z^T  A^{\diamond}_{qq } Z |U)}} \\
    \label{eq::decompcoro}
   & &
+ \abs{\E (Z^T  A^{\diamond}_{q q} Z |U) - \E (Z^T  A^{\diamond}_{q q} Z)} =:
{\bf I} + {\bf II} 
\een
\noindent{\bf Part I}
Denote by $\N$ the $\ve$-net for $\Sp^{n-1}$, for example, as constructed in 
Lemma~\ref{lemma::net} with $\size{\N} \le (3/\ve)^n$.

Applying the Hanson-Wright inequality (cf. Theorem~\ref{thm::HW}) with the preceding estimates on the operator
and Frobenius norms and the union bound, we have by Theorems~\ref{thm::mainop2} and~\ref{thm::uninorm2}, and the union bound, for any $t > 0$,
\bens
\lefteqn{\prob{\left\{\exists q \in \N,  \abs{Z^T A^{\diamond}_{qq} Z - \E(Z^T A^{\diamond}_{qq}    Z | U) } > t \right\} | U \in \F_0^c}  }\\
&\le &
 2 \size{\N}^2 \exp \left(-  c\min\left(\frac{t^2}{{\twonorm{B_0}}^2
       \twonorm{A_0} W^2}, \frac{t}{\twonorm{A_0}\twonorm{B_0}} \right)\right)
 \eens
 Hence we set 
\bens 
\tau_0 \asymp n \log (3e/\ve) \twonorm{A_0}  \twonorm{B_0}+
\sqrt{n \log (3e/\ve)}  \twonorm{B_0} \twonorm{A_0}^{1/2}  W 
\eens 
Finally, we have by Theorems~\ref{thm::HW},~\ref{thm::mainop2} and~\ref{thm::uninorm2},
\bens
\lefteqn{\prob{\exists q \in \N, \abs{Z^T A^{\diamond}_{qq} Z - \E(Z^T 
        A^{\diamond}_{qq} Z | U) } > \tau_0} =: \prob{\F_1}} \\
& = & \E_U  \prob{\exists q \in \N,  \abs{Z^T A^{\diamond}_{qq} Z - \E(Z^T A^{\diamond}_{qq} Z | U) } > \tau_0 | U}\\
& \leq &  2 \size{\N}^2 \exp
\left(-c\min\left(\frac{\tau_0^2}{{\twonorm{B_0}^2}  \twonorm{A_0} W^2},  \frac{\tau_0}{\twonorm{B_0}\twonorm{A_0}}\right) \right)\prob{\F_0^c} +   \prob{\F_0} \\
   & \le & \exp(- c_1 n \log (3e / \ve))) + \prob{\F_0}.
   \eens
   \noindent{\bf Part II}
Fix $q \in \Sp^{n-1}$. 
Let $\tilde{a}^{k}_{ij}$ be a shorthand for $\tilde{a}^{k}_{ij}(q) = a_{kk} b_{ij} q_i q_j$. 
Let 
\bens 
S_{\star}(q)  = 
\E (Z^T  A^{\diamond}_{q q} Z  | U)- \E (Z^T  A^{\diamond}_{q q} Z)  = \sum_{k=1}^m \sum_{i \not=j}^n \tilde{a}^{k}_{ij}(u^k_i u^k_j - p_k^2) 
\eens
Corollary~\ref{coro::offdn} follows from Theorem~\ref{thm::Bernmgf}, which is the  
main tool to deal with the sparse quadratic forms. Its proof appears
in Section~\ref{sec::proofofpartII}.

Now suppose that for $C_4$ large enough, set
\bens
\tau' & = & C_4 a_{\infty} \twonorm{B_0}  \psi_B(n) n \log (3e/\ve)
\asymp  a_{\infty} \twonorm{\abs{B_0}} n \log (3e/\ve).
\eens
Then by~\eqref{eq::baseline}, Corollary~\ref{coro::offdn} and the union
bound,
\bens
\lefteqn{\prob{\exists q \in \N, \abs{S_{\star}(q)} > \tau'} =: \prob{\F_2}  } \\
&\le &  \abs{\N} \exp\left(- C_5 \min\left(\frac{\psi_B(n)(n   \log(3e/\ve) )^2 }{ \sum_{k=1}^m a_{kk}p_k^2}, \frac{ \twonorm{\abs{B_0}} n\log(3e/\ve)}{\twonorm{B_0}}
  \right)\right) \\
& \le & (3/\ve)^{n} \exp\left(-  C' s_0 \log   (3e/\ve) \right) \le \exp\left(-2n \log (3e/\ve)\right),
\eens
where $1\le \psi_B(n) = O(\sqrt{n})$.
On event $\F_0^c  \cap \F_1^c \cap \F_2^c$, for $W$ as defined in 
\eqref{eq::Wlocal}, we have by the bounds immediately above,
\bens
\lefteqn{\sup_{q\in \N}  \inv{\twonorm{B_0}}
  \abs{q^T\tilde{\Delta} q} =  \sup_{q \in \N}  \inv{\twonorm{B_0}}
  \abs{Z^T A^{\dm}_{q, q} Z-\E Z^T A^{\dm}_{q, q} Z}  }\\
& \le &
\sup_{q \in \N} \inv{\twonorm{B_0}} \left(\abs{Z^T A^{\dm}_{qq} Z-\E
    (Z^T A^{\dm}_{qq} Z | U)}  + \abs{\E (Z^T A^{\dm}_{qq} Z | U)-\E
    Z^T A^{\dm}_{q, q} Z} \right)\\
& \le &  \frac{\tau_0 +\tau'}{\twonorm{B_0}}  \asymp \sqrt{n \log
  (3e/\ve)}  \twonorm{A_0}^{1/2}  W +
n \log (3e/\ve) (\twonorm{A_0} +  a_{\infty}  \psi_B(n))
\eens
where $W  \asymp  (a_{\infty} \sum_{s=1}^m p_s^2)^{1/2}
+\psi_B(n) \big(a_{\infty} \twonorm{A_0} \log (n \vee m) \sum_{j=1}
p_j^4 \big)^{1/4}$. Denote by
\bens
r_{\offd}(n) := \sqrt{{n \twonorm{A_0}}/{(\sum_{j}
     a_{jj}  p_j^2)}}, \text{ and } \eta_A =
 \sqrt{a_{\infty}/a_{\min}}.
 \eens
 \noindent{\bf Putting things together}
 The rest of the proof follows from that of 
Theorem~\ref{thm::main-intro} using~\eqref{eq::MB1}
and~\eqref{eq::MB2}. 
By a standard approximation argument, we have on event  $\F_0^c  \cap 
\F_1^c \cap \F_2^c$, 
\ben
\label{eq::offdfinal}
&&\sup_{q \in \Sp^{n-1}} {\abs{q^T\tilde{\Delta}
    q}}/{\big(\twonorm{B_0}\norm{\M}_{\offd}\big)}
\le  \sup_{q \in \N}  {\abs{q^T\tilde{\Delta}
    q}}/{\big((1-\ve)^2\twonorm{B_0}\norm{\M}_{\offd}\big)} \\
\nonumber
  &\le &    C' \frac{\sqrt{n} \twonorm{A_0}^{1/2} W}{\sum_{j=1}^m a_{jj} p_j^2} +
C_2 \frac{n \twonorm{A_0} (1 +  a_{\infty}  \psi_B(n)/\twonorm{A_0})}{\sum_{j=1}^m a_{jj} p_j^2} \\
\nonumber
&\le &   C \eta_A r_{\offd}(n) + C' \psi_B(n)  r_{\offd}(n) f_{\QA}
+ C_4 r^2_{\offd}(n) \big(1+ {a_{\infty} \psi_B(n)}/{\twonorm{A_0}}  \big)
\een
where $f_{\QA} :=  \big(\log (n \vee m) \twonorm{A_0} a_{\infty} \sum_{s=1}^m 
  p_s^4\big)^{1/4}/\big(\sum_{j=1}^m a_{jj} p_j^2\big)^{1/2}$ and
\bens
\lefteqn{\frac{\sqrt{n} \twonorm{A_0}^{1/2}
  W}{\sum_{j=1}^m a_{jj} p_j^2}  =: \eta_A  r_{\offd}(n) +
\psi_B(n)  r_{\offd}(n) f_{\QA} } \\
 && \asymp \eta_A  r_{\offd}(n) +  \frac{ \psi_B(n) n^{1/2}\twonorm{A_0}^{3/4}
   a_{\infty}^{1/4}  \big(\log (n \vee m)  \sum_{s=1}^m 
p_s^4\big)^{1/4}}{\sum_{j=1}^m a_{jj} p_j^2}.
\eens
Obviously $\sum_{s=1}^m  p_s^4 \le p_{\max}^2  \sum_{s=1}^m
  p_s^2$, where $p_{\max}  = \max_{j} p_j$ and hence 
\ben
\nonumber
\psi_B(n)  r_{\offd}(n) f_{\QA}
& \le & \psi_B(n) \big(\frac{n
\twonorm{A_0}}{\sum_{j}a_{jj}  p_j^2} \big)^{3/4}
\big(\frac{a_{\infty} p_{\max}^2 \sum_{j} p_j^2}{a_{\min} \sum_{j}
  p_j^2}\big)^{1/4} \big(\frac{\log (n \vee m)}{n}\big)^{1/4} \\
& \asymp &
\label{eq::rate3}
\eta^{1/2}_A \sqrt{p_{\max} } \psi_B(n) r^{3/2}_{\offd}(n) \big({\log (n \vee   m) }/{n}\big)^{1/4},
\een
where we use the fact that $\big(\sum_{j=1}^m a_{jj} p^2_j \big)^{1/4}
\ge a_{\min}^{1/4} \big(\sum_{i=1}^m p_i^4 \big)^{1/4}$.
By \eqref{eq::MB2},~\eqref{eq::offdfinal}, and~\eqref{eq::rate3}, we have on event $\F_0^c  \cap \F_1^c 
 \cap \F_2^c \cap \F_{\diag}^c$,
 \ben
 \nonumber
\lefteqn{\twonorm{\tilde{B}_0 -B_0}/{\twonorm{B_0}
    \le C \eta_A  r_{\offd}(n) + C_4  r^2_{\offd}(n) \big(1+ {a_{\infty} \psi_B(n)}/{\twonorm{A_0}} \big) }}\\
& &
\label{eq::ratelocal}
+ C_3 \eta^{1/2}_A \sqrt{p_{\max} } r^{3/2}_{\offd}(n) \psi_B(n)  \big(\log (n \vee  m)/n\big)^{1/4}
\een
where for the diagonal component, we have for
$n = \Omega(\log (m \vee 
n))$
\bens
\lefteqn{{\maxnorm{\diag(\X \X^T) -  \E\diag(\X
      \X^T)}}/(\twonorm{B_0} \norm{\M}_{\diag})\le  C_{\diag}
  r_{\diag}}
\\
& \le &
\eta_A \big({\twonorm{A_0} \log (m \vee n) 
}/{\norm{\M}_{\diag}}  \big)^{1/2} =o(\eta_A r_{\offd}(n)). 
\eens
Thus~\eqref{eq::Bhatop} holds.
 The final expression in the theorem statement 
 for the overall rate of convergence follows  the same line of 
 arguments from Lemma~\ref{lemma::finalrate} with a slight variation.
Now by~\eqref{eq::rate3} and the Cauchy-Schwarz  inequality,
\ben
\nonumber
\psi_B(n)  r_{\offd}(n) f_{\QA} & \le &
  C \eta^{1/2}_A   \sqrt{p_{\max}} r^{3/2}_{\offd}(n) \psi_B(n) \big(\frac{\log (n \vee 
    m) }{n}\big)^{1/4}\\
  \label{eq::ratelocal2}
  & \le & c \eta_A r_{\offd}(n) + c' p_{\max}  r_{\offd}^2 (n) \psi_B(n)
 \big(\frac{ \psi_B(n)\log^{1/2} (n \vee m)}{\sqrt{n}}\big) 
 \een
 By \eqref{eq::MB2},~\eqref{eq::rate3}~\eqref{eq::ratelocal},~\eqref{eq::ratelocal2},
 we have on event $\F_0^c  \cap \F_1^c  \cap \F_2^c \cap \F_{\diag}^c$,
 \ben
 \nonumber
\lefteqn{\twonorm{\tilde{B}_0 -B_0}/{\twonorm{B_0} }\le C \big(\eta_A
    r_{\offd}(n) +   r^2_{\offd}(n) \big) + } \\ 
   \nonumber
  &  &
  C'   r^2_{\offd}(n) \psi_B(n)
  \Big(\frac{c_2 p_{\max} \psi_B(n)\log^{1/2} (n \vee m)
  }{\sqrt{n}}+\frac{a_{\infty}}{\twonorm{A_0}}\Big)  \\
  \label{eq::oprate5}
  &  \le &  C_1
 \eta_A r_{\offd}(n)  + C_2 r^2_{\offd}(n) \psi_B(n) \log^{1/2}(m \vee n)
 \een
where  $\psi_B(n)=O(\sqrt{n})$.
Finally, we have by the union bound, for some absolute constants $c, C$,
\bens
\prob{\F_0^c  \cap \F_1^c  \cap \F_2^c \cap \F_{\diag}^c} & \ge & 1- c /(m \vee n)^4 - 2
\exp\big(-c n \log (3e/\ve)\big) \\
& \ge &  1- C /(m \vee n)^4
\eens
since $\exp(\log (m \vee 
n) ) \le \exp(c' n \log (3e/\ve))$ since $\log (m \vee n) = o(n)$.
 Theorem~\ref{thm::gramsparse} thus holds.
\end{proofof}

\begin{remark}
Assume that $\log^{1/2} (m \vee n) \psi_B(n) /\sqrt{n} = o(1)$,
 we can get rid of the extra $\log^{1/2}(m \vee n)$ factor
 in~\eqref{eq::oprate5}.
  Then we have by \eqref{eq::Bhatop}, 
\bens 
 \twonorm{\tilde{B}_0 -B_0}/{\twonorm{B_0} }= O_P\Big(\eta_A 
 r_{\offd}(n)  + \big(p_{\max} + a_{\infty}/\twonorm{A_0}\big) 
 r^2_{\offd}(n) \psi_B(n) \Big) 
 \eens
which increases with $p_{\max}$ and decreases with $\twonorm{A_0}$ for 
a fixed value of $r_{\offd}(n)$, as we emprically observed in Section
\ref{sec::examples}.
\end{remark}

In order for~\eqref{eq::baseline} to hold for $s_0 = n$,
Lemma~\ref{eq::avgrowsum} suggests that we need
\bens 
 {\sum_{j=1}^m a_{jj} p_j^2}/{\twonorm{A_0}} = \Omega\big( \onenorm{B_0}
 \eta^2_A \log (n \vee m)/\twonorm{B_0} \big)\; \text{where} \; \onenorm{B_0} =
 \sum_{i,j} \abs{b_{ij}}
 \eens
On the other hand, in order for~\eqref{eq::baseline} to hold, it is sufficient to have 
\ben 
 \label{eq::baselinecoro}
 \quad \quad 
 {\sum_{j=1}^m a_{jj} p_j^2}/{\twonorm{A_0}}
 =O\big(\eta^2_A \log (n \vee m) n \norm{B_0}_{\infty}/{\twonorm{B_0}} \big) 
 \een
 using the upper bound of $\psi_B(n) \le \norm{B_0}_{\infty}/{\twonorm{B_0}}$.
Table~\ref{tab2} suggests that for the $\operatorname{AR}(1)$ models,
the upper and lower bounds are closing up; however, for the Star
model, the gap can be large. \\
\begin{table}
\caption{Metrics for models of $B$}
\label{tab2}
\begin{tabular*}{\textwidth}{l|ccc|ccc|ccc}
\hline
  \textbf{Metric}&\multicolumn{3}{c}{AR(1) $\rho_B=0.3$}& \multicolumn{3}{c}{AR(1) $\rho_B=0.7$}& \multicolumn{3}{c}{Star $\rho_B=\frac{1} {\sqrt{n}}$}  \\ \hline
  $n$                          & 64    & 128    & 256    &  64   & 128    & 256     &  64 & 128 & 256 \\ \hline
  $\norm{B}_{\infty}$                        &  1.857 & 1.857  & 1.857   & 5.667 & 5.667  & 5.667  &8.875 &  12.176 & 16.938 \\
  $\twonorm{B}$                        &  1.855 & 1.856  & 1.857   & 5.578 & 5.642  & 5.660  &2.588 &  2.593 &  2.611 \\
  $\onenorm{B}/\twonorm{B}$  &  63.43 & 127.39 & 255.36  & 62.22 & 125.79 & 253.54 &54.39 & 105.77 & 207.18 \\
  $\norm{B}_{\infty}/\twonorm{B}$          &  1.001 & 1.000  & 1.000   & 1.016 & 1.004  & 1.001  &3.429  & 4.696 & 6.488 \\
\hline
\end{tabular*}
\end{table}

\subsection{Proof of Lemma~\ref{eq::avgrowsum}}
\begin{proofof2}
For the lower bound, we have by the Rayleigh-Ritz theorem,
\bens
 \lambda_{\max}(\abs{B_0}) & \ge & \frac{\ip{\abs{B_0}  \vecone, 
     \vecone}}{\twonorm{\vecone}^2} =\inv{n} \sum_{i, j} \abs{b_{ij}}
 \eens
where  $\vecone = (1, 1, \ldots, 1)$;
By definition, the largest eigenvalue of symmetric non-negative
matrix $\abs{B_0}$ coincides with the spectral radius
\bens
\lambda_{\max}(\abs{B_0})  
& := & \max_{v \in \Sp^{n-1}} \abs{v}^T \abs{B_0} \abs{v} =
\max_{v \in \Sp^{n-1}} \abs{v^T \abs{B_0} v} =: \rho(\abs{B_0}); \text{and}, \\
\twonorm{B_0} & = & \rho(B_0) 
\le  \rho(\abs{B_0})   =\lambda_{\max}(\abs{B_0})  =
\twonorm{\abs{B_0}}  \le   \norm{\abs{B_0}}_{\infty} =
\norm{B_0}_{\infty}
\eens
in view of the previous derivations, where the upper bound follows
from the fact that the matrix operator norm is upper bounded by its $\ell_{\infty}$ norm.
\end{proofof2}

\subsection{Proof of Corollary~\ref{coro::offdn}}
\label{sec::proofofpartII}
Corollary~\ref{coro::Bernmgf} follows from Theorem~\ref{thm::Bernmgf}.
%in case $\diag(A) = 0$.
\begin{corollary}
  \label{coro::Bernmgf}
Suppose all conditions in Theorem~\ref{thm::Bernmgf} hold.
Let $A = (a_{ij})$ be an $m \times m$ matrix with 
$0$s along its diagonal. Then,
for every $\abs{\lambda} \le \inv{16(\norm{A}_1 \vee \norm{A}_{\infty})}$ and 
$D_{\max} := \norm{A}_{\infty} + \norm{A}_1$
\bens
\E \exp\big(\lambda\big( \sum_{i,j} a_{ij} \xi_i \xi_j - \E \sum_{i,j} a_{ij} \xi_i \xi_j \big)\big) 
  \le   \exp\big(36.5 \lambda^2 D_{\max}  e^{8 \abs{\lambda}
     D_{\max}} \sum_{i\not=j} \abs{a_{ij}}  p_i   p_j\big).
\eens
\end{corollary}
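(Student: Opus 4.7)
The plan is to derive the corollary directly from Theorem~\ref{thm::Bernmgf} by exploiting the zero-diagonal hypothesis on $A$. Theorem~\ref{thm::Bernmgf} already provides the moment generating function bound for $S_\star = \sum_{i,j} a_{ij}\xi_i\xi_j - \E \sum_{i,j} a_{ij}\xi_i\xi_j$ in terms of three structural quantities: the off-diagonal weighted variance sum $\sum_{i\neq j}|a_{ij}|\sigma_j^2\sigma_i^2$, the diagonal sum $\sum_{i=1}^m |a_{ii}|\sigma_i^2$, and the off-diagonal mean-squared sum $\sum_{i\neq j}|a_{ij}|p_j p_i$. So the proof is really a bookkeeping calculation: collapse these three contributions into the single term appearing on the right-hand side of the corollary.

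First I would invoke Theorem~\ref{thm::Bernmgf} directly for $\lambda$ in the admissible range $|\lambda| \le 1/(16(\|A\|_1 \vee \|A\|_\infty))$; note that the admissible range in the corollary agrees with the theorem's. Since $a_{ii}=0$ for all $i$ by hypothesis, the contribution $\sum_{i=1}^m |a_{ii}|\sigma_i^2$ in the second exponential of Theorem~\ref{thm::Bernmgf} vanishes outright. This leaves only the off-diagonal piece $2\sum_{i\neq j}|a_{ij}|p_j p_i$ inside that exponential.

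Next I would bound the first exponential. Since $\sigma_i^2 = p_i(1-p_i) \le p_i$, we have $\sigma_j^2\sigma_i^2 \le p_j p_i$, so
\begin{equation*}
32.5\,\lambda^2 D_{\max} e^{8|\lambda|D_{\max}} \sum_{i\neq j} |a_{ij}|\sigma_j^2\sigma_i^2 \;\le\; 32.5\,\lambda^2 D_{\max} e^{8|\lambda|D_{\max}} \sum_{i\neq j} |a_{ij}| p_j p_i.
\end{equation*}
For the second exponential, after dropping the diagonal contribution, use the monotonicity $e^{4|\lambda|D_{\max}} \le e^{8|\lambda|D_{\max}}$ so that $2\lambda^2 D_{\max} e^{4|\lambda|D_{\max}} \cdot 2\sum_{i\neq j}|a_{ij}|p_j p_i \le 4\lambda^2 D_{\max} e^{8|\lambda|D_{\max}}\sum_{i\neq j}|a_{ij}|p_j p_i$. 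Adding the two exponents yields total constant $32.5 + 4 = 36.5$, which is exactly the coefficient in the corollary.

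The only subtlety is that the corollary inflates the constant $D_{\max}$ from $\|A\|_\infty \vee \|A\|_1$ (in Theorem~\ref{thm::Bernmgf}) to $\|A\|_\infty + \|A\|_1$. This is a harmless weakening: since $\|A\|_\infty \vee \|A\|_1 \le \|A\|_\infty + \|A\|_1$, replacing the former by the latter can only enlarge both the prefactor $D_{\max}$ and the exponent $8|\lambda|D_{\max}$ (and the admissible range of $\lambda$ in the corollary statement remains phrased with the tighter $\|A\|_1 \vee \|A\|_\infty$, so no issue arises there). There is no real obstacle here; the result is essentially a substitution. If anything, the only care needed is in tracking the constants to make sure $32.5 + 4 \le 36.5$ with room to spare for any rounding in the exponential monotonicity step, which is immediate.
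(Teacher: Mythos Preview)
Your proposal is correct and matches the paper's approach: the paper simply states that Corollary~\ref{coro::Bernmgf} follows from Theorem~\ref{thm::Bernmgf} without spelling out the details, and your derivation (drop the diagonal term, use $\sigma_i^2\sigma_j^2 \le p_ip_j$ and $e^{4|\lambda|D_{\max}} \le e^{8|\lambda|D_{\max}}$, then combine $32.5+4=36.5$, and finally relax $\|A\|_1 \vee \|A\|_\infty$ to $\|A\|_1 + \|A\|_\infty$) is exactly the intended bookkeeping.
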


\begin{proofof}{Corollary~\ref{coro::offdn}}
Let $\tilde{A} = \E \DD(q) = (\tilde{a}^{k}_{ij})_{k=1, \ldots, m}$ 
be the block-diagonal matrix with $k^{th}$ block along the 
diagonal being $\tilde{A}^{(k)} := (\tilde{a}^{k}_{ij}(q))_{i,j \le
  n}, k \in [m]$, where $\tilde{a}^{k}_{ij}(q) = a_{kk} b_{ij} q_i q_j$ for
$i \not=j$ and  $\tilde{a}^{k}_{jj} = 0$.
Thus $\tilde{A}  =  \diag(A_0) \otimes \offd(B_0 \circ  (q \otimes q))$,
where the expectation is taken elementwise;
Then for $\abs{\lambda} \le  1/{(16 D_{\max})}$, where $D_{\max} :=
\shnorm{\tilde{A}}_{\infty} \vee \shnorm{\tilde{A}}_1 \le a_{\infty} \twonorm{B_0}$,
 \bens
\E \exp\big(\lambda S_{\star}(q) \big)
 & \le &  \exp\big(60 \lambda^2 D_{\max}  
 \sum_k \sum_{i\not=j} \abs{\tilde{a}^{k}_{ij}(q)}  p_i  p_j\big) \\
& \le & \exp\big(60 \lambda^2 D_{\max}  \twonorm{\abs{B_0}}  \sum_k a_{kk} p_k^2\big)
\eens
by   Corollary~\ref{coro::Bernmgf}.
The rest of the proof follows that of Lemma~\ref{lemma::Bernmgf2}.
\end{proofof}

\section{Proof of Theorem~\ref{thm::main}}
\label{sec::appendproofoffdmain}
\begin{proofof2}
  To prove Theorem~\ref{thm::main}, it remains
  to check conditions in 
Theorems~\ref{thm::mainlights} and~\ref{thm::samplesize}.
Now~\eqref{eq::baseline} ensures~\eqref{eq::samplecrux}
and~\eqref{eq::sample1local} hold, where we show
\ben 
\nonumber 
\lefteqn{
  \sum_{j=1}^m a_{jj} p^2_j
%  =\big(\sum_{j=1}^m a_{jj} p^2_j\big)^{3/4}
%\big(\sum_{j=1}^m a_{jj} p^2_j \big)^{1/4}
\ge \big(\sum_{j=1}^m a_{jj} p^2_j\big)^{3/4}
\big(a_{\min} \sum_{j=1}^m p^4_j \big)^{1/4} }\\
\label{eq::sample2local}
%&& \ge C_5 \psi_B(s_0)\big(s_0 \log \big(\frac{3en}{s_0 \ve}\big) \twonorm{A_0}\big)^{1/2}  \big(N \log (n \vee m) \big)^{1/4}\\
  &&  \ge C_5  \twonorm{A_0}^{3/4} a_{\infty}^{1/4}    \psi_B(s_0) 
  \big(s_0 \log \big(\frac{3en}{s_0\ve}\big)\big)^{1/2}  \big(\log (n \vee m)  \sum_{s=1}^m p_s^4\big)^{1/4}
  \een
  so that $r_{\offd}(s_0)  f_{\QA} \psi_B(s_0) < 1$.
Since $\psi_{B}(s_0) \le \psi_{B}(2s_0 \wedge n) \le \sqrt{2s_0}$, we
have by~\eqref{eq::baseline}, for $S_a := {\sum_{j=1}^m a_{jj} p^2_j}/{\twonorm{A_0}}$ and $d = 2s_0 \wedge n$,
  \ben
 \label{eq::propcond}
 \quad \quad S_a \ge  C_4 \eta_A^2 s_0\psi_B(d) \log (n \vee m)
  \ge C_6 (\eta_A s_0)^{2/3} \psi^{4/3}_B(s_0) \log (n \vee m)
  \een
%  for some absolute constants $C_4, C_6$;
%which implies that~\eqref{eq::sample2local} holds;
Hence it is sufficient to have \eqref{eq::medium} in order for
\eqref{eq::sample2local}  to hold:
\ben
\label{eq::medium}
&& \big({\sum_{j=1}^m a_{jj} p^2_j}/{\twonorm{A_0}}\big)^{3/4} 
\ge C_5 \psi_B(s_0) \big(s_0 \log (\frac{3en}{s_0 \ve})\big)^{1/2}
\big(\frac{a_{\infty} \log (n \vee m)}{a_{\min}}\big)^{1/4},
\een
for which~\eqref{eq::propcond} in turn is a sufficient condition;
This completes the proof of Theorem~\ref{thm::main}.
\end{proofof2}

\silent{\label{eq::sample2local}
&&
\sum_{j=1}^m a_{jj} p^2_j \ge C_5 \psi_B(s_0) (N \log (n \vee m))^{1/4}
%\big( a_{\infty} \twonorm{A_0} \log (n \vee m) \sum_{i=1}^m p_i^4 \big)^{1/4}
\big(s_0 \log \big(\frac{3e n}{s_0\ve} \big) \twonorm{A_0}\big)^{1/2}}
%C_5 \psi_B(s_0) \sqrt{s_0 \log (3en/(s_0 \ve))}
%\twonorm{A_0}^{3/4}a_{\infty}^{1/4} 
%\sqrt{s_0 \log(3en/ (s_0 \ve)) \twonorm{A_0}
% in the supplementary material. {a_{\infty}}/{a_{\min}}

\silent{
  Alternatively, we may consider
  \bens
\lefteqn{  \sqrt{\eta_A}   (\psi_B(s_0))^{1/2} \ell_{s_0, n}^{1/4}
  r^{3/2}_{\offd}(s_0) =  r_{\offd}(s_0) \sqrt{\eta_A r_{\offd}(s_0)
    \psi_B(s_0) \ell_{s_0, n}^{1/2}} }\\
  & \le & \eta_A r_{\offd}(s_0)  + \ell_{s_0, n}^{1/2}  r^2_{\offd}(s_0) \psi_B(s_0)
\eens
While for $s_0 < n$, we have for 
\bens
r_{\diag}
\asymp \eta_A \big(\frac{\twonorm{A_0} \log (m \vee n) 
}{\sum_{j} a_{jj}  p_j}\big)^{1/2} 
& \le &
\eta_{A}
r_{\offd}(s_0) =
\eta_{A} \sqrt{s_0 \log \big(\frac{3e n}{s_0\ve}\big) 
  \frac{\twonorm{A_0}}{\sum_{j} a_{jj}  p_j^2}} \\
\text{while } \;  r_{\offd}^2 (s_0)  \psi_B(s_0) \ell_{s_0, n}^{1/2}
& \asymp &
\frac{s_0 \psi_B(s_0) \twonorm{A_0}}{(\sum_{j}a_{jj}  p_j^2)}
\big({\log (n \vee m)}{\log (3en/(s_0 \ve) )} \big)^{1/2} \\
& = &
\eta_{A} r_{\offd}(s_0) =
\eta_{A} s_0 \log^{1/2} \big(\frac{3e n}{s_0\ve}\big) 
  \frac{\twonorm{A_0}}{\sum_{j} a_{jj}  p_j^2} \\
\text{so long as } \;
\log (m \vee n) \sum_{j} a_{jj}  p_j^2 & \le & s_0 \log \big(\frac{3e n}{s_0\ve}\big) (\sum_{j} a_{jj}  p_j)
\eens
since $n \log (6e) \ge s_0 \log \big(\frac{3e n}{s_0\ve}\big) \ge \log
(6e n)$ since $f(s_0) = s_0 \log \big(\frac{3e n}{s_0\ve}\big)$ is
monotonically increasing with $s_0$, while clearly $\log (m \vee n)
\in [\log n, n \log (6e))$ can be anywhere in this range so long as $m
= o(e^n)$.
Recall
\bens 
r_{\offd}(s_0) = \sqrt{s_0 \log \big(\frac{3e n}{s_0  \ve}\big) 
  \frac{\twonorm{A_0}}{\sum_{j} a_{jj}  p_j^2}} \; \text{ and }
 \sqrt{\ell_{s_0, n}} = \sqrt{\frac{\log (n \vee m)}{\log (3en/s_0 \ve)}}. 
 \eens
\bnum
\item
Suppose that $\log (6 en) <s_0 \log \big(\frac{3e n}{s_0  \ve}\big) \le \log
(m \vee n)$; That is, we have many samples
$m = \Omega\big(\big(\frac{3e n}{s_0  \ve}\big) e^{s_0} \big) $, where clearly
  $s_0 \le \log (m)/\log (3en/(s_0 \ve))$.
  Then for $\psi_B(s_0) \le \sqrt{s_0}$
  \bens
  r_{\offd}^2 (s_0)  \psi_B(s_0) \ell_{s_0, n}^{1/2}
& \asymp &
\frac{\psi_B(s_0) \twonorm{A_0}}{(\sum_{j}a_{jj}  p_j^2)}
{\sqrt{s_0} \log^{1/2} (n \vee m)}
\big({s_0 \log (3en/(s_0 \ve) )} \big)^{1/2} \\
& \le &
\frac{s_0 {\log (n \vee m)} \twonorm{A_0}}{(\sum_{j}a_{jj}  p_j^2)}
\asymp r^2_{\offd}(s_0)
\eens
In other words, when $s_0$ is small, then the error is dominated by $\eta_A r_{\offd}(s_0)$.
\item
  Otherwise, suppose that $s_0 \log \big(\frac{3e n}{s_0  \ve}\big) \ge
 \log (m \vee n)$ so that  $r_{\offd}(s_0) \ge r_{\diag}$
   and hence,
 \bens
 r_{\offd}(s_0) \psi_B(s_0)   \sqrt{\ell_{s_0, n}}
 &= & \psi_B(s_0)\sqrt{\frac{\log (n \vee m)}{\log (3en/s_0 \ve)}}
 \sqrt{s_0 \log \big(\frac{3e n}{s_0\ve}\big) 
   \frac{\twonorm{A_0}}{\sum_{j} a_{jj}  p_j^2}} \\
 &\le &  \psi_B(s_0) \sqrt{\log (n \vee m)}  \sqrt{\frac{s_0
     \twonorm{A_0}}{\sum_{j} a_{jj}  p_j^2} } \le \sqrt{ \psi_B(s_0)}
\eens
where recall that we assume
\bens
{\sum_{j=1}^m a_{jj} p_j^2}/{ \twonorm{A_0}} \ge C_4 \eta_A^2 ( \psi_B(2s_0 \wedge n) \vee 1)
 s_0 \log (n \vee m)
 \eens
% and hence the dominating term clearly is $\eta_A r_{\offd}(s_0)$.
 \enum
}

%\subsection{Proof of Proposition~\ref{prop::preamble}}
% \begin{proofof2}
%{Proposition~\ref{prop::preamble}}
%\silent{ In order to satisfy~\eqref{eq::sample2}, we require
%that~\eqref{eq::precision} holds so that~\eqref{eq::sample2local}
%holds; moreover,  we express~\eqref{eq::precision} as follows and
%show that Condition~\eqref{eq::baseline} implies that
%\eqref{eq::precision} holds, which in turn guarantees
%that~\eqref{eq::sample2} holds, and \bens  r_{\offd}(s_0)  f_{\QA}
%\psi_B(s_0) = o(1)  \eens} \end{proofof2}

%We will now prove Lemma~\ref{lemma::finalrate}, while devoting the rest of this section
%to the proof of  Theorems~\ref{thm::samplesize}
%and~\ref{thm::mainlights}.

%the large deviation bounds using~\eqref{eq::decomp},  by
%Theorems~\ref{thm::samplesize} and~\ref{thm::mainlights}, and a
%standard approximation argument in the sense of~\eqref{eq::netbounds} and~\eqref{eq::approx},
\silent{\bens
\nonumber
\sup_{q, h \in \Sp^{n-1} \cap E} \frac{\abs{q^T\tilde{\Delta}
      h}}{\twonorm{B_0}\norm{\M}_{\offd}} \le
  %\delta_{\overall} \le
C_{\sparse} 
\big(r_{\offd}(s_0) \eta_{A} \ell_{s_0, n}^{1/2} +
r^2_{\offd}(s_0)  \psi_B(2s_0 \wedge n) \big)
\eens}
  %  =  \sup_{q, h \in \Sp^{n-1} \cap E}
  %  \inv{\twonorm{B_0}\norm{\M}_{\offd}}  \abs{Z^T A^{\dm}_{q, h}
  %  Z-\E Z^T A^{\dm}_{q, h} Z}\\

%\label{eq::bridge3} \asymp 
%Furthermore, we have by the union bound,
%This completes the proof of Theorem~\ref{thm::main}. 

%\subsubsection{Proof of 
%\label{sec::appendproofoffdlemma}

%\subsection{Proof of Theorems~\ref{thm::samplesize} and~\ref{thm::mainlights}}
%We will devote th rest of this section
%First we need to state some preliminary results.
%\subsection{Proof of Lemma~\ref{lemma::finalrate}}
  \silent{
    \bens 
\frac{\sum_{j=1}^m a_{jj}
  p_j^2}{ \twonorm{A_0}}
& \ge & 
\psi_B(s_0) s_0 \log (n/s_0) 
\big(\log (n \vee m) \sum_{s=1}^m 
  p_s^4\big)^{1/4}
  \eens 
Suppose 

     \bens 
 Z^T  A^{\diamond}_{q,h} Z 
& =: &
Z^T \big(\half \sum_{k=1}^m \sum_{i=1}^n
  \sum_{j \not= i}^n  u^k_i u^k_j \big(q_i h_j + q_j h_i) (c_i
    c_j^T) \otimes (d_k d_k^T) \big) \big) Z
    \eens }
\silent{
  since $ s_0 \ge \psi_B(s_0)^2$.
\ben
\label{eq::middleterm}
\frac{a_{\infty} \sum_{j} p_j^2}{\twonorm{A_0} \log (n \vee m) } \ge
\frac{\sum_{j} a_{jj} p_j^2} {\twonorm{A_0} \log (n \vee m) } \ge
\een
\begin{corollary}
  \label{coro::uninorm}
  Suppose~\eqref{eq::sample1local} holds.
  Then we have
\bens
&& \sup_{q, h \in \Sp^{n-1}, s_0-\sparse}  
\fnorm{A^{\diamond}_{qh}} \le C \twonorm{B_0}
\twonorm{A_0}^{1/2} (V_1 + V_2)
%\; \text{ where }\; \\ && W  \asymp  (a_{\infty} \sum_{s=1}^m
%p_s^2)^{1/2} +  \psi_B(s_0)\big(N \log (n \vee m)\big)^{1/4}
\eens
\end{corollary}
\begin{proof}
\bens
&& \sup_{q, h \in \Sp^{n-1}, s_0-\sparse}  
\fnorm{A^{\diamond}_{qh}} \le W \cdot \twonorm{B_0}
\twonorm{A_0}^{1/2}
\; \text{ where }\; \\
&& W  \asymp  \big(a_{\infty} \sum_{s=1}^m p_s^2\big)^{1/2} + 
\psi_B(s_0)\sqrt{\twonorm{A_0} \log (n \vee m)}
+ \psi_B(s_0) \big(N \log (n \vee m)\big)^{1/4}
\eens
The corollary thus holds while adjusting the constant $C$. 
\end{proof}}

%  we have by Lemma~\ref{lemma::finalrate},
%$r_{\offd}(s_0)  f_{\QA} \psi_B(s_0)\big) \le \eta_A \big(s_0 \log (n
%\vee m){\twonorm{A_0}}/{(\sum_{j}a_{jj}  p_j^2)}\big)^{1/2} +r_{\offd}^2 (s_0)  \psi_B(2s_0 \wedge n) \to 0$
%\underline{r_{\offd}} \sqrt{s_0} under~\eqref{eq::baseline}, to see this, we have
%\bens
%r_{\offd}^2(s_0) \psi_B(2s_0 \wedge n) \asymp \psi_B(2s_0 \wedge n) s_0 \log \big(\frac{3e n}{s_0\ve}\big) 
% \frac{\twonorm{A_0}}{\sum_{j} a_{jj}  p_j^2} <1 \eens
\section{Proofs for Theorem~\ref{coro::thetaDet}}
\label{sec::inverseLemma}
We use $\hat{a}_{j} := -\tilde{\Theta}_{jj}$ as shorthand 
notation, where $\tilde{\Theta}_{jj}$ is as defined in 
\eqref{eq::digest}. Denote by $a_{j} := - \theta_{jj}$.

\subsection{Proof of Theorem~\ref{coro::thetaDet}}
\label{sec::proofofinverses}
Let $\tilde\Theta_{j \cdot}$ denote the $j^{th}$ row vector of 
$\tilde\Theta$ following Definition~\ref{def::TopHat}.
In Lemma~\ref{lemma::tidebound}, we derive error bounds for estimating the  
diagonal entries of $\Theta_0$, as well as the error bounds for constructing row vectors $\{\Theta_{j \cdot}, j\in [n]\}$ of 
$\Theta_0$ with $\{\tilde\Theta_{j \cdot}, j \in [n]\}$. Let $\kappa_B := \twonorm{B_0}/\lambda_{\min}(B_0)$.
Let  $\tilde{\kappa}_{\rho} := M_{\rho} M_{\Omega} \ge \kappa_{\rho}$ be an upper estimate on the condition number of $\rho(B)$ under 
(A1). Let $\alpha$ be as in \eqref{eq::lassopen}. Suppose
\ben 
\label{eq::parityproof}
&& {\sum_{j} a_{jj} p_j^2}/{\twonorm{A_0}}  \ge 4 C_{\overall}^2
\eta_A^2\kappa_B^2 \twonorm{\Theta_0}^2 d_0 \log (m \vee n), 
\text{ where } \\
\nonumber
%\label{eq::defineCoverall}
&& C_{\overall}  := 16 C_{\alpha} C_{\gamma} b_{\infty} \tilde{\kappa}_{\rho}
\text{ for }  \; C_{\gamma} \ge c_{\gamma} \vee C_{\max}, \; \; C_{\alpha} := \lambda_{\min}(B_0)/{\alpha},
\een
and $C_{\max}$ and $c_{\gamma}$ are as defined in~\eqref{eq::BHatoffd}
and~\eqref{eq::halflambda}, respectively. Lemma~\ref{lemma::tidebound} is proved in supplementary
Section~\ref{sec::proofoftide},
which follows steps from Corollary 
5~\cite{LW12}. 
\begin{lemma}
  \label{lemma::tidebound}
  Suppose all conditions in Theorem~\ref{coro::thetaDet} hold. 
  Suppose~\eqref{eq::parityproof} holds.
Let $\alpha=\lambda_{\min}(B_0) /C_{\alpha}$, where $2> C_{\alpha} > 1$.
Then for each $j$, $(a) \abs{\tilde{\Theta}_{jj}} \le 2 \abs{\theta_{jj}}$;
\bens
(b) && \abs{\tilde{\Theta}_{jj} - \theta_{jj}} =\abs{\hat{a}_j - a_j}
\le  2 C_{\overall} \ul{r_{\offd}} \sqrt{d_0} \kappa_B
\theta^2_{\max},\; \; \text{ where }\; \; \theta_{\max} := \max_{jj} \theta_{jj};\\
(c)  && \onenorm{\tilde{\Theta}_{j \cdot} - \Theta_{j \cdot}} \le {2 C_{\overall} \ul{r_{\offd}} d_0 } \theta_{\max} (\kappa_B+3)/{\lambda_{\min}(B_0)}.
\eens
\end{lemma}

\begin{proofof}{Theorem~\ref{coro::thetaDet}}
Clearly, Condition~\eqref{eq::parityproof} holds under the assumption that 
$\ul{r_{\offd}} \sqrt{d_0} = o(1)$ as imposed 
in~\eqref{eq::paritydual} in Theorem~\ref{coro::thetaDet}; See~\eqref{eq::parity4}. 
  Following Lemma~\ref{lemma::tidebound}, 
\ben
\label{eq::tildeTheta}
\shnorm{\tilde{\Theta} - \Theta_{0}}_{\infty} & \le & 2 C_{\overall} \ul{r_{\offd}} d_0 \theta_{\max} (\kappa_B+3)
/{\lambda_{\min}(B_0)}\; \\
\nonumber
\; \text{ and hence } \; 
\shnorm{\hat{\Theta} - \Theta_0}_1 
& = & \shnorm{\hat{\Theta} - \Theta_0}_{\infty}  \le 
\nonumber
\shnorm{\hat{\Theta} - \tilde\Theta}_{\infty} +
\shnorm{\Theta_0 - \tilde\Theta}_{\infty}  \\
& \le & 
\nonumber
2 \shnorm{\Theta_0 - \tilde\Theta}_{\infty} =
2 \max_{j} \onenorm{\tilde{\Theta}_{j \cdot} - \Theta_{j \cdot}} 
\een
where the second inequality holds by optimality of $\hat{\Theta}$ in
minimizing $\shnorm{\Theta - \tilde\Theta}_{\infty}$ among all symmetric matrices
and the last  inequality holds by \eqref{eq::tildeTheta}.
\end{proofof}

\subsection{Proof of Lemma~\ref{lemma::tidebound}}
Lemma~\ref{lemma::detLassonew}  is identical to Theorem
16~\cite{RZ17} upon adjustment of constants, which we elaborate
in Section~\ref{sec::proofofdetLasso}.

\begin{lemma}
  \label{lemma::detLassonew}
Suppose all conditions in Theorem~\ref{coro::thetaDet} hold,
where we set $\lambda$ as in~\eqref{eq::lambdamain}.

Let $\beta^{j*} = \{\beta_k^{j*}, k \not=j, k \in  [n]\}$.
Let $\hat\beta^{(j)} = \{\hat\beta^{(j)}_k, k \not=j, k \in 
[n]\}$ be as in~\eqref{eq::origin} with $\hat\Gamma^{(j)}$ and 
$\hat\gamma^{(j)}$ given in~\eqref{eq::GammaMain} and~\eqref{eq::gammaMain}.
Then for all $j$,
\ben
\label{eq::twofix}
    \twonorm{\hat{\beta}^{(j)} - \beta^{j*}}
    & \le & 3\lambda \sqrt{d_0}/\alpha \le 12 C_{\gamma} C_{\alpha} b_{\infty} \tilde{\kappa}_{\rho}
    {\ul{r_{\offd}}\sqrt{d_0} }/{\lambda_{\min}(B_0)}, \\
\label{eq::onefix}
\onenorm{\hat{\beta}^{(j)} - \beta^{j*}}
& \le & 4 \sqrt{d_0}   \shnorm{\hat{\beta}^{(j)} - \beta^{j*}}_2 \le {12 \lambda d_0}/{\alpha} 
\een
\end{lemma}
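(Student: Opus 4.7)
\textbf{Proof proposal for Lemma~\ref{lemma::detLassonew}.}

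The plan is to follow the classical constrained-Lasso analysis (in the spirit of \cite{LW12,BRT09}), but carefully track the tolerance term $\tau$ in the Lower-$\RE$ condition and the $\ell_1$-constraint $\onenorm{\beta}\le b_1$ present in \eqref{eq::origin}. Fix $j\in[n]$, let $\hat\Delta := \hat\beta^{(j)} - \beta^{j*}$, and let $S \subset [n]\setminus\{j\}$ denote the support of $\beta^{j*}$, which has size at most $d_0-1$ by the maximum-row-sparsity assumption on $\Theta_0$. Note $\beta^{j*}$ is feasible for \eqref{eq::origin} by the choice $b_1\ge\onenorm{\beta^{j*}}$, so the basic inequality holds:
\begin{equation*}
\tfrac12(\hat\beta^{(j)})^T\hat\Gamma^{(j)}\hat\beta^{(j)} - \ip{\hat\gamma^{(j)},\hat\beta^{(j)}} + \lambda\onenorm{\hat\beta^{(j)}}
\;\le\; \tfrac12(\beta^{j*})^T\hat\Gamma^{(j)}\beta^{j*} - \ip{\hat\gamma^{(j)},\beta^{j*}} + \lambda\onenorm{\beta^{j*}}.
\end{equation*}

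Rearranging yields the standard quadratic inequality
\begin{equation*}
\tfrac12\hat\Delta^T\hat\Gamma^{(j)}\hat\Delta \;\le\; \ip{\hat\gamma^{(j)}-\hat\Gamma^{(j)}\beta^{j*},\,\hat\Delta} + \lambda\bigl(\onenorm{\beta^{j*}}-\onenorm{\hat\beta^{(j)}}\bigr).
\end{equation*}
By \eqref{eq::halflambda}, the first term on the right is at most $(\lambda/4)\onenorm{\hat\Delta}$. Splitting $\onenorm{\hat\Delta}=\onenorm{\hat\Delta_S}+\onenorm{\hat\Delta_{S^c}}$ and using $\onenorm{\hat\beta^{(j)}}\ge\onenorm{\beta^{j*}_S+\hat\Delta_{S^c}}-\onenorm{\hat\Delta_S}$, one obtains
\begin{equation*}
\tfrac12\hat\Delta^T\hat\Gamma^{(j)}\hat\Delta + \tfrac{3\lambda}{4}\onenorm{\hat\Delta_{S^c}} \;\le\; \tfrac{5\lambda}{4}\onenorm{\hat\Delta_S},
\end{equation*}
so (after absorbing the $\RE$ tolerance) $\hat\Delta$ approximately lies in the cone $\onenorm{\hat\Delta_{S^c}}\le 3\onenorm{\hat\Delta_S}\le 3\sqrt{d_0}\twonorm{\hat\Delta}$.

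The next step invokes the Lower-$\RE$ condition on $\hat\Gamma^{(j)}$ with parameters $(\alpha,\tau)$: $\hat\Delta^T\hat\Gamma^{(j)}\hat\Delta \ge \alpha\twonorm{\hat\Delta}^2 - \tau\onenorm{\hat\Delta}^2$. Using the cone-type bound $\onenorm{\hat\Delta}\le 4\sqrt{d_0}\twonorm{\hat\Delta}$ gives the absorption term $\tau\onenorm{\hat\Delta}^2 \le 16\tau d_0\twonorm{\hat\Delta}^2$, which by \eqref{eq::lassopen} ($\alpha/\tau\ge 120 d_0$) is at most $\alpha\twonorm{\hat\Delta}^2/7$. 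The residual Lasso-remainder piece involving the $\ell_1$-radius contributes at most $\lambda\tau b_1/\tau\cdot\onenorm{\hat\Delta}$-type terms, which are controlled by $\lambda/\tau \ge 4b_1$, also in \eqref{eq::lassopen}. Combining these with the basic inequality yields $\alpha\twonorm{\hat\Delta}^2 \le 3\lambda\sqrt{d_0}\twonorm{\hat\Delta}$, hence \eqref{eq::twofix}; the second inequality in \eqref{eq::twofix} then follows by substituting $\lambda\asymp 4 C_{\gamma}b_{\infty}\tilde\kappa_\rho\ul{r_{\offd}}$ from \eqref{eq::lambdamain} and $\alpha=\lambda_{\min}(B_0)/C_{\alpha}$. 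Finally, \eqref{eq::onefix} is immediate from the cone inclusion $\onenorm{\hat\Delta}\le 4\sqrt{d_0}\twonorm{\hat\Delta}$ together with \eqref{eq::twofix}.

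The main obstacle is the careful bookkeeping of the tolerance term $\tau$ and the $\ell_1$-constraint $b_1$ in the basic inequality: unlike in the standard Lasso, the Lower-$\RE$ condition is only effective after the $\tau\onenorm{\hat\Delta}^2$ term is dominated by the curvature $\alpha\twonorm{\hat\Delta}^2$, which is exactly why the conditions $\alpha/\tau\ge 120 d_0$ and $\lambda/\tau\ge 4b_1$ are imposed in \eqref{eq::lassopen}. Verifying that these slack constants propagate cleanly through the basic inequality (so that one still ends up with the clean bound $\twonorm{\hat\Delta}\le 3\lambda\sqrt{d_0}/\alpha$ rather than something lossier) is the only nontrivial step; once that is done, the rest is algebraic substitution of the definitions of $\lambda$, $\alpha$, and $\ul{r_{\offd}}$.
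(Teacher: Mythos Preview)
Your approach is essentially the same as the paper's, and all the right ingredients are present: the basic inequality from optimality, the cone constraint $\onenorm{\hat\Delta_{S^c}}\le 3\onenorm{\hat\Delta_S}$, and the Lower-$\RE$ absorption with $\alpha/\tau\ge 120 d_0$. One clarification is worth making about where the condition $\lambda/\tau\ge 4b_1$ actually enters. In your write-up you invoke it for a ``residual Lasso-remainder piece'' after the curvature step, but in fact it is needed \emph{earlier}, to obtain the cone constraint itself without circularity. From $\tfrac12\hat\Delta^T\hat\Gamma^{(j)}\hat\Delta + \tfrac{3\lambda}{4}\onenorm{\hat\Delta_{S^c}} \le \tfrac{5\lambda}{4}\onenorm{\hat\Delta_S}$ you cannot yet use $\onenorm{\hat\Delta}\le 4\sqrt{d_0}\twonorm{\hat\Delta}$ to control the quadratic term (that is the cone bound you are trying to prove). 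Instead, the paper uses only the weak lower bound $\hat\Delta^T\hat\Gamma^{(j)}\hat\Delta \ge -\tau\onenorm{\hat\Delta}^2$, then the $\ell_1$-constraint in \eqref{eq::origin} to get $\onenorm{\hat\Delta}\le 2b_1$, so that $\tau\onenorm{\hat\Delta}^2 \le 2b_1\tau\onenorm{\hat\Delta}\le (\lambda/2)\onenorm{\hat\Delta}$; this is precisely where $\lambda\ge 4b_1\tau$ is used, and it yields $0\le 3\lambda\onenorm{\hat\Delta_S} - \lambda\onenorm{\hat\Delta_{S^c}}$ cleanly. Once the cone constraint is in hand, your absorption $\tau\onenorm{\hat\Delta}^2\le 16\tau d_0\twonorm{\hat\Delta}^2\le \tfrac{2\alpha}{15}\twonorm{\hat\Delta}^2$ (via $\alpha/\tau\ge 120 d_0$) and the conclusion $\twonorm{\hat\Delta}\le 3\lambda\sqrt{d_0}/\alpha$ match the paper exactly.
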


\begin{corollary}
  \label{coro::oracle}
  Suppose all conditions in Lemma~\ref{lemma::detLassonew} hold;
Suppose~\eqref{eq::parityproof} holds.   Let $a_j := -\theta_{jj}$.
Then for all $j$, we have the following bounds
\ben
\label{eq::oneabs}
\onenorm{\hat{\beta}^{(j)} - \beta^{j*}}/{\sqrt{d_0}}
  & \le & 4  \shnorm{\hat{\beta}^{(j)} - \beta^{j*}}_2 \le 2/\kappa_B <
  2 \\
\label{eq::betaoracle}
\abs{a_j} \onenorm{\hat\beta^{(j)}}
  & \le &
  c' \sqrt{d_0} \twonorm{\Theta_{j \cdot}} \; \text{ for } \; \; c' \le 2 \sqrt{2}.
\een
\end{corollary}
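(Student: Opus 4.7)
The plan is to derive both inequalities by combining the deterministic Lasso error bounds in Lemma~\ref{lemma::detLassonew} with the parity condition~\eqref{eq::parityproof}, and then applying a simple triangle-inequality argument for the second bound.

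First, I substitute the explicit choices of $\lambda$ and $\alpha$ into Lemma~\ref{lemma::detLassonew}. With $\lambda \asymp 4 C_\gamma b_\infty \tilde\kappa_\rho \ul{r_{\offd}}$ and $\alpha = \lambda_{\min}(B_0)/C_\alpha$, the bound~\eqref{eq::twofix} reads $\twonorm{\hat\beta^{(j)} - \beta^{j*}} \le (3/4) C_{\overall} \ul{r_{\offd}}\sqrt{d_0}/\lambda_{\min}(B_0)$ after folding $12 C_\gamma C_\alpha b_\infty \tilde\kappa_\rho = (3/4) C_{\overall}$. The parity condition~\eqref{eq::parityproof} directly yields $\ul{r_{\offd}}\sqrt{d_0} \le 1/(2 C_{\overall} \kappa_B \twonorm{\Theta_0})$, and inserting this together with $\twonorm{\Theta_0} = 1/\lambda_{\min}(B_0)$ gives $\twonorm{\hat\beta^{(j)} - \beta^{j*}} \le 3/(8\kappa_B) < 1/(2\kappa_B)$. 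Multiplying by $4$ and recalling the Cauchy-Schwarz step $\onenorm{\hat\beta^{(j)} - \beta^{j*}} \le 4\sqrt{d_0} \twonorm{\hat\beta^{(j)} - \beta^{j*}}$ from~\eqref{eq::onefix} then delivers~\eqref{eq::oneabs}.

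For~\eqref{eq::betaoracle}, I split via the triangle inequality: $\abs{a_j} \onenorm{\hat\beta^{(j)}} \le \abs{a_j}\onenorm{\beta^{j*}} + \abs{a_j}\onenorm{\hat\beta^{(j)} - \beta^{j*}}$. Using $\beta^{j*}_k = -\theta_{jk}/\theta_{jj}$ from Proposition~\ref{prop::projection}, the first term equals $\onenorm{\Theta_{j, -j}} \le \sqrt{d_0}\twonorm{\Theta_{j,-j}} \le \sqrt{d_0} \twonorm{\Theta_{j\cdot}}$, where the first step is Cauchy-Schwarz on the $d_0$-sparse row vector. For the second term I apply the already-established bound $\onenorm{\hat\beta^{(j)} - \beta^{j*}} \le (3/2)\sqrt{d_0}/\kappa_B$ together with $\theta_{jj}/\kappa_B \le \theta_{jj}\lambda_{\min}(B_0)/\twonorm{B_0} \le \theta_{jj}/\twonorm{B_0}  \twonorm{B_0} \twonorm{\Theta_0}^{-1} \cdot \twonorm{\Theta_{j\cdot}} \cdot \twonorm{\Theta_{j\cdot}}/\twonorm{\Theta_{j\cdot}}$; more simply, since $\theta_{jj}$ is one diagonal entry of $\Theta_0$ we have $\theta_{jj} \le \twonorm{\Theta_{j\cdot}}$, while $1/\kappa_B \le 1$, and summing the two contributions yields a constant $c' = 1 + 3/2 = 5/2 \le 2\sqrt{2}$, as required.

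The main obstacle is just the bookkeeping in step one: tracking the ratio $12 C_\gamma C_\alpha b_\infty \tilde\kappa_\rho / C_{\overall} = 3/4$ and verifying that the parity assumption~\eqref{eq::parityproof} is exactly tuned to absorb the product $\kappa_B \twonorm{\Theta_0}\lambda_{\min}(B_0)^{-1} = \kappa_B/\lambda_{\min}(B_0) \cdot \lambda_{\min}(B_0) = \kappa_B$ that appears after substitution, producing the clean factor $3/(8\kappa_B)$. Once this bookkeeping is done, the rest is routine triangle inequalities and the sparsity-based Cauchy-Schwarz step on $\Theta_{j\cdot}$.
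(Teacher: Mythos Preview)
Your proof is correct and follows essentially the same approach as the paper: combine the Lasso error bound~\eqref{eq::twofix} with the parity condition (which the paper records as the intermediate inequality~\eqref{eq::parity4}) to obtain~\eqref{eq::oneabs}, then apply the triangle inequality together with sparsity-based Cauchy--Schwarz on $\Theta_{j\cdot}$ for~\eqref{eq::betaoracle}, arriving at the same constant $c' = 5/2 < 2\sqrt{2}$. The confused chain of inequalities in the middle of your second part is unnecessary but harmless, since you immediately give the correct simple route via $\theta_{jj} \le \twonorm{\Theta_{j\cdot}}$ and $1/\kappa_B \le 1$.
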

ction{On estimating the diagonal entries of $\Theta_0$}

\begin{proof}
   Under \eqref{eq::parityproof}, we have 
\ben 
\label{eq::parity4}
16 C_{\alpha}  C_{\gamma} b_{\infty} \tilde\kappa_{\rho}\ul{r_{\offd}}
\sqrt{d_0} & := & C_{\overall} \ul{r_{\offd}} \sqrt{d_0} \le 
{\lambda_{\min}(B_0)}/{(2  \kappa_B )}
\een
Now, we have by \eqref{eq::twofix}, \eqref{eq::onefix} and \eqref{eq::parity4}, and $\kappa_B \ge 1$
  \bens
\onenorm{\hat{\beta}^{(j)} - \beta^{j*}}/{\sqrt{d_0}}
& \le &
4  \shnorm{\hat{\beta}^{(j)}  - \beta^{j*}}_2
\le {48 C_{\alpha} C_{\gamma} b_{\infty}  \tilde{\kappa}_{\rho}   \ul{r_{\offd}}\sqrt{d_0}}/{\lambda_{\min}(B_0)} \\
& \le &
3/ (2\kappa_B) < 3/2\; \;   \text{ and hence}\\
\nonumber 
\abs{a_j} \onenorm{\hat\beta^{(j)}}
  & \le &
\sqrt{d_0} \abs{\theta_{jj}}  \big(\onenorm{\beta^{j*}} /\sqrt{d_0} +
  \onenorm{\hat\beta^{(j)} - \beta^{j*}}/\sqrt{d_0}  \big)\\
        \nonumber 
& \le &
\sqrt{d_0} \big(\big(\sum_{k\not=j}^n\theta_{jk}^2\big)^{1/2} + {3\theta_{jj}}/{2} \big)\le c' \sqrt{d_0} \twonorm{\Theta_{j \cdot}}
\eens
where $c' \le 2 \sqrt{2}$ and the last step holds by~\eqref{eq::oneabs}.
\end{proof}

As an initial step, we show Proposition~\ref{prop::aratio}, which
shows Part (a) of Lemma~\ref{lemma::tidebound}.
To simplify notation,  we suppress $0$ from the subscripts when we
refer to column ( row) vectors of $B_0$ (and $\Theta_0$). 
\begin{proposition}
  \label{prop::aratio}
For $\tilde{\Theta} := (\tilde\theta_{jk})$, we have by definition
\bens
\forall k \not=j, \; \;
\tilde\theta_{jk} = \hat{a}_j \hat\beta^{(j)}_k,  \; \; \text{ where } \; \; \tilde\theta_{jj} = -\hat{a}_j
\eens
Suppose all conditions in Lemma~\ref{lemma::tidebound} hold. Then
\ben
  \label{eq::thetaratio}
\abs{{a_j}/{\hat{a}_j} - 1}
&  = & \abs{a_i}  \abs{\hat{a}_j^{-1} - a_j^{-1}} 
<   C_{\overall}  \ul{r_{\offd}} \sqrt{d_0} \theta_{\max} \kappa_B <1/2
\een
\end{proposition}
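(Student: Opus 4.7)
The plan is to compare $\hat{a}_j^{-1}$ and $a_j^{-1}$ via the identities $-a_j^{-1} = 1/\theta_{jj} = b_{jj} - B_{j,-j}\beta^{j*}$ (a Schur-complement consequence of $\beta_k^{j*} = -\theta_{jk}/\theta_{jj}$, cf.\ Proposition~\ref{prop::projection}) and $-\hat{a}_j^{-1} = \hat{B}^{\star}_{jj} - \hat{B}^{\star}_{j,-j}\hat\beta^{(j)}$ from Definition~\ref{def::TopHat}. Subtracting yields
\begin{equation*}
  \hat{a}_j^{-1} - a_j^{-1} = -(\hat{B}^{\star}_{jj} - b_{jj}) + \big(\hat{B}^{\star}_{j,-j}\hat\beta^{(j)} - B_{j,-j}\beta^{j*}\big),
\end{equation*}
and I would further split the bracket as $(\hat{B}^{\star}_{j,-j} - B_{j,-j})\beta^{j*} + \hat{B}^{\star}_{j,-j}(\hat\beta^{(j)} - \beta^{j*})$, so that three error sources are isolated: a diagonal discrepancy, a design-error piece, and a regression-error piece.

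The diagonal discrepancy is controlled directly by \eqref{eq::BHatoffd}: $\abs{\hat{B}^{\star}_{jj}-b_{jj}} \le C_{\max}b_{\infty}\ul{r_{\offd}}$. For the design-error piece I would invoke H\"older and the bound $\onenorm{\beta^{j*}}\le \sqrt{d_0}\,M_\Omega\sqrt{b_{\infty}/b_{\min}}$ implicit in \eqref{eq::b1omega}, using $\sqrt{b_{\infty}/b_{\min}}\le\sqrt{\kappa_B}\le\kappa_B$ (since $\lambda_{\min}(B_0)\le b_{jj}\le\twonorm{B_0}$). For the regression error I would decompose $\hat{B}^{\star}_{j,-j} = B_{j,-j} + (\hat{B}^{\star}_{j,-j}-B_{j,-j})$: the dominant term $\abs{B_{j,-j}(\hat\beta^{(j)}-\beta^{j*})}$ is handled by Cauchy--Schwarz with $\twonorm{B_{j,-j}}\le\twonorm{B_0}$ (since $\sum_{k\ne j}b_{jk}^2\le (B_0^2)_{jj}\le\twonorm{B_0}^2$), yielding $\twonorm{B_0}\cdot 3\lambda\sqrt{d_0}/\alpha$ via \eqref{eq::twofix}, while the residual cross-term is at most $C_{\max}b_{\infty}\ul{r_{\offd}}\cdot 12\lambda d_0/\alpha$ by \eqref{eq::onefix}.

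Substituting $\lambda \asymp C_\gamma b_{\infty}\tilde\kappa_\rho\ul{r_{\offd}}$ and $\alpha = \lambda_{\min}(B_0)/C_\alpha$, the two dominant contributions to $\theta_{jj}\abs{\hat{a}_j^{-1}-a_j^{-1}}$ both scale like $b_{\infty}\tilde\kappa_\rho\ul{r_{\offd}}\sqrt{d_0}\,\theta_{\max}\,\kappa_B$ (using $\theta_{jj}\le\theta_{\max}$ and $b_{\infty}/\lambda_{\min}(B_0)\le\kappa_B$); the diagonal term is of strictly smaller order $\theta_{\max}b_{\infty}\ul{r_{\offd}}$, and the cross-term carries an additional $\ul{r_{\offd}}\sqrt{d_0}=o(1)$ factor by \eqref{eq::paritydual} and is absorbed. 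Collecting absolute constants into $C_{\overall}=16 C_\alpha C_\gamma b_{\infty}\tilde\kappa_\rho$ delivers the first inequality in \eqref{eq::thetaratio}. The strict bound by $1/2$ then follows from \eqref{eq::parityproof}, calibrated so that $\ul{r_{\offd}}^2\le 1/(4 C_{\overall}^2\kappa_B^2\twonorm{\Theta_0}^2 d_0)$, i.e.\ $C_{\overall}\ul{r_{\offd}}\sqrt{d_0}\,\kappa_B\twonorm{\Theta_0}\le 1/2$, combined with $\theta_{\max}\le\twonorm{\Theta_0}$.

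The main obstacle is a subtle bookkeeping issue rather than any new analytic input: the regression-error piece $\hat{B}^{\star}_{j,-j}(\hat\beta^{(j)}-\beta^{j*})$ must be handled via the $\ell_2$--Cauchy--Schwarz route, pairing the spectral bound on $B_{j,-j}$ with the $\ell_2$ error estimate \eqref{eq::twofix}, rather than through the cruder $\ell_\infty/\ell_1$ H\"older route. The latter would produce a $d_0$ factor (through $\onenorm{\hat\beta^{(j)}-\beta^{j*}}$), which the parity condition $\ul{r_{\offd}}\sqrt{d_0}=o(1)$ is too weak to suppress down to the target $\sqrt{d_0}$ rate.
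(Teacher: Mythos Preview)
Your proof is correct and follows essentially the same route as the paper: the same Schur-complement identity, the same three error sources, the same use of \eqref{eq::BHatoffd}, \eqref{eq::twofix}, and the $\ell_2$ bound $\twonorm{B_{j,-j}}\le\twonorm{B_0}$, and the same closing via \eqref{eq::parityproof}. The only (cosmetic) difference is that you split $\hat B^\star_{j,-j}\hat\beta^{(j)} - B_{j,-j}\beta^{j*}$ by adding and subtracting $\hat B^\star_{j,-j}\beta^{j*}$ (so your design-error piece carries $\onenorm{\beta^{j*}}$, bounded directly by \eqref{eq::b1omega}, at the cost of a higher-order cross term), whereas the paper adds and subtracts $B_{j,-j}\hat\beta^{(j)}$ and then appeals to Corollary~\ref{coro::oracle} to control $\abs{a_j}\onenorm{\hat\beta^{(j)}}$; the resulting bounds are the same.
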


\begin{proof}
Now by Definition~\ref{def::TopHat}, we have
\bens
\lefteqn{\abs{\frac{{a}_{j} }{\hat{a}_j} -1}
 = \abs{a_{j} }\abs{a_j^{-1} - \hat{a}_j^{-1} } 
 = \abs{a_{j} } \abs{ (\hat{B}^{\star}_{jj} - \hat{B}^{\star}_{j, \minus j} \hat{\beta}^{(j)}) -  (B_{jj} - {B}_{j, \minus j} \beta^{j*})}} \\
& \le & \abs{a_{j} } \abs{\hat{B}^{\star}_{jj} -B_{jj}} +
 \abs{a_{j} }\abs{  \hat{B}^{\star}_{j, \minus j} \hat{\beta}^{(j)}  - {B}_{j, \minus j} \beta^{j*}} = I + II;
 \eens
 By~\eqref{eq::BHatoffd} and the triangle inequality, we have for $\kappa_B \ge 1$,
\bens
 (I) & := & 
 \abs{a_{j} } \abs{\hat{B}^{\star}_{jj} -B_{jj}} \le
 C_{\max} b_{\infty}  \theta_{\max}\ul{r_{\offd}} \le C_{\max}
 \kappa_B  \theta_{\max} \ul{r_{\offd}} \\
\lefteqn{ \text{ while } \; \;  (II)
 :=  \abs{a_{j}}  \abs{\hat{B}^{\star}_{j, \minus j} \hat{\beta}^{(j)}   - {B}_{j, \minus j}  \beta^{j*}} }\\
& \le&  \abs{a_{j}} \abs{  \hat{B}^{\star}_{j, \minus j}
  \hat{\beta}^{(j)} -  {B}_{j, \minus  j} \hat\beta^{(j)}}
+\abs{a_{j}} \abs{{B}_{j, \minus  j} \hat\beta^{(j)}  - {B}_{j, \minus
    j} \beta^{j*}}  =:    \Pi_{\alpha} +   \Pi_{\beta} \\
& \le & 
\abs{a_{j}} \norm{  \hat{B}^{\star}_{j, \minus j} -  {B}_{j, \minus
    j} }_{\infty} \onenorm{\hat\beta^{(j)}} + \abs{a_{j}}
\twonorm{B_{j, \minus  j}} \twonorm{\hat\beta^{(j)} -  \beta^{j*}} \\
& \le & \abs{a_{j}}\onenorm{\hat\beta^{(j)}}   C_{\max} b_{\infty}
\ul{r_{\offd}} + \abs{a_{j}}\twonorm{B_0} \twonorm{\hat{\beta}^{(j)} - \beta^{j*}} 
  \eens
where by ~\eqref{eq::BHatoffd}, $\shnorm{\hat{B}^{\star}_{j, \minus j} -  {B}_{j, \minus  j} }_{\infty} \le 
  C_{\max} b_{\infty} \ul{r_{\offd}}$ and by~\eqref{eq::betaoracle},~\eqref{eq::parityproof} and~\eqref{eq::twofix},
\ben
\nonumber
\Pi_{\alpha} & \le & {c' C_{\max} b_{\infty} \ul{r_{\offd}}
  \sqrt{d_0}} \twonorm{\Theta_{0}} \le
 c' C_{\max} b_{\infty} \ul{r_{\offd}} \sqrt{d_0} \theta_{\max}
 \kappa_B   \; \text{ and} \\
\label{eq::2bproof}
\nonumber
\Pi_{\beta} & \le &\abs{a_j}  \twonorm{B_0}
\twonorm{\hat{\beta}^{(j)} - \beta^{j*}} \le 
\abs{a_j}  \twonorm{B_0} 3 \lambda \sqrt{d_0} /{\alpha}\\
&    \asymp  &
\nonumber
\underline{r_{\offd}} \sqrt{d_0} \abs{a_j}  \twonorm{B_0} {12 C_{\alpha} C_{\gamma} b_{\infty}
  \tilde\kappa_{\rho}}/{\lambda_{\min}(B_0)}
\le (12 C_{\gamma} C_{\alpha} b_{\infty} \tilde\kappa_{\rho})
 \ul{r_{\offd}} \sqrt{d_0}  \theta_{\max} \kappa_B 
\een
where $\theta_{\max}  \kappa_B =  (\max_{j}\abs{\theta_{jj}})
{\twonorm{B_0}}  \twonorm{\Theta_0}
\ge (\max_{j} b_{jj} \theta_{jj}) \twonorm{\Theta_0} >
\twonorm{\Theta_0}$ since $ b_{jj} \theta_{jj} > 1$ by Proposition~\ref{prop::projection}.
Thus we have for $C_{\gamma} \ge
c_{\gamma} \vee C_{\max}$,
\bens
\nonumber
\lefteqn{\abs{\frac{{a}_{j} }{\hat{a}_j} -1}\le 
  I + \Pi_{\alpha} + \Pi_{\beta} \le 
  \big(1+ c' + 12 C_{\alpha} \tilde\kappa_{\rho}\big)
  b_{\infty} C_{\gamma} \ul{r_{\offd}} \sqrt{d_0} \kappa_B \theta_{\max}  }\\ 
&< &
C_{\overall} \ul{r_{\offd}} \sqrt{d_0} \theta_{\max}
\kappa_B  \le {C_{\overall} \ul{r_{\offd}} \sqrt{d_0}  \kappa_B}/{\lambda_{\min}(B_0)} \le 1/2
\eens
where $c' < 2 \sqrt{2}$, $C_{\alpha} > 1$ and the last step holds by~\eqref{eq::parityproof}
and~\eqref{eq::parity4}.
\end{proof}

\subsection{Proof of Lemma~\ref{lemma::tidebound}}
\label{sec::proofoftide}
\begin{proofof2}
 Clearly, by Proposition~\ref{prop::aratio},
\ben
 \label{eq::solratio}
1/{2} & < & \abs{{{a}_{j} }/{\hat{a}_j}} < {3}/{2}
\text{ and hence } \; \;\abs{\hat{a}_{j}} < 2 \abs{a_j}
\een
 Part (b) also holds for all $j \in [n]$, as by~\eqref{eq::thetaratio} and
\eqref{eq::solratio},
\ben
\nonumber
\lefteqn{\abs{\tilde{\Theta}_{jj} - \theta_{jj}}
 =  \abs{\hat{a}_j - a_j} =\abs{\hat{a}_j}
\abs{1-{a_j}/{\hat{a}_j} } \le
2 \abs{a_j} \abs{a_j} \abs{a_j^{-1} - \hat{a}_j^{-1} } }\\
\nonumber
& \le  &
2 C_{\overall} \ul{r_{\offd}} \sqrt{d_0}  \theta_{\max}^2 
\kappa_B \le { 2 C_{\overall} \ul{r_{\offd}} \sqrt{d_0}}
\theta_{\max} \kappa_B/{\lambda_{\min}(B_0)} 
\een
\ben
\nonumber
\text{Finally,} \;
\onenorm{\tilde{\Theta}_{j \cdot} - \Theta_{j \cdot}} & \le &  \abs{\hat{a}_j - a_j} +   \onenorm{\hat{a}_j \hat{\beta}^{(j)} - a_j 
  \beta^{j*}} \\
\nonumber
& = &
\abs{\hat{a}_j - a_j} +   \onenorm{\hat{a}_j \hat{\beta}^{(j)} - \hat{a}_j   \beta^{j*} +   \hat{a}_j   \beta^{j*} -  a_j 
  \beta^{j*}}\\
\nonumber
  & \le &
  \abs{\hat{a}_j - a_j} + \abs{\hat{a}_{j}} \onenorm{\hat{\beta}^{(j)} - 
    \beta^{j*}} + \abs{\hat{a}_{j} - a_j}  \onenorm{\beta^{j*}} \\
  & \le &
  \label{eq::defineW}
 \abs{\hat{a}_j - a_j} \big(1 +\onenorm{\beta^{j*}}\big)
 +\abs{\hat{a}_{j}} \onenorm{\hat{\beta}^{(j)} -   \beta^{j*}} =: Y_1
 + Y_2 \\
 \nonumber
 \text{ where } \;&&
 1 +\onenorm{\beta^{j*}} = \onenorm{\Theta_{j \cdot}}/{\theta_{jj}}
 \le \sqrt{d_0}  \twonorm{\Theta_0}/{\theta_{jj}}
 \een
Now, by~\eqref{eq::thetaratio} and~\eqref{eq::solratio}, we have for
$C_{\overall} = 16 C_{\alpha} C_{\gamma}
b_{\infty}\tilde\kappa_{\rho}$ and $ \abs{a_j} = \theta_{jj}$,
 \ben
 \label{eq::W1a}
 &&  Y_1    =  \abs{\hat{a}_j - a_j} \big(1 +\onenorm{\beta^{j*}}\big)
 =\abs{\hat{a}_j} \abs{a_j} \abs{a_j^{-1} - \hat{a}_j^{-1} } {\onenorm{\Theta_{j \cdot}}}/{\theta_{jj}} \\
  \nonumber
&\le &
 2 \abs{a_j} \abs{a_j^{-1} - \hat{a}_j^{-1} } \sqrt{d_0}\twonorm{\Theta_{0}}
 \le 2 C_{\overall}  \theta_{\max} \kappa_B {\ul{r_{\offd}}
   d_0}/{\lambda_{\min}(B_0)}, \\
 \nonumber
\text{and }  & & Y_2 = \abs{\hat{a}_{j}} \onenorm{\hat{\beta}^{(j)} - 
   \beta^{j*}}  \le 24 \abs{a_j} d_0 \lambda /\alpha \\
 \label{eq::W2a}   
 &&\le
 96 \abs{a_j} C_{\alpha} C_{\gamma}  \ul{r_{\offd}} d_0 
    b_{\infty} \tilde\kappa_{\rho}/{\lambda_{\min}(B_0)}     \le 6 C_{\overall} \theta_{\max} {  \ul{r_{\offd}}
    d_0}/{\lambda_{\min}(B_0)}
  \een
by~\eqref{eq::onefix}. The lemma holds by \eqref{eq::defineW},
\eqref{eq::W1a} and \eqref{eq::W2a}, since
\ben
  \nonumber 
\onenorm{\tilde{\Theta}_{j \cdot} - \Theta_{j \cdot}}
     &\le &
 Y_1 + Y_2   < 2 C_{\overall} \theta_{\max} (\kappa_B+3) {\ul{r_{\offd}} d_0 }/{\lambda_{\min}(B_0)}.
 \een
\end{proofof2}

\subsection{Proof of Lemma~\ref{lemma::detLassonew}}
\label{sec::proofofdetLasso}
The proof follows that of~\cite{RZ17}, cf. Theorem~15;
Let $S_j := \supp(\beta^{j*})$, $d_0 \ge \size{S_j} \forall j \in [n]$.

\begin{lemma}{\textnormal~\cite{BRT09,LW12}}
  \label{lemma:magic-number}
  Suppose all conditions in Lemma~\ref{lemma::detLassonew} hold.
  Fix $j \in [n]$. Denote by $S = S_j$. Denote by  $\beta = \beta^{j*}$ and 
  $$\upsilon = \hat{\beta} - \beta,$$
  where $\hat\beta := \hat{\beta}^{(j)}$ is as defined in \eqref{eq::origin}.

Then for all $j \in [n]$,
\ben
\label{eq::onenorm}
\onenorm{\upsilon_{S^c}} & \leq & 3 \onenorm{\upsilon_{S}}, \; \text{
  and hence} \; \onenorm{\upsilon} \le \; \; 4 \onenorm{\upsilon_{S}} \le 4 \sqrt{d_0}
\twonorm{\upsilon}.
\een
\end{lemma}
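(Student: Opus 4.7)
The plan is to derive the cone condition $\onenorm{\upsilon_{S^c}} \le 3\onenorm{\upsilon_S}$ from the first-order optimality of $\hat\beta^{(j)}$ in~\eqref{eq::origin}, combined with the gradient bound~\eqref{eq::halflambda} and the Lower-$\RE$ condition on $\hat\Gamma^{(j)}$ imposed in Lemma~\ref{lemma::detLassonew}. The second inequality of the lemma is then immediate: by the definition $S=\supp(\beta^{j*})$ with $|S|\le d_0$, once $\onenorm{\upsilon_{S^c}}\le 3\onenorm{\upsilon_S}$ is established we get $\onenorm{\upsilon}\le 4\onenorm{\upsilon_S}\le 4\sqrt{d_0}\twonorm{\upsilon_S}\le 4\sqrt{d_0}\twonorm{\upsilon}$ by Cauchy–Schwarz.

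First I would use feasibility of $\beta^{j*}$ in the optimization~\eqref{eq::origin}, which holds because $b_1\ge\onenorm{\beta^{j*}}$ by the choice of $b_1$ in~\eqref{eq::b1radius}, combined with the fact that on the relevant high-probability event $\onenorm{\beta^{j*}}$ is upper bounded as in~\eqref{eq::b1omega}. Feasibility gives $F(\hat\beta^{(j)})\le F(\beta^{j*})$ for the Lasso objective $F$; expanding and setting $\upsilon=\hat\beta^{(j)}-\beta^{j*}$ yields
\[
\tfrac{1}{2}\upsilon^T\hat\Gamma^{(j)}\upsilon \;\le\; \ip{\hat\gamma^{(j)}-\hat\Gamma^{(j)}\beta^{j*},\upsilon} + \lambda\big(\onenorm{\beta^{j*}}-\onenorm{\hat\beta^{(j)}}\big).
\]
By Hölder and~\eqref{eq::halflambda}, the first right-hand term is bounded by $\tfrac{\lambda}{4}\onenorm{\upsilon}$. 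By the Lower-$\RE$ condition $\upsilon^T\hat\Gamma^{(j)}\upsilon \ge \alpha\twonorm{\upsilon}^2-\tau\onenorm{\upsilon}^2 \ge -\tau\onenorm{\upsilon}^2$, so the left-hand side is at worst $-\tfrac{\tau}{2}\onenorm{\upsilon}^2$.

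The main obstacle, which also drives the assumptions $\lambda/\tau \ge 4b_1$ and $\alpha/\tau \ge 120 d_0$ in~\eqref{eq::lassopen}, is absorbing the tolerance term $\tau\onenorm{\upsilon}^2$ back into the slack budget. Here I would use the key fact that both $\hat\beta^{(j)}$ and $\beta^{j*}$ lie in the $\ell_1$-ball of radius $b_1$, so $\onenorm{\upsilon}\le 2b_1$, and therefore $\tau\onenorm{\upsilon}^2 \le 2b_1\tau\onenorm{\upsilon}\le \tfrac{\lambda}{2}\onenorm{\upsilon}$ by~\eqref{eq::lassopen}. This turns the quadratic remainder into a linear one that can be pooled with the gradient term.

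Putting these pieces together yields $\lambda(\onenorm{\hat\beta^{(j)}}-\onenorm{\beta^{j*}}) \le \tfrac{\lambda}{2}\onenorm{\upsilon}$. For the left-hand side, the standard decomposition $\onenorm{\hat\beta^{(j)}}=\onenorm{(\beta^{j*}+\upsilon)_S}+\onenorm{\upsilon_{S^c}}\ge \onenorm{\beta^{j*}}-\onenorm{\upsilon_S}+\onenorm{\upsilon_{S^c}}$ (using $\supp(\beta^{j*})\subseteq S$ and the reverse triangle inequality) gives $\onenorm{\hat\beta^{(j)}}-\onenorm{\beta^{j*}}\ge \onenorm{\upsilon_{S^c}}-\onenorm{\upsilon_S}$. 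Dividing by $\lambda$ and writing $\onenorm{\upsilon}=\onenorm{\upsilon_S}+\onenorm{\upsilon_{S^c}}$ produces $\tfrac{1}{2}\onenorm{\upsilon_{S^c}}\le \tfrac{3}{2}\onenorm{\upsilon_S}$, which is the first claim of~\eqref{eq::onenorm}. The second claim then follows as described above, completing the proof.
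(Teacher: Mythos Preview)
Your proposal is correct and follows essentially the same route as the paper's own proof: both start from optimality of $\hat\beta^{(j)}$ to get the basic inequality $\tfrac{1}{2}\upsilon^T\hat\Gamma^{(j)}\upsilon \le \ip{\hat\gamma^{(j)}-\hat\Gamma^{(j)}\beta^{j*},\upsilon}+\lambda(\onenorm{\beta^{j*}}-\onenorm{\hat\beta^{(j)}})$, apply~\eqref{eq::halflambda} for the $\tfrac{\lambda}{4}\onenorm{\upsilon}$ bound, and absorb the Lower-$\RE$ tolerance term via $\onenorm{\upsilon}\le 2b_1$ and $4b_1\tau\le\lambda$ from~\eqref{eq::lassopen}. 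Your presentation is in fact slightly cleaner than the paper's (which multiplies through by $2$ first and tracks the combination $\tfrac{5}{2}\onenorm{\upsilon_S}-\tfrac{3}{2}\onenorm{\upsilon_{S^c}}$), but the ingredients and the resulting constant $3$ are identical; the only minor remark is that the condition $\alpha/\tau\ge 120 d_0$ you mention is not needed for this lemma itself---it enters only in the proof of Lemma~\ref{lemma::detLassonew}.
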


\begin{proof}
We use the shorthand notation $\hat\gamma = \hat\gamma^{(j)}$ and   $\hat\Gamma = \hat\Gamma^{(j)}$.
By the optimality of $\hat{\beta}$, we have
\begin{eqnarray}
  \nonumber
\lambda \onenorm{\beta} - 
\lambda \onenorm{\hat\beta}
& \geq & 
\inv{2} \hat\beta \hat\Gamma  \hat\beta - \inv{2} \beta \hat\Gamma \beta -
         \ip{\hat\gamma, v} \\
   \nonumber
& = & 
\inv{2} \up^T \hat\Gamma \up +\ip{\up, \hat\Gamma \beta} -
\ip{\up, \hat\gamma}  = \inv{2} \up^T \hat\Gamma \up -\ip{\up,
      \hat\gamma - \hat\Gamma \beta};
      \een
      Hence
      \ben
  \label{eq::precondition}
      \lefteqn{\half \up^T \hat\Gamma \up 
\leq \ip{\up, \hat\gamma - \hat\Gamma \beta} +
 \lambda \left(\onenorm{\beta} -  \onenorm{\hat\beta} \right)}\\
  \nonumber
  & \leq &       \lambda
\left(\onenorm{\beta}-  \onenorm{\hat\beta} \right) +
         \norm{\hat\gamma - \hat\Gamma \beta}_{\infty}
           \onenorm{\upsilon} \text{ and thus by
           \eqref{eq::halflambda}}\\
  \label{eq::finalupperbound}
\quad \up^T \hat\Gamma \up 
& \leq &  \lambda \left(2 \onenorm{\beta} - 2 \onenorm{\hat\beta} \right) +
  \frac{\lambda}{2} \onenorm{\upsilon} \leq \lambda \half \left(5 \onenorm{\upsilon_{S}} - 3\onenorm{\upsilon_{S^c}}\right),
\end{eqnarray}
where by the triangle inequality, and $\beta_{\Sc} = 0$, we have
\begin{eqnarray} 
\nonumber
2 \onenorm{\beta} -  2 \onenorm{\hat\beta} + \half \onenorm{\upsilon}
& = &
2 \onenorm{\beta_S} - 2 \onenorm{\hat\beta_{S}} -
2 \onenorm{\upsilon_{\Sc}} + \half\onenorm{\upsilon_S} +\half \onenorm{\upsilon_{S^c}} \\
& \leq &
\nonumber
2 \onenorm{\up_{S}} -2 \onenorm{\up_{\Sc}} + \half \onenorm{\up_S}
+ \half \onenorm{\up_{S^c}} \\ 
& \leq & 
\label{eq::magic-number-2}
 \half \left(5 \onenorm{\upsilon_{S}} - 3\onenorm{\upsilon_{S^c}}\right).
\end{eqnarray}
We now give a lower bound on the LHS of~\eqref{eq::precondition},
applying the lower-$\RE$ condition as in Definition~\ref{def::lowRE},
\ben
\nonumber
\up^T \hat\Gamma \up
& \ge &
\alpha \twonorm{\up}^2 - \tau \onenorm{\up}^2
\ge  - \tau \onenorm{\up}^2\\
\text{ and hence } \;
- \up^T \hat\Gamma \up
& \le & 
\nonumber
\onenorm{\up}^2 \tau \le \onenorm{\up} 2 b_1 \tau\\
& \le &
\label{eq::magic-number-neg}
\frac{\lambda}{2} \onenorm{\up}
=\half \lambda (\onenorm{\up_S} + \onenorm{\up_{\Sc}}),
\een
where we use the assumption that 
$\onenorm{\up} \le \onenorm{\hat{\beta}} + \onenorm{\beta}
\le 2 b_1  \quad \text{ and } \;\;
2 b_1 \tau \le \half \lambda$,
which holds by the triangle inequality and the fact that both 
$\hat{\beta}$ and $\beta$ have $\ell_1$ norm being bounded by
$b_1$ in view of~\eqref{eq::origin} and the assumption.
Hence 
\bens
0 
& \le &
 - \up^T \hat\Gamma \up + \frac{5}{2} \lambda \onenorm{\upsilon_{S}} -
\frac{3}{2} 
\lambda \onenorm{\upsilon_{S^c}} \\
\nonumber
& \le &
\half \lambda \onenorm{\upsilon_{S}} + \half \lambda \onenorm{\upsilon_{S^c}}
+ \frac{5}{2}\lambda \onenorm{\upsilon_{S}} - \frac{3}{2} \lambda
\onenorm{\upsilon_{S^c}} \le 3 \lambda \onenorm{\upsilon_{S}} - \lambda \onenorm{\upsilon_{S^c}}.
\eens
by~\eqref{eq::finalupperbound} and~\eqref{eq::magic-number-neg}; we have $\onenorm{\upsilon_{S^c}} \le 3 \onenorm{\upsilon_{S}}$
and the lemma holds.
\end{proof}

\begin{proofof}{Lemma~\ref{lemma::detLassonew}}
  The proof is deterministic and works for all $j \in [n]$.
  Hence fix $j \in [n]$ and denote by $\up = \hat{\beta}^{(j)} - \beta^{j*}$. 
By \eqref{eq::onenorm}, we have $\onenorm{\up}^2 \le 16 d_0
\twonorm{\up}^2$ for all $j$.
Moreover, we have by the lower-$\RE$ condition as in
Definition~\ref{def::lowRE}, for all $j \in [n]$,
\ben
\label{eq::prelow}
\up^T \hat\Gamma \up
& \ge & 
\alpha \twonorm{\up}^2 - \tau \onenorm{\up}^2 \ge 
(\alpha  - 16 d_0 \tau) \twonorm{\up}^2 \ge  \frac{13 \alpha}{15} \twonorm{\up}^2,
\een
where the  last inequality follows from the assumption that 
$16 d_0 \tau \le \frac{2\alpha}{15}$.
Combining the bounds in \eqref{eq::prelow}, \eqref{eq::onenorm} and \eqref{eq::finalupperbound},
we have
\bens
 \frac{13 \alpha}{15} \twonorm{\up}^2
&\le &
\up^T \hat\Gamma \up
\le \frac{5}{2} \lambda  \onenorm{\upsilon_S}
\le \frac{5}{2}  \lambda \sqrt{d_0} \twonorm{\up}.
\eens
Thus we have for all $j$,
\bens
\twonorm{\up} \le \frac{75}{26 \alpha} \lambda  \sqrt{d_0}  \le
\frac{3 \lambda \sqrt{d_0}}{\alpha} \; \;
\text{ and } \;\;
\onenorm{\up} \le  4 \sqrt{d_0}\twonorm{\up} \le
\frac{12 \lambda d_0 }{\alpha}
\eens 
The Lemma is thus proved.
\end{proofof}

\section{Proof of Theorem~\ref{thm::samplesize} }
\label{sec::appendsamplesize}
We first state results on matrix $A_{qh}^{\diamond}, q, h \in
\Sp^{n-1}$, from which Theorems~\ref{thm::mainop2} and~\ref{thm::uninorm2} follow 
respectively.
Recall
\ben
\label{eq::defineADMlocal}
A_{qh}^{\diamond}
& = &
\half \sum_{k=1}^m \sum_{i\not= j} u^k_i u^k_j  (q_i h_{j}  + q_j h_i) 
(c_i c_j^T) \otimes (d_k  d_k^T)
\een
\begin{theorem}
  \label{thm::mainop}
  Let $A^{\diamond}_{q, h}$ be as defined in \eqref{eq::defineADMlocal}.
Then for all  $q, h \in  \Sp^{n-1}$,
\bens
\twonorm{A_{qh}^{\diamond}} \le \twonorm{A_0}\twonorm{B_0}.
\eens
\end{theorem}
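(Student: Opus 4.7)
The plan is to reduce the general bilinear bound to the quadratic case $q=h$ via polarization, and then prove the quadratic bound by comparing $A_{vv}^{\diamond}$ to the ``master'' matrix $B_0 \otimes A_0$ in PSD order. Concretely, observe that since the coefficient $\frac{1}{2}(q_ih_j + q_j h_i)$ is the polarized version of the quadratic form $q_iq_j$, one has the identity $A_{qh}^{\diamond} = \tfrac{1}{4}\bigl(A_{(q+h)(q+h)}^{\diamond} - A_{(q-h)(q-h)}^{\diamond}\bigr)$, and the definition is homogeneous of degree $2$, so $\twonorm{A_{vv}^{\diamond}} \le \twonorm{A_0}\twonorm{B_0}\,\|v\|_2^2$ for arbitrary $v \in \R^n$ would yield, via the parallelogram identity $\|q+h\|_2^2 + \|q-h\|_2^2 = 2(\|q\|_2^2+\|h\|_2^2) = 4$, the claim $\twonorm{A_{qh}^{\diamond}} \le \twonorm{A_0}\twonorm{B_0}$ on the sphere.

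To prove the quadratic bound, I would split off the diagonal $i=j$ terms. Writing $P_k = \diag(u^k_1,\dots,u^k_n)$ and using $A_{0}^{1/2}= [d_1,\ldots,d_m]$, $B_0^{1/2}=[c_1,\ldots,c_n]$, the unrestricted sum factors as
\begin{equation*}
A_{vv}^{\textnormal{all}}\ :=\ \sum_{k=1}^m\sum_{i,j} u^k_i u^k_j v_i v_j\, (c_ic_j^T)\otimes(d_k d_k^T) \ =\ \sum_{k=1}^m \bigl(B_0^{1/2}P_k v\bigr)\bigl(B_0^{1/2}P_k v\bigr)^T\!\otimes\,d_k d_k^T,
\end{equation*}
which is manifestly PSD. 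By Cauchy-Schwarz, $(B_0^{1/2}P_kv)(B_0^{1/2}P_kv)^T \preceq \|P_kv\|_2^2\, B_0 \preceq \|v\|_2^2\, B_0$ since $u^k_i\in\{0,1\}$, so summing over $k$ and using $\sum_k d_kd_k^T = A_0$ yields $A_{vv}^{\textnormal{all}} \preceq \|v\|_2^2\,(B_0\otimes A_0)$. The removed diagonal piece is
\begin{equation*}
D_{vv} \ =\ \sum_i v_i^2\, (c_ic_i^T)\otimes\Big(\sum_k u^k_i\, d_kd_k^T\Big) \ \preceq\ \|v\|_\infty^2\,(B_0\otimes A_0)\ \preceq\ \|v\|_2^2\,(B_0\otimes A_0),
\end{equation*}
again using $u^k_i \in \{0,1\}$ and $\sum_i c_ic_i^T = B_0$. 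Since $A_{vv}^{\diamond} = A_{vv}^{\textnormal{all}} - D_{vv}$ is a difference of two PSD matrices each $\preceq \|v\|_2^2(B_0\otimes A_0)$, one gets the sandwich $-\|v\|_2^2\,B_0\otimes A_0 \preceq A_{vv}^{\diamond} \preceq \|v\|_2^2\,B_0\otimes A_0$, hence $\twonorm{A_{vv}^{\diamond}} \le \|v\|_2^2\,\twonorm{A_0\otimes B_0} = \|v\|_2^2\,\twonorm{A_0}\twonorm{B_0}$.

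The bound is fully deterministic and uniform over realizations of $U$ because the only properties of $u^k_i$ used are $u^k_i\in[0,1]$ entrywise; no distributional input enters. I don't expect a serious obstacle here: the key structural insight is that the tensor-product form factors through the PSD comparison $vv^T \preceq \|v\|_2^2\, I$ applied in the $B_0$-weighted inner product, together with the identity $\sum_k d_kd_k^T = A_0$. The only bit that requires some care is making sure the diagonal subtraction is handled correctly (both signs in the PSD sandwich) so that removing the $i=j$ terms does not inflate the operator norm, which is precisely what the PSD bound on $D_{vv}$ provides.
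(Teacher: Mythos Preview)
Your proof is correct, and it takes a genuinely different route from the paper's.

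The paper works directly with the bilinear matrix $A_{qh}^{\diamond}$, without polarization. It writes $A_{qh}^{\diamond}=\sum_k \HH_0^{(k)}(q,h)\otimes d_kd_k^T$ and observes that, for any $w\in\Sp^{mn-1}$, the quadratic form $w^T A_{qh}^{\diamond} w$ equals $\langle \tilde{\UU}(q,h),\DD_0(w)\rangle$, where $\tilde{\UU}(q,h)$ is the block-diagonal matrix whose $k$-th block is $\offd\bigl(\tfrac12[(u^k\!\circ q)(u^k\!\circ h)^T+(u^k\!\circ h)(u^k\!\circ q)^T]\bigr)$ and $\DD_0(w)$ is a block-diagonal PSD matrix with entries $w^T(c_ic_j^T\otimes d_kd_k^T)w$. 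H\"older's inequality then gives $\twonorm{A_{qh}^{\diamond}}\le \twonorm{\tilde{\UU}(q,h)}\,\sup_w\norm{\DD_0(w)}_*$. Two short lemmas supply the pieces: $\twonorm{\tilde{\UU}^{(k)}(q,h)}\le \twonorm{u^k\circ q}\,\twonorm{u^k\circ h}\le 1$, and $\norm{\DD_0(w)}_*=\tr(\DD_0(w))=w^T(B_0\otimes A_0)w\le\twonorm{A_0}\twonorm{B_0}$.

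Your approach instead polarizes to the quadratic case and then uses a direct PSD sandwich: $A_{vv}^{\diamond}=A_{vv}^{\textnormal{all}}-D_{vv}$ with $0\preceq A_{vv}^{\textnormal{all}}\preceq\|v\|_2^2\,B_0\otimes A_0$ and $0\preceq D_{vv}\preceq\|v\|_2^2\,B_0\otimes A_0$, followed by the parallelogram identity. This is more elementary and self-contained --- no auxiliary matrices $\DD_0(w)$, no trace-duality argument --- and it makes the PSD structure explicit. The paper's route is more modular (its Lemma~\ref{lemma::chaosop} applies to any block-diagonal $\tilde{\UU}$, not just the rank-one type here) and handles general $q,h$ in one stroke without the polarization detour. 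Both arguments give the same constant and both use only that $u^k_i\in[0,1]$, so your remark that the bound is deterministic and distribution-free is correct and matches the paper's observation.
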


\begin{theorem}
  \label{thm::uninorm-intro}
  Let $s_0 \in[ n]$.  
Suppose $\sum_{j=1}^m a_{jj}^2 p_j^2 =\Omega\big(a^2_{\infty} \log (n
\vee m)\big)$.
Then on event $\F_0^c$,  where $\prob{\F_0^c} \ge 1-{4}/{(n \vee
  m)^4}$, for $N  \le \twonorm{A_0}  a_{\infty} \sum_{s=1}^m  p_s^4$,
\ben
\nonumber
&& \sup_{q, h \in \Sp^{n-1}, s_0-\sparse}  
\fnorm{A^{\diamond}_{qh}} \le W \cdot \twonorm{B_0}
\twonorm{A_0}^{1/2}
\; \text{ where }\; W \asymp \big(a_{\infty} \sum_{s=1}^m p_s^2\big)^{1/2} \\
\label{eq::Aqbounds}
&&\quad \quad \quad  
+ \psi_B(s_0)\big(\twonorm{A_0} \log (n \vee m) \big)^{1/2} + \psi_B(s_0)\big(N
\log (n \vee m)\big)^{1/4}
\een
where $\psi_B(s_0) = {\rho_{\max}(s_0, (\abs{b_{ij}}))}/{\twonorm{B_0}}$ is as in Definition~\ref{def::C0}. 
\end{theorem}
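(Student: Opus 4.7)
The plan is to decompose $\fnorm{A^\diamond_{qh}}^2$ into a deterministic mean plus a random deviation, control both uniformly over $s_0$-sparse unit vectors $q, h$, and then take a square root. Writing $A^\diamond_{qh} = \sum_k B^k_{qh} \otimes (d_k d_k^T)$ with $B^k_{qh} := \tfrac{1}{2}\sum_{i \neq j} u^k_i u^k_j (q_i h_j + q_j h_i) c_i c_j^T$, the Kronecker inner-product identity $\langle M\otimes N, M'\otimes N'\rangle_F = \langle M,M'\rangle_F \langle N,N'\rangle_F$ yields
\[
\fnorm{A^\diamond_{qh}}^2 \; = \; \sum_{k,k'} a_{kk'}^2 \, \langle B^k_{qh}, B^{k'}_{qh}\rangle_F,
\]
and, crucially, the independence of the mask columns $u^k$ across $k$ makes $\{B^k_{qh}\}_k$ an independent family, which I will exploit throughout.

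First, for the mean $\mu(q,h) := \E\fnorm{A^\diamond_{qh}}^2$, I split into the diagonal ($k=k'$) contribution $\sum_k a_{kk}^2 \E\fnorm{B^k_{qh}}^2$ and the cross ($k\neq k'$) contribution $\sum_{k\neq k'} a_{kk'}^2 \langle \E B^k_{qh}, \E B^{k'}_{qh}\rangle_F$. Direct expansion plus the inequalities $q^T B_0 h \le \twonorm{B_0}$ and $\sum_{i,j}\abs{b_{ij}}\abs{q_i}\abs{h_j}\le \rho_{\max}(2s_0 \wedge n, \abs{B_0}) = \psi_B(2s_0\wedge n)\twonorm{B_0}$ for $s_0$-sparse $q,h$ (via Lemma~\ref{lemma::converse} and polarization) bound the diagonal part by $a_\infty \twonorm{A_0}\twonorm{B_0}^2 \sum_k p_k^2$ and the cross part by $\twonorm{A_0}^2\twonorm{B_0}^2(\sum_k p_k^2)^2$. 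Taking $\sqrt{\mu(q,h)}$ produces the first term of $W$ together with a lower-order contribution absorbed into the $\psi_B$-weighted terms.

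Second, for the deviation $\fnorm{A^\diamond_{qh}}^2 - \mu(q,h)$ at fixed $(q,h)$, I proceed in two pieces. The diagonal-in-$k$ fluctuation $\sum_k a_{kk}^2(\fnorm{B^k_{qh}}^2 - \E\fnorm{B^k_{qh}}^2)$ is a sum of independent contributions, each a low-degree multilinear polynomial in the Bernoullis $\{u^k_j\}_j$ to which I apply Theorem~\ref{thm::Bernmgf} after observing that $\fnorm{B^k_{qh}}^2$ reduces to a sum of monomials of degree at most four in the $u^k_j$ with coefficient weights expressible as products $b_{ii'}b_{jj'}(q_i h_j + q_j h_i)(q_{i'}h_{j'}+q_{j'}h_{i'})$. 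For the cross term, I bound the moment generating function by iterated conditioning: fix $u^k$, apply the quadratic-form MGF bound of Theorem~\ref{thm::Bernmgf} to $u^{k'}$, then integrate the resulting deterministic expression in $u^k$ using Theorem~\ref{thm::Bernmgf} a second time. Optimizing the Bernstein-type tail produces a variance proxy of order $\psi_B^2(s_0)\twonorm{A_0}\twonorm{B_0}^2 a_\infty \sum_k p_k^4$ and a subexponential scale $\psi_B^2(s_0)\twonorm{A_0}^2 \twonorm{B_0}^2$; after taking $\sqrt{\cdot}$ these yield the second and third terms of $W$.

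Third, I pass from fixed $(q,h)$ to uniform control via the $\ve$-net $\N$ of Lemma~\ref{lemma::net}, with $\abs{\N} \le \exp(s_0\log(3en/(s_0\ve)))$, and a union bound over the $\abs{\N}^2$ pairs; the sample-size hypothesis $\sum a_{jj}^2 p_j^2 = \Omega(a_\infty^2 \log(n\vee m))$ suffices to dominate the $\log\abs{\N}^2$ factor by the variance scale, producing the $\log(n\vee m)$ appearing in $W$. A standard net-to-sphere approximation at cost $(1-\ve)^{-2}$ then transfers the bound to all $s_0$-sparse $q,h\in \Sp^{n-1}$. The main obstacle is the deviation step: $\fnorm{A^\diamond_{qh}}^2$ is a \emph{quartic} (not quadratic) polynomial in the Bernoullis, so I cannot invoke a Hanson--Wright bound off the shelf but must use the iterated conditional application of Theorem~\ref{thm::Bernmgf}, and I must be careful that sparse sums of $\abs{b_{ij}}$ are bounded by $\psi_B(s_0)\twonorm{B_0}$ (not by the crude $\sqrt{s_0}\twonorm{B_0}$) so that the final $W$ depends on the sparse spectral radius rather than on $\sqrt{s_0}$ alone.
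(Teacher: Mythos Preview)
Your proposal has the right high-level structure (mean plus deviation, then uniformize in $(q,h)$), but the third step---the union bound over the $\ve$-net $\N$---does not give the stated bound and is where your argument departs from the paper in a way that costs you the result. The net has size $\abs{\N}^2 \asymp \exp\big(2 s_0 \log(3en/(s_0\ve))\big)$, so to survive the union bound your tail must absorb $s_0 \log(n/s_0)$, not $\log(n\vee m)$. The hypothesis $\sum_j a_{jj}^2 p_j^2 = \Omega(a_\infty^2 \log(n\vee m))$ says nothing about $s_0$, so your claim that it ``suffices to dominate the $\log\abs{\N}^2$ factor'' is false in general. Following your route, the second and third terms of $W$ would carry $s_0 \log(n/s_0)$ rather than $\log(n\vee m)$, which is strictly worse and not what the theorem asserts.

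The paper avoids any net over $(q,h)$ altogether. Instead of organizing $\fnorm{A^\diamond_{qh}}^2$ by mask columns $k,k'$ as you do, it expands by the \emph{row} indices $(i,j,k,\ell)$ appearing in the quartic form and groups terms according to how many of these indices coincide, yielding homogeneous Bernoulli polynomials $W_2^\diamond, W_3^{\diag}, W_4^{\diag}, W_4^\diamond$ of degrees $2,3,4$. In each piece the randomness factors as $\sum_{\text{tuple}} w_{\text{tuple}}(q,h,B_0)\cdot S^\star(\text{tuple})$, where $S^\star(\text{tuple})$ depends only on $U$ (e.g.\ $S_2^\star(i,j) = \sum_s a_{ss}^2(u^s_i u^s_j - p_s^2)$) and the coefficients $w_{\text{tuple}}$ depend only on $(q,h,B_0)$. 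A union bound over the at most $O(n^4)$ tuples controls $\max_{\text{tuple}}\abs{S^\star(\text{tuple})}$ at cost $\log(n\vee m)$, and then $\sum_{\text{tuple}}\abs{w_{\text{tuple}}(q,h,B_0)}$ is bounded \emph{deterministically and uniformly} over all $s_0$-sparse $q,h$ by quantities involving $\psi_B(s_0)$. This is the idea you are missing: separate the $U$-randomness from the $(q,h)$-dependence so that uniformity in $(q,h)$ is free once the finitely many $S^\star$'s concentrate.
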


\begin{proofof2}
Denote by $\N$ the 
$\ve$-net for $\Sp^{n-1} \cap E$ as constructed in
Lemma~\ref{lemma::net}.
Hence on event $\F_0^c$, by Theorem~\ref{thm::uninorm-intro}, we have
for $N = a_{\infty} \twonorm{A_0}  \log (n \vee m) \sum_{s=1}^m p_s^4$
\bens
&& \sup_{q, h \in \Sp^{n-1} \cap E} \fnorm{A^{\diamond}_{qh}} \le \twonorm{B_0}   \twonorm{A_0}^{1/2}  W
=: \mu_f \;\; \text{ where} \\
&& W \asymp \big(a_{\infty} \sum_{s=1}^m p_s^2\big)^{1/2} +
\psi_B(s_0) \big(N \log (n \vee m)  \big)^{1/4} =: V_1 + V_2
\eens
since for the second term in~\eqref{eq::Aqbounds}, we have
$\psi_B(s_0) \big(\twonorm{A_0} \log (n \vee m) \big)^{1/2} \le \big(s_0 \twonorm{A_0} \log (n \vee m)
\big)^{1/2} \le \big(a_{\infty} \sum_{s=1}^m p_s^2\big)^{1/2}$
by condition~\eqref{eq::sample1local}, where $\psi_B(s_0) =
O(\sqrt{s_0})$.

By Theorem~\ref{thm::mainop}, we have 
$\sup_{q, h \in \Sp^{n-1}} \norm{A_{q, h}^{\diamond}}_2 \le
\twonorm{A_0}\twonorm{B_0}$.

Hence, we have by Theorem~\ref{thm::HW} and the union bound, for any $t > 0$,  
\bens
\lefteqn{\prob{\left\{\exists q,h \in \N,  \abs{Z^T A^{\diamond}_{qh} Z - \E(Z^T A^{\diamond}_{qh}
        Z | U) } > t \right\} | U \in \F_0^c}  }\\
&\le &
2 \size{\N}^2 \exp \big(-c\min\big(\frac{t^2}{\mu_f^2}, \frac{t}{\twonorm{A_0}\twonorm{B_0}} \big)\big) 
     \eens
Set for some absolute constants $C_1, C_2$,
\ben
\label{eq::tauF1}
&& \tau_0 = C_1  \mu_f \sqrt{s_0  \log\big(\frac{3e n}{s_0\ve}\big)}  
+ C_2 s_0 \log \big(\frac{3e n}{s_0\ve}\big) 
\twonorm{A_0} \twonorm{B_0}
\een
Then for some absolute constant $c_1$ and $\F_0$ as defined in
Theorem~\ref{thm::uninorm-intro},
\bens
\lefteqn{\prob{\exists q,h \in \N, \abs{Z^T A^{\diamond}_{qh} Z - \E(Z^T 
        A^{\diamond}_{qh} Z | U) } > \tau_0} =: \prob{\F_1}} \\
& = & \E_U  \prob{\exists q, h \in \N,  \abs{Z^T A^{\diamond}_{qh} Z - \E(Z^T A^{\diamond}_{qh} Z | U) } > \tau_0 | U}\\
& \leq &  
\prob{\left\{\exists q,h \in \N,  \abs{Z^T A^{\diamond}_{qh} Z - \E(Z^T A^{\diamond}_{qh}
    Z | U) } > \tau_0 \right\} \cap \F_0^c} +  \prob{\F_0}  \\
& \le & \exp(- c_1 s_0 \log (3e n/(s_0 \ve))) + \prob{\F_0}.
\eens
On event $\F_1^c \cap \F_0^c$, we have by~\eqref{eq::sample1local} and~\eqref{eq::sample2local}, for $\norm{\M}_{\offd} = \sum_{k=1}^m a_{kk} p_k^2$,
\bens 
&& \lefteqn{\forall q, h \in \N \quad \abs{Z^T A^{\diamond}_{q h} Z-\E (Z^T A^{\diamond}_{q h} Z|
    U)}/ \big(\norm{\M}_{\offd} \twonorm{B_0} \big) \le
  \frac{\tau_0}{\big(\norm{\M}_{\offd} \twonorm{B_0}\big)} } \\
& \asymp &
\frac{(V_1 + V_2)\big(s_0 \log \big(\frac{3e n}{s_0\ve} \big)
  \twonorm{A_0}\big)^{1/2}}{\sum_{k=1}^m a_{kk} p_k^2}
+ \frac{s_0 \log\big(\frac{3e n}{s_0\ve} \big) \twonorm{A_0}}{\sum_{k=1}^m
a_{kk} p_k^2}\\
& \asymp & 
r_{\offd}(s_0) \eta_{A} + r_{\offd}(s_0) f_{\QA}\psi_B(s_0) +
r_{\offd}^2(s_0)=o(1) \text{ where}  \\
&& 
\big(a_{\infty} \sum_{s=1}^m p_s^2\big)^{1/2} \big(s_0 \log \big(\frac{3e n}{s_0\ve} \big) 
\twonorm{A_0}\big)^{1/2}
/{\big(\sum_{k=1}^m  a_{kk} p_k^2\big)}
\asymp \eta_{A} r_{\offd}(s_0), \\
&&
\frac{\psi_B(s_0) \big(N \log (n \vee m)\big)^{1/4} }{\big(\sum_{k=1}^m  a_{kk} p_k^2\big)}
\big(s_0 \log \big(\frac{3e n}{s_0\ve} \big) 
\twonorm{A_0}\big)^{1/2} 
\asymp  r_{\offd}(s_0) f_{\QA} \psi_B(s_0),
\eens
\text{ and }
${C_2 s_0 \log\big(\frac{3e n}{s_0\ve} \big)
  \twonorm{A_0}}/{\big(\sum_{k=1}^m a_{kk} p_k^2\big)} \asymp
r_{\offd}^2(s_0) = o(r_{\offd}(s_0))$. Furthermore, we have $\prob{\F_1} \le \prob{\F_0} +
2\exp(- c_1 s_0 \log (3en/(s_0 \ve)))$. This completes the proof of Theorem~\ref{thm::samplesize}
\end{proofof2}

\section{Proof of Theorem~\ref{thm::mainlights}}
\label{sec::appendmainlights}
Let  $\DD(q,h)$ be the block-diagonal matrix with $k^{th}$ block along the 
diagonal being $(\DD_{ij}^{(k)}(q, h))_{i,j \le n}$, where 
\bens  
\DD^{(k)}_{ij}(q, h) 
& = & 
\half Z^T \big((q_i h_{j} +q_j h_{i} )  c_i c_j^T  \otimes (d_k 
  d_k^T)\big) Z  \; \; \text{ and hence} \\
\E \DD^{(k)}_{ij}(q,h) & = & \half \tr\big((q_i h_{j} + q_j h_i) c_i c_j^T  \otimes (d_k 
  d_k^T)\big)=
\half  a_{kk} (q_i h_j + q_j h_i) b_{ij}
\eens
Hence for $\tilde{a}^{k}_{ij}(q,h)  = \half  a_{kk} (q_i h_j + q_j
h_i) b_{ij}$ and $\E (u^k_i) = p_k$ for all $i$,
\bens
 \lefteqn{\E (Z^T  A^{\dm}_{q,h} Z |U) - \E (Z^T  A^{\dm}_{q,h} Z)}\\
 &= & 
   \mvec{U}^T \E \DD(q,h) \mvec{U}  - \E (\mvec{U}^T \E 
   \DD(q, h) \mvec{U}) \\
   &=: &
\sum_{k=1}^m \sum_{i=1}^n \sum_{j \not= i}^n 
 (u^k_i u^k_j - \E(u^k_i u^k_j))  \tilde{a}^{k}_{ij}(q,h) =:
 S_{\star}(q,h)
 \eens
for $S_{\star}(q, h)$ as defined in \eqref{eq::stardust}.
First we consider $q, h$ being fixed.
We state in Lemma~\ref{lemma::Bernmgf2} an estimate on the moment generating function of 
$S_{\star}(q, h)$ as defined in~\eqref{eq::stardust}, from which a
large deviation bound immediately follows.
We emphasize that Lemma \ref{lemma::Bernmgf2} holds for all vectors
$q, h \in \Sp^{n-1}$, rather than for sparse vectors only; When $q$
and $h$ are indeed $s_0$-sparse, then  $
\tilde{\rho}(s_0, \abs{B_0}) \le 2\rho_{\max}((2s_0) \wedge n,
(\abs{b_{ij}}))$ is used to replace $\sum_{i, j} \abs{b_{ij}}
\abs{q_i} \abs{h_j}$ appearing in~\eqref{eq::2sparse}.
\begin{lemma}
  \label{lemma::Bernmgf2}
  Let $E = \cup_{\abs{J} \le s_0} E_J$ for $0 < s_0 \le n$.
  Let $\abs{B_0} =  (\abs{b_{ij}})$.
  Denote by
  \ben
  \nonumber
 \breve\rho(s_0, \abs{B_{0}}) & := &
  \tilde\rho(s_0, \abs{B_{0}}) \wedge \twonorm{B_0}, \; \text{ where } \\
\label{eq::C0proof}
\tilde\rho(s_0, \abs{B_{0}})
& := &
\max_{q, h \in \Sp^{n-1}, s_0-\sparse} \sum_{i=1}^n
\sum_{j=1}^n\abs{b_{ij}} \abs{q_i}\abs{h_j}
\een
Then for any $q, h \in E \cap \Sp^{n-1}$, for
$\abs{\lambda} \le 1/\big(16 a_{\infty} \breve\rho(s_0, \abs{B_{0}})\big)$, we have
\bens 
&& \E \exp\big(\lambda S_{\star}(q, h)\big)
\le \exp\big(60 \lambda^2 a_{\infty}   \breve\rho(s_0,
\abs{B_{0}}) \tilde\rho(s_0, \abs{B_{0}})  \sum_k a_{kk} p_k^2 \big);\\
&& \lefteqn{\text{and for any } \; \; t > 0, \quad\prob{\abs{S_{\star}(q,h)} > t}  \le }\\
&&2\exp\big(- c\min\big(\frac{t^2}{a_{\infty} \breve\rho(s_0, \abs{B_{0}})
  \tilde\rho(s_0, \abs{B_{0}})  \sum_{k=1}^m  a_{kk}p_k^2},
\frac{t}{a_{\infty} \breve\rho(s_0, \abs{B_{0}})}  \big) \big)
\eens 
\end{lemma}

\begin{proof}
Let $\tilde{A} = \E \DD(q,h) = (\tilde{a}^{k}_{ij})_{k=1, \ldots, m}$ 
be the block-diagonal matrix with $k^{th}$ block along the 
diagonal being $\tilde{A}^{(k)} := (\tilde{a}^{k}_{ij})_{i,j \le n}, k
=1, \ldots, m$, where
$\tilde{a}^{k}_{ij}:= \tilde{a}^{k}_{ij}(q,h) = \half a_{kk} b_{ij}(q_i h_{j}+q_j h_i)$;
Then 
\ben
\label{eq::tildeA}
%\text{ and } \quad
\tilde{A} = \E \DD(q, h)  & = & \half \diag(A_0) \otimes \offd(B_0 \circ 
((q \otimes h) + (h \otimes q))),
\een
where the expectation is taken componentwise.

Now we compute for
$\tilde{A}$ as defined in \eqref{eq::tildeA}, the quantity $D_{\max}
:= \shnorm{\tilde{A}}_{\infty} \vee \shnorm{\tilde{A}}_1$; First,
 for all $q, h \in \Sp^{n-1}$, 
\bens
D_{\max} \le \max_{k} \frac{a_{kk}}{2} \max_{i}  \big(\abs{q_i} \sum_{j\not=i}
\abs{h_j} \abs{ b_{ij}} + \abs{h_i} \sum_{j\not=i}
\abs{q_j} \abs{ b_{ij}} \big) \le  a_{\infty} \max_{i}
\norm{\vecb^{(i)}}_2
\eens
where for $B_0  = [\vecb^{(1)}, \ldots, \vecb^{(n)}]$  and $q, h \in
\Sp^{n-1}$, $\abs{q_i} \le 1$ for all $i$ and
\bens  
\sum_ {j\not= i}^m  \abs{b_{ij}}\abs{h_j}
& \le &
\big(\sum_{j \not= i} b_{ij}^2\big)^{1/2}
\big(\sum_{j\not=i} h_j^2\big)^{1/2} \le \max_{i} \norm{\vecb^{(i)}}_2  \le \twonorm{B_0};
\eens
On the other hand, $\max_{i} \abs{q_i} \sum_{j\not=i}\abs{h_j} \abs{b_{ij}} \le 
\sum_{i} \abs{q_i} \sum_{j\not=i}\abs{h_j} \abs{b_{ij}}$ and hence
\bens
\forall h, q \in \Sp^{n-1} \cap E, \; \half \big(\max_{i} \abs{q_i} \sum_{j\not=i} \abs{h_j} \abs{ b_{ij}}
  + \max_{i} \abs{h_i} \sum_{j\not=i}
  \abs{q_j} \abs{ b_{ij}} \big) \le  \tilde\rho(s_0, \abs{B_{0}})
  \eens
  \text{ and hence}
$D_{\max} \le a_{\infty} (\twonorm{B_0} \wedge  \tilde\rho(s_0, \abs{B_{0}}))
  =:  a_{\infty} \breve\rho(s_0, \abs{B_{0}})$.
Hence by Theorem~\ref{thm::Bernmgf}, we have for 
$\abs{\lambda} \le \inv{16a_{\infty} \breve\rho(s_0, \abs{B_{0}})}\le \inv{16 D_{\max}}$,
\bens
\lefteqn{\E \exp(\lambda S_{\star})
\le \prod_{k=1}^m   \exp\big(36.5 \lambda^2 D_{\max} e^{8 \abs{\lambda} D_{\max}}
  \sum_{i\not=j} \abs{\tilde{a}^{k}_{ij}} \E(u^k_j)\E(u^k_i) \big)} \\
& =&
\exp\big(60 \lambda^2 D_{\max} \sum_{k=1}^m \sum_{i\not=j}^n 
  \abs{\tilde{a}^{k}_{ij}} p_k^2\big)  \le \exp( \lambda^2 60D_{\max}
  T)  \; \; \text{ where }  \\
&& \lefteqn{
\sum_{k=1}^m \sum_{i\not=j}^n \abs{\tilde{a}^{k}_{ij}} p_k^2  \le
\sum_{k=1}^m  \frac{a_{kk} p_k^2}{2}  \sum_{i\not=j}^n\abs{b_{ij}}(\abs{q_i}\abs{h_j}
+\abs{q_j}\abs{h_i}) }\\
& \le & \tilde\rho(s_0, \abs{B_{0}}) \sum_{k=1}^m  a_{kk}p_k^2 =: T
\eens
Let $t>0$. Optimizing over $0 <  \lambda <  \inv{16a_{\infty} (\twonorm{B_0} \wedge
  \tilde\rho(s_0, \abs{B_{0}}))}$, we have
\bens 
\lefteqn{\prob{S_{\star} > t}  \le \frac{\E \exp(\lambda
    S_{\star})}{e^{\lambda t}} \le \exp(-\lambda t+ \lambda^2  60 D_{\max} T)} \\
&\le & \exp\big(- c\min\big(\frac{t^2}{a_{\infty} \breve\rho(s_0,
  \abs{B_{0}})  \tilde\rho(s_0, \abs{B_{0}}) \sum_{k=1}^m
  a_{kk}p_k^2}, \frac{t}{a_{\infty} \breve\rho(s_0,\abs{B_{0}})}  \big) \big) =: q_{\star};
\eens 
Repeating the same arguments, we have for $t>0$,
\bens 
\prob{S_{\star} < -t} & = &  \prob{-S_{\star} > t} \le q_{\star}
\eens 
hence the lemma is proved by combining these two events.
\end{proof}
We now derive an upper bound on $\tilde{\rho}(s_0, \abs{B_0})$ in 
Lemma~\ref{lemma::S2bounds}.
\begin{lemma}
  \label{lemma::S2bounds}
  Denote by $\abs{q} = (\abs{q_1}, \ldots, \abs{q_n})$ the vector with 
  absolute values of $q_j, j =1, \ldots, n$.
  Let $$\rho_{\min}(s_0, \abs{B_0}) := \min_{q \in \Sp^{n-1};
    s_0-\text{sparse}} \; \; \abs{q}^T \abs{B_0} \abs{q}.$$
  Let $q, h \in E \cap \Sp^{n-1}$ be $s_0$-sparse.
  Then for $\tilde\rho(s_0, \abs{B_{0}})$ as in Definition~\eqref{eq::2sparse}
  \ben
\label{eq::2sparse}
  \lefteqn{  \tilde\rho(s_0, \abs{B_{0}})
    := \sup_{h, q \in E \cap \Sp^{n-1}}  \abs{h}^T \abs{B_0}\abs{q}} \\
  \nonumber
& \le & [2 \rho_{\max}((2s_0) \wedge n, (\abs{b_{ij}}))- \rho_{\min}(s_0, \abs{B_0}) ] \wedge
[\sqrt{s_0} \twonorm{B_0}].
\een
\end{lemma}

\begin{proof}
Note that
$\twonorm{\abs{q} + \abs{h}} \le \twonorm{\abs{q} } +\twonorm{\abs{h}} = 2$.
Thus we have
\ben
\nonumber
(\abs{q} + \abs{h})^T \abs{B_0}(\abs{q} + \abs{h}) & = &
  \abs{q}^T \abs{B_0}\abs{q} + 2 \abs{h}^T \abs{B_0} \abs{q} +
  \abs{h}^T \abs{B_0}\abs{h} \\
  \nonumber
\text{and hence}
\abs{h}^T \abs{B_0}\abs{q}
& = &
\half\big( (\abs{q} + \abs{h})^T \abs{B_0}(\abs{q} + \abs{h}) - \abs{q}^T 
\abs{B_0}\abs{q} -\abs{h}^T \abs{B_0}\abs{h} \big)\\
\label{eq::0sparse}
& \le & 2 \rho_{\max}((2s_0) \wedge n, (\abs{b_{ij}}))- \rho_{\min}(s_0, \abs{B_0})
\een
for $q, h \in \Sp^{n-1}$ that are $s_0-\sparse$;
On the other hand, $\forall h \in \Sp^{n-1}$ and  $\forall i$,
$\sum_{j=1}^n  \abs{b_{ij}}\abs{h_j} \le  \twonorm{\vecb^{(i)}}  \twonorm{h} =  \twonorm{\vecb^{(i)}}$
and hence
\ben
\nonumber
\max_{q, h\in \Sp^{n-1}, s_0-\sparse}
\sum_{i=1}^n  \abs{ q_i} \sum_{j=1}^n\abs{b_{ij}}\abs{h_j}
& \le &
\max_{q \in \Sp^{n-1}, s_0-\sparse} \sum_{i=1}^n  \abs{q_i}
\twonorm{\vecb^{(i)}} \\
\label{eq::1sparse}
& \le & \sqrt{s_0} \twonorm{B_0}
\een
where $\vecb^{(1)}, \ldots, \vecb^{(n)}$ are column (row) vectors of  
symmetric matrix $B_0 \succ 0$. 
Thus combining \eqref{eq::0sparse} and \eqref{eq::1sparse}, we have
\eqref{eq::2sparse}.
\end{proof}

\subsection{Proof of Theorem~\ref{thm::mainlights}}
Denote by $\N$ the 
$\ve$-net for $\Sp^{n-1} \cap E$ as constructed in
Lemma~\ref{lemma::net}.
Now suppose $q, h \in \Sp^{n-1}$ are $s_0$-sparse.
  Denote by $\breve\rho(s_0) = \breve\rho(s_0, \abs{B_{0}}) = 
\tilde\rho(s_0, \abs{B_{0}}) \wedge \twonorm{B_0}$; we use the 
shorthand notation $\tilde\rho(s_0) = \tilde\rho(s_0, \abs{B_0}) \le 2 
\rho_{\max}((2s_0 )\wedge n, \abs{B_0})$ as 
defined in \eqref{eq::C0proof}. Let 
$$\tau'  = C_4 a_{\infty} (\twonorm{B_0} \psi_B((2s_0) \wedge n)) s_0 \log
(3en/(s_0 \ve)).$$s
Then by Lemma \ref{lemma::Bernmgf2}, we have for $C_4$ large enough,
$\psi_B((2s_0) \wedge n) =O(\sqrt{2s_0})$, and letting $d := (2s_0) \wedge n$,
\bens
\lefteqn{\prob{\exists q, h \in \N, \abs{S_{\star}(q, h)} \ge \tau'}
  =: \prob{\F_2}
  \le \abs{\N}^2 \cdot} \\
&&  \exp\big(- c\big(\frac{\big(C_4 a_{\infty} \twonorm{B_0}
        \psi_{B}(d)  s_0 \log \big(\frac{3en}{s_0 \ve}\big) \big)^2 }
    {a_{\infty}(\breve\rho(s_0)) \tilde\rho(s_0)\sum_{k=1}^m
      a_{kk}p_k^2} \wedge
    \frac{C_4 \twonorm{B_0} \psi_{B}(d) s_0 \log \big(\frac{3en}{s_0
        \ve}\big) }{\breve\rho(s_0) }
    \big)\big)
\eens
where by Lemma~\ref{lemma::S2bounds}
and condition~\eqref{eq::samplecrux}, 
\bens
\lefteqn{\prob{\exists q, h \in \N, \abs{S_{\star}(q, h)} \ge \tau'} \le } \\
& &
\abs{\N}^2 \exp\big(- C \big(s_0 \log\big(\frac{3en}{s_0 \ve}\big)\big)
\wedge \big(\psi_{B}(d) s_0 \log\big(\frac{3en}{s_0 \ve}\big)\big)\big) \\
& \le &
(3/\ve)^{2s_0} {n \choose s_0}^2 \exp\big(- C's_0
\log\big(\frac{3en}{s_0 \ve} \big) \big)
\le  \exp\big(-C'' s_0 \log\big(\frac{3en}{s_0 \ve} \big) \big)
 \hskip2pt \;\;\scriptstyle\Box
\eens

\subsection{Proof of Lemma~\ref{lemma::finalrate}}
\begin{proofof2}
%  It is straightforward to check that
For $\psi_B(s_0) \le \sqrt{s_0}$ as in
Definition~\ref{def::C0} and by Lemma~\ref{lemma::converse},
\bens 
\lefteqn{\psi_B(s_0)  r_{\offd}(s_0) f_{\QA} 
  \asymp
  \frac{\twonorm{A_0}^{3/4} a_{\infty}^{1/4}
    \psi_B(s_0) 
  \big(s_0 \log \big(\frac{3en}{s_0\ve}\big)\big)^{1/2}
  \big(\log (n \vee m)  \sum_{s=1}^m 
    p_s^4\big)^{1/4}}{\sum_{j=1}^m a_{jj} p_j^2}} \\
 & \le & \sqrt{p_{\max}}\sqrt{\eta_A}   (\psi_B(s_0))^{1/2}  r^{3/2}_{\offd}(s_0) \ell_{s_0, n}^{1/4}
 =  \sqrt{p_{\max}} r_{\offd}(s_0) \sqrt{\eta_A r_{\offd}(s_0)  \psi_B(s_0) \ell_{s_0, n}^{1/2}} \\
 & \le & \sqrt{p_{\max}} r_{\offd}(s_0)  (\ell_{s_0, n}^{1/2}  \eta_A+  r_{\offd}(s_0) 
 \psi_B(s_0)) \asymp \sqrt{p_{\max}} \big(\underline{r_{\offd}} \sqrt{s_0} +r_{\offd}^2 (s_0) 
\psi_B(s_0)\big)
\eens
where $r_{\offd}(s_0)$ and $\ul{r_{\offd}}$ are as in~\eqref{eq::offdrate} and~\eqref{eq::paritydual} respectively.
\end{proofof2}

\section{Proof of Theorem~\ref{thm::mainop}}
\label{sec::bernchaos}
We will  prove a uniform deterministic bound on
$\norm{A^{\dm}_{q, h}}_2$ in this section. The proof techniques
developed in this section may be of independent  interests for
analyzing tensor quadratic forms.
As mentioned, our proof  for Theorem~\ref{thm::mainop} will go through
if one replaces $u^k, k=1, \ldots, m$ by independent Gaussian random vectors; however, the
statement will be probabilistic subject to an additional logarithmic
factor. Such a result may be of independent interests, as more
generally, the mask matrix $U$ may not be constrained to the family of
Bernoulli random matrices.  For example, one may consider $U$ as a
matrix with arbitrary positive coefficients belonging to $[0, 1]$. In
particular, Lemma~\ref{lemma::chaosop} holds for general
block-diagonal matrices with bounded operator norm, which can be deterministic.

Let $\{u^{k}, k=1, \ldots, m\}$ be the column vectors of the mask matrix 
$U = [u^{1} | \ldots | u^{m}]$. 
Recall that the nuclear norm or trace norm of $d \times d$ matrix $X$ is defined as
$\norm{X}_{*} :=\onenorm{s(X)} = \sum_{i=1}^d s_i(X) = \tr(\sqrt{X^TX})$,
where $\sqrt{X^TX}$ represents the unique positive-semidefinite matrix
$C$ such that $C^2 = X^T X$ and $s(X) := (s_i(X))_{i=1}^d$ denotes the vector of singular values of $X$.
For positive-semidefinite matrix $A \succeq 0$, clearly, $\sqrt{A^TA}
= \sqrt{A^2} = A$, and
\bens 
\norm{A}_{*} :=\onenorm{s(A)} = \tr(A) = \sum_{i=1}^m \lambda_i(A),
\eens
where $\lambda_i(A)$ are eigenvalues of $A$.
First, denote by $ \upsilon_{ij}^{(k)} = c_i c_j^T
\otimes (d_k   d_k^T) \in \R^{mn \times mn}$.
Denote by $\DD_0(w)$ the block diagonal matrix such that
on the $k^{th}$ block, we have for a fixed $w \in \Sp^{mn-1}$
\ben
\nonumber
\DD_{0,ij}^{(k)}(w)  & = & w^T \big(c_i c_j^T  \otimes (d_k 
  d_k^T)\big) w  =: w^T  \upsilon_{ij}^{(k)} w \text{ and hence} \\
\label{eq::defineDD0}
\DD^{(k)}_{0}(w) & = &
\big(\DD^{(k)}_{0, ij}(w)\big) \in \R^{n \times n}
\een
As a preparation, we first state a general result in
Lemma~\ref{lemma::chaosop} involving sum of tensor products,
specialized to our settings, as well as properties of matrix 
$\DD_0$ in Lemma~\ref{lemma::nuclearnorm}. We defer proofs of Lemmas~\ref{lemma::nuclearnorm}
  and~\ref{lemma::UKbound} to Section~\ref{sec::proofofnuclearnorm}.
\begin{lemma}
  \label{lemma::chaosop} 
Denote by $\tilde{\UU}$ a block-diagonal matrix with $0$s along 
 the diagonal, and on the $k^{th}$ block, we have a symmetric matrix
 $\tilde{\UU}^{(k)}= (\tilde{\UU}^{(k)}_{ij}) \in \R^{n \times n}$ with bounded operator norm.
Consider $\HH_0^{(k)} = \sum_{i \not=j} \tilde{\UU}^{(k)}_{ij} c_i
c_j^T$. Then for $\DD_0(w)$ as defined in~\eqref{eq::defineDD0},
 \bens 
 \norm{\sum_{k=1}^m \HH_0^{(k)} \otimes d_k  d_k^T}_2
 = \sup_{w \in \Sp^{mn-1}} \abs{\ip{\tilde{\UU},  \DD_0(w)}} \le
 \twonorm{\tilde\UU}
 \sup_{w \in \Sp^{mn-1}}\norm{\DD_0(w)}_{*}.
 \eens
\end{lemma}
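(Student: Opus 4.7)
The plan is to exploit symmetry of the matrix $M := \sum_{k=1}^m \HH_0^{(k)} \otimes d_k d_k^T$ to rewrite its operator norm as the supremum of a quadratic form, recognize that quadratic form as a Frobenius inner product against $\DD_0(w)$, and then invoke the duality between the operator and nuclear norms together with the block-diagonal structure of both $\tilde\UU$ and $\DD_0(w)$.

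First I would check that $M$ is symmetric: each $\HH_0^{(k)}=\sum_{i\neq j}\tilde\UU^{(k)}_{ij} c_i c_j^T$ is symmetric because $\tilde\UU^{(k)}$ is symmetric (so swapping $i,j$ in the sum yields the transpose), and $d_k d_k^T$ is symmetric. Thus $\twonorm{M}=\sup_{w\in\Sp^{mn-1}}\abs{w^T M w}$. Expanding,
\begin{align*}
w^T M w \;=\; \sum_{k=1}^m\sum_{i\neq j}\tilde\UU^{(k)}_{ij}\, w^T(c_i c_j^T\otimes d_k d_k^T)w
\;=\; \sum_{k=1}^m\sum_{i\neq j}\tilde\UU^{(k)}_{ij}\,\DD^{(k)}_{0,ij}(w),
\end{align*}
which equals $\ip{\tilde\UU,\DD_0(w)}$ in the Frobenius inner product (the diagonal entries of each $\tilde\UU^{(k)}$ are zero by hypothesis, so the restriction $i\neq j$ does not change the inner product). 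This establishes the asserted equality $\twonorm{M}=\sup_w\abs{\ip{\tilde\UU,\DD_0(w)}}$.

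For the inequality, I would use that both $\tilde\UU$ and $\DD_0(w)$ are block-diagonal with $m$ blocks of size $n\times n$, so $\ip{\tilde\UU,\DD_0(w)}=\sum_{k=1}^m\tr(\tilde\UU^{(k)}\DD^{(k)}_{0}(w))$. Applying the von Neumann trace inequality $\abs{\tr(AB)}\le\twonorm{A}\norm{B}_{*}$ block by block and then pulling the maximum over $k$ outside,
\begin{align*}
\abs{\ip{\tilde\UU,\DD_0(w)}}
\;\le\; \sum_{k=1}^m \twonorm{\tilde\UU^{(k)}}\,\norm{\DD^{(k)}_{0}(w)}_{*}
\;\le\; \bigl(\max_{k}\twonorm{\tilde\UU^{(k)}}\bigr)\sum_{k=1}^m\norm{\DD^{(k)}_{0}(w)}_{*}.
\end{align*}
Finally, the singular values of a block-diagonal matrix are the union of the singular values of its blocks, so $\twonorm{\tilde\UU}=\max_k\twonorm{\tilde\UU^{(k)}}$ and $\norm{\DD_0(w)}_{*}=\sum_k\norm{\DD^{(k)}_0(w)}_{*}$. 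Taking the supremum over $w\in\Sp^{mn-1}$ on both sides yields the claimed bound.

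No step requires a genuinely hard estimate; the only subtlety I anticipate is being careful with the block-diagonal bookkeeping—in particular, checking that the Frobenius inner product over the full $mn\times mn$ matrices decomposes cleanly into a sum over blocks and that the operator/nuclear norms behave as stated on block-diagonal matrices. Once these routine facts are in place, the trace-duality inequality delivers the result in one line.
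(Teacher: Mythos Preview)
Your proposal is correct and essentially matches the paper's proof: both use symmetry of $M$ to reduce to $\sup_w |w^T M w|$, identify this as $\sup_w |\ip{\tilde\UU,\DD_0(w)}|$ via the zero-diagonal condition, and then apply the operator--nuclear norm duality (the paper cites this as H\"older's inequality, Exercise~10.4.2 in Vershynin). The only cosmetic difference is that the paper applies the trace duality directly to the full block-diagonal matrices, whereas you apply it block by block and then reassemble; these are equivalent because of the block structure you carefully noted.
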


\begin{lemma}
\label{lemma::UKbound}
Let $\tilde{\UU}$ be a block-diagonal matrix with $0$s along the 
diagonal, and $\tilde{\UU}^{(k)}(q, h)  \in \R^{n \times n}$ on the 
$k^{th}$ block along the diagonal such that
\bens
&& \forall k, \; \forall i \not=j, \quad \tilde{\UU}_{ij}^{(k)}(q, h) = 
\half (u^k_i u^k_j (q_i h_{j} + h_i  q_j)); \\
\text{Then,} &&
\forall q, h \in \Sp^{n-1}, \forall k, \quad 
\shnorm{\tilde{\UU}^{(k)}(q,h)}_2  \le  1.
\eens
\end{lemma}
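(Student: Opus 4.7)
The plan is to reduce the bound to a simple Rayleigh-quotient estimate for ``hollow'' rank-one matrices, after applying polarization to exploit a cancellation between the rank-one part and a diagonal correction. First I would introduce the shorthand $\tilde{q}^k := u^k \circ q$ and $\tilde{h}^k := u^k \circ h$. Since $u^k_i \in \{0,1\}$, we immediately get $\twonorm{\tilde{q}^k}^2 = \sum_i u^k_i q_i^2 \le \twonorm{q}^2 = 1$ and likewise $\twonorm{\tilde{h}^k} \le 1$. A direct check shows
$$\tilde{\UU}^{(k)}(q, h) \;=\; \tfrac{1}{2}\bigl(\tilde{q}^k (\tilde{h}^k)^T + \tilde{h}^k (\tilde{q}^k)^T\bigr) \;-\; \diag(\tilde{q}^k \circ \tilde{h}^k),$$
because the two rank-one terms each contribute $\tilde{q}^k_i \tilde{h}^k_i$ to the $i$-th diagonal, so subtracting $\diag(\tilde{q}^k\circ\tilde{h}^k)$ zeroes out the diagonal (as required by the hypothesis).

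The naive triangle inequality at this point only gives $\twonorm{\tilde\UU^{(k)}} \le 1 + \max_i |q_i h_i| \le 2$, so one must combine the two pieces. The key move is polarization: using $\tfrac{1}{2}(ab^T + ba^T) = \tfrac{1}{4}[(a+b)(a+b)^T - (a-b)(a-b)^T]$ together with the pointwise identity $\diag(a\circ b) = \tfrac{1}{4}\diag[(a+b)\circ(a+b) - (a-b)\circ(a-b)]$, and setting $s^{\pm} := \tilde{q}^k \pm \tilde{h}^k$, I rewrite
$$\tilde{\UU}^{(k)}(q, h) \;=\; \tfrac{1}{4}\bigl[s^+(s^+)^T - \diag(s^+ \!\circ s^+)\bigr] \;-\; \tfrac{1}{4}\bigl[s^-(s^-)^T - \diag(s^- \!\circ s^-)\bigr].$$

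The central one-line lemma I would then establish is that for every $w \in \R^n$, the hollow rank-one matrix $ww^T - \diag(w \circ w)$ has operator norm at most $\twonorm{w}^2$. Since it is symmetric, it suffices to bound its Rayleigh quotient: for $v \in \Sp^{n-1}$,
$$v^T\bigl[ww^T - \diag(w\circ w)\bigr]v \;=\; \ip{w, v}^2 - \sum_i w_i^2 v_i^2,$$
which lies in $[-\twonorm{w}^2,\;\twonorm{w}^2]$, because $\ip{w, v}^2 \le \twonorm{w}^2$ by Cauchy--Schwarz and $\sum_i w_i^2 v_i^2 \le \max_i w_i^2 \le \twonorm{w}^2$.

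Combining via the triangle inequality applied to the polarized decomposition, together with the parallelogram identity $\twonorm{s^+}^2 + \twonorm{s^-}^2 = 2(\twonorm{\tilde{q}^k}^2 + \twonorm{\tilde{h}^k}^2)$, yields
$$\twonorm{\tilde{\UU}^{(k)}(q,h)} \;\le\; \tfrac{1}{4}\twonorm{s^+}^2 + \tfrac{1}{4}\twonorm{s^-}^2 \;=\; \tfrac{1}{2}\bigl(\twonorm{\tilde{q}^k}^2 + \twonorm{\tilde{h}^k}^2\bigr) \;\le\; 1,$$
which is the desired bound. The main obstacle is to realize that the two summands $\tfrac{1}{2}(\tilde{q}^k(\tilde{h}^k)^T + \tilde{h}^k(\tilde{q}^k)^T)$ and $\diag(\tilde{q}^k \circ \tilde{h}^k)$ must be bounded \emph{together}, since separately each can saturate its bound in ways that would give only the loose constant $2$; polarization is precisely the device that re-packages their difference as a (signed) sum of two hollow rank-one matrices, for which the sharp constant $1$ drops out of the parallelogram identity.
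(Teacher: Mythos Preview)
Your proof is correct, but it takes a more elaborate route than the paper's. The paper writes $\tilde{\UU}^{(k)}(q,h)=\tfrac12\offd(\tilde{q}^k(\tilde{h}^k)^T)+\tfrac12\offd(\tilde{h}^k(\tilde{q}^k)^T)$ (same notation $\tilde{q}^k,\tilde{h}^k$ as yours), notes the two summands are transposes of each other, and then simply bounds
\[
\twonorm{\tilde{\UU}^{(k)}}\le \twonorm{\offd(\tilde{q}^k(\tilde{h}^k)^T)}\le \fnorm{\offd(\tilde{q}^k(\tilde{h}^k)^T)}\le \fnorm{\tilde{q}^k(\tilde{h}^k)^T}=\twonorm{\tilde{q}^k}\twonorm{\tilde{h}^k}\le 1.
\]
So the paper never separates the rank-one part from the diagonal correction (which is what forced you into polarization to avoid the loose constant~$2$); instead it keeps the hollow matrix intact and passes to the Frobenius norm, where dropping the diagonal only helps. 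Your polarization-plus-Rayleigh-quotient argument is a genuinely different device: it stays with the operator norm throughout and recovers the sharp constant from the parallelogram identity, which is perhaps more illuminating as to \emph{why} the bound is exactly~$1$, but at the cost of a few more lines than the paper's one-step Frobenius relaxation.
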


\begin{lemma}
  \label{lemma::nuclearnorm}
 Let $\DD_0$ be defined as in \eqref{eq::defineDD0}.
Then $\DD_0 \succeq 0$ for all $w \in \R^{mn}$ and
$\norm{\DD_0(w)}_{*} =  w^T (B_0 \otimes A_0) w$; Moreover,
\bens
\sup_{w \in \Sp^{mn-1}} \norm{\DD_0(w)}_{*} & = & \twonorm{B_0} \twonorm{A_0}.
\eens
\end{lemma}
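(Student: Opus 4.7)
The plan is to identify each block $\DD_0^{(k)}(w)$ as a rank-one positive-semidefinite outer product; once this is done, positive-semidefiniteness of $\DD_0$ is automatic, the nuclear norm collapses to the trace, and the identity $\norm{\DD_0(w)}_* = w^T (B_0 \otimes A_0) w$ reduces to a standard vec--trace manipulation.

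First I would reshape $w \in \R^{mn}$ as $w = \mvec(W^T)$ for a suitable $W \in \R^{n\times m}$, and compute the $(i,j)$ entry of the $k$-th block directly using the mixed-product rule $(A\otimes B)\mvec(X) = \mvec(BXA^T)$:
\begin{equation*}
\DD_{0,ij}^{(k)}(w)
= w^T\bigl(c_i c_j^T \otimes d_k d_k^T\bigr) w
= \tr\bigl(c_i^T W d_k d_k^T W^T c_j\bigr)
= (c_i^T W d_k)(c_j^T W d_k).
\end{equation*}
Setting $v^{(k)} \in \R^n$ with coordinates $v^{(k)}_i := c_i^T W d_k$, this yields $\DD_0^{(k)}(w) = v^{(k)} (v^{(k)})^T \succeq 0$. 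Hence $\DD_0(w)$ is block-diagonal with rank-one PSD blocks, proving $\DD_0(w) \succeq 0$.

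Next, since each block is PSD, the nuclear norm equals the trace, and summing over blocks gives
\begin{equation*}
\norm{\DD_0(w)}_* = \sum_{k=1}^m \tr\bigl(v^{(k)} (v^{(k)})^T\bigr)
= \sum_{k=1}^m \norm{v^{(k)}}_2^2
= \sum_{k=1}^m d_k^T W^T \Bigl(\sum_{i=1}^n c_i c_i^T\Bigr) W d_k.
\end{equation*}
Using $\sum_i c_i c_i^T = B_0^{1/2} B_0^{1/2} = B_0$ (from \eqref{eq::cf}) and $\sum_k d_k d_k^T = A_0$ (from \eqref{eq::dm}), together with the vec identity, I obtain
\begin{equation*}
\norm{\DD_0(w)}_* = \tr(W^T B_0 W A_0) = \mvec(W^T)^T (B_0 \otimes A_0) \mvec(W^T) = w^T(B_0 \otimes A_0) w.
\end{equation*}

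Finally, taking the supremum over $w \in \Sp^{mn-1}$ gives the largest eigenvalue of the Kronecker product, which factorizes as $\lambda_{\max}(B_0 \otimes A_0) = \lambda_{\max}(B_0)\,\lambda_{\max}(A_0) = \twonorm{B_0}\twonorm{A_0}$ since $B_0, A_0 \succ 0$. There is no real obstacle here; the only subtle point is getting the vec conventions consistent with the definition of $Z \sim \mvec{\Z^T}$ used throughout the paper, so that the reshaping $w = \mvec(W^T)$ produces the Kronecker factor in the order $B_0 \otimes A_0$ rather than $A_0 \otimes B_0$.
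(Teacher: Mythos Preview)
Your proof is correct and takes essentially the same approach as the paper: both identify each block $\DD_0^{(k)}(w)$ as a rank-one PSD matrix $v^{(k)}(v^{(k)})^T$ with $v^{(k)}_i = c_i^T W d_k$, whence the nuclear norm reduces to the trace and sums to $w^T(B_0\otimes A_0)w$. The only cosmetic difference is that you invoke the vec--Kronecker identity and $\tr(W^T B_0 W A_0)$ explicitly, whereas the paper unpacks the same computation coordinate-wise (writing $h^T\DD_0^{(k)}h = (\sum_i h_i \sum_s c_{i,s}\langle w^s, d_k\rangle)^2$ after splitting $w$ into $n$ blocks of size $m$); your reshaping $w=\mvec(W^T)$ makes the bookkeeping cleaner and your remark about the Kronecker-order convention is well placed.
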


\begin{proofof}{Theorem~\ref{thm::mainop}}
Consider for arbitrary $q, h \in  \Sp^{n-1}$ and $\tilde{\UU} := \tilde{\UU}(q,h)$ as defined in Lemma~\ref{lemma::UKbound},
\bens 
\HH_0^{(k)}(q, h)
& = & \sum_{i=1}^n \sum_{j\not= i} \tilde{\UU}_{ij}^{(k)}(q, h) c_i
c_j^T.
\eens
Let $\DD_0(w)$ be as defined in \eqref{eq::defineDD0}.
Now  for arbitrary $q, h \in \Sp^{n-1}$, 
\bens  
A_{q, h}^{\dm}
& = & \sum_{k=1}^m \HH_0^{(k)}(q, h) \otimes d_k d_k^T
= \sum_{k=1}^m \sum_{j \not= i} \tilde{\UU}^{(k)}_{ij}(q, h) c_i c_j^T \otimes d_k d_k^T,
\eens
we have by Lemmas~\ref{lemma::chaosop}  and~\ref{lemma::UKbound}, 
\bens
\twonorm{A_{qh}^{\diamond}}
& = &
 \twonorm{\sum_{k=1}^m \HH_0^{(k)}(q,h) \otimes d_k  d_k^T} = \sup_{w \in  \Sp^{mn-1}}\abs{\ip{\tilde{\UU}(q, h), \DD_{0}(w)}} \\
 &\le &
 \twonorm{\tilde\UU(q,h)} \sup_{w \in \Sp^{mn-1}}\norm{\DD_0(w)}_{*} \le \sup_{w \in    \Sp^{mn-1}}\norm{\DD_0(w)}_{*},
\eens
where $\forall q, h \in \Sp^{n-1}$, we have $\twonorm{\tilde{\UU}(q, h) }  \le  1$;
Hence by Lemma~\ref{lemma::nuclearnorm},
\bens 
\sup_{q, h \in \Sp^{n-1}} \twonorm{A_{q, h}^{\dm}} 
&  \le &  \sup_{w \in \Sp^{mn-1}}\norm{\DD_0(w)}_{*} = \twonorm{A_0}\twonorm{B_0}.
\eens
\end{proofof}

\subsection{Proof of Lemmas~\ref{lemma::chaosop} to~\ref{lemma::nuclearnorm}} 
\label{sec::proofofnuclearnorm}
\begin{proofof}{Lemma~\ref{lemma::chaosop}}
Clearly $\HH_0^{(k)}$ is symmetric.
Denote by $M = \sum_{k=1}^m \HH_0^{(k)}\otimes d_k d_k^T$, then $M$ is
also symmetric.
Recall the operator norm of the symmetric matrix $M = \sum_{k=1}^m
\HH_0^{(k)} \otimes d_k d_k^T$ is the same as the spectral radius of
$M$, denoted by $\rho(M) := \{\max{\abs{\lambda}}, \lambda \text{ eigenvalue of }
M\}.$
Let $\DD_0(w)$ be as defined in \eqref{eq::defineDD0}.
Hence by definition,
\ben
\nonumber
\lefteqn{\twonorm{M}
  = \sup_{w \in \Sp^{mn-1}}
\abs{w^T \big(\sum_{k=1}^m\sum_{i\not=j}\tilde{\UU}^{(k)}_{ij} c_i
    c_j^T \otimes d_k d_k^T\big) w} =} \\
\label{eq::specradius}
 &  & \quad \sup_{w \in \Sp^{mn-1}} \abs{\sum_{k=1}^m
   \sum_{i\not=j}\tilde{\UU}^{(k)}_{ij}w^T (c_i c_j \otimes d_k d_k^T)
   w}   =  \sup_{w \in \Sp^{mn-1}} \abs{\sum_{k=1}^m
  \ip{\tilde{\UU}^{(k)},  \DD_0^{(k)}(w)}} \\
\label{eq::specradius2}
& & \quad\quad\quad =\sup_{w \in \Sp^{mn-1}}
 \abs{\ip{\tilde{\UU},  \DD_0(w)}}  \le \sup_{w \in \Sp^{mn-1}} \shnorm{\tilde{\UU}}_2
 \norm{\DD_0(w)}_{*} 
\een
where \eqref{eq::specradius} follows since $\diag(\tilde{\UU}) = 0$,
and in \eqref{eq::specradius2}, we use the fact that
 \bens 
 \forall w \in \Sp^{mn-1} \; \;
 \abs{\ip{\tilde{\UU},  \DD_0(w)}} = \abs{\sum_{k=1}^m 
   \ip{\tilde{\UU}^{(k)},  \DD_0^{(k)}(w)}}  
 \le \twonorm{\tilde{\UU}} \norm{\DD_0(w)}_{*}
 \eens
 by H\"{o}lder's inequality.
 See for example Exercise 10.4.2 in~\cite{Vers18}.
\end{proofof}

\begin{proofof}{Lemma~\ref{lemma::UKbound}}
  Denote by $\tilde{u}^k(q)= u^k \circ q$, where $q \in  \Sp^{n-1}$.
  Hence $\forall i \not=j$, we have $\tilde{\UU}_{ij}^{(k)}(q, h)  =
  \half \big(\tilde{u}^k(q)\otimes \tilde{u}^k(h)  +\tilde{u}^k(h)
  \otimes \tilde{u}^k(q)\big)_{i,j}$,
  and $\forall q, h \in  \Sp^{n-1}$, and $\forall k$,
\bens 
\twonorm{\tilde{\UU}^{(k)}(q,h)} & \le &
\half\twonorm{\offd(\tilde{u}^k(q) \otimes \tilde{u}^k(h))} +
\half\twonorm{\offd(\tilde{u}^k(h) \otimes \tilde{u}^k(q))} \\
& = &
\twonorm{\offd(\tilde{u}^k(q) \otimes \tilde{u}^k(h))} =
\fnorm{\offd(\tilde{u}^k(q) \otimes \tilde{u}^k(h))} \\
& \le &
\fnorm{\tilde{u}^k(q) \otimes \tilde{u}^k(h)}
= \sqrt{\tr\big((\tilde{u}^k(q) \otimes \tilde{u}^k(h))^T (\tilde{u}^k(q)
  \otimes \tilde{u}^k(h))\big)}\\
& = &
\sqrt{\tilde{u}^k(h)^T \tilde{u}^k(h)}
\sqrt{\tilde{u}^k(q)^T \tilde{u}^k(q)}
= \twonorm{u^k \circ h}  \twonorm{u^k \circ q} \le 1 
\eens
where for all $u^k, k=1, \ldots, m$ and $q \in \Sp^{n-1}$, we have for 
$\twonorm{\tilde{u}^k(q)}  = \twonorm{u^k \circ q} \le \twonorm{q} = 1$.
\end{proofof}

\begin{proofof}{Lemma~\ref{lemma::nuclearnorm}}
Fix $w \in \R^{mn}$. 
We break a vector $w$ on the sphere into  $n$ vectors $w^1, w^2, \ldots, w^n$, each of which has 
size $m$. We show that each block $(\DD_0^{(k)}(w)_{ij})_{i,j \le n}$ is
positive semidefinite (PSD) and hence $\DD_0(w)$ is PSD for all $w \in \R^{mn}$; Indeed,
\bens
\forall h \in \R^n \quad
h^T \DD_0^{(k)} h & = &
\sum_{ij} h_{i} h_j \sum_{s, t} c_{i,s}
\ip{w_s, d_k} c_{j, t} \ip{w_t, d_k} \\
& = & 
(\sum_{i} h_{i} \sum_{s} c_{i,s}
\ip{w_s, d_k}) (\sum_{j} h_{j} \sum_{t} c_{j, t} \ip{w_t, d_k}) \ge 0
\eens
For the nuclear norm, we use the fact that 
for all $w \in \R^{mn}$, $\DD_0(w) \succeq 0$, and
\bens
\norm{\DD_0(w)}_{*} & = &  \tr(\DD_0(w)) =\sum_{k=1}^m
\tr(\DD^{(k)}_0(w))  \\
& = & \sum_{k=1}^m \sum_{j=1}^n \DD^{(k)}_{0, jj}(w) 
=\sum_{k=1}^m \sum_{j=1}^n w^T (c_j c_j^T)  \otimes 
  (d_k   d_k^T) w \\
&= & w^T \big(\sum_{j=1}^n (c_j c_j^T)  \otimes \sum_{k=1}^m (d_k
d_k^T) \big) w = w^T (B_0 \otimes A_0) w; \text{ and hence} \\
\sup_{w \in \Sp^{mn-1}}
\norm{\DD_0(w)}_{*} & = & \sup_{w \in \Sp^{mn-1}} w^T 
(B_0 \otimes 
A_0) w= \twonorm{B_0} \twonorm{A_0}
\eens
and the last statement follows from the definition of operator
norm.
\end{proofof}

\section{Proof sketch for Theorem~\ref{thm::uninorm-intro}}
\label{sec::uninormskye}
Our analysis framework will work beyond cases considered in the
present work, namely, it will work in cases where random matrix $U$ follows
other distributions; for example, one may consider $U$ as a matrix
with positive coefficients, rather than $0, 1$s.
First, we rewrite the off-diagonal part of the quadratic form as follows:
\ben
\nonumber
\lefteqn{q^T \offd(\X \X^T) h  = \sum_{i \not=j} q_i h_{j} \ip{v^i \circ y^i, v^j \circ y^j}} \\ 
\nonumber
&= &
\label{eq::quad1}
Z^T\big(\sum_{i \not= j} q_i h_{j}  c_i c_j^T \otimes A_0^{1/2} 
\diag(v^i \otimes v^j) A_0^{1/2} \big) Z
\een
and recall for each row vector $y^i$ of $X =
B_0^{1/2} \Z A_0^{1/2}$, for $\Z$ as defined in \eqref{eq::missingdata}, 
we observe in $\X$ its sparse instance:  $\forall i=1,  \ldots, n,$
\ben 
\label{eq::rowmissing}
v^i \circ y^i, \text{ where } v^i_k  \sim \Ber(p_k), \forall k = 1, \ldots, m, 
\een 
and for two vectors $v^i, y^i \in \R^m$,  $v^i \circ y^i$ denote 
their Hadamard  product such that $(v^i \circ y^i)_{k} = v^{i}_k 
y^i_k = u^k_i x^k_i$.
Recall the symmetric matrix $A_{qh}^{\diamond}$ as defined in
\eqref{eq::defineADM}
is the average of the asymmetric versions:
$A_{qh}^{\diamond} = \half(A_{qh}^{\diamond}(\ell) + A_{qh}^{\diamond}(r))$
where we denote by 
\bens 
A^{\diamond}_{q,h}(\ell) 
& = &
\sum_{i =1}^n \sum_{j\not= i}^n
q_i h_{j} (c_i c_j^T) \otimes \big( A_0^{1/2}
  \diag(v^i \otimes v^j) A_0^{1/2} \big) \text{ and } \\
A^{\diamond}_{q,h}(r)  & = &
\sum_{i =1}^n \sum_{j\not= i}^n q_i h_j (c_j c_i^T) \otimes 
\big( A_0^{1/2} \diag(v^i \otimes v^j) A_0^{1/2} \big) = (A^{\diamond}_{q,h}(\ell) )^T
\eens
Recall  $\fnorm{A^{\diamond}_{q,h}}  \le \half
\big(\fnorm{A^{\diamond}_{q,h}(\ell)} + \fnorm{A^{\diamond}_{q,h}(r)}
\big)$ \; \text{ and hence} 
\bens
\fnorm{A^{\diamond}_{q,h}}^2   & \le &  \inv{4} \big(2 \fnorm{A^{\diamond}_{q,h}(\ell)}^2
  + 2 \fnorm{A^{\diamond}_{q,h}(r)}^2\big) =\fnorm{A^{\diamond}_{q,h}(r)}^2
\eens
First, we use the decomposition argument to express $\fnorm{A^{\diamond}_{q,h}(r)}^2$
as a summation over homogeneous polynomials of degree $2, 3, 4$
respectively. Hence
\ben
\nonumber
\lefteqn{\fnorm{A^{\diamond}_{q,h}(r)}^2 =}\\
\nonumber
& &
\sum_{i\not= j} \sum_{k \not=\ell}
  q_i h_{j} q_k h_{\ell} \tr\big((c_i c_j^T c_{\ell} c_k^T)\big) 
    \tr\big(A_0^{1/2} \diag(v^i \circ v^j) A_0 
      \diag(v^{\ell} \circ v^k) A_0^{1/2} \big) \\
      &  &
      \label{eq::summands}
\quad \quad \quad \quad
  =    \sum_{i, k}b_{ki} q_i q_k 
\sum_{j \not= i, k \not=\ell}
b_{j \ell} h_{j}  h_{\ell} \big( (v^i \circ v^j)^T (A_0 \circ A_0 ) 
(v^{\ell} \circ v^k)\big).
\een
For now, we have a quick summary of these random functions and their expectations.
We then prove the concentration of measure bounds for each homogeneous
polynomial respectively.
We now characterize the sums that involve all unique pairs, triples, and quadruples of Bernoulli 
random variables. It is understood that $(i,j)$ and $(k,\ell)$ are 
allowed to overlap in one or two vertices, but $i \not=j$ and $k \not=
\ell$. Here and in the sequel, we use $j\not= i \not= \ell \not=k$ to
denote that $i, j, k, \ell$ are all distinct, while $i\not=j\not=k$ denotes that
indices $i, j, k$ are distinct, and so on.
On the other hand, the conditions $i \not=j$ and $k \not= \ell$ do not exclude the
possibility that $k=j$ or $k=i$, or  $i = \ell$, or $j = \ell$, or
some combination of these.
We first introduce the following definitions.
\bit
\item
Fix $i \not=j$. In \eqref{eq::summands}, when an unordered pair of indices $(k,
 \ell)$ is chosen to be identical with an unordered pair $(i, j)$, we
 add an element in $W_2^{\diamond}$ resulting in a homogeneous positive polynomial of degree 2
\ben 
\label{eq::defineW2}
W_2^{\diamond}
 & = & \sum_{(i, j) }^{{n\choose 2}}
  w^{\be}_{(i,j)} (v_i \circ  v_j)^T \diag(A_0 \circ 
  A_0)(v_i\circ v_j) 
  \een 
  where the weight $w^{\be}_{(i,j)} \ge 0$ for all $i \not=j$ is to be
  defined in~\eqref{eq::defineWbe}.
\item
 In forming polynomial $W_3^{\diag}$, it is understood that the quadruple $(i, j, k, 
\ell)$~\eqref{eq::summands}, where $i \not= j$ and $k \not=\ell$ will 
collapse into a triple with three distinct indices, say, $i, j, k$. 
Indeed, suppose we first fix a pair of indices $(i, j)$, where $i
\not=j$ and for the second pair $(k, \ell)$, we pick a single new 
coordinate $k \not=i, j$, while, without loss of generality, fixing $\ell = i$;
We then add a set of elements in $W_3^{\diag}$ with 4 coefficients
denoted by $\Delta_{(i,j), (i,k)}$ for $i \not=j
\not=k$:
\ben
\label{eq::azerb}
&& \lefteqn{\quad \quad \Delta_{(i,j), (i,k)}  := 
b_{ii}  q_i^2 b_{jk} h_{j} h_{k} +  b_{jk} q_{j} q_{k} b_{ii}  h_i^2 +} \\
\nonumber 
&& \quad q_i h_{i} (b_{ik} q_k b_{ij} h_{j} + b_{ij} q_j b_{ik} h_{k}),\; \text{ and hence}\\
\nonumber
%\label{eq::defineW3}
&& W_3^{\diag}  :=  \sum_{i =1}^n \sum_{j \not= i}^n \sum_{k \not= i,j}^n \Delta_{(i,j),
  (i,k)} \cdot (v_i \circ  v_j)^T \diag(A_0 \circ A_0)(v_i \circ v_k).
\een
In Section~\ref{sec::polyexpo}, we explain the counting strategy for 
this case and will analyze $W_3^{\diamond}$ in Lemma~\ref{lemma::triangleweight}. 
\item
In $W_4^{\diag}$, we have all indices $i, j, k, \ell$ being distinct, namely,
\bens
\label{eq::defineW4}
W_4^{\diag} :=
\sum_{i\not= k}  b_{ki}  q_i q_k 
\sum_{j\not= i \not= \ell \not=k}
b_{j\ell} h_{j} h_{\ell}
(v^i \circ  v^j)^T \diag(A_0 \circ A_0)(v^k \circ v^\ell);
\eens
\eit
Thus we have $\fnorm{A^{\diamond}_{q,h}(r)}^2  =
  W_2^{\diamond} +   W_3^\diag+W_4^{\diag} + W_4^{\diamond}$, where
\bens
\label{eq::W4dm}
W_4^{\diamond} = 
\sum_{i=1}^n \sum_{k=1}^n  b_{ki}  q_i q_k \sum_{i\not= j, k\not=\ell} 
b_{j\ell} h_{j} h_{\ell}(v^i \circ  v^j)^T \offd(A_0 \circ  A_0)(v^k
\circ v^\ell)
\eens
In summary, we have the following bounds for the expected values of each 
component in Lemma~\ref{lemma::mean}, which follows immediately from
Lemmas~\ref{lemma::W4dm},~\ref{lemma::W2diag},~\ref{lemma::triangleweight}, and~\ref{lemma::W4distinct}.
We prove Lemma~\ref{lemma::mean} in Section~\ref{sec::proofofmean}.
\begin{lemma}
  \label{lemma::mean}
  Denote by $b_{\infty} = \max_{j} b_{jj}$.  For all $q, h \in
  \Sp^{n-1}$, we have
\bens
 \abs{\E W_2^{\diamond} }& \le & 2 b_{\infty}^2 \sum_{i=1}^m  a_{ii}^2
 p_i^2, \quad
 \abs{\E W_4^{\diamond}} \le  \twonorm{B_0}^2 \sum_{i\not=j}  a_{ij}^2 p_i^2 p_j^2,\\
\abs{\E W_3^{\diag}} & \le & (2 b_{\infty}^2 + 2 b_{\infty} \twonorm{B_0}+ 2 \twonorm{B_0}^2)\sum_{i=1}^m  a_{ii}^2 p_i^3, \\
\abs{\E W_4^{\diag}} & \le & (2 b_{\infty}^2 +2 b_{\infty} \twonorm{B_0}+
6\twonorm{B_0}^2)\sum_{i=1}^m  a_{ii}^2 p_i^4.
\eens
\end{lemma}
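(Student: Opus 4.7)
The plan is to prove Lemma~\ref{lemma::mean} by analyzing each of the four polynomials $W_2^{\diamond}$, $W_3^{\diag}$, $W_4^{\diag}$, $W_4^{\diamond}$ separately. In each case, I would first integrate out the Bernoulli variables in the mask $U$, since these are independent of the deterministic coefficient tensors built from $B_0$ and $A_0$, and then bound the resulting deterministic sums using the Cauchy–Schwarz inequality together with the spectral properties of $B_0 \succ 0$ and the entrywise bound $b_{ij}^2 \le b_{ii}b_{jj} \le b_{\infty}^2$.

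For $W_2^{\diamond}$ the Bernoulli factor equals $\sum_{k} a_{kk}^2 v^i_k v^j_k$ so that its expectation is $\sum_{k} a_{kk}^2 p_k^2$; what remains is a deterministic bilinear form $\sum_{i \ne j} w^{\be}_{(i,j)}$ in $q$ and $h$ that arises from the collapse $(k,\ell) = (i,j)$, and I would bound it by $2 b_{\infty}^2 \twonorm{q}^2 \twonorm{h}^2 = 2 b_{\infty}^2$ using the factorization $b_{ii}b_{jj}q_i^2 h_j^2 + b_{ij}^2 q_i h_i q_j h_j \le b_{\infty}^2 (q_i^2 h_j^2 + |q_i h_i q_j h_j|)$. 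For $W_4^{\diamond}$, since the four indices $i,j,k,\ell$ are all distinct, the expectation factors as $\sum_{s \ne t} a_{st}^2 p_s^2 p_t^2$; the remaining deterministic sum becomes $\bigl(\sum_{i,k} b_{ik} q_i q_k\bigr)\bigl(\sum_{j,\ell} b_{j\ell} h_j h_\ell\bigr) - (\text{corrections})$, and I would bound the main term by $(q^T B_0 q)(h^T B_0 h) \le \twonorm{B_0}^2$, absorbing the lower-order correction terms from the index-coincidence constraints $j \ne i$, $\ell \ne k$ into the same $\twonorm{B_0}^2$ factor.

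For the two mixed polynomials $W_3^{\diag}$ and $W_4^{\diag}$ the strategy is the same but the bookkeeping is heavier. After taking expectation, the $(v^i \circ v^j)^T \diag(A_0 \circ A_0)(v^i \circ v^k)$ factor contributes $\sum_s a_{ss}^2 p_s^3$ (because $s$ coordinate is shared among three $v$'s), and the analogous quadruple-distinct factor in $W_4^{\diag}$ contributes $\sum_s a_{ss}^2 p_s^4$. The coefficient sums in front then split into four terms (matching the four summands in $\Delta_{(i,j),(i,k)}$ and its $W_4^{\diag}$ analog), each of which I bound by one of the three model forms $\sum_{i,j}b_{ij}^2 q_i^2 h_j^2 \le b_{\infty} \twonorm{B_0}$, $\bigl|\sum_{i,j,k}b_{ij}b_{ik}q_j q_k h_i^2\bigr| \le \twonorm{B_0}^2$, or $\bigl|\sum_{i,j,k,\ell} b_{ij}b_{k\ell}q_i q_k h_j h_\ell\bigr| \le \twonorm{B_0}^2$ via $q^T B_0 h \le \twonorm{B_0}$ and $\ell_2/\ell_\infty$ Cauchy–Schwarz arguments. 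Summing up the four contributions yields the claimed constants $(2b_\infty^2 + 2 b_\infty \twonorm{B_0} + 2\twonorm{B_0}^2)$ and $(2b_\infty^2 + 2 b_\infty \twonorm{B_0} + 6\twonorm{B_0}^2)$ respectively.

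The hard part will be the $W_3^{\diag}$ and $W_4^{\diag}$ terms: not the algebra of any single bound, but tracking which index-coincidence patterns have been absorbed into which of the three target model forms, so that no contribution is double counted and the stated coefficients in front of $b_\infty^2$, $b_\infty \twonorm{B_0}$, and $\twonorm{B_0}^2$ are truly achieved. In practice I would defer these to the four supporting lemmas (Lemmas~\ref{lemma::W4dm},~\ref{lemma::W2diag},~\ref{lemma::triangleweight},~\ref{lemma::W4distinct}) each of which treats exactly one $W$ component, and then Lemma~\ref{lemma::mean} follows by assembling their conclusions; the proof of Lemma~\ref{lemma::mean} itself is then a one-paragraph citation of these four auxiliary results.
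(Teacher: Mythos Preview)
Your proposal is essentially correct and matches the paper's approach: factor out the Bernoulli expectation, then bound the remaining deterministic $B_0$-coefficient sum via Cauchy--Schwarz and spectral bounds, with each $W$-component handled by its own auxiliary lemma (Lemmas~\ref{lemma::W4dm},~\ref{lemma::W2diag},~\ref{lemma::triangleweight},~\ref{lemma::W4distinct}).

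Two small corrections. First, in $W_4^{\diamond}$ the four $B_0$-indices $i,j,k,\ell$ are \emph{not} required to be distinct (that restriction defines $W_4^{\diag}$); what makes the expectation factor as $\sum_{s\ne t} a_{st}^2 p_s^2 p_t^2$ is rather that $s\ne t$ in $\offd(A_0\circ A_0)$, so the column blocks $\{u^s_i,u^s_j\}$ and $\{u^t_k,u^t_\ell\}$ of $U$ are independent regardless of collisions among $i,j,k,\ell$. Second, your plan to ``absorb the lower-order correction terms \ldots\ into the same $\twonorm{B_0}^2$ factor'' does not quite work: the paper's Lemma~\ref{lemma::W4dm} carries out the inclusion--exclusion for the constraints $j\ne i$, $\ell\ne k$ and each of the three correction terms contributes its own $\twonorm{B_0}^2$ (or $b_\infty\twonorm{B_0}$), yielding $4\twonorm{B_0}^2$ rather than $\twonorm{B_0}^2$---the constant stated in Lemma~\ref{lemma::mean} appears to be a typo relative to the supporting lemma.
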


\subsection{Proof of Theorem~\ref{thm::uninorm-intro}}
\label{sec::proofofuninorm}
We first state the large deviation bounds for $W_2^{\diamond}$,
$W_3^{\diag}$ and $W_4^{\diag}$, which are bounded in a similar manner.
As a brief summary of the number of unique events (or unique
polynomial of order 2, 3, 4) in the summation
$\sum_{i, k} \sum_{i \not=j, k \not= \ell} w_{i\not= j, k \not= \ell}
\sum_{t=1}^m a^2_{tt} u^t_i u^t_j
u^t_{k}  u^t_{\ell},$ where the labels are allowed to repeat, we end up with the following
categories:
\bens
\# W_2^{\diamond} = \half n(n-1) && \text{ with coefficients}\; \;
w^{\be}_{(i,j)} = b_{ii} q_i^2 b_{jj} h_{j}^2 \\
&&+b_{jj} q_j^2 b_{ii} h_{i}^2 + 2 b_{ij}^2 q_i q_j h_{i}  h_{j} \\ 
\# W_3(\diag) = 
\frac{n(n-1)(n-2)}{3!} &&
\; \text{with coefficients} \;
b_{ii} q_i^2 b_{jk}h_{j} h_{k} +
b_{jk} q_{j} q_{k} b_{ii} h_i^2 \\
&& + q_i h_i (b_{ik} q_k b_{ij} h_{j} + b_{ij} b_{ik} h_k q_{j}) \\
&& \text{ and their $3!$ permutations} \\
\# W_4(\diag) = \frac{(n)(n-1)(n-2)(n-3)}{4!} \;
&&
\text{with 24 coefficients which} \\
&&\text{we do not enumerate here}
\eens
For a unique pair $(i, j), i \not=j$, the coefficient 
corresponding to the unique quadratic term
$\big((v^i \circ v^j)^T \diag(A_0 \circ A_0) (v^i \circ
  v^j)\big)$ is
\ben
\label{eq::defineWbe}
0 \le w^{\be}_{(i, j)} & =& b_{ii} b_{jj} q_i^2  h_{j}^2 +b_{ii} b_{jj} q_j^2 h_{i}^2 +
2 b_{ij}^2 q_i q_j h_i h_j\\
\nonumber
& \le & 2 b_{ii} b_{jj} q_i^2  h_{j}^2 + 2b_{ii} b_{jj} q_j^2 h_{i}^2
\; \text{for } \; B_0 \succeq 0.
\een

Exploiting symmetry, we rewrite
\ben
\label{eq::defineW2diag}
W_2^{\diamond} := 
\sum_{(i\not= j)}^{{n \choose 2}}  w^{\be}_{(i,j)} \sum_{s=1}^m a_{ss}^2 u^s_i u^s_j 
 \;\; \text{ where}\;\; 0 \le \sum_{(i\not= j)}^{{n \choose 2}}  w^{\be}_{(i,j)} \le 2 b_{\infty}^2.
 \een
Thus our starting point is to obtain the large deviation bound for each
such linear term $\sum_{s=1}^m a_{ss}^2 u^s_i u^s_j$,
and then put together a large deviation bound for
$W_2^{\diamond}$ from its mean using  the union bound, as 
well as the upper bound on the total weight $\sum_{i
  \not=j}w^{\be}_{(i,j)} \le 2 b_{\infty}^2$ as in
\eqref{eq::defineW2diag}; cf. Lemma~\ref{lemma::W2diag} 
\begin{lemma}{\textnormal{($W_2^{\diamond}$ bound)}}
\label{lemma::W2devi}
Let $W_2^{\diamond}$ be as defined in~\eqref{eq::defineW2diag}.
Denote by 
\ben 
\label{eq::defS2star}
\quad \abs{ S_2^{\star}} := \max_{i \not=j} \abs{S_2^{\star}(i, j)}
\; \text{ where }  \; S_2^{\star}(i, j) := \sum_{s=1}^m a_{ss}^2 (u^s_i u^s_j  - p_s^2).
\een
Suppose that for some absolute constant $C_a$, $\sum_{s=1}^m a_{ss}^2 p_s^2 \ge 16 C_a^2 a^2_{\infty} \log (n \vee m)$
and  $\tau_2 =  C_a a_{\infty} \big(\log (n \vee m) \sum_{s=1}^m a_{ss}^2 p_s^2\big)^{1/2}$.
Then $\prob{ \abs{S_2^{\star}} > \tau_2} := \prob{\event_2} \le  \inv{(n \vee m)^4}$.
On event $\event_2^c$, we have $\forall q, h \in \Sp^{n-1}$,
\bens
\abs{W_2^{\diamond} - \E W_2^{\diamond}} \le \sum_{(i\not= j)}  w^{\be}_{(i,j)} \abs{S_2^{\star}(i,j)} \le 2
b_{\infty}^2 \tau_2 \le  \half b_{\infty}^2 \sum_{s=1}^m a_{ss}^2 p_s^2
\eens
\end{lemma}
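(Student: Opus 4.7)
The plan is to reduce the lemma to a standard Bernstein-type concentration argument for independent centered random variables, followed by a union bound over pairs, and then a trivial triangle inequality. The key observation is that for each fixed $i \ne j$, the variables $\eta_s := u_i^s u_j^s$, $s = 1, \ldots, m$, are mutually independent (since the columns of the mask $U$ are independent), and each is Bernoulli with parameter $p_s^2$. Hence $S_2^\star(i,j) = \sum_{s=1}^m a_{ss}^2(\eta_s - p_s^2)$ is a centered sum of independent bounded random variables with per-term bound $a_\infty^2$ and total variance at most $a_\infty^2 \sum_s a_{ss}^2 p_s^2$.

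For the pointwise bound, I would apply Theorem~\ref{thm::Bernmgf} to the diagonal matrix $A = \diag(a_{11}^2, \ldots, a_{mm}^2)$ with Bernoulli success probabilities $p_s^2$. Since the off-diagonal contribution vanishes and $\eta_s^2 = \eta_s$, the MGF bound simplifies to
\[
\E \exp\bigl(\lambda S_2^\star(i,j)\bigr) \le \exp\Bigl(4 \lambda^2 a_\infty^2 \textstyle\sum_{s=1}^m a_{ss}^2 p_s^2\Bigr) \quad \text{for } |\lambda| \le 1/(32 a_\infty^2),
\]
with $D_{\max} = a_\infty^2$ and $\sigma_s^2 = p_s^2(1-p_s^2) \le p_s^2$. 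A standard Chernoff optimization then yields
\[
\prob{\abs{S_2^\star(i,j)} > t} \le 2 \exp\Bigl(-c \min\bigl(\tfrac{t^2}{a_\infty^2 \sum_s a_{ss}^2 p_s^2},\ \tfrac{t}{a_\infty^2}\bigr)\Bigr).
\]
Setting $t = \tau_2$ and invoking the hypothesis $\sum_s a_{ss}^2 p_s^2 \ge 16 C_a^2 a_\infty^2 \log(n \vee m)$ shows that $\tau_2 \le \tfrac{1}{4}\sum_s a_{ss}^2 p_s^2$, so the sub-Gaussian branch dominates and the right-hand side reduces to $2 \exp(-c' C_a^2 \log(n \vee m))$. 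Choosing $C_a$ large enough that $c' C_a^2 \ge 6$, a union bound over the $\binom{n}{2}$ pairs $i \ne j$ yields $\prob{\abs{S_2^\star} > \tau_2} \le 1/(n \vee m)^4$.

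Conditioning on $\event_2^c$, the triangle inequality together with the nonnegativity of the weights $w^{\be}_{(i,j)}$ and the total-weight bound $\sum_{i\ne j} w^{\be}_{(i,j)} \le 2 b_\infty^2$ from~\eqref{eq::defineW2diag} gives, uniformly in $q, h \in \Sp^{n-1}$,
\[
\abs{W_2^{\diamond} - \E W_2^{\diamond}} \le \sum_{i \ne j} w^{\be}_{(i,j)} \abs{S_2^\star(i,j)} \le \tau_2 \sum_{i \ne j} w^{\be}_{(i,j)} \le 2 b_\infty^2 \tau_2.
\]
The final comparison $2 b_\infty^2 \tau_2 \le \tfrac12 b_\infty^2 \sum_s a_{ss}^2 p_s^2$ is equivalent, after squaring, to $16 C_a^2 a_\infty^2 \log(n \vee m) \le \sum_s a_{ss}^2 p_s^2$, i.e., the stated sample-size hypothesis. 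There is no serious technical obstacle here; the main care needed is bookkeeping of absolute constants so that the single threshold $16 C_a^2$ simultaneously activates the sub-Gaussian branch of Bernstein, absorbs the union-bound factor of $n^2$, and produces the clean constant $1/2$ in the final inequality.
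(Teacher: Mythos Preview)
Your proof is correct and follows essentially the same route as the paper: recognize that for fixed $i\neq j$ the products $u^s_i u^s_j$ are independent Bernoulli$(p_s^2)$, apply a Bernstein-type MGF bound to the centered linear sum, union-bound over $\binom{n}{2}$ pairs, and then use the triangle inequality together with the total-weight bound $\sum w^{\be}_{(i,j)}\le 2b_\infty^2$. The only cosmetic difference is that the paper invokes Corollary~\ref{coro::expmgfW3} (derived from Lemma~\ref{lemma::bern-sumlocal}) for the pointwise tail, whereas you invoke Theorem~\ref{thm::Bernmgf} with a diagonal matrix; both yield the same $\exp\bigl(-c\min(\tau_2^2/M_2,\ \tau_2/a_\infty^2)\bigr)$ shape, so there is no substantive divergence.
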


It follows from the proof of Lemma~\ref{lemma::W2devi} that for
some $0< \ve < 1/2$,
$$(1-\ve) \E W_2^{\diamond}
\le W_2^{\diamond} \le (1+\ve)\E W_2^{\diamond},$$
so long as $\sum_{s=1}^m a_{ss}^2 p_s^2 =\Omega( a^2_{\infty} \log (n
\vee m))$;
This is not surprising given that $W_2^{\diamond}$ is a positive polynomial
of degree $2$, which is known to have strong concentration.
Unfortunately, although the dominating term 
$W_2^{\diamond}$ has non-negative coefficients $w^{\be}_{(i,j)}$ with respect to each unique
$(i, j)$ pair and their corresponding linear term $\sum_{s=1}^m a_{ss} u^s_i u^s_j$,
the same property does not hold for others.
We exploit crucially an upper bound on the sum of absolute values of 
coefficients (including many possibly non-positive) to derive the
corresponding large deviation bounds for $W_3(\diag)$, $W_4(\diag)$ and $W_4^\diamond$.

We next prove large deviation bounds and obtain an upper bound on the
following polynomial functions:
$\abs{W_3^\diag - \E W_3^\diag}$, $\abs{W_4^\diag - \E W_4^\diag}$, and $\abs{W_4^{\diamond}
- \E W_4^\diamond}$ in Lemmas~\ref{lemma::S3devi} to~\ref{lemma::S5devi} respectively.
In combination with the absolute value bounds on their
expected values in Lemma~\ref{lemma::mean}, we obtain an upper bound for each of the following terms: $W_2^{\diamond}, \abs{W_3(\diag)}, \abs{W_4(\diag)}$, 
and $\abs{W_4^{\diamond}}$ using the triangle inequality, which
collectively leads to a large deviation bound on the Frobenius norm as stated in
Theorem~\ref{thm::uninorm-intro}.

Throughout the rest of this section, it is understood that  for $\abs{B_0} =
(\abs{b_{ij}})$ and when $q, h \in \Sp^{n-1}$ are $s_0$-sparse, 
we replace $\twonorm{\abs{B_0}} = \twonorm{(\abs{b_{ij}})}$ with its
maximum $s_0$-sparse eigenvalue $\rho_{\max}(s_0, (\abs{b_{ij}})) \le \sqrt{s_0} \twonorm{B_0}$,
as in Definition~\ref{def::C0} and bounded in
Lemma~\ref{lemma::converse}. Moreover, we choose constants large
enough so that all probability statements hold. 
Recall that to extract the cubic polynomial $W_3^{\diag}$ from
 \eqref{eq::summands},
 we first allow $v^i$ to appear on both sides of the quadratic form 
 $(v^i \circ  v^j)^T \diag(A_0 \circ A_0)(v^i \circ v^k)$; we then
 add an element in $W_3^{\diag}$ with weight
 \bens
 \Delta_{(i,j), (i,k)} \; 
\text{ for } \; (v_i \circ  v_j)^T \diag(A_0 \circ A_0)(v_i \circ
v_{k})  =\sum_{s=1}^m a_{ss}^2 u^s_i u^s_j u^s_k
\eens
where it is understood that index pairs $(i, j)$ and $(i, k)$ on both sides of
$\diag(A_0 \circ A_0)$ remain unordered, resulting in four
coefficients in $\Delta_{(i,j), (i,k)}$ (cf. \eqref{eq::azerb}).
We crucially exploit an upper bound on the sum over absolute 
values of coefficients corresponding to each polynomial function 
$S_3^{\star}(i, j, k)$ as stated in Lemma~\ref{lemma::triangleweight}
to derive their corresponding large deviation bounds.
\begin{lemma}
\label{lemma::S3devi}
Denote by
$$\tau_3 = C_2 a_{\infty} \big(a_{\infty} \log (n \vee m) \big) \vee \big(\log  (n \vee m) \sum_{j=1}^m a_{jj}^2  p_j^3\big)^{1/2}.$$
Let $\abs{S_3^{\star}} := \max_{i \not=j\not=k} \abs{S_3^{\star}(i, j,
  k)}\; \text{ where } \;
\abs{S_3^{\star}(i, j, k)} := \abs{\sum_{s=1}^m a_{ss}^2 (u^s_i u^s_j u^s_{k} - p_s^3)}$.
Then $$\prob{ \abs{S_3^{\star}} > \tau_3} =: \prob{\event_3} \le    \frac{1}{3 (n \vee m)^4}.$$
Under event $\event_3^c$, we have
\bens
\lefteqn{\forall q, h \in \Sp^{n-1}, \quad
  \abs{W_3^{\diag} -\E W_3^{\diag}} \le  2 (\twonorm{\abs{B_0}}
  b_{\infty} + \twonorm{B_0}^{2}) \tau_3}  \\
& \le & \twonorm{B_0}^2 \sum_{s=1}^m a_{ss}^2   p_s^3+ C_3
a_{\infty}^2 \log (n \vee m) (\twonorm{\abs{B_0}}^2 + \twonorm{B_0}^2)
\eens
where $C_3$ is an absolute constant; When we consider $s_0$-sparse vectors $q, h \in
\Sp^{n-1}$, we replace $\twonorm{\abs{B_0}} = \twonorm{(\abs{b_{ij}})}$ with 
$\rho_{\max}(s_0, (\abs{b_{ij}}))$ as in Definition~\ref{def::C0}.
\end{lemma}

\begin{lemma}
\label{lemma::S4devi}
Denote by
\bens
\abs{S_4^{\star}} = \max_{(i \not= j \not= k \not= \ell)} \abs{S_4^{\star}(i,j,k,\ell)} \; 
\text{ where } \; 
S_4^{\star}(i,j,k,\ell) := \sum_{s=1}^m a_{ss}^2 (u^s_i u^s_j u^s_k u^s_{\ell}  - p_s^4)
\eens
Then for $\tau_4  = C_4 a_{\infty}( a_{\infty} \log (n \vee m) \vee \big(\log (n \vee m) \sum_{s=1}^m  a_{ss}^2 p_s^4\big)^{1/2})$, we have
$$\prob{\abs{S_4^{\star}} > \tau_4} =: \prob{\event_4}  \le \inv{12 (n \vee m)^4}.$$
Under event $\event_4^c$, we have for all $q, h \in \Sp^{n-1}$,
\ben 
\label{eq::largeW4}
\abs{W_4^{\diag} - \E W_4^{\diag}}
& \le &
\twonorm{\offd(\abs{B_0})}^2 \abs{S_4^{\star}} \le \twonorm{(\abs{b_{ij}})}^2 
\tau_4. 
\een
When we consider $s_0$-sparse vectors $q, h \in
\Sp^{n-1}$, we replace $\twonorm{(\abs{b_{ij}})}$ with
$\rho_{\max}(s_0, (\abs{b_{ij}}))$ as in Definition~\ref{def::C0}. 
\end{lemma}
%We next bound the sum
\begin{lemma}
  \label{lemma::S5devi}
Let $\ol{S}_{\diamond}(i,j,k,\ell)  := 
\sum_{s \not=t} a_{st}^2 
(u^s_i u^s_j u^t_k u^t_{\ell} -p_s^2 p_t^2) \; \; \forall i\not=j,  k
\not=\ell.$
Denote by
$$\tau_5 = C_5 \twonorm{A_0} \big( \twonorm{A_0} \log (n \vee 
m) \vee \big(\log (n \vee m) \sum_{s\not=t}^m     a_{st}^2 p_s^2
p_t^2\big)^{1/2} \big).$$
On event $\event_5^c$, we have $\abs{S_5^{\star}} :=  \max_{(i\not= j,  k\not=\ell)}
\abs{\ol{S}_{\diamond}(i,j,k,\ell)} \le \tau_5$, and hence
\ben
\label{eq::W4db}
\forall q, h \in \Sp^{n-1}, &&
\abs{W_4^{\diamond} - \E W_4^{\diamond}} \le   \twonorm{\abs{B_0}}^2
\tau_5,\\
\nonumber
\text{and }\;\; W_4^{\diamond} - \E W_4^{\diamond} 
& = & 
\sum_{i=1}^n \sum_{k =1}^n (b_{ik} q_i q_k) 
\big(\sum_{j \not=i}^n \sum_{\ell \not=k}^n b_{j\ell} h_{j}h_{\ell}
\big) \ol{S}_{\diamond}(i,j,k,\ell)  \\
\label{eq::defineSdm}
& =: &
\sum_{i \not =j,  k \not= \ell} w_{\square}(i, j, k, \ell)
(\ol{S}_{\diamond}(i, j, k, \ell))
\een
 \text{ where } $\abs{B_0} = (\abs{b_{ij}})$;
Then $\prob{\abs{S_5^{\star}} > \tau_5} =: \prob{\event_5} \le  \inv{2 (n \vee m)^4}$.
When we consider $s_0$-sparse vectors $q, h \in
\Sp^{n-1}$, we replace $\twonorm{\abs{B_0}} = \twonorm{(\abs{b_{ij}})}$ with 
$\rho_{\max}(s_0, (\abs{b_{ij}}))$ as in Definition in \ref{def::C0}.
in \eqref{eq::W4db}.
\end{lemma}
Lemma~\ref{lemma::S3devi},~\ref{lemma::S4devi} and~\ref{lemma::S5devi}
are proved in Sections~\ref{sec::W2dmproof},~\ref{sec::W3devi}, \ref{sec::W4diagdevi}, and~\ref{sec::W4offddevi} respectively.

\subsection{Proof of Theorem~\ref{thm::uninorm-intro}}
\begin{proofof2}
  Throughout this proof, it is understood that when we consider $s_0$-sparse vectors $q, h \in
  \Sp^{n-1}$,  we replace $\twonorm{\abs{B_0}} =
  \twonorm{(\abs{b_{ij}})}$ with $\rho_{\max}(s_0, (\abs{b_{ij}})) \le
  \sqrt{s_0} \twonorm{B_0}$ as  in Definition~\ref{def::C0} and shown in 
Lemma~\ref{lemma::converse}.
First, we have by Lemma~\ref{lemma::mean}, for all $q, h \in \Sp^{n-1}$, 
\bens
\E \fnorm{A^{\diamond}_{q,h}}^2 & = & \E W_2^{\diamond} + \E W_3(\diag)+  \E 
W_4^{\diag} + \E W_4^{\diamond} \\
& \le &
\E W_2^{\diamond} + \abs{\E W_3(\diag)} +  \abs{\E W_4(\diag)} + \abs{\E
  W_4^{\diamond}} \\
& \le &
2 b_{\infty}^2\sum_{j=1}^m  a_{jj}^2 (p_j^2 +p_j^3 +p_j^4 ) + 
4 \twonorm{B_0}^2  (\sum_{j=1}^m  a_{jj}^2 p_j^3 + 2\sum_{j=1}^m
a_{jj}^2 p_j^4 + \sum_{i\not=j}  a_{ij}^2 p_i^2 p_j^2 ) \\
& \le &
 2 b_{\infty}^2 \sum_{i=1}^m  a_{ii}^2 p_i^2 + 
6\twonorm{B_0}^2 \big(\sum_{j=1}^m  a_{ij}^2 p_j^3 + \sum_{i=1}^m 
  a_{ii}^2 p_i^4\big)   + 4 \twonorm{B_0}^2 N  \\
\text{where} \; \; 
N & = &  \sum_{j=1}^n a_{jj}^2 p_j^4  + \sum_{i \not=j}^m a_{ij}^2 
p_i^2 p_j^2 \le \lambda_{\max}(A_0 \circ A_0)  \sum_{s=1}^m p_s^4 
 \le a_{\infty} \twonorm{A_0} \sum_{s=1}^m p_s^4 
 \eens
Let events $\event_2, \event_3, \event_4, \event_5$ be as defined in  Lemmas~\ref{lemma::W2devi},~\ref{lemma::S3devi},  
~\ref{lemma::S4devi}, and~\ref{lemma::S5devi} respectively.
We have on event $\event^c_2$, by Lemma~\ref{lemma::W2devi} 
for all $q, h \in \Sp^{n-1}$, $\abs{W_2^{\diamond} -\E W_2^{\diamond}}
\le b_{\infty}^2 \sum_{s=1}^m a_{ss}^2 p_s^2$. Denote by 
\bens 
S_2 & = &
\sum_{s=1}^m  a^2_{ss} p_s^2 \text{ and } \quad S_4 =\sum_{s=1}^m  a_{ss}^2 p_s^4 \quad 
\text{and} \quad 
S_3  :=  \sum_{s=1}^m  a_{ss}^2 p_s^3 \le \sqrt{S_2 S_4} 
\eens
On event $\event_3^c$, we have by 
Lemma~\ref{lemma::triangleweight} and Lemma~\ref{lemma::S3devi}, for
all $q, h \in \Sp^{n-1}$,
\bens
\abs{W_3^{\diag} } & \le &
\abs{\E W_3^{\diag}} + \abs{ W_3^{\diag} -\E W_3^{\diag}}\\
& \le &
8 \twonorm{B_0}^2 S_3
% \sum_{s=1}^m a_{ss}^2 p_s^3
+ C_3 (\twonorm{\abs{B_0}}^2 + \twonorm{B_0}^2) a_{\infty}^2  \log (n \vee m)  
\eens
where $S_3 \le (S_2 S_4)^{1/2} \le \half(S_2  + S_4)$ and $C_3 a_{\infty}^2 \log (n \vee m) \le c' S_2$ by assumption on the
lower bound on $S_2$.
Finally, on event $\event_4^c \cap \event_5^c$, by
Lemmas~\ref{lemma::S4devi} and~\ref{lemma::S5devi}, we have for some absolute constants $C_1, C_1'$,
\bens 
\lefteqn{\abs{W_4^{\diag} - \E W_4^{\diag}} + \abs{W_4^{\diamond} - \E W_4^{\diamond}} }\\
& \le &  
C_1 \log (n \vee m) \twonorm{A_0}^2 \twonorm{\abs{B_0} }^2+ C'_1 \twonorm{\abs{B_0}}^2 \twonorm{A_0}
  \log^{1/2}(n \vee m) \sqrt{N}
\eens
Hence, on event $\event_2^c \cap \event_3^c \cap \event_4^c \cap
\event_5^c$, by Lemmas~\ref{lemma::mean},~\ref{lemma::W2devi},~\ref{lemma::S4devi},~\ref{lemma::S3devi} and~\ref{lemma::S5devi}, we have for all $q, h \in \Sp^{n-1}$,
\bens
\lefteqn{\fnorm{A^{\diamond}_{q,h}}^2 \le \abs{W_2^{\diamond} } +\abs{W_3^{\diag}} + \abs{W_4^{\diag}} +
  \abs{W_4^{\diamond}} \le  \E W_2^{\diamond} + \abs{W_2^{\diamond} -\E W_2^{\diamond}}} \\
& + &
\abs{W_3(\diag)} +  \abs{\E W_4(\diag)} + \abs{\E  W_4^{\diamond}} +
\abs{W_4^{\diag} -\E W_4^{\diag}} + \abs{W_4^{\diamond} -  \E  W_4^{\diamond}} \\
& \le &
C_6 \twonorm{B_0}^2 S_2 +
C_7 \twonorm{B_0}^2 a_{\infty} \twonorm{A_0} \sum_{s=1}^m p_s^4  + C_8 \twonorm{\abs{B_0}}^2  \twonorm{A_0}^2 \log (n \vee m) \\
& +&  C_9 \twonorm{\abs{B_0}}^2 \twonorm{A_0} \big(\log (n \vee m) a_{\infty} \twonorm{A_0}  \sum_{j=1}^m p_j^4\big)^{1/2}
\eens 
for some absolute constants $C_6, C_7, C_8, \ldots$.
The theorem statement thus holds on event  $\F_0^c := \event^c_2 \cap \event^c_3 \cap
\event_4^c \cap \event_5^c$,  which holds with probability  at least $1 -
\frac{4}{(n \vee m)^4}$ by the union bound. 
\end{proofof2}

\subsection{Proof of Lemma~\ref{lemma::mean}}
\label{sec::proofofmean}
\subsubsection{Case $W_4^{\diamond}$}
We prove Lemma~\ref{lemma::W4dm} in
Section~\ref{sec::proofofW4dm}.
\begin{lemma}
  \label{lemma::W4dm}
  Let $W_4^{\diamond}$ be as defined in~\eqref{eq::W4dm}.
  \ben 
\label{eq::W4dm}
W_4^{\diamond}
& = & 
\sum_{i=1}^n \sum_{k =1}^n (b_{ik} q_i q_k) 
\big(\sum_{j \not=i}^n \sum_{\ell \not=k}^n(b_{j\ell} h_{j} h_{\ell}) 
\big) \sum_{s\not=t}  a_{st}^2  u^s_i u^s_j u^t_k u^t_{\ell}
\een 
  Then
  $\forall q, h  \in \Sp^{n-1}$, 
  $\abs{\E W_4^{\diamond}} \le   4 \twonorm{B_0}^2 \sum_{i\not=j}
  a_{ij}^2 p_i^2 p_j^2$
  \;\text{ where }
  \ben
  \label{eq::offdsum}
\;\; \abs{\sum_{i, k}  b_{ki}  q_i q_k
  \sum_{i\not= j, k\not=\ell} b_{j\ell} h_{j} h_{\ell}} \le   4 \twonorm{B_0}^2
\een
\end{lemma}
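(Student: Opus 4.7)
\begin{proofof}{Lemma~\ref{lemma::W4dm}}
The plan is to first take expectations term-by-term using independence of the Bernoulli entries across columns of $U$, and then reduce the resulting constrained quadratic form over $q$ and $h$ to a small number of unconstrained quadratic forms which we can bound by $\twonorm{B_0}^2$.

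First I would observe that for $s \neq t$, the random variables $u^s_i u^s_j$ (depending only on column $s$) and $u^t_k u^t_\ell$ (depending only on column $t$) are independent, and hence $\E(u^s_i u^s_j u^t_k u^t_\ell) = p_s^2 p_t^2$ whenever $i \not= j$ and $k \not= \ell$. This factors the expectation cleanly:
\[
\E W_4^{\diamond} \;=\; \Bigl(\sum_{s\neq t} a_{st}^2 p_s^2 p_t^2\Bigr)\cdot T, \quad \text{where} \quad T \;:=\; \sum_{i,k} b_{ik}q_i q_k \sum_{j\neq i,\; \ell\neq k} b_{j\ell} h_j h_\ell.
\]
So it suffices to show $|T| \le 4 \twonorm{B_0}^2$, which gives both claims of the lemma (the displayed absolute-value bound \eqref{eq::offdsum} and the bound on $\abs{\E W_4^\diamond}$).

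Next I would remove the constraints $j\not=i$ and $\ell\not=k$ via inclusion--exclusion. Writing $(B_0 h)_i = \sum_\ell b_{i\ell} h_\ell$, one has
\[
\sum_{j\neq i,\;\ell\neq k} b_{j\ell} h_j h_\ell \;=\; h^T B_0 h - h_i (B_0 h)_i - h_k (B_0 h)_k + b_{ik} h_i h_k,
\]
so $T$ decomposes into four terms $T_1 - T_2 - T_3 + T_4$, namely
\[
T_1 = (q^T B_0 q)(h^T B_0 h), \quad T_4 = \sum_{i,k} b_{ik}^2 q_i q_k h_i h_k = (q\circ h)^T(B_0\circ B_0)(q\circ h),
\]
with $T_2, T_3$ the two cross terms. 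The key step is then to bound each $T_r$ by $\twonorm{B_0}^2$: for $T_1$ this is immediate from $\twonorm{q}=\twonorm{h}=1$; for $T_2 = \sum_{i,k} b_{ik}(q_i h_i)(q_k (B_0h)_k)$ one rewrites it as $\langle q\circ h,\, B_0 (q\circ B_0 h)\rangle$ and applies Cauchy--Schwarz together with the elementary bounds $\twonorm{q\circ h}\le \twonorm{q}_\infty \twonorm{h}\le 1$ and $\twonorm{q\circ B_0 h}\le \twonorm{B_0 h}\le \twonorm{B_0}$; $T_3$ is handled symmetrically. For $T_4$ I would invoke the Schur product inequality $B_0\circ B_0 \preceq b_\infty B_0$, giving $\twonorm{B_0\circ B_0}\le b_\infty \twonorm{B_0}\le \twonorm{B_0}^2$, and again use $\twonorm{q\circ h}\le 1$.

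Combining by the triangle inequality yields $|T|\le 4\twonorm{B_0}^2$, which is \eqref{eq::offdsum} and implies the expectation bound immediately. The main (mild) obstacle is the Hadamard-product term $T_4$: the remaining three pieces are direct consequences of Cauchy--Schwarz, but $T_4$ genuinely requires using positive semidefiniteness of $B_0$ via the Schur inequality. Everything else is bookkeeping.
\end{proofof}
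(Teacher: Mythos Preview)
Your approach is essentially the same as the paper's: take expectation, reduce to bounding the constrained double quadratic sum $T$, remove the constraints $j\neq i$ and $\ell\neq k$ by inclusion--exclusion, and bound each of the four resulting terms by $\twonorm{B_0}^2$. The treatment of $T_1$ and $T_4$ matches the paper exactly (the paper also uses $\twonorm{B_0\circ B_0}\le b_\infty\twonorm{B_0}$ for $T_4$).

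There is, however, an algebraic slip in your expression for the cross term. From your own inclusion--exclusion, the term corresponding to $-h_i(B_0h)_i$ is
\[
T_2 \;=\; \sum_{i,k} b_{ik}\,q_i q_k\, h_i(B_0h)_i \;=\; \sum_i (q_ih_i)\,(B_0q)_i\,(B_0h)_i \;=\; \langle q\circ h,\ (B_0q)\circ(B_0h)\rangle,
\]
not $\langle q\circ h,\ B_0(q\circ B_0h)\rangle$ as you wrote; the factor $h_i(B_0h)_i$ carries the same index $i$ on both pieces, so no extra application of $B_0$ appears. The bound is still $\twonorm{B_0}^2$: either by Cauchy--Schwarz as $\twonorm{q\circ h}\,\twonorm{(B_0q)\circ(B_0h)}\le \twonorm{B_0q}_\infty\twonorm{B_0h}\le \twonorm{B_0}^2$, or, as the paper does, by writing $(B_0q)_i(B_0h)_i = q^T(\vecb^{(i)}\otimes\vecb^{(i)})h$ and using $\max_i\twonorm{\vecb^{(i)}}^2\le\twonorm{B_0}^2$ together with $\langle|q|,|h|\rangle\le 1$. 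With this correction your argument goes through and coincides with the paper's.
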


\subsubsection{Case $W_2^{\diamond}$}
\label{sec::polyexpo}
We prove Lemma~\ref{lemma::W2diag} in Section~\ref{sec::proofofW2mean}.
\begin{lemma}
\label{lemma::W2diag}
Fix   $q, h \in \Sp^{n-1}$. Let $ w^{\be}_{(i, j)}$ be as defined in~\eqref{eq::defineWbe}.
Then for all $q, h \in \Sp^{n-1}$ and $W_2^{\diamond}$ as defined in \eqref{eq::defineW2diag},
$\E W_2^{\diamond}  \le 2 b_{\infty}^2 \sum_{s=1} a_{ss}^2 p_s^2.$
\end{lemma}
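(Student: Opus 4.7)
The plan is to separate the expectation into a purely combinatorial factor (the total weight) times the mask-driven factor $\sum_{s} a_{ss}^2 p_s^2$, and then bound the total weight by a direct application of the PSD property of $B_0$ together with $\twonorm{q}=\twonorm{h}=1$.

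First I would use linearity of expectation: since for any $i\neq j$ the entries $u^s_i = v^i_s$ and $u^s_j = v^j_s$ come from two different (hence independent) rows of the mask $U$ in Definition~\ref{def::RMS}, we have $\E[u^s_i u^s_j] = p_s^2$. Substituting this into~\eqref{eq::defineW2diag} factors the expectation as
\begin{equation*}
\E W_2^{\diamond} \;=\; \Big(\sum_{(i\neq j)}^{n\choose 2} w^{\be}_{(i,j)}\Big)\cdot \sum_{s=1}^{m} a_{ss}^2 p_s^2,
\end{equation*}
so the task reduces to showing $\sum_{(i\neq j)} w^{\be}_{(i,j)} \le 2 b_{\infty}^2$, which is the upper bound already advertised in the line just above~\eqref{eq::defineW2diag}.

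For that estimate I would invoke the positive-semidefiniteness of $B_0$, which forces $b_{ij}^2 \le b_{ii}b_{jj}$, combined with the AM-GM bound $2|q_i q_j h_i h_j| \le q_i^2 h_j^2 + q_j^2 h_i^2$. Together these give the pointwise estimate
\begin{equation*}
w^{\be}_{(i,j)} \;\le\; 2\, b_{ii} b_{jj}\bigl(q_i^2 h_j^2 + q_j^2 h_i^2\bigr),
\end{equation*}
as already noted in~\eqref{eq::defineWbe}. Summing this symmetric expression over all unordered pairs $\{i,j\}$ with $i\neq j$ is the same as summing $2 b_{ii}b_{jj} q_i^2 h_j^2$ over all \emph{ordered} pairs $(i,j)$ with $i\neq j$, so using $b_{ii}b_{jj}\le b_{\infty}^2$ I would conclude
\begin{equation*}
\sum_{(i\neq j)}^{n\choose 2} w^{\be}_{(i,j)} \;\le\; 2 b_{\infty}^2 \sum_{i\neq j} q_i^2 h_j^2 \;\le\; 2 b_{\infty}^2 \twonorm{q}^2 \twonorm{h}^2 \;=\; 2 b_{\infty}^2,
\end{equation*}
which, multiplied by $\sum_s a_{ss}^2 p_s^2$, delivers exactly the claimed bound.

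There is really no hard step here: once the independence across rows of $U$ is used to separate the two sums, everything reduces to the elementary observation that the $w^{\be}_{(i,j)}$-weights form a symmetric quadratic form dominated by $2 b_{\infty}^2 (q \otimes h)(q \otimes h)^T$ on $\R^{n^2}$, whose trace over off-diagonal indices is at most $2 b_{\infty}^2 \twonorm{q}^2\twonorm{h}^2$. The only bookkeeping subtlety is keeping track of the unordered-versus-ordered pair convention in the summation, which absorbs the factor of $2$ coming from AM-GM.
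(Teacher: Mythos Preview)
Your proposal is correct and follows essentially the same route as the paper: factor $\E W_2^{\diamond}$ into the total weight times $\sum_s a_{ss}^2 p_s^2$ via independence of the mask rows, then bound each $w^{\be}_{(i,j)}$ using $b_{ij}^2\le b_{ii}b_{jj}$ (PSD of $B_0$) together with AM--GM, and finally sum using $\twonorm{q}=\twonorm{h}=1$. Your handling of the unordered-versus-ordered pair bookkeeping is exactly the mechanism the paper uses as well.
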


\subsubsection{Counting strategy for unique triples}
We prove Lemma~\ref{lemma::triangleweight} in 
Section~\ref{sec::proofofW3dm}. 
Recall for $\Delta_{(i,j),(i,k)}$ as defined in~\eqref{eq::azerb}, 
\bens
W_3^{\diag} & := &
\sum_{i =1}^n \sum_{j \not= i}^n \sum_{k \not= i,j}^n \Delta_{(i,j),
  (i,k)} \cdot (v_i \circ  v_j)^T \diag(A_0 \circ A_0)(v_i \circ v_k).
\eens
\begin{lemma}
  \label{lemma::triangleweight}
For all $q, h\in \Sp^{n-1}$, 
\bens 
\abs{\E W_3^{\diag}} & \le & 
\big(2 b_{\infty}^2 + 2  \twonorm{B_0} b_{\infty} + 
2 \twonorm{B_0}^2\big) \sum_{s=1}^m a_{ss}^2 p_s^3; 
\eens
where  we have for $\Delta_{(i,j),(i,k)}$ as 
defined in~\eqref{eq::azerb}, 
\bens
\abs{\sum_{ i \not= j\not= k} \Delta_{(i,j),(i,k)}}
& \le & 2 \twonorm{B_0}^2 + 2  \twonorm{B_0} b_{\infty} + 2
b_{\infty}^2;
\eens
Moreover, for $\twonorm{\abs{B_0}} = \twonorm{(\abs{b_{ij}})}$ and for all $q, h\in \Sp^{n-1}$, 
\bens
\sum_{ i \not= j\not= k} \abs{\Delta_{(i,j), (i,k)}}
& \le & 2 \twonorm{(\abs{b_{ij}})} b_{\infty} + 2 \twonorm{B_0}^{2}; 
\eens
where  it is understood that when we consider $s_0$-sparse vectors $q, h \in
\Sp^{n-1}$, we replace $\twonorm{\abs{B_0}} = \twonorm{(\abs{b_{ij}})}$ with 
$\rho_{\max}(s_0, (\abs{b_{ij}}))$ as in Definition~\ref{def::C0} and bounded in Lemma~\ref{lemma::converse}.
\end{lemma}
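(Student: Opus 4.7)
The plan is to first observe that Lemma \ref{lemma::triangleweight} is really a deterministic combinatorial bound on the coefficient array $\Delta_{(i,j),(i,k)}$. Indeed, for all distinct $i,j,k$, the variables $u^s_i, u^s_j, u^s_k$ are independent Bernoullis, so
\begin{equation*}
\E\bigl[(v_i\circ v_j)^T\diag(A_0\circ A_0)(v_i\circ v_k)\bigr]=\sum_{s=1}^m a_{ss}^2 \E[u^s_i u^s_j u^s_k]=\sum_{s=1}^m a_{ss}^2 p_s^3,
\end{equation*}
so that $\E W_3^{\diag}=\bigl(\sum_s a_{ss}^2 p_s^3\bigr)\sum_{i\not=j\not=k}\Delta_{(i,j),(i,k)}$ and the first claimed bound reduces to the second. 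The third claim, on $\sum|\Delta_{(i,j),(i,k)}|$, is the counterpart where absolute values are pulled inside, and follows from the same decomposition with $B_0$ replaced by $|B_0|=(\abs{b_{ij}})$.

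To bound $\sum_{i\not=j\not=k}\Delta_{(i,j),(i,k)}$, I split $\Delta$ into the four natural summands and group them as
\begin{align*}
S_1 &:=\sum_{i\not=j\not=k} b_{ii}q_i^2\, b_{jk}h_j h_k, \qquad S_2 :=\sum_{i\not=j\not=k} b_{ii}h_i^2\, b_{jk}q_j q_k,\\
S_3 &:=\sum_{i\not=j\not=k} q_i h_i\,b_{ij}b_{ik}\bigl(q_k h_j+q_j h_k\bigr).
\end{align*}
For $S_1$, I rewrite the inner $(j,k)$-sum by inclusion-exclusion on the events $\{j=i\},\{k=i\},\{j=k\}$:
\begin{equation*}
\sum_{j\not=i,\,k\not=i,\,j\not=k} b_{jk}h_j h_k = h^T B_0 h - 2 h_i(B_0 h)_i - \sum_{j} b_{jj}h_j^2 + 2b_{ii}h_i^2,
\end{equation*}
and then sum against $b_{ii}q_i^2$ over $i$. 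Each of the resulting pieces is either a rank-one quadratic form bounded by $\twonorm{B_0}$, a diagonal-weighted term bounded by $b_\infty$, or a product of the two. Using $\sum_i b_{ii}q_i^2\le b_\infty$ together with $|h^T B_0 h|\le \twonorm{B_0}$ and $|\sum_j b_{jj}h_j^2|\le b_\infty$ gives $|S_1|\le b_\infty\twonorm{B_0}+O(b_\infty^2)$; $S_2$ is handled symmetrically by exchanging the roles of $q$ and $h$. For $S_3$, setting $Q=B_0 q$ and $H=B_0 h$ and using the same inclusion-exclusion for the constraint $j\not=k$,
\begin{equation*}
S_3 = 2\sum_i q_i h_i\bigl((Q_i-b_{ii}q_i)(H_i-b_{ii}h_i)\bigr)-2\sum_i q_i h_i\sum_{j\not=i} b_{ij}^2\, q_j h_j,
\end{equation*}
and I expand the first sum. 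The dominant piece $\sum_i q_i h_i Q_i H_i=\langle q\circ H,\,h\circ Q\rangle$ is bounded by $\twonorm{q\circ H}_2\twonorm{h\circ Q}_2\le \twonorm{H}_2\twonorm{Q}_2\le \twonorm{B_0}^2$ via Cauchy-Schwarz and $\twonorm{q}=\twonorm{h}=1$; the remaining pieces carry at least one extra factor of $b_{ii}$ and are absorbed into the $b_\infty\twonorm{B_0}$ and $b_\infty^2$ terms. Summing $|S_1|+|S_2|+|S_3|$ gives the stated $2b_\infty^2+2b_\infty\twonorm{B_0}+2\twonorm{B_0}^2$.

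For the third bound $\sum|\Delta_{(i,j),(i,k)}|\le 2\twonorm{|B_0|}b_\infty+2\twonorm{B_0}^2$, I run the same four-term decomposition after the triangle inequality, so that every $b_{ij}$ is replaced by $|b_{ij}|$. The $S_1,S_2$-type sums are then controlled by $\sum_{j,k}|b_{jk}||h_j||h_k|\le \twonorm{|B_0|}$ (since $|h|\in\Sp^{n-1}$), multiplied by $\sum_i b_{ii}q_i^2\le b_\infty$, yielding the $\twonorm{|B_0|}b_\infty$ term. The $S_3$-type sum, after taking absolute values, still admits the Cauchy-Schwarz estimate $\twonorm{B_0}^2$ applied to $|B_0|q,|B_0|h$, so $\twonorm{|B_0|}\cdot\twonorm{|B_0|}=\twonorm{B_0}^2$ when interpreting $\twonorm{B_0}^2$ in the bound as $\twonorm{|B_0|}^2$ (since $\twonorm{|B_0|}\ge \twonorm{B_0}$ by Lemma \ref{eq::avgrowsum}; the claim stated in terms of $\twonorm{B_0}^2$ follows from the final sparse-eigenvalue replacement). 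The main obstacle is bookkeeping: the inclusion-exclusion on the constraints $j\not=i$, $k\not=i$, $j\not=k$ produces several correction terms and I must show each correction is absorbed into the trio $b_\infty^2$, $b_\infty\twonorm{B_0}$, $\twonorm{B_0}^2$ with the stated leading constants $2$, rather than letting cross-terms blow up the constants. The $s_0$-sparse version is immediate: whenever an estimate invokes $\twonorm{|B_0|}$, I restrict to the support of $q,h$ and replace it by $\rho_{\max}(s_0,(\abs{b_{ij}}))$ per Definition \ref{def::C0}.
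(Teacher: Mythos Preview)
Your reduction of $\E W_3^{\diag}$ to the coefficient sum and the three-term split $S_1,S_2,S_3$ are exactly the paper's $I,II,III$, so the architecture is right. But two things go wrong in the execution.

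\textbf{The $\twonorm{B_0}^2$ in the absolute bound is not $\twonorm{|B_0|}^2$.} Your treatment of the $S_3$-type term after taking absolute values is incorrect. You propose to replace $B_0$ by $|B_0|$ throughout, which would give $\twonorm{|B_0|}^2$, and then try to argue this is the same as $\twonorm{B_0}^2$ via ``the final sparse-eigenvalue replacement''; but $\twonorm{|B_0|}\ge \twonorm{B_0}$, so this goes the wrong way. The paper's key observation is that for fixed $i$,
\[
\sum_{j\ne i}|b_{ij}|\,|h_j|\;\le\;\Bigl(\sum_{j\ne i} b_{ij}^2\Bigr)^{1/2}\twonorm{h}\;=\;\twonorm{\vecb^{(i)}_{\setminus i}}\;\le\;\twonorm{B_0},
\]
since the $\ell_2$-norm of a column of $B_0$ is at most $\twonorm{B_0}$ and is unchanged by taking entrywise absolute values. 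Thus
\[
2\sum_i |q_i||h_i|\sum_{j\ne i}\sum_{k\ne i}|b_{ij}||h_j||b_{ik}||q_k|\;\le\;2\max_i\twonorm{\vecb^{(i)}_{\setminus i}}^2\,\langle |q|,|h|\rangle\;\le\;2\twonorm{B_0}^2.
\]
This is why the third term genuinely carries $\twonorm{B_0}^2$, not $\twonorm{|B_0|}^2$. Your $Q=B_0q$, $H=B_0h$ approach does not see this, and the attempted fix is wrong.

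\textbf{The constants from full inclusion--exclusion do not close.} For $S_1$ you expand over $\{j=i\},\{k=i\},\{j=k\}$; summing $b_{ii}q_i^2$ against each of the four resulting pieces gives, at best, $b_\infty(\twonorm{B_0}+b_\infty)+2b_\infty\twonorm{B_0}+2b_\infty^2$, already $3b_\infty\twonorm{B_0}+3b_\infty^2$, and $S_3$ picks up further $b_\infty\twonorm{B_0}$ corrections from the cross terms $b_{ii}q_iH_i$, $b_{ii}h_iQ_i$, and $\sum_{j\ne i}b_{ij}^2q_jh_j$. The paper avoids this by taking the absolute value at the $i$-level and then, for fixed $i$, bounding the inner $(j,k)$-sum as a quadratic form on the principal submatrix $(B_0)_{\setminus i,\setminus i}$:
\[
\Bigl|\sum_{j,k\ne i,\,j\ne k} b_{jk}h_jh_k\Bigr|\;\le\;\Bigl|\sum_{j,k\ne i} b_{jk}h_jh_k\Bigr|+\Bigl|\sum_{j\ne i}b_{jj}h_j^2\Bigr|\;\le\;\twonorm{B_0}+b_\infty,
\]
which immediately gives $I\le b_\infty\twonorm{B_0}+b_\infty^2$. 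If you only need bounds up to absolute constants your route is fine, but it does not deliver the specific $2,2,2$ constants in the lemma statement.
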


\subsubsection{$W_4^{\diag}$: distinct $i, j, k, \ell$}
The proof of Lemma~\ref{lemma::W4distinct} follows the arguments in
Lemma 2.1 by~\cite{BVZ19}, which we defer to Section~\ref{sec::proofofW4distinct}.
\begin{lemma}
\label{lemma::W4distinct}
We have $\abs{\E W_4^{\diag}} \le  \big(2 b_{\infty}^2 +  2
b_{\infty}\twonorm{B_0}+ 6 \twonorm{B_0}^2\big) \sum_{i=1}^m  a_{ii}^2
p_i^4$, where for any $q, h  \in \Sp^{n-1}$,
\ben
\label{eq::W4diag}
&& \quad W_4^{\diag} = \sum_{i \not=k \not=j \not=\ell}^n 
b_{ki} q_k q_i b_{\ell j} h_{\ell}  h_{j} (v_i \circ v_j)^T \diag(A_0
\circ A_0)(v_k \circ v_\ell) \\
\nonumber
\text{ and} && \abs{ \sum_{i, j, k, \ell  \; \text{\distinct}} b_{ki}  q_i q_k b_{j\ell} h_{j}
  h_{\ell}}  \le 6 \twonorm{B_0}^2 + 2 b_{\infty}\twonorm{B_0} + 2 b_{\infty}^2
\een
\end{lemma}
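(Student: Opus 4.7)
\textbf{Proof plan for Lemma~\ref{lemma::W4distinct}.} The plan has two pieces: first compute $\E W_4^{\diag}$ exactly using independence, then bound the resulting deterministic coefficient sum by inclusion–exclusion on the ``full'' sum $(q^T B_0 q)(h^T B_0 h)$.

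First I will exploit independence. Since the quadruple $(i,j,k,\ell)$ appearing in $W_4^{\diag}$ consists of four distinct indices, the row masks $v^i,v^j,v^k,v^\ell$ are mutually independent, so for each coordinate $s \in [m]$ the Bernoullis $u^s_i,u^s_j,u^s_k,u^s_\ell$ are independent with $\E u^s_i = p_s$. Thus $\E[(v_i \circ v_j)^T \diag(A_0 \circ A_0)(v_k \circ v_\ell)] = \sum_{s=1}^m a_{ss}^2 p_s^4$, and this factor pulls out cleanly:
\begin{equation*}
\E W_4^{\diag} \; = \; \Big(\sum_{s=1}^m a_{ss}^2 p_s^4\Big) \, \sum_{i,j,k,\ell \text{ distinct}} b_{ki} q_i q_k\, b_{j\ell} h_j h_\ell.
\end{equation*}

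Next I would reduce the deterministic sum to the ``unrestricted'' sum plus correction terms. Write $\sum_{\text{distinct}} = \sum_{i,j,k,\ell} - \sum_{\text{some pair equal}}$. The full sum factors as $(q^T B_0 q)(h^T B_0 h) \le \twonorm{B_0}^2$ since $q,h \in \Sp^{n-1}$. By Bonferroni, the correction is bounded by the sum over the six pairs $\{i,k\},\{j,\ell\},\{i,j\},\{i,\ell\},\{k,j\},\{k,\ell\}$ of the absolute contribution when that pair is forced equal. The two ``outer'' identifications $i=k$ and $j=\ell$ factor as $(\sum_i b_{ii} q_i^2)(\sum_{j,\ell} b_{j\ell} h_j h_\ell)$ and symmetrically, each bounded by $b_{\infty}\twonorm{B_0}$. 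Each of the four ``crossing'' identifications (e.g.\ $i=j$) reduces to a sum of the form $\sum_i q_i h_i (B_0 q)_i (B_0 h)_i$; by Cauchy--Schwarz and $\|q\|_\infty,\|h\|_\infty \le 1$, this is bounded by $\|B_0 q\|_2\|B_0 h\|_2 \le \twonorm{B_0}^2$. Summing, $|\sum_{\text{distinct}}| \le \twonorm{B_0}^2 + 2b_\infty \twonorm{B_0} + 4\twonorm{B_0}^2$, which after adding the modest overcount from multi-index coincidences (e.g.\ absorbed into the $b_\infty^2$ term from cases where three indices collapse, bounded crudely by $b_\infty^2$) yields $6\twonorm{B_0}^2 + 2b_\infty\twonorm{B_0} + 2b_\infty^2$, matching~\eqref{eq::W4diag}. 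Multiplying by $\sum_s a_{ss}^2 p_s^4$ gives the claimed bound on $|\E W_4^{\diag}|$.

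The main obstacle is not conceptual but a careful bookkeeping of the inclusion–exclusion so that the constants come out as stated. One has to be careful with the ``triple collapse'' cases (e.g.\ $i=j=\ell$ with $k$ free), since naively applying Bonferroni over the six pairs overcounts them; these either get absorbed into the Cauchy--Schwarz bound $\twonorm{B_0}^2$ already used, or reduce to expressions like $(\sum_i b_{ii} q_i h_i (B_0 h)_i)$ bounded by $b_\infty \twonorm{B_0}$, keeping the total within the stated constants. No additional spectral quantity beyond $b_\infty$ and $\twonorm{B_0}$ is needed, which is what makes this the simplest of the four decomposition lemmas.
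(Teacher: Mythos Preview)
Your first step (pulling out $\sum_s a_{ss}^2 p_s^4$ by independence) is correct. The gap is in the second step, and it is more than bookkeeping.

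The Bonferroni inequalities bound $\sum_{\cup F_p}$ by $\sum_p \sum_{F_p}$ only when the summands are nonnegative; here the summands $b_{ki}q_iq_k\,b_{j\ell}h_jh_\ell$ are signed, so you cannot control the correction term that way. You must carry out the full inclusion--exclusion and put absolute values on each level separately. Starting from the fully unrestricted sum you then have six events, hence fifteen pairwise intersections at level two. Three of them (the disjoint-pair cases $\{i{=}j,k{=}\ell\}$, $\{i{=}k,j{=}\ell\}$, $\{i{=}\ell,j{=}k\}$) are bounded by $b_\infty\twonorm{B_0}$ or $b_\infty^2$, but the remaining twelve (the triple-collapse cases, e.g.\ $i{=}j{=}k$ with $\ell$ free, giving $\sum_i b_{ii}q_i^2 h_i(B_0h)_i$) are each of order $b_\infty\twonorm{B_0}$ as well. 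So level two alone contributes roughly $14\,b_\infty\twonorm{B_0}$, and levels three through six add further terms. Your claim that these all fit inside $\twonorm{B_0}^2 + 2b_\infty^2$ is therefore wrong; this route does give a bound of the right shape, but with constants strictly larger than those in the lemma.

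The paper avoids this blow-up by starting not from the unrestricted sum but from the sum with $j\neq i$ and $\ell\neq k$ already enforced, which was bounded by $4\twonorm{B_0}^2$ in Lemma~\ref{lemma::W4dm} (equation~\eqref{eq::offdsum}). From that base only four further equalities need to be excluded ($i{=}k$, $i{=}\ell$, $j{=}k$, $j{=}\ell$), and crucially the built-in constraints $i\neq j$, $k\neq\ell$ force four of the six pairwise intersections to be empty (for instance $i{=}k$ together with $j{=}k$ would give $i{=}j$). Only $\{i{=}k,\,j{=}\ell\}$ and $\{i{=}\ell,\,j{=}k\}$ survive, and together they are bounded by $2b_\infty^2$ via the $W_2^{\diamond}$ computation. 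Adding $4\twonorm{B_0}^2$ (base) $+\;2\twonorm{B_0}^2 + 2b_\infty\twonorm{B_0}$ (four single events) $+\;2b_\infty^2$ (two surviving intersections) gives exactly $6\twonorm{B_0}^2 + 2b_\infty\twonorm{B_0} + 2b_\infty^2$.
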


\subsection{Proof of Lemma~\ref{lemma::W4dm}}
\label{sec::proofofW4dm}
Recall $\{\vecb^{(1)}, \ldots, \vecb^{(n)}\}$ denotes the set of column (row) vectors of  
symmetric positive-definite matrix $B_0 \succ 0$.  
\begin{proofof2}
For $i \not=j$ and $k \not=\ell$, $(v^i \circ  v^j)^T \offd(A_0 \circ
A_0)(v^k \circ v^\ell) =   \sum_{s \not=t} a_{st}^2 u^s_{i} u^s_{j} 
u^t_{k}  u^t_{\ell}$; Hence by linearity of expectation,
\bens 
\lefteqn{\abs{\E W_4^{\diamond} }
=\abs{\sum_{i=1}^n \sum_{k =1}^n (b_{ik} q_i q_k) 
\big(\sum_{j \not=i}^n \sum_{\ell \not=k}^n(b_{j\ell} h_{j} h_{\ell}) 
\big) \E \sum_{s\not=t}  a_{st}^2  u^s_i u^s_j u^t_k u^t_{\ell} }} \\
& = &
\abs{\sum_{i=1}^n \sum_{k =1}^n 
(b_{ki} q_k q_i)  \big(\sum_{j \not=i}^n \sum_{\ell \not=k}^n( b_{j\ell} h_{j} h_{\ell}) 
\big) }
\sum_{s\not=t}  a_{st}^2 p_s^2 p_t^2  \le  4 \twonorm{B_0}^2 \sum_{i\not=j}  a_{ij}^2 p_i^2 p_j^2 
\eens
To see the last step, we now examine the coefficients for $W_4^{\diamond}$: 
for each fixed $(i, k)$ pair,
\ben
\nonumber
\sum_{j \not= i, \ell \not=k} b_{j\ell} h_{j} h_{\ell}
& = & \sum_{j, \ell} b_{j\ell} h_{j} h_{\ell} -
\sum_{j, \ell=k} b_{j\ell} h_{j} h_{\ell} -
\sum_{j=i, \ell} b_{j\ell} h_{j} h_{\ell} +
\sum_{j= i, \ell=k} b_{j\ell} h_{j} h_{\ell} \\
& = &
\nonumber
\sum_{j, \ell} b_{j\ell} h_{j} h_{\ell} -
h_k \sum_{j=1}^n b_{j k} h_{j} - 
h_{i} \sum_{\ell=1}^n b_{i \ell} h_{\ell} + b_{i k} h_{i} h_{k}  \\
\text{ and hence }\; \;
&& \nonumber
\sum_{i, k}  b_{ki}  q_i q_k 
\sum_{j\not= i, \ell \not=k}
b_{j\ell} h_{j} h_{\ell} = 
\sum_{i, k}  b_{ki}  q_i q_k \sum_{j, \ell} b_{j\ell} h_{j} h_{\ell} -
\\
&&
\label{eq::foursum}
\sum_{i, k}  b_{ki}  q_i q_k h_k \sum_{j=1}^n b_{j k} h_{j} - \sum_{i, k}  b_{ki}  q_i q_k 
h_{i} \sum_{\ell=1}^n b_{i \ell} h_{\ell} + \sum_{i, k}  b_{ki}  q_i q_k  b_{i k} h_{i} h_{k},
\een
where due to symmetry, we bound the middle two terms in an identical
manner: denote by $\abs{q} = (\abs{q_1}, \ldots, \abs{q_n})$,
\bens
\lefteqn{\abs{\sum_{i, k}  b_{ki}  q_i q_k h_k \sum_{j=1}^n b_{j k} h_{j} }
  = \abs{\sum_{k=1}^n q_{k}h_{k}  \sum_{i=1}^n \sum_{j=1}^n
    b_{ki} b_{kj} q_{i} h_{j}   } }\\
  & = & \abs{\sum_{k=1}^n q_{k}h_{k} \big( q^T (\vecb^{(k)} \otimes 
    \vecb^{(k)})h \big)}  \le   \max_{k} \twonorm{\vecb^{(k)} \otimes
    \vecb^{(k)}}\abs{\ip{\abs{q}, \abs{h}}} \le \twonorm{B_0}^2
  \eens
  \bens
\lefteqn{ \text{Similarly}, \abs{\sum_{i, k}  b_{ki}  q_i q_k 
  h_{i} \sum_{\ell=1}^n b_{i \ell} h_{\ell}} = 
\abs{\sum_{i=1}^n q_{i}h_{i}
  \sum_{k=1}^n  b_{ik}  q_k \sum_{\ell=1}^n b_{i \ell} 
  h_{\ell}} }\\
& = &
\abs{\sum_{i=1}^n q_{i}h_{i}
    \big( q^T (\vecb^{(i)} \otimes 
    \vecb^{(i)})h\big)} \le \twonorm{B_0}^2 \\
  \text{ finally } & & \sum_{i, k}  b_{ki}  q_i q_k  b_{i k} h_{i} h_{k}
= \sum_{i, k}  b_{ki}^2  (q \circ h)_i (q \circ h)_{k} \le \twonorm{B_0 \circ B_0} \le b_{\infty} \twonorm{B_0}
\eens
where $q \circ h = (q_1 h_1, \ldots, q_n h_n)$; Hence by
\eqref{eq::foursum} and the inequalities immediately above, we have
\bens
\abs{\sum_{i, k}
  b_{ki}  q_i q_k 
  \sum_{i\not= j, k\not=\ell} b_{j\ell} h_{j} h_{\ell}}
& \le &
\abs{\sum_{i, k}  b_{ki}  q_i q_k}\abs{ \sum_{j, \ell} b_{j\ell} h_{j}
  h_{\ell}}
+
\abs{\sum_{i, k}  b_{ki}  q_i q_k h_k \sum_{j=1}^n b_{j k} h_{j} } +  \\
&&
\abs{\sum_{i, k}  b_{ki}  q_i q_k 
h_{i} \sum_{\ell=1}^n b_{i \ell} h_{\ell} }+
\abs{\sum_{i, k}  b_{ki}  q_i q_k  b_{i k} h_{i} h_{k}}  \le  4 \twonorm{B_0}^2.
\eens
\end{proofof2}

\subsection{Proof of Lemma~\ref{lemma::W2diag}}
\label{sec::proofofW2mean}
%\begin{proofof2}
  We have  for all $q, h \in \Sp^{n-1}$,
  \bens
  0 \le \E W_2^{\diamond}
  & = & \sum_{i\not= j}^{{n \choose 2} } w^{\be}_{(i, j)} \E \big((v_i \circ  v_j)^T \diag(A_0 \circ 
  A_0)(v_i\circ v_j) \big)\\
& = &
\sum_{(i, j), i\not=j} w^{\be}_{(i,j)} 
\sum_{s=1} a_{ss}^2 p_s^2  \le 2 b_{\infty}^2 \sum_{s=1} a_{ss}^2
p_s^2 \; \text{ where} \; \\
0 \le \sum_{i\not=j} w^{\be}_{(i,j)}  & \le &
\sum_{i \not=j}  (2 b_{ii} b_{jj} q_i^2 h_{j}^2 + 2 b_{jj} b_{ii} q_j^2  h_{i}^2 )  \\
& \le & 
b_{\infty}^2 (\sum_{i =1}^n q_i^2 \sum_{j \not=i} h_{j}^2  +
\sum_{i=1}^n h_i^2 \sum_{j \not= i}^n q_{j}^2) \le 2 b_{\infty}^2 
\eens 
where we use the fact that  for $B_0 = (b_{i,j}) \succ 0$,  $b_{ii}
b_{jj} \ge b_{ij}^2$ and
\bens
(\sqrt{b_{ii} b_{jj}} q_i  h_{j})^2 + (\sqrt{b_{ii} b_{jj}} h_{i}
q_j)^2 \ge 2 b_{ii} b_{jj}\abs{q_i q_j h_i h_j} \ge 2 b_{ij}^2 \abs{q_i 
  q_j h_i h_j}.
\eens 

\subsection{Proof of  Lemma~\ref{lemma::triangleweight}}
\label{sec::proofofW3dm}
\begin{proofof2}
  Throughout this proof, we assume that $q, h \in
  \Sp^{n-1}$. Denote by
  $$\vecb^{(i)}_{\setminus i} = (b_{i, 1}, \ldots, b_{i,   i-1}, b_{i, i+1}, \ldots, b_{i, n}).$$ 
Summing over $ \Delta_{(i,j), (i,k)}$ over all unique triples $i \not=j\not=k$, we have 
\bens
\lefteqn{\sum_{i=1}^n \sum_{j\not= i}^n \sum_{k \not=j, i}^n 
  \Delta_{(i,j), (i,k)} =  \sum_{i =1}^n b_{ii}  q_i^2   \sum_{j \not= k \not= i} b_{jk} h_{j} h_{k}} \\
&  &
+ \sum_{i =1}^n b_{ii}  h_i^2   \sum_{j \not= k \not= i} b_{jk} q_{j}
q_{k}  +  2 \sum_{i=1}^n q_i h_{i} \sum_{k\not=j\not=i}^n b_{ik} q_k b_{ij} h_{j}
\eens
where for a fixed index $i$ and column $u^s$, when we sum over all $j,k$, due to symmetry, we have 
\bens 
\sum_{(j \not= k) \not=i} b_{ik} q_k b_{ij} h_{j} & = & \sum_{(j \not= k) \not=i} b_{ij} q_j b_{ik} h_{k}
=q_{\cdot\setminus i}^T \offd(\vecb^{(i)}_{\cdot \setminus i} \otimes 
\vecb^{(i)}_{\cdot \setminus i}) h_{\cdot \setminus i};
\eens
Now, this leads to the following equivalent expressions for
$W_3^{\diag}$:
\ben
\label{eq::defineW3}
\lefteqn{ W_3^{\diag}  :=  \sum_{i =1}^n \sum_{j \not= i} \sum_{k \not= i,j}
\Delta_{(i,j),(i,k)} \cdot (v^i \circ  v^j)^T \diag(A_0 \circ A_0)(v^i \circ v^k) }\\
\nonumber
& = &
\sum_{s=1}^m a_{ss}^2
\sum_{i =1}^n  u^s_i \sum_{j \not= i} \sum_{k \not= i,j} \big(
  b_{ii} q_i^2 (b_{jk} h_{j} h_{k}) +(b_{jk} q_j q_k) b_{ii} h_i^2
\big) u^s_j u^s_k \\
\nonumber
&&   +
\sum_{s=1}^m a_{ss}^2 \sum_{i =1}^n 2 (q_i h_{i})  u^s_i \sum_{j \not= i} \sum_{k
    \not= i,j}  (b_{ki} q_k b_{ij} h_{j})   u^s_j u^s_k
\een
 where in~\eqref{eq::defineW3}, we have for each of the $n(n-1)(n-2)$
 uniquely ordered triple $(i, j, k)$, a total weight defined by 
 ${\Delta}_{(i,j),(i,k)}$, and the second expression holds by symmetry of
 the quadratic form; in more details,
\bens
\sum_{(j \not= k) \not=i} b_{ik} q_k b_{ij} h_{j} u^s_j u^s_k 
& = &
\sum_{(j \not= k) \not=i} b_{ij} q_j b_{ik} h_{k} u^s_j u^s_k  \\
& = &
(q \circ u^s)_{\cdot\setminus i}^T \offd(\vecb^{(i)}
\otimes  \vecb^{(i)})_{\cdot\setminus i, {\cdot \setminus i}} (h\circ u^s)_{\cdot\setminus i}.
\eens
Therefore, we only write $2 b_{ik} q_k b_{ij}h_{j}$
rather than the sum over $(b_{ik} q_k b_{ij} h_{j} + b_{ij} q_j b_{ik}
h_{k})$ in the summation in the sequel.
%Now 
Notice that
\bens\E (v_j \circ  v_i)^T \diag(A_0 \circ A_0)(v_j \circ v_k)
& = & \sum_{s=1}^m 
\E (a_{ss}^2 u^s_i u^s_j u^s_k) = \sum_{s=1}^m a_{ss} p_s^3.
\eens
Hence by linearity of expectations, we have
\bens
\abs{\E W_3^{d}}
& = & \abs{\sum_{ i \not= j\not= k} \Delta_{(i,j),(i,k)} \sum_{s=1}^m a_{ss}^2 
p_s^3 } \le \abs{\sum_{ i \not= j\not= k} \Delta_{(i,j),(i,k)}} \sum_{s=1}^m a_{ss}^2 
p_s^3 \\
& \le &
2 \big(\twonorm{B_0} b_{\infty} + b_{\infty}^2 + \twonorm{B_0}^2 \big) \sum_{s=1}^m a_{ss}^2 p_s^3.
\eens
where we will show that
\bens
  \lefteqn{
\abs{\sum_{ i \not= j\not= k} \Delta_{(i,j), (i,k)}} \le 
\sum_{ i=1}^n b_{ii} q_i^2 \abs{\sum_ {j\not= i} \sum_{k \not= i, j}
  b_{jk} h_j h_k} +
\sum_{ i=1}^n b_{ii} h_i^2 \abs{\sum_ {j\not= i} \sum_{k \not= i, j}
  b_{jk} q_j q_k} }\\
&& + 2 \sum_{ i=1}^n \abs{q_i} \abs{h_i} \abs{\sum_{j\not= i}^n 
  (b_{ij} h_j) (\sum_{k \not= i \not=j} b_{ik} q_{k})}
=: I + II + III 
%\le  2 \twonorm{B_0} b_{\infty} + 2 b_{\infty}^2 + 2 \twonorm{B_0}^2.
\eens
It remains to show $I, II \le \twonorm{B_0}
b_{\infty} + b_{\infty}^2 $ and $III \le  2 \twonorm{B_0}^2$.
The first two terms are bounded similarly and with the same upper
bound:
%and hence we show the first one:
\bens
&& I  =  \sum_{ i=1}^n b_{ii} q_i^2 \abs{\sum_ {j\not= i} \sum_{k \not= i, j}
  b_{jk} h_j h_k} \le \twonorm{B_0} b_{\infty} + b_{\infty}^2, \text{ where for a fixed $i$} \\
&& \abs{\sum_{k, j, k \not= j \not=i} b_{jk} h_j h_k} \le 
\abs{\sum_{k, j\not=i} b_{jk} h_j h_k} + 
 \abs{ \sum_{k= j\not=i}^n b_{jj} h_j^2} \le \twonorm{B_0} +
 b_{\infty}, \\
&& \text{and similarly,} \;\; II = \sum_{ i=1}^n b_{ii} h_i^2 \abs{\sum_ {j\not= i} \sum_{k \not= i, j}
  b_{jk} q_j q_k}  \le \twonorm{B_0} b_{\infty}  + b_{\infty}^2.
\eens
We can rewrite the sum for $III$ as follows.
Let $\abs{q} = (\abs{q_1}, \ldots, \abs{q_n})$. Then
\bens
\nonumber
  III & = & 2\sum_{ i=1}^n \abs{q_i} \abs{h_i} \abs{\sum_{j\not= i}^n 
    (b_{ij} h_j) (\sum_{k \not= i \not= j} b_{ik} q_{k})} \\
& \le &
2 \sum_{i=1}^n \abs{q_i}\abs{ h_i }
  \sum_{j \not=i}^n \sum_ {k \not=j\not= i}^n 
  \abs{b_{ij}}  \abs{h_j}\abs{b_{ik}}\abs{q_{k}} =: IV
  \eens
where for $\vecb^{(i)}_{\cdot \setminus i} = (b_{i,1}, \ldots,
b_{i, i-1}, b_{i, i+1}, \ldots, b_{i,n})$, 
\ben
\nonumber
IV  & := & 2 \sum_{i=1}^n \abs{q_i}\abs{ h_i }
  \sum_{j\not=i}^n \sum_{k\not=i}^n \abs{b_{ij}}
  \abs{h_j}\abs{b_{ik}}\abs{q_{k}} \le 
  2 \sum_{i=1}^n \abs{q_i}\abs{ h_i } \twonorm{\vecb^{(i)}_{\cdot
      \setminus i}}^2 \\
\label{eq::IVbound}
& \le & 2 \max_{i} \twonorm{\vecb^{(i)}_{\cdot \setminus i}}^2
\ip{\abs{q}, \abs{h}}  \le 2 \twonorm{B_0}^2;
\een
Similarly, we have for $\abs{B_0} = (\abs{b_{ij}})$, 
\bens 
\lefteqn{\sum_{ i \not= j\not= k} \abs{\Delta_{(i,j),(i,k)}} 
 \le 
2 \sum_{i=1}^m \abs{q_i}\abs{h_i}
\sum_{k\not=i}  \sum_{j \not=k \not=i}
\abs{b_{ik}}\abs{q_{k}}\abs{b_{ij}}  \abs{h_j} +} \\
&&
\sum_{i=1}^n b_{ii} q_i^2
\sum_ {j\not= i} \sum_{k \not= i,
  j}\abs{b_{jk}}\abs{h_j} \abs{h_k} + 
\sum_{ i=1}^n b_{ii} h_i^2 \sum_ {j\not= i} \sum_{k \not= i, j}
\abs{b_{jk}}\abs{q_j} \abs{q_k} \\
&& =: IV + V + VI \le  2  \twonorm{B_0}^2+ 2  b_{\infty}\twonorm{\abs{B_0}},
\eens
where the term $IV$ is as bounded in~\eqref{eq::IVbound}, and $V +VI
\le 2  b_{\infty}\twonorm{\abs{B_0}}$ since
\bens 
V & =  & \sum_{i=1}^n b_{ii} q_i^2 \sum_{k, j, k \not= j \not=i}
\abs{b_{jk}} \abs{h_j} \abs{h_k} \le 
\sum_{i=1}^n b_{ii} q_i^2 \sum_{k, j\not=i}^n \abs{b_{jk}} \abs{h_j}
\abs{h_k} \le b_{\infty} \twonorm{\abs{B_0}}
\eens
where $\twonorm{\abs{B_0}} := \twonorm{(\abs{b_{ij}})}$ in the
argument above  is understood to denote $\rho_{\max}(s_0, \abs{B_0})
\le  \sqrt{s_0} \twonorm{B_0}$  in case $h$ is $s_0-\sparse$, and  $VI  :=\sum_{ i=1}^n b_{ii} h_i^2 \sum_ {j\not= i} \sum_{k \not= i, j}
\abs{b_{jk}}\abs{q_j} \abs{q_k}$  is bounded in a similar manner.
\end{proofof2}

\subsection{Proof of  Lemma~\ref{lemma::W4distinct}}
\label{sec::proofofW4distinct}
\begin{proofof2}
First we have by \eqref{eq::offdsum}, 
\bens
\abs{ \sum_{k\not=i} b_{ki}  q_i q_k  \sum_{j \not=i,  \ell \not=k}  b_{j\ell}
  h_{j} h_{\ell}}  \le 4 \twonorm{B_0}^2
\eens
  Denote by
  \bens
  E_1 & = &
  \{(i, j, k, \ell): (i=k),  i\not=j, k \not= \ell\}, \quad  E_2 = \{(i, j, k, \ell): (i=\ell),  i\not=j,  k \not= \ell\},\\
E_3 & = & \{(i, j, k, \ell):  (j=k), i\not=j,  k \not= \ell\}, \quad E_4 =
\{(i, j, k, \ell): (j=\ell), i\not=j, k\not=\ell\}
  \eens
  Now by the inclusion-exclusion principle,
  \bens
  \lefteqn{
\sum_{i, j, k, \ell  \; \text{\distinct}} b_{ki}  q_i q_k b_{j\ell}
h_{j}  h_{\ell}  =  \sum_{i, k}  b_{ki}  q_i q_k 
 \sum_{i\not= j, k\not=\ell} b_{j\ell} h_{j} h_{\ell} }\\
   & - &
  \sum_{i = k}  b_{ki}  q_i q_k 
  \sum_{j\not= i, \ell \not=k} b_{j\ell} h_{j} h_{\ell}  (E_1) -
  \sum_{i \not=k}   b_{ki}  q_i q_k \sum_{i\not= j, k\not=\ell =i}
  b_{j\ell} h_{j} h_{\ell} (E_2) \\
    & - &
   \sum_{i \not= k}  b_{ki}  q_i q_k 
   \sum_{i\not= j, k\not=\ell, j=k} b_{j\ell} h_{j} h_{\ell} (E_3) 
   -  \sum_{i, k}  b_{ki}  q_i q_k   \sum_{\ell \not=k, j 
     =\ell \not=i} b_{j\ell} h_{j} h_{\ell} (E_4) \\
  & +&
  \sum_{i = k}  b_{ki}  q_i q_k 
   \sum_{i\not= j, j=\ell} b_{j\ell} h_{j} h_{\ell}(E_4 \cap E_1)  
   + \sum_{i \not= k}  b_{ki}  q_i q_k   \sum_{\ell =i, j 
     =k \not=i} b_{j\ell} h_{j} h_{\ell} (E_2 \cap E_3)\\
&& \text{where clearly} \;\; 
E_1 \cap E_4  =  \{(i, j, k, \ell): (i=k), (j=\ell), i\not=j\} \\
&& \text{ and } \; \; E_2 \cap E_3 = \{(i, j, k, \ell): (i=\ell), (j=k), i\not=j\}
\eens
while all other pairwise intersections are empty and hence all three-wise
and four-wise intersections are empty.
Simplifying the notation, we have
\bens
\lefteqn{
\sum_{i, j, k, \ell  \; \text{\distinct}} b_{ki}  q_i q_k b_{j\ell} h_{j}  h_{\ell} = 
 \sum_{i, k}  b_{ki}  q_i q_k 
 \sum_{i\not= j, k\not=\ell} b_{j\ell} h_{j} h_{\ell} }\\
 & - &
\sum_{i}  b_{ii}  q_i^2\sum_{j \not= i, \ell \not=i} b_{j\ell} h_{j} h_{\ell}  (E_1) -
\sum_{i \not=k}   b_{ki}  q_i q_k \sum_{i\not= j} b_{j i} h_{j} h_{i} (E_2) \\
  & - &
  \sum_{i, k}  b_{ki}  q_i q_k   \sum_{j \not=k, j \not=i} b_{jj} h_{j}^2 (E_4) -
   \sum_{i \not=j}  b_{ji}  q_i q_j  \sum_{ j\not=\ell} b_{j\ell} h_{j} h_{\ell} (E_3) \\
  & +&
  \sum_{i = 1}^n  b_{ii}  q_i^2 \sum_{i\not= j} b_{jj} h_{j}^2 
  \; (E_4 \cap E_1)     + \sum_{i=1}^n \sum_{j \not=i}  b_{ji}  q_i q_j b_{ji} h_{j} h_{i} \;(E_2 \cap E_3)  
   \eens
   where for indices $(i, j, k, \ell)$ in $E_1$ and $E_2$,
   \bens
&&   (E_1) 
   \abs{\sum_{i}  b_{ii}  q_i^2\sum_{j \not= i, \ell \not=i} b_{j\ell}
     h_{j} h_{\ell}} \le b_{\infty} \sum_{i=1}^n  q_i^2 \abs{\sum_{j \not= i, \ell \not=i} b_{j\ell} h_{j}
     h_{\ell}} \le b_{\infty} \twonorm{B_0} \text{ and } \\
&& (E_2) \abs{\sum_{i \not=k}   b_{ki}  q_i q_k \sum_{j \not =i}
  b_{ji} h_{j} h_{i}}
=  \sum_{i=1}^n \abs{ q_i} \abs{ h_i} \abs{\sum_{i \not=k}   b_{ki}  q_k \sum_{i\not=
    j} b_{j i} h_{j}} \le  \twonorm{B_0}^2
\eens
where for a fixed index $i$,
\bens
\abs{ \sum_{i \not=k}   b_{ki}  q_k \sum_{i\not=
    j} b_{j i} h_{j}} \le \abs{\sum_{i \not=k}  b_{ki}  q_k } \abs{\sum_{i\not= j} b_{j i} h_{j}}
\le \twonorm{\vecb^{(i)}}^2 \le \twonorm{B_0}^2
\eens
Similarly, we bound for indices $(i, j, k, \ell)$ in $E_3$ and $E_4$,
\bens
 (E_4) \quad
\abs{\sum_{i, k}  b_{ki}  q_i q_k   \sum_{j \not=k, j \not=i} b_{jj}
  h_{j}^2} \le  b_{\infty} \twonorm{B_0} \text{ and }  (E_3) \quad
\abs{ \sum_{i \not=j}  b_{ji}  q_i q_j  \sum_{ j\not=\ell} b_{j\ell} h_{j}
 h_{\ell}} \le \twonorm{B_0}^2
\eens
Finally, for indices $(i, j, k, \ell)$ in $(E_3 \cap E_2) \cup (E_4 \cap
E_1)$, we have
\bens
\sum_{i = 1}^n \sum_{j \not= i} (b_{ii}  q_i^2 b_{jj} h_{j}^2
+ b_{ji}  q_i q_j b_{ji} h_{j} h_{i}) \le 2 b_{\infty}^2
\eens
following Lemma~\ref{lemma::W2diag}.
Now as we have shown in  Lemma~\ref{lemma::W4dm},
\bens
%\label{eq::offdsum}
\abs{\sum_{i, k}  b_{ki}  q_i q_k 
  \sum_{i\not= j, k\not=\ell} b_{j\ell} h_{j} h_{\ell}} \le 4
\twonorm{B_0}^2
\eens
Thus we have by  the bounds immediately above,
\bens
\abs{\sum_{i, j, k, \ell  \; \text{\distinct}} b_{ki}  q_i q_k
  b_{j\ell} h_{j}  h_{\ell}}
\le 6 \twonorm{B_0}^2 + 2 \twonorm{B_0} b_{\infty} +  2 b_{\infty}^2
\eens
Finally, for $w^\square_{i,j,k,\ell} =b_{ki}q_iq_k b_{j\ell} h_j
h_{\ell}$,
\bens 
\abs{\E W_4^{\diag}} =  \abs{\sum_{ i \not= j\not= k\not=\ell} w^\square_{i,j,k,\ell}
  \sum_{s=1}^m a_{ss}^2 p_s^4 }
\le (8 \twonorm{B_0}^2 + 2b_{\infty}^2 ) \sum_{s=1}^m a_{ss}^2 
p_s^4.
\eens
Thus the lemma holds.
\end{proofof2}

\section{Concentration of measure bounds}
\label{sec::polyconc}
We need the following result which follows from Proposition 3.4~\cite{Tal95}.
\begin{lemma}{\textnormal{(\cite{Tal95})}}
\label{lemma::bern-sumlocal}
Let $A = (a_{ij})$ be an $m \times m$ matrix.
Let $a_{\infty} := \max_i \abs{a_{ii}}$.
Let $\xi = (\xi_1, \ldots, \xi_m) \in \{0, 1\}^m$ be a random vector
with independent Bernoulli random variables $\xi_i$ such that
$\xi_i = 1$ with probability $p_i$ and $0$ otherwise.
Then for $\abs{\lambda} \le \inv{4 a_{\infty}}$,
\bens
\E\exp\big(\lambda \sum_{i=1}^m a_{ii} (\xi_i -p_i)\big)  \le 
\exp\big(\inv{2} \lambda^2 e^{\abs{\lambda} a_{\infty}} \sum_{i=1}^m
a_{ii}^2 \sigma^2_i \big).
\eens
\end{lemma}

\begin{corollary}
\label{coro::expmgfW3}
Let $A_{\infty} = \max_{i} a^2_{ii} = a_{\infty}^2$.
  Let $\xi = (\xi_1, \ldots, \xi_m) \in \{0, 1\}^m$ be a random vector 
with independent Bernoulli random variables $\xi_i$ such that 
$\xi_j = 1$ with probability $\E \xi_j$ and $0$ otherwise. 
Let $S_{\star} =\sum_{j=1}^m a^2_{jj} (  \xi_j - \E \xi_j)$. 
Then for $t>0$,  $\prob{\abs{S_{\star}} > t} \le 2 \exp\big(-
c\min\big(\frac{t^2}{M}, \frac{t}{a^2_{\infty} } \big) \big)$,
where $M = a^2_{\infty} \sum_{s=1}^m a^2_{ss}  \E \xi_s$.
\end{corollary}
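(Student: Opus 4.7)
\begin{proofof}{Corollary~\ref{coro::expmgfW3}}
The plan is to apply Lemma~\ref{lemma::bern-sumlocal} with the substitution $a_{ii} \mapsto a_{ii}^2$ (so that the corresponding ``infinity'' parameter becomes $A_\infty = a_\infty^2$), and then carry out a standard Chernoff/Bernstein optimization. First, treating $\{a_{jj}^2\}_{j=1}^m$ as the new diagonal coefficients, Lemma~\ref{lemma::bern-sumlocal} yields for every $|\lambda| \le 1/(4 a_\infty^2)$
\begin{equation*}
\E \exp(\lambda S_\star) \;\le\; \exp\!\Bigl(\tfrac{1}{2} \lambda^2 e^{|\lambda| a_\infty^2} \sum_{i=1}^m a_{ii}^4 \sigma_i^2\Bigr),
\end{equation*}
where $\sigma_i^2 = (\E\xi_i)(1-\E\xi_i) \le \E\xi_i$. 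Using the crude bound $e^{|\lambda|a_\infty^2} \le e^{1/4}$ on this range of $\lambda$, together with $a_{ii}^2 \le a_\infty^2$, I get
\begin{equation*}
\E \exp(\lambda S_\star) \;\le\; \exp\!\bigl(c_1 \lambda^2 M\bigr), \qquad |\lambda| \le 1/(4 a_\infty^2),
\end{equation*}
for an absolute constant $c_1$, with $M = a_\infty^2 \sum_{s} a_{ss}^2 \E\xi_s$ as in the statement.

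Next, for any $t>0$ and $\lambda \in (0, 1/(4 a_\infty^2)]$, Markov's inequality gives $\prob{S_\star > t} \le \exp(-\lambda t + c_1 \lambda^2 M)$. I will then optimize over $\lambda$ in two regimes: in the ``Gaussian regime'' where the unconstrained optimum $\lambda^* = t/(2 c_1 M)$ is admissible, i.e.\ $t \le c_1 M/(2 a_\infty^2)$, plugging back in yields the bound $\exp(-t^2/(4 c_1 M))$; in the ``exponential regime'' where $t > c_1 M/(2 a_\infty^2)$, choosing $\lambda = 1/(4 a_\infty^2)$ and using the condition on $t$ to absorb the quadratic term into the linear one yields a bound of order $\exp(-c\, t/a_\infty^2)$. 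Combining the two regimes gives a one-sided tail $\exp(-c\min(t^2/M,\, t/a_\infty^2))$; the same MGF bound applied to $-S_\star$ (since the range of admissible $\lambda$ is symmetric about $0$) yields the matching lower tail, and a union bound produces the factor of $2$.

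There is no real obstacle here beyond bookkeeping: the only subtlety is making sure that substituting $a_{ii}^2$ into Lemma~\ref{lemma::bern-sumlocal} produces the correct form of $M$, which requires the elementary bound $a_{ii}^4 \sigma_i^2 \le a_\infty^2 \cdot a_{ii}^2 \E\xi_i$, and that the constraint $|\lambda|\le 1/(4 a_\infty^2)$ is exactly what produces the $t/a_\infty^2$ term in the exponent when $t$ is large.
\end{proofof}
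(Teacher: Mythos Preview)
Your proposal is correct and follows essentially the same approach as the paper's proof: apply Lemma~\ref{lemma::bern-sumlocal} with coefficients $a_{ii}^2$ to obtain the MGF bound $\E\exp(\lambda S_\star)\le\exp(\lambda^2 M)$ on $|\lambda|\le 1/(4a_\infty^2)$ (using $\sigma_i^2\le\E\xi_i$ and $a_{ii}^4\le a_\infty^2 a_{ii}^2$), then use Markov's inequality and optimize over $\lambda$ in the constrained range to get the two-regime sub-exponential tail, finishing with the symmetric argument for $-S_\star$.
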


We are going to apply Lemma~\ref{lemma::bern-sumlocal} and its
Corollary~\ref{coro::expmgfW3}
to obtain concentration of measure bounds on the diagonal components
corresponding to degree-$2, 3, 4$  polynomials in $W_2^{\diamond}, W_3(\diag),$
and $W_4^{\diag}$ respectively, where $i \not=j \not=k \not=\ell$ are
being fixed. Moreover, we choose constants large enough so that all probability  
statements hold. We prove Corollary~\ref{coro::expmgfW3} in 
Section~\ref{sec::proofofexpmgfW3}.

\subsection{Proof of Lemma~\ref{lemma::W2devi}}
\label{sec::W2dmproof}
%\begin{proofof2}
Denote by $M_2 = a_{\infty}^2 \sum_{s  =1}^m a_{ss}^2 p_s^2$.
Notice that  by assumption,
\bens
2C_{\alpha}^2 a^2_{\infty} \log (n \vee m) 
\le \tau_2 := C_a \sqrt{\log (n \vee m) M_2} \le \inv{4} \sum_{s=1}^m
a_{ss}^2 p_s^2
\eens
By Corollary~\ref{coro::expmgfW3} and the union bound,
\bens 
\lefteqn{\prob{ \abs{S_2^{\star}} > \tau_2}
  :=\prob{\max_{i\not=j} \abs{S_2^{\star}(i, j)} > \tau_2}  := \prob{\event_2} } \\
& \le &  
{n \choose 2} 2\exp\big(- c\min\big({\tau_2^2}/{M_2},
   {\tau_2}/{a_{\infty}^2} \big)\big) \le \frac{1}{(n \vee m)^4}
\eens
which holds for $C_\alpha^2$ sufficiently large.
Then on event $\event_2^c$, for positive weights $w^{\be}_{(i,j)} \ge 0$,
we have for all $q, h \in \Sp^{n-1}$,
\bens
\abs{W_2^{\diamond} - \E W_2^{\diamond}}
%=  \abs{\sum_{(i, j), i \not=j} w^{\be}_{(i,j)} 
%(\sum_{s=1}^m a_{ss}^2 u^s_i u^s_j - \sum_{s=1}^m a_{ss}^2  p_s^2) }
\le  \sum_{j \not=i}^n w^{\be}_{(i,j)} \abs{\sum_{s=1}^m a_{ss}^2 u^s_i u^s_j - \sum_{s=1}^m a_{ss}^2 
  p_s^2 }  \le  \tau_2 \sum_{j \not=i}^n w^{\be}_{(i,j)} 
\le 2 b_{\infty}^2 \tau_2
%\le \half b_{\infty}^2  \sum_{s=1}^m a_{ss}^2 p_s^2.
\hskip2pt \;\;\scriptstyle\Box
\eens
%\end{proofof2}

\subsection{Proof of Lemma~\ref{lemma::S3devi}}
\label{sec::W3devi}
By Corollary~\ref{coro::expmgfW3} and the union bound,
\bens 
\lefteqn{\prob{ \abs{S_3^{\star}} > \tau_3} =
  \prob{\max_{i\not=j\not=k} \abs{S_3^{\star}(i, j, k)} > \tau_3} := \prob{\event_3} 
} \\
& \le &  
{n \choose 3} 2 \exp\big(- c\min\big(\frac{\tau_3^2}{ a_{\infty}^2 \sum_{s =1}^m a_{ss}^2 p_s^3},  \frac{\tau_3}{a_{\infty}^2}\big) \big)
 \le  1/{(3 (n \vee m) ^4)}
 \eens
where recall $\tau_3 = C_2 a_{\infty}^2 \log (n \vee m) \vee (a_{\infty} \sqrt{\log  (n \vee m) \sum_{j=1}^m a_{jj}^2 
   p_j^3})$ for  absolute constant $C_2$.  By~\eqref{eq::defineW3} and Lemma~\ref{lemma::triangleweight}, we have on event $\event_3^c$ 
\bens
\nonumber
\lefteqn{
\forall q, h \in \Sp^{n-1}, \; \; \abs{W_3^{\diag} - \E W_3^{\diag}} \le
\sum_{i \not= j \not=k} \abs{\Delta_{(i,j), (i,k)}}
\abs{S_3^{\star}(i, j, k)}} \\
 & \le &
\label{eq::W3star}
2 C_2 (\twonorm{\abs{B_0}}  b_{\infty} + \twonorm{B_0}^{2}) 
\big(  a_{\infty} \sqrt{\log (n \vee m)  S_3} \vee a_{\infty}^2
  \log (n \vee m)  \big) \\
 & \le &
\twonorm{B_0}^2 S_3  + C_3 a_{\infty}^2 \log (n \vee m)  (\twonorm{\abs{B_0}}^2+
\twonorm{B_0}^2)
\eens
where for $b_{\infty} \le \twonorm{\abs{B_0}} \wedge \twonorm{B_0}$,
where $\abs{B_0} =(\abs{b_{ij}})$,
\bens
\lefteqn{2 C_2 (\twonorm{\abs{B_0}}  b_{\infty} + \twonorm{B_0}^2)
  a_{\infty}\sqrt{\log (n \vee m)  S_3}}\\
& \le &
4 C_2 (\twonorm{\abs{B_0}} \vee \twonorm{B_0}) \twonorm{B_0}
a_{\infty}\sqrt{\log (n \vee m)  S_3} \\
& \le &\twonorm{B_0}^2 S_3  + 4 C_2^2 a_{\infty}^2  \log (n \vee m)  (\twonorm{\abs{B_0}}^2 \vee 
\twonorm{B_0}^2)
\eens
\bens
\lefteqn{\text{ while }
2 C_2 ( \twonorm{\abs{B_0}}  b_{\infty} + \twonorm{B_0}^{2}) 
a_{\infty}^2 \log (n \vee m) } \\
& \le &
2 C_2 \twonorm{\abs{B_0}}  b_{\infty} a_{\infty}^2 \log (n \vee m) +
2C_2 \twonorm{B_0}^{2} a_{\infty}^2 \log (n \vee m).  \hskip1pt \;\;\scriptstyle\Box
\eens

\subsection{Proof of Lemma~\ref{lemma::S4devi}}
\label{sec::W4diagdevi}
Denote by $M_4 \asymp a_{\infty}^2 \sum_{s=1}^m  a_{ss}^2 p_s^4$.
By Corollary~\ref{coro::expmgfW3}, we have
for $\tau_4   = C_4( \sqrt{M_4 \log (n \vee m) } \vee a^2_{\infty}
\log (n \vee m)) $,
\bens 
\prob{\abs{S_4^{\star}} > \tau_4}
& = &  
\prob{\exists \distinct \; i, j, k, \ell: \abs{S_4^{\star}(i, j, k,\ell)} > \tau_4 }\\
& \le &  {n \choose 4} 2\exp\big(-c \min\big(\frac{\tau_4^2}{M_4},
\frac{\tau_4}{a_{\infty}^2} 
\big)\big) \le 1/{(12 (n \vee m)^4)}
\eens
where $C_4$ is an absolute constant; Denote the above exception event as $\event_4$. 
Now, we have on event $\event_4^c$ and for all $q, h \in \Sp^{n-1}$, 
\bens
\abs{W_4^{\diag} - \E W_4^{\diag}}
& \le &
\sum_{i \not =k} \abs{b_{ki}}\abs{q_k} \abs{q_i}
\sum_{ j \not= k \not= \ell \not=i} \abs{b_{j\ell}} \abs{h_{j}}\abs{h_{\ell}} \abs{S_4^{\star}(i,j,k,\ell)}\\
& \le &  \twonorm{\offd(\abs{B_0})}^2 \abs{S_4^{\star}} \le
\twonorm{\offd(\abs{B_0})}^2 \tau_4.    \hskip1pt \;\;\scriptstyle\Box
\eens

\subsection{Proof of Corollary~\ref{coro::expmgfW3}}
\label{sec::proofofexpmgfW3}
\begin{proofof2}
  %{Corollary~\ref{coro::expmgfW3}}
  Let $A = (A_0 \circ A_0) = (a_{ij}^2)$ be an $m \times m$ matrix. 
  Let $A_{\infty} := \max_s a_{ss}^2$. Denote by $M = A_{\infty}  \sum_{s=1}^m a_{ss}^2 \E \xi_s$.
Thus we have by Lemma~\ref{lemma::bern-sumlocal},  for $\abs{\lambda}
\le 1/(4 A_{\infty})$,  
\bens
\E \exp\big(\lambda S_{\star} \big) 
& \le &
\exp\big(\half \lambda^2 e^{\abs{\lambda} A_{\infty}}
  \sum_{s=1}^m a_{ss}^4 \E \xi_s \big) \le
\exp\big(\lambda^2 M \big)
 \eens
 where $e^{\abs{\lambda} A_{\infty}} \le  e^{1/4}$.
 Now for $0< \lambda \le 1/{(4 A_{\infty})}$, by Markov's inequality,
\bens 
\prob{S_{\star} > t}
& = &  \prob{\exp(\lambda S_{\star}) > \exp(\lambda t)} \le \E
\exp(\lambda S_{\star})/  \exp(\lambda t) \\
& \le &  \exp\big(-\lambda t +  \lambda^2  M \big) 
\eens
Optimizing over $0< \lambda < 1/{(4 A_{\infty})}$, we have for $t>0$,
\bens 
\prob{S_{\star} > t} & \le &  
\exp\big(- c\min\big({t^2}/{M}, {t}/{A_{\infty} }  \big)\big) =: q_{\diag}
\eens
Repeating the same arguments above, for $0< \lambda < 1/{(4 a_{\infty})}$ and $t>0$,
\bens
\prob{S_{\star} < -t} & = &  \prob{\exp(-\lambda S_{\star}) >
  \exp(\lambda t)} \\
& \le & \frac{\E \exp(-\lambda S_{\star})}{e^{\lambda t}}
\le  \exp\big(-\lambda t +  \lambda^2 M  \big) \le q_{\diag}
\eens
The corollary is thus proved by combining these two events since
$\prob{\abs{S_{\star}} > t} \le 2  q_{\diag}$.
\end{proofof2}

\subsection{Large deviation bound on $W_4^{\diamond}$}
\label{sec::W4offddevi}
Our goal in this section is to prove Lemma~\ref{lemma::S5devi}.
Let $ i\not=j$ and  $k \not=\ell$.
For $W_4^{\diamond}$, we need to derive large deviation 
bound on polynomial function $\ol{S}_{\diamond}(i, j, k, \ell)$ for 
each quadruple $(i, j, k,\ell)$  such that $i \not=j, k \not=\ell$. 
More precisely, we have Theorem~\ref{thm::expaqua}.
We prove Theorem~\ref{thm::expaqua} in Section~\ref{sec::proofofaqua}.
Denote by $\forall  i \not= j,  \; \; k \not=\ell$,
\ben
\label{eq::Sdm}
S_{\diamond}(i, j, k, \ell)  & := &
(v_i \circ v_j)^T \offd(A_0 \circ A_0)(v_k 
\circ v_\ell)  :=  \sum_{s\not=t}^m a_{st}^2 u_i^s u_j^s u^t_k 
u^t_{\ell}     \\
 \label{eq::Sdm0}
 \quad  \quad 
     \overline{S}_{\diamond}(i, j, k, \ell) & := &
     S_{\diamond}(i,j,k,\ell)   - \E S_{\diamond}(i,j,k,\ell) \\
     \nonumber
 \text{ where} \quad \E S_{\diamond}(i, j, k, \ell)
 &   = & \sum_{s \not=t} a_{st}^2  p_t^2   p_s^2 \; \;
 \forall 
 i \not= j,  \; \; k \not=\ell.
 \een
\begin{theorem}
\label{thm::expaqua}
Let $a_{\infty} = \max_{i} a^2_{ii}$ and $c$ be an absolte constant.
Let $\overline{S}_{\diamond}(i, j, k, \ell)$ be as defined in  \eqref{eq::Sdm0}.
For any $t > 0$, and quadruple $(i, j, k, \ell)$ such that $i
\not=j, k \not=\ell$, 
\bens 
\prob{\abs{\ol{S}_{\diamond}(i, j, k, \ell)} > t} & \le &  
2\exp\big(- c \min\big(\frac{t^2}{\twonorm{A_0}^2  \sum_{s \not=t}
    a_{st}^2  p_t^2   p_s^2 }, \frac{t}{\twonorm{A_0}^2 } \big) \big)
\eens
\end{theorem}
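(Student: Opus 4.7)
The plan is to recognize $S_{\diamond}(i,j,k,\ell)$ as a bilinear form in two Bernoulli vectors and combine a Bernstein decomposition with conditional MGF estimates along the lines of Theorem~\ref{thm::Bernmgf}. Set $\xi_s := u_i^s u_j^s$ and $\eta_t := u_k^t u_\ell^t$, and let $\tilde B := \offd(A_0 \circ A_0)$, so that
\bens
S_\diamond(i,j,k,\ell) = \xi^T \tilde B\, \eta, \qquad \E S_\diamond = (p^2)^T \tilde B\, p^2,
\eens
where $p^2 := (p_1^2,\ldots,p_m^2)^T$. Because distinct columns of $U$ are independent, the pairs $(\xi_s,\eta_s)$ are independent across $s$; within a fixed $s$, $\xi_s$ and $\eta_s$ may be correlated in the overlap case $\{i,j\}\cap\{k,\ell\}\ne\emptyset$, but the matching-index terms $s=t$ do not appear in the off-diagonal sum. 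The Schur product theorem for PSD matrices gives $\twonorm{A_0 \circ A_0} \le \twonorm{A_0}^2$, and since $\sum_j a_{ij}^2 = \twonorm{A_0 e_i}^2 \le \twonorm{A_0}^2$, also $\shnorm{\tilde B}_\infty, \shnorm{\tilde B}_1 \le \twonorm{A_0}^2$.

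Using the identity $\xi_s \eta_t - p_s^2 p_t^2 = \tilde\xi_s \tilde\eta_t + p_t^2 \tilde\xi_s + p_s^2 \tilde\eta_t$, I split $\overline S_\diamond = T_0 + L_\xi + L_\eta$ into a centered bilinear piece $T_0 := \tilde\xi^T \tilde B\, \tilde\eta$ and two linear forms in the centered Bernoullis. For the linear part $L_\eta = \sum_t c_t \tilde\eta_t$ with $c_t := \sum_{s \ne t} a_{st}^2 p_s^2$, Bernstein's inequality for bounded independent sums (with $|\tilde\eta_t|\le 1$ and $\Var(\tilde\eta_t)\le p_t^2$) yields a tail bound with variance proxy $\sum_t c_t^2 p_t^2 \le \max_t|c_t| \cdot \sum_t c_t p_t^2 \le \twonorm{A_0}^2 \sum_{s\ne t} a_{st}^2 p_s^2 p_t^2$ and scale $\max_t|c_t| \le \shnorm{\tilde B}_\infty \le \twonorm{A_0}^2$, matching the target form; $L_\xi$ is symmetric.

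The principal obstacle is the bilinear term $T_0$, which demands a Hanson--Wright-type estimate for a Bernoulli bilinear form whose two factors need not be jointly independent. I would condition on $\xi$ and write $T_0|\xi = \sum_t d_t(\xi)\tilde\eta_t$, where $d_t(\xi) := \sum_{s\ne t} a_{st}^2 \tilde\xi_s$ crucially omits $\xi_t$. Because $d_t(\xi)$ is a function of $\{\omega^s\}_{s\ne t}$ and $\eta_t$ of $\omega^t$, one has $d_t(\xi) \perp \eta_t$ even in the overlap case, and the $\eta_t$ remain conditionally independent across $t$ given $\xi$ (with conditional law $\eta_t|\xi = \eta_t|\xi_t$). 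Bernstein's MGF bound applied conditionally, then integrated in $\xi$, yields
\bens
\E e^{\lambda T_0} \le \E \exp\big(C\lambda^2 \textstyle\sum_t d_t(\xi)^2 p_t^2\big)
\eens
on a suitable range of $\lambda$; one then uses
\bens
\E\textstyle\sum_t d_t(\xi)^2 p_t^2 = \sum_{s\ne t} a_{st}^4 p_s^2 p_t^2 (1-p_s^2) \le \twonorm{A_0}^2 \sum_{s\ne t} a_{st}^2 p_s^2 p_t^2
\eens
(from $a_{st}^2 \le a_\infty^2 \le \twonorm{A_0}^2$), and a second Bernstein step of the Theorem~\ref{thm::Bernmgf} type to control $\max_t|d_t(\xi)|$. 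This produces the Bernstein MGF estimate with variance $\twonorm{A_0}^2 \sum_{s\ne t} a_{st}^2 p_s^2 p_t^2$ and scale $\twonorm{A_0}^2$; Chernoff together with a union bound over $T_0, L_\xi, L_\eta$ closes the argument. The hard part is extracting the operator-norm factor $\twonorm{A_0}^2$ (rather than the larger $\shnorm{\tilde B}_F$ that naive Hanson--Wright would supply) and absorbing the conditional-mean shift of $\eta_t|\xi_t$ in the overlap case; both are what force the two-level conditioning mirroring Theorem~\ref{thm::Bernmgf}.
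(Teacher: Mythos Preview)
Your decomposition $\overline S_\diamond = T_0 + L_\xi + L_\eta$ and the Bernstein treatment of the linear pieces exactly parallel the paper's $Q_1 + L_1 + L_2$ split in Lemma~\ref{lemma::Sdm}. The substantive difference is in the bilinear term. The paper does \emph{not} condition directly on $\xi$; instead it introduces auxiliary i.i.d.\ Bernoulli($1/2$) variables $\delta_1,\ldots,\delta_m$ and writes $S_\diamond = 4\E_\delta S_\delta$ with $S_\delta = \sum_{s\in\Lambda_\delta,\, t\in\Lambda_\delta^c} a_{st}^2 \xi_s \xi'_t$. Because $\Lambda_\delta$ and $\Lambda_\delta^c$ are disjoint column sets, the factors $\{\xi_s\}_{s\in\Lambda_\delta}$ and $\{\xi'_t\}_{t\in\Lambda_\delta^c}$ are \emph{genuinely independent} even when $\{i,j\}\cap\{k,\ell\}\ne\emptyset$, and the two-step conditioning from the proof of Theorem~\ref{thm::Bernmgf} then applies cleanly to $Q_1$.

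Your route avoids decoupling and instead exploits that $d_t(\xi)$ omits column $t$; this is correct, but once you condition on all of $\xi$ you also fix $\xi_t$, and in the overlap case $\E[\tilde\eta_t\mid\xi_t]\ne 0$. You flag this, yet the phrase ``two-level conditioning mirroring Theorem~\ref{thm::Bernmgf}'' does not resolve it: the proof of Theorem~\ref{thm::Bernmgf} itself first decouples (Theorem~\ref{thm::decouple}) to obtain independent copies and only then conditions---the two-level conditioning alone does not remove the drift. Your argument can be completed without decoupling, but it needs one more concrete step you did not supply: write $\E[\tilde\eta_t\mid\xi_t]=\gamma_t\tilde\xi_t$ with $|\gamma_t|\le 1$ (any mean-zero function on $\{0,1\}$ is a multiple of the centered variable), so the drift $\sum_t d_t(\xi)\gamma_t\tilde\xi_t=\sum_{s\ne t}\gamma_t a_{st}^2\tilde\xi_s\tilde\xi_t$ is an off-diagonal quadratic form in the \emph{independent} variables $\tilde\xi_s$, to which Corollary~\ref{coro::Bernmgf} applies directly with the same $\twonorm{A_0}^2$ scale. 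With that extra piece your approach goes through; the paper's decoupling simply sidesteps the issue and is a bit shorter.
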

%Corollary~\ref{coro::expmgfW3} in Section~\ref{sec::proofofexpmgfW3}

\subsubsection{Proof of Lemma~\ref{lemma::S5devi}}
  We can now apply Theorem~\ref{thm::expaqua} with
  $\tau_5 = C_5 \twonorm{A_0} \big( \twonorm{A_0} \log (n \vee m)
    \vee  \sqrt{\log (n \vee m) \sum_{s\not=t}^m a_{st}^2 p_s^2 p_t^2}
    \big)$, for $C_5$ large enough,
\bens 
\lefteqn{\prob{\abs{S_5^{\star}} > \tau_5}
=\prob{\exists i \not=j, k \not=\ell, \; \;\ol{S}_{\diamond}(i, j, k, \ell) \ge \tau_5} 
  =: \prob{\event_5}}\\
& \le &
{n \choose 2}^2 2\exp\big(- c\min\big(\log (n \vee m)\big) \big) \le  1/{(2 (n \vee m)^4)} 
\eens
Hence on event $\event_5^c$, for $\abs{B_0} = (\abs{b_{ij}})$ and all $q, h
\in \Sp^{n-1}$,
 \bens
\lefteqn{\abs{W_4^{\diamond} - \E W_4^{\diamond}}
\le 
\sum_{i \not =j,  k \not= \ell} 
\abs{w_{\square}(i, j, k, \ell)} \abs{\ol{S}_{\diamond}(i, j, k,
  \ell)}} \\
& \le &
\sum_{i,k}\sum_{ j \not= i,\ell \not=k}
(\abs{b_{ki}}\abs{q_k} \abs{ q_i} \abs{b_{j\ell}} \abs{h_{j}}
\abs{h_{\ell}} )\abs{\overline{S_{\diamond}}(i,j,k,\ell)} \le
\twonorm{\abs{B_0}}^2 \tau_5.
\hskip1pt \;\;\scriptstyle\Box
\eens

\subsubsection{Proof of Theorem~\ref{thm::expaqua}}
\label{sec::proofofaqua}
We obtain the following estimate on the moment generating function of
$\ol{S}_{\diamond}(i, j, k, \ell)$.
Although we give explicit constants here, they are by no means optimized. This
result may be of independent interests. In fact, we present a proof
aiming for clarity rather than optimality of the constants being involved.
\begin{lemma}
  \label{lemma::Sdm}
For all $\abs{\lambda} \le 1/{(32 \twonorm{A_0}^2)}$ and
$C_{12} = 65
e^{1/4}$, we have for all quadruple $(i, j, k, \ell)$, where $i \not=j$ and $k \not=\ell$,
\bens
\E \exp(\lambda (S_{\diamond}(i,j,k,\ell)  - \E S_{\diamond}(i, j, k,\ell)))
& \le &
\exp\big(C_{12}  \lambda^2   \twonorm{A_0}^2 \sum_{s \not=t } a_{st}^2  p_t^2   p_s^2
\big) 
\eens
\end{lemma}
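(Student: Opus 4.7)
The plan is to extend the column-by-column moment-generating-function (MGF) iteration underlying Theorem~\ref{thm::Bernmgf} and Corollary~\ref{coro::Bernmgf} from quadratic forms in independent Bernoullis to the bilinear form $S_{\diamond}$, after reducing it to a quadratic form in the paired products $\xi_s := u_i^s u_j^s$ and $\zeta_t := u_k^t u_\ell^t$.

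First I would reduce to these Bernoulli products. Since $i \neq j$ and $k \neq \ell$, each $\xi_s$ (resp.\ $\zeta_t$) is Bernoulli with mean $p_s^2$ (resp.\ $p_t^2$), and because distinct columns of $U$ are mutually independent, the pair $(\xi_s,\zeta_t)$ with $s \neq t$ depends on disjoint columns and is therefore independent --- precisely the index range appearing in $S_{\diamond}$. Then I would stack $\eta := (\xi_1,\ldots,\xi_m,\zeta_1,\ldots,\zeta_m) \in \{0,1\}^{2m}$ and let $\mathcal{A}$ be the symmetric $2m \times 2m$ matrix with zero diagonal whose only nonzero entries are $\mathcal{A}_{s,m+t} = \mathcal{A}_{m+t,s} = \tfrac{1}{2}a_{st}^2$ for $s \neq t$, so that $S_{\diamond} = \eta^T \mathcal{A}\,\eta$. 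Using $\sum_t a_{st}^2 = (A_0^2)_{ss} \le \twonorm{A_0}^2$, one gets $\shnorm{\mathcal{A}}_{\infty} = \shnorm{\mathcal{A}}_1 \le \tfrac{1}{2}\twonorm{A_0}^2$, so the relevant scale is $D_{\max} \le \tfrac{1}{2}\twonorm{A_0}^2$, consistent with the admissible range $|\lambda| \le 1/(32\twonorm{A_0}^2)$ in the lemma and guaranteeing that all exponential corrections are at most $e^{1/4}$.

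Next I would group the $2m$ coordinates of $\eta$ into $m$ pairs indexed by column, the $s$-th pair being $(\xi_s,\zeta_s)$, which is entirely determined by column $u^s$ of $U$; since the columns of $U$ are mutually independent, these $m$ pairs are independent random vectors in $\{0,1\}^2$. Conditional on all other columns, $S_{\diamond}$ is affine in $(\xi_s,\zeta_s)$:
\[
S_{\diamond} = \xi_s \sum_{t \neq s} a_{st}^2 \zeta_t + \zeta_s \sum_{t \neq s} a_{st}^2 \xi_t + (\text{terms not depending on column } s).
\]
A convexity estimate for the conditional MGF of a bounded $\{0,1\}^2$-valued pair, in the spirit of Lemma~\ref{lemma::bern-sumlocal} and the proof of Theorem~\ref{thm::Bernmgf}, yields
\[
\E\bigl[e^{\lambda(a \xi_s + b \zeta_s)} \mid \text{rest}\bigr] \le \exp\!\Bigl(\lambda p_s^2(a+b) + C\lambda^2 e^{|\lambda|(|a|+|b|)}\bigl(a^2 + b^2 + |ab|\bigr) p_s^2(1-p_s^2)\Bigr),
\]
where the random coefficients $a,b$ are determined by the other columns and satisfy $|a|,|b| \le D_{\max}$. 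Iterating this estimate across $s = 1, \ldots, m$ and centering at $\E S_{\diamond} = \sum_{s \neq t} a_{st}^2 p_s^2 p_t^2$ then telescopes to
\[
\E \exp\bigl(\lambda (S_{\diamond} - \E S_{\diamond})\bigr) \le \exp\!\Bigl(C \lambda^2 D_{\max} e^{O(|\lambda| D_{\max})} \sum_{s \neq t} a_{st}^2 p_s^2 p_t^2\Bigr),
\]
in exact parallel with Theorem~\ref{thm::Bernmgf}; bookkeeping of the three contribution types (diagonal $\xi_s\zeta_s$ terms that are killed by the $s \neq t$ restriction, off-diagonal cross products, and the centering shift), each of which picks up its own prefactor of the form of Corollary~\ref{coro::Bernmgf}, fixes the leading constant to $C_{12} = 65 e^{1/4}$.

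The hard part will be the within-column MGF estimate. Unlike the scalar Bernoulli setting of Theorem~\ref{thm::Bernmgf}, each column now contributes a \emph{pair} $(\xi_s,\zeta_s)$ whose joint law varies with the overlap $|\{i,j\} \cap \{k,\ell\}|$: product measure when the overlap is empty, the degenerate $\xi_s = \zeta_s$ measure when $\{i,j\} = \{k,\ell\}$, and an intermediate regime when exactly one row index is shared. Producing a single pairwise MGF bound that is uniformly sharp across all three overlap configurations, and that yields a variance proxy proportional to $\sum_{s \neq t} a_{st}^2 p_s^2 p_t^2$ rather than the coarser $\sum_{s \neq t} a_{st}^2 p_s p_t$ (which would blow up for small $p_s$), is the technical heart of the argument and is what forces both the restriction $|\lambda| \le 1/(32\twonorm{A_0}^2)$ and the explicit value $C_{12} = 65 e^{1/4}$.
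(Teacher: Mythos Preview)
Your reduction to the Bernoulli products $\xi_s = u_i^s u_j^s$ and $\zeta_t = u_k^t u_\ell^t$, the $2m\times 2m$ encoding, and the bound $D_{\max}\le \tfrac12\twonorm{A_0}^2$ are all fine. The gap is in the ``column-by-column iteration''. After you bound $\E\bigl[e^{\lambda(a\xi_s+b\zeta_s)}\mid\text{rest}\bigr]$, the variance proxy $C\lambda^2(a^2+b^2+|ab|)p_s^2$ in the exponent is \emph{quadratic} in $a=\sum_{t\ne s}a_{st}^2\zeta_t$ and $b=\sum_{t\ne s}a_{st}^2\xi_t$, hence quadratic in the remaining column variables. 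When you pass to the next column you are therefore again facing an exponential of a bilinear form, with coefficients perturbed by $O(\lambda^2)$; nothing telescopes to a closed-form bound, and iterating this $m$ times does not produce $\sum_{s\ne t}a_{st}^2p_s^2p_t^2$ without an additional idea. You are also misreading Theorem~\ref{thm::Bernmgf}: its proof is not a sequential iteration but a decoupling argument (Theorem~\ref{thm::decouple}) that replaces $Z$ by an independent copy $Z'$ on one side of the bilinear form, after which conditioning on $Z'$ makes the form linear in $Z$.

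The paper's proof uses exactly this device. It introduces auxiliary Bernoulli $\delta_s$ with $\E\delta_s=1/2$ so that $S_\diamond=4\E_\delta S_\delta$ with $S_\delta=\sum_{s\in\Lambda_\delta}\sum_{t\in\Lambda_\delta^c}a_{st}^2\xi_s\zeta_t$; for fixed $\delta$ the two families $\{\xi_s\}_{s\in\Lambda_\delta}$ and $\{\zeta_t\}_{t\in\Lambda_\delta^c}$ live on disjoint column sets and are therefore independent \emph{regardless} of whether $\{i,j\}$ and $\{k,\ell\}$ overlap. One then centers, splits $S_\delta-\E(S_\delta\mid\delta)=Q_1+L_1+L_2$, separates by Cauchy--Schwarz, pushes $\E_\delta$ inside by Jensen, and handles $Q_1$ via the two-stage bound of \eqref{eq::mgS1}--\eqref{eq::precur} and $L_1,L_2$ via Lemma~\ref{lemma::bern-sumlocal}. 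In particular, the three-case analysis of the within-column law of $(\xi_s,\zeta_s)$ that you flag as ``the hard part'' never arises: decoupling over the column index $s$ makes the row-index overlap irrelevant. Your approach could be salvaged by invoking a block version of decoupling for the independent pairs $(\xi_s,\zeta_s)$, but that is essentially the paper's argument in different clothing, and the constant $C_{12}=65e^{1/4}$ falls out of the $Q_1$ bound there rather than from any case analysis.
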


\begin{proofof}{Theorem~\ref{thm::expaqua}}
  Fix $i\not=j$ and $k \not=\ell$.
  Denote by $\ol{S}_{\diamond}=  \ol{S}_{\diamond}(i, j, k, \ell)$.
  By Lemma~\ref{lemma::Sdm}, we have for  $t> 0$ and $0< \lambda \le 1/{(32 \twonorm{A_0}^2)}$,
%and $C_{12}=  65 e^{1/4}$,
  \bens
  \prob{\ol{S}_{\diamond}  \ge t}
  & = &
\prob{\lambda(\ol{S}_{\diamond})  \ge \lambda t}  =
\prob{\exp(\lambda(\ol{S}_{\diamond}))  \ge \exp(\lambda t)} \\
& \le &
\frac{\E \exp(\lambda \ol{S}_{\diamond})}{e^{\lambda t}} \le 
\exp\big(-\lambda t + \lambda^2 C_{12}  \twonorm{A_0}^2 \sum_{s\not=t} a_{st}^2  p_s^2 
  p_t^2\big).
\eens
Optimizing over $\lambda$, we have for  $\ol{S}_{\diamond} = \ol{S}_{\diamond}(i, j, k, \ell)$,
\bens
\prob{\ol{S}_{\diamond}  
 \ge t} & \le & 
\exp\big(-c\min \big(\frac{t^2}{\twonorm{A_0}^2 \sum_{s\not=t} a_{st}^2
      p_s^2 p_t^2}, \frac{t}{\twonorm{A_0}^2}\big) \big) =: p_1;
    \eens
Repeating the same arguments above, for $0< \lambda \le 1/{(32 \twonorm{A_0}^2)}$ and $t>0$, we have 
\bens 
\prob{\ol{S}_{\diamond} < -t} & = &  \prob{\exp(\lambda(-\ol{S}_{\diamond}) )> \exp(\lambda
  t)} \le
\frac{\E \exp(-\lambda \ol{S}_{\diamond})}{e^{\lambda t}} \le p_1;
\eens 
Hence $\prob{\abs{\ol{S}_{\diamond}} > t} \le 2p_1$ and the theorem is thus proved.
\end{proofof}

\subsubsection{Proof of Lemma~\ref{lemma::Sdm}}
Denote by 
$\xi =v^i \circ v^j$ and $\xi' = v^k \circ v^{\ell}$ in  the following 
steps, where  $i\not= j, k \not=\ell$.
Then
\bens
S_{\diamond}(i, j, k, \ell)  & := & 
(v_i \circ v_j)^T \offd(A_0 \circ A_0)(v_k     \circ v_\ell) \\
& = &
\sum_{s\not=t}^m a_{st}^2 u_i^s u_j^s u^t_k 
u^t_{\ell} = \sum_{t=1}^m  \xi'_t \sum_{s\not=t}^m a_{st}^2 \xi_s
\eens
First, we compute the expectation: $\E S_{\diamond}(i,j,k,\ell) =
\sum_{i\not=j}  a_{ij}^2 p_i^2 p_j^2, \forall i\not= j, k \not=\ell$.
Notice that the two vectors $\xi$ and $\xi'$ may not be independent, since $(i, j)$
and $(k, \ell)$ can have overlapping vertices; however, when we
partition $U$ into disjoint submatrices
$(U)_{\Lambda} :=\{u^s\}_{s \in \Lambda}$ and
$(U)_{\Lambda^c} :=\{u^t\}_{t  \in \Lambda^c}$,
each formed by extracting columns of $U$ indexed by $\Lambda \subset
[m]$ and its complement set $\Lambda^c$ respectively, then
\bens
\forall i \not=j, \; k \not=\ell,
u^s_i, u^s_j, u^t_k, u^t_{\ell}, \; \; 
s \in  \Lambda, \; \; t \in \Lambda^c
\eens
are mutually independent Bernoulli random variables; and hence each
monomial $\xi'_t := u^t_k u^t_{\ell}, t \in \Lambda^c$ is independent
of the sum $\sum_{s \in 
  \Lambda} a_{st}^2 \xi_s =\sum_{s \in 
  \Lambda} a_{st}^2 u^s_i u^s_j$. 

 \noindent{\bf Decoupling.}
Consider independent Bernoulli random variables $\delta_i \in \{0,
1\}$ with $\E \delta_i =1/2$. 
Let $\E_{\delta}$ denote the expectation with respect to random vector $\delta 
= (\delta_1, \ldots, \delta_m)$. Since $\E \delta_i (1-\delta_j)=1/4$ for
$i\not=j$ and $0$ for $i=j$, we have
\ben
\label{eq::meandelta}
S_{\diamond} & := & 4 \E_{\delta} S_{\delta} \; \text{ where } \; 
S_{\delta} := \sum_{s ,t} \delta_s (1-\delta_t) a_{st}^2 
u^s_i u^s_j u^t_k u^t_{\ell}
\een
Let $\Lambda_{\delta} =\{i \in [m]: \delta_i =1\}$ and 
$\Lambda_{\delta}^c$ 
be the complement of $\Lambda_{\delta}$. 
First notice that we can express
\bens 
S_{\delta} :=
\sum_{s \in \Lambda_{\delta}} \sum_{t \in \Lambda_{\delta}^c} a_{st}^2  
u^s_i u^s_j u^t_k u^t_{\ell}
=
\sum_{t \in \Lambda_{\delta}^c} u^t_k u^t_{\ell}
\sum_{s \in \Lambda_{\delta}} a_{st}^2 u^s_i u^s_j  
\eens  
Hence
\ben
\exp( \lambda \E S_{\diamond}) & = &
\label{eq::mean}
\exp(4 \lambda \E_{U, \delta}  S_{\delta}) =: \exp(4\lambda
\E_{\delta} \E(S_{\delta} | \delta)) \; \text{ where } \; \\
\label{eq::ESupper}
\E (S_{\delta} | \delta) & = &
\E \big(\sum_{s \in \Lambda_{\delta}}
  \sum_{t \in \Lambda_{\delta}^c} a_{st}^2 u^s_i u^s_j u^t_k u^t_{\ell}  | \delta \big) =
 \sum_{s \in \Lambda_{\delta}} \sum_{t \in \Lambda_{\delta}^c}
 a_{st}^2    p_s^2  p_t^2 
 \een
and in \eqref{eq::mean},
$\E_{\delta}$ denotes the expectation with respect to the random vector $\delta = (\delta_1, \ldots, \delta_m)$, or equivalently, the
random set of indices in $\Lambda_{\delta}$, and  $\E_{U, \delta}$
denotes expectation with respect to both $U$ and $\delta$.
Now by~ \eqref{eq::meandelta} and~\eqref{eq::mean}, we have for all $\lambda \in \R$, 
\ben
\nonumber
\E \exp(\lambda (S_{\diamond}  - \E S_{\diamond}))
& = &
\nonumber
\E_U \exp\big(4\lambda \big(\E_{\delta} (S_{\delta}) -   \E_{\delta}
    \E (S_{\delta} | \delta)
  \big) \big)\\
& = &
\label{eq::step0}
\E_U \exp\left\{4 \lambda \E_{\delta} [S_{\delta} - \E (S_{\delta} |  \delta) ]\right\} =: \E_U \exp\left\{4 \lambda g(U)\right\}
\een
where the random function $g(U)$ is defined as follows: 
\ben
\label{eq::definegU}
g(U) & = & \E_{\delta} [S_{\delta} -\E (S_{\delta} |  \delta) ] \\
\nonumber
& = &
\E_{\Lambda_{\delta}} \big(\sum_{t \in \Lambda_{\delta}^c} u^t_k u^t_{\ell}
\sum_{s \in \Lambda_{\delta}} a_{st}^2 u^s_i u^s_j 
- \sum_{s \in \Lambda_{\delta}} \sum_{t \in \Lambda_{\delta}^c}
a_{st}^2    p_s^2  p_t^2  \big| U\big)\\
\nonumber
\text{ while }\;\;
\E_{\delta} S_{\delta} & = & \E_{\Lambda_{\delta}} \big(\sum_{t \in \Lambda_{\delta}^c} u^t_k u^t_{\ell}
\sum_{s \in \Lambda_{\delta}} a_{st}^2 u^s_i u^s_j  \big| U\big)
\een
Hence we have reduced the original problem to the new problem of computing the moment
generating function for $g(U)$. \\
\noindent{\bf Centering.}
Denote by
\bens
Z'_t := \xi'_t - p_t^2 = u^t_k u^t_{\ell} - p_t^2 \text{ and } Z_s :=
\xi_s - p_s^2 = u^s_i u^s_j -p_s^2
\eens
Fix $\delta$.
First, we express the decoupled quadratic form  involving centered random variables with
\bens
    \sum_{s \in \Lambda_{\delta}} 
    \sum_{t \in \Lambda_{\delta}^c} a_{st}^2 Z_s Z'_t
    & := &
  \sum_{s \in \Lambda_{\delta}} 
  \sum_{t \in \Lambda_{\delta}^c} a_{st}^2 ({\xi}_s - p_s^2 ) 
  ({\xi'}_t - p_t^2) \\
 & = &
  \sum_{s \in \Lambda_{\delta}} 
  \sum_{t \in \Lambda_{\delta}^c} a_{st}^2\big(({\xi}_s {\xi'}_t)- 
    ({\xi}_s - p_s^2) p_t^2- p_s^2({\xi'}_t - p_t^2) - p_s^2 p_t^2\big); 
  \eens
  where
$\{\xi' -p_t^2\}_{\Lambda_{\delta}^c} \text{ and }
\{\xi-p_s^2\}_{\Lambda_{\delta}}$ are each centered and mutually
independent random vectors.
Hence we can now express $S_{\delta} - \E (S_{\delta} \big| \delta)$ as sum of 
quadratic and linear forms based on centered random vectors with independent mean-zero coordinates: 
\ben
 \nonumber
S_{\delta} - \E (S_{\delta} \big| \delta)
& = &
 \nonumber
\sum_{t \in \Lambda_{\delta}^c} \xi'_t \sum_{s \in \Lambda_{\delta}}
a_{st}^2 \xi_s - \sum_{t \in \Lambda_{\delta}^c} p_t^2 
\sum_{s \in \Lambda_{\delta}} a_{st}^2 p_s^2  \\ 
& = &
 \nonumber
  \sum_{s \in \Lambda_{\delta}} 
  \sum_{t \in \Lambda_{\delta}^c} a_{st}^2\big({\xi}_s {\xi'}_t - p_s^2 p_t^2\big) \\
  & = &
   \nonumber
\sum_{s \in \Lambda_{\delta}} \sum_{t \in \Lambda_{\delta}^c}
a_{st}^2 Z_s Z'_t 
+ \sum_{s \in \Lambda_{\delta}} \sum_{t \in \Lambda_{\delta}^c} p_t^2 
a_{st}^2 Z_s +
\sum_{s \in \Lambda_{\delta}} p_s^2 
\sum_{t \in \Lambda_{\delta}^c}
a_{st}^2  Z'_t  \\
 \label{eq::step4}
& = & Q_1 + L_1 + L_2
\een
where  for each fixed 
$\delta$, $Q_1, L_1, L_2$ are quadratic and linear terms involving mean-zero
independent random variables in $\{Z_s\}_{s \in \Lambda_{\delta}}$ and  $\{Z'_t\}_{t \in 
  \Lambda_{\delta}^c}$, which are in turn mutually independent.
 Hence by~\eqref{eq::step0}, \eqref{eq::definegU} and~\eqref{eq::step4},
 \ben
 \nonumber
 \E_U \exp\big(4 \lambda g(U) \big) & = & \E_U \exp\big(4 \lambda \E_{\delta} [S_{\delta} - \E (S_{\delta} | \delta)]\big) \\
 & = &
 \nonumber
 \E_U \exp\big(4\lambda \E_{\delta} [Q_1 + L_1 + L_2] \big) \\
 & = &
 \nonumber
 \E_U\big( \exp\big(4\lambda \E_{\delta} [Q_1]\big) \exp\big( 4\lambda \E_{\delta}[L_1 + L_2] \big)\big) \\
 & \le &
 \label{eq::step1}
\big(\E_U \exp\big(8\lambda \E_{\delta} [Q_1] \big) \big)^{1/2}
\big(\E_U \exp\big(8\lambda \E_{\delta}[L_1 + L_2]  \big) \big)^{1/2}
\een
where $\E_{\delta}$ denotes the conditional expectation with respect 
to randomness in $\delta$ for fixed $U$; The second equality
 holds by linearity of expectations, $\E_{\delta}[Q_1 + L_1 + L_2]=\E_{\delta}[Q_1] +\E_{\delta}[L_1 +
 L_2]$, and~\eqref{eq::step1} follows from the Cauchy-Schwartz inequality.

\noindent{\bf Computing moment generating functions.}
We have by Jensen's inequality and Fubini theorem,  for $\abs{\lambda} \le 1/{(32 \twonorm{A_0}^2)}$,
\ben
\nonumber 
 \E_U \exp\big(8\lambda \E_{\delta} [Q_1] \big) & \le &   \E_U
 \E_\delta \exp\big(8\lambda Q_1 \big)=
\E_\delta  \E_U \exp\big(8\lambda  \big( \sum_{s \in \Lambda_{\delta}} Z_s 
   \sum_{t \in \Lambda_{\delta}^c} a_{st}^2 {Z'}_t \big)\big)\\
\label{eq::step2}
 &\le &
 \E_\delta \exp\big(65 \lambda^2  \twonorm{A_0}^2 e^{1/4}
   \big( \sum_{s \in \Lambda_{\delta}} p_s^2      \sum_{t \in \Lambda_{\delta}^c} a_{st}^2 p_t^2 \big)\big) \\
 \nonumber
 &\le &
  \exp\big(65 \lambda^2  \twonorm{A_0}^2 e^{1/4}
    \big( \sum_{s \not=t} a_{st}^2 p_s^2 p_t^2 \big) \big) 
 \een
 where~\eqref{eq::step2} follows from the proof of Theorem~\ref{thm::Bernmgf} (cf. \eqref{eq::mgS1} and \eqref{eq::precur}), where we replace
 $A$ with $A_0 \circ A_0$ while adjusting constants; The second inequality holds since for all $\delta$ and
 \bens
\forall  t>0, \quad \exp \big( t \sum_{s \in \Lambda_{\delta}} p_s^2 
   \sum_{t \in \Lambda_{\delta}^c} a_{st}^2 p_t^2 \big) \le
\exp\big(t \sum_{s  \not=t} a_{st}^2 p_s^2 p_t^2 \big)  
 \eens
Now for any fixed $\delta$, $L_1$ and $L_2$ are independent random
variables with respect to the randomness in $U$.
By Jensen's inequality and Fubini theorem again,   for all $\lambda
\in \R$,
\bens
\lefteqn{\E_U \exp\big(8\lambda \E_{\delta}  (L_1 + L_2)  \big)
\le \E_U \E_{\delta} \exp\big(8\lambda (L_1 + L_2)  \big) }\\
& = &
\E_{\delta} \E_U  \exp\big(8\lambda (L_1 + L_2) \big) = 
\E_{\delta} \big(\E_U  \exp\big(8\lambda (L_1)  \big) \E_U  \exp\big(8\lambda
    (L_2)  \big) \big) 
\eens
Thus conditioned on $\delta$, we denote by $d_s$ and $d_t$, the
following fixed constants:
%respectively:
\ben 
\label{eq::defDmax}
\forall s \in \Lambda_{\delta},
\quad 0 \le d_s & := & \sum_{t \in  \Lambda_{\delta}^c}  a_{st}^2 p_t^2 
\le \sum_{t =1}^m  a_{st}^2 p_t^2  \le D_{\max} = \twonorm{A_0}^2 \\
\forall t \in \Lambda_{\delta}^c,
\quad 0 \le d_t & := & \sum_{s \in  \Lambda_{\delta}}  a_{st}^2 p_s^2 
\le \sum_{s=1}^m  a_{st}^2 p_s^2  \le D_{\max} = \twonorm{A_0}^2 
\een
where $\E \xi_s = \E u^s_i u^s_j = p_s^2$ and recall that
\bens
L_1 
& := &
\sum_{s \in \Lambda_{\delta}} \sum_{t \in \Lambda_{\delta}^c} p_t^2 
a_{st}^2 Z_s = \sum_{s \in  \Lambda_{\delta}} d_s (\xi_s -p_s^2) \\
L_2
& := &
\sum_{s \in \Lambda_{\delta}} p_s^2 
\sum_{t \in \Lambda_{\delta}^c}
a_{st}^2  Z'_t  =\sum_{t \in  \Lambda_{\delta}^c} d_t (\xi'_t -p_t^2) 
\eens
Now by Lemma~\ref{lemma::bern-sumlocal}, we have for  $\tau = 8
\lambda \le 1/{(4D_{\max})}$ where $D_{\max} = \twonorm{A_0}^2$,
\bens
\lefteqn{\E_{U_{\Lambda_{\delta}}} \exp\big(\tau \sum_{s \in
    \Lambda_{\delta}} d_s(\xi_s - p_s^2) \big)
 = \prod_{s \in \Lambda_{\delta}}  \E \exp\big(\tau d_s \big( u^s_i
 u^s_j - \E u^s_i u^s_j
\big) \big) }\\
 & \le & \exp\big(\half \tau^2 e^{\abs{\tau} D_{\max}} \sum_{s \in 
     \Lambda_{\delta}}  d_s^2   p_s^2 \big)  \le 
\exp\big(\half \tau^2 D_{\max} e^{\abs{\tau} D_{\max}}
  \sum_{s \in  \Lambda_{\delta}}    \sum_{t \in  \Lambda_{\delta}^c} a_{st}^2 p_s^2 p_t^2\big)
 \eens
 Similarly, for $d_t := \sum_{s \in  \Lambda_{\delta}}  a_{st}^2
 p_s^2$ and  $\tau = 8 \lambda$
\bens
\lefteqn{
 E_{U_{\Lambda_{\delta}^c}}
 \exp\big(8\lambda (L_2)  \big) := 
 E_{U_{\Lambda_{\delta}^c}}   \exp\big(8\lambda \sum_{t \in \Lambda_{\delta}^c} d_t Z'_t \big)} \\
 & \le &
 \exp\big(\half \tau^2 e^{\abs{\tau} D_{\max}}
   \sum_{t \in  \Lambda_{\delta}^c}  d_t^2   p_t^2 \big)  \le 
\exp\big(\half \tau^2 D_{\max} e^{\abs{\tau} D_{\max}}
\sum_{t \in  \Lambda_{\delta}^c}  \sum_{s \in  \Lambda_{\delta}}  a_{st}^2 p_s^2 p_t^2\big)
\eens
Hence for $\abs{\lambda } \le 1/{(32 \twonorm{A_0}^2)}$ and
$\abs{\tau} \le 1/{(4 D_{\max})}$ 
\ben
\nonumber
\E_U \exp\big(8\lambda \E_{\delta}  (L_1 + L_2)  \big) 
& \le &
\nonumber
\E_{\delta}\big(\E_U  \exp\big(8\lambda (L_1) \big)  \E_U  \exp\big(8\lambda
    (L_2)  \big) \big)\\
& \le &
\nonumber
 \E_{\delta}
 \big(\exp\big(\tau^2 D_{\max} e^{\abs{\tau} D_{\max}}   \sum_{s \in
       \Lambda_{\delta}}    \sum_{t \in  \Lambda_{\delta}^c} a_{st}^2
     p_s^2 p_t^2 \big) \big)\\
 \label{eq::step3}
 & \le & \exp\big(64 e^{1/4} \lambda^2 D_{\max}  \sum_{s \not=t} a_{st}^2 p_t^2  p_s^2 \big)
 \een
\noindent{\bf Putting things together.}
Hence by~\eqref{eq::step0},~\eqref{eq::step1},~\eqref{eq::step2} and
\eqref{eq::step3}, we have for all $\abs{\lambda } \le 1/{(32 \twonorm{A_0}^2)}$,
\bens
\lefteqn{\E \exp(\lambda (S_{\diamond}  - \E S_{\diamond})) = \E_U \exp\left\{4 \lambda g(U)\right\}}\\
& \le &
\big(\E_U \exp\big(8\lambda \E_{\delta}[ Q_1] \big) \big)^{1/2}
\big(\E_U \exp\big(8\lambda \E_{\delta}  [L_1 + L_2]  \big)\big)^{1/2} \\
& \le &  
\exp\big(65 \lambda^2  \twonorm{A_0}^2 e^{1/4}  \big( \sum_{s
      \not=t} a_{st}^2 p_s^2 p_t^2 \big) \big)  \le  \exp\big(65 e^{1/4} \lambda^2 D_{\max}   \sum_{s \not=t} a_{st}^2 p_t^2  p_s^2 \big)
 \eens 
 \qed

\section{Proof of Theorem~\ref{thm::Bernmgf}}
\label{sec::proofofbasemgf}
We first state the following Decoupling Theorem~\ref{thm::decouple}, which follows from
Theorem 6.1.1~\cite{Vers18}.
\begin{theorem}{\textnormal{(\cite{Vers18})}}
\label{thm::decouple}
Let $A$ be an $m \times m$ matrix.
Let $X =(X_1, \ldots, X_m)$ be a random vector with independent
mean zero coordinates $X_i$. Let $X'$ be an  independent copy of $X$.
Then, for every convex function $F: \R \mapsto \R$, one has
\ben
\label{eq::decoupled}
\E F(\sum_{i\not=j} a_{ij}X_i X_j)
 \le \E F(4 \sum_{i\not=j} a_{ij}X_i X'_j).
\een
\end{theorem}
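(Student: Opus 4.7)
\textbf{Proof plan for Theorem~\ref{thm::decouple}.}
The plan is to carry out the standard Bernoulli selector decoupling argument in four stages. First, I would introduce i.i.d.\ Bernoulli random variables $\delta_1,\ldots,\delta_m$ with $\E\delta_i = 1/2$, independent of both $X$ and $X'$, and let $I = \{i : \delta_i = 1\}$. The identity $\E_\delta[4\delta_i(1-\delta_j)] = \mathbb{1}\{i\ne j\}$ (it equals $1$ for $i\ne j$ and $0$ for $i=j$) gives the key representation
\begin{equation*}
\sum_{i\ne j} a_{ij} X_i X_j \;=\; 4\,\E_\delta \sum_{(i,j)\in I\times I^c} a_{ij} X_i X_j .
\end{equation*}
Jensen's inequality applied to the convex function $F$, followed by Fubini to exchange $\E_X$ and $\E_\delta$, yields
\begin{equation*}
\E_X F\Bigl(\sum_{i\ne j} a_{ij}X_iX_j\Bigr) \;\le\; \E_\delta \E_X F\Bigl(4\sum_{(i,j)\in I\times I^c} a_{ij} X_i X_j\Bigr),
\end{equation*}
so by the probabilistic method I can fix a particular realization $I\subset[m]$ for which the inner expectation dominates the left-hand side.

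Second, for this fixed $I$ the two collections $\{X_i: i\in I\}$ and $\{X_j: j\in I^c\}$ are independent. Since $X'$ is an independent copy of $X$, I can replace $\{X_j: j\in I^c\}$ by $\{X_j': j\in I^c\}$ without changing the joint distribution, producing
\begin{equation*}
\E_X F\Bigl(4 \sum_{(i,j)\in I\times I^c} a_{ij} X_i X_j\Bigr) \;=\; \E_{X,X'} F\Bigl(4 \sum_{(i,j)\in I\times I^c} a_{ij} X_i X_j'\Bigr).
\end{equation*}

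Third, and this is the step I expect to be the main obstacle, I need to extend the partial sum on the right back to the full off-diagonal sum $\sum_{i\ne j} a_{ij} X_i X_j'$ without losing the inequality. The device is a second conditional Jensen step: let $Y = 4\sum_{(i,j)\in I\times I^c} a_{ij} X_i X_j'$ and let $Z$ be the sum $4\sum_{i\ne j} a_{ij} X_i X_j' - Y$, i.e.\ the terms with $(i,j)\in (I^c\times I)\cup (I\times I)\cup (I^c\times I^c)$ and $i\ne j$. I must verify, case by case on the three types of extra $(i,j)$, that $\E[Z\mid Y]=0$: in each case at least one of $X_i$ or $X_j'$ is not in the $\sigma$-field generated by $Y$ and is mean-zero, so independence and centering give $\E[X_i X_j'\mid Y]=0$. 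This is precisely where the independence and mean-zero hypotheses are used.

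Fourth and finally, conditional Jensen applied to $F(Y) = F(\E[Y+Z\mid Y]) \le \E[F(Y+Z)\mid Y]$ and taking expectations delivers
\begin{equation*}
\E_{X,X'} F(Y) \;\le\; \E_{X,X'} F(Y+Z) \;=\; \E F\Bigl(4\sum_{i\ne j} a_{ij} X_i X_j'\Bigr),
\end{equation*}
which chains with the first two stages to give the claimed bound~\eqref{eq::decoupled}. The delicate point is stage three: one must pick the correct decomposition so that each added monomial is conditionally mean zero given $Y$, which depends on the partition $I$ determining which $X_i$'s and $X_j'$'s are already ``used up'' inside $Y$. No optimization of constants is needed since the factor $4$ is baked into the selector identity.
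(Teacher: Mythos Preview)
The paper does not prove this theorem; it is quoted as Theorem~6.1.1 of Vershynin's textbook and used as a black box in the proof of Theorem~\ref{thm::Bernmgf}. Your four-stage plan is exactly the standard argument from that reference and is correct. One small technical refinement for stage three: rather than conditioning on $Y$ itself, condition on the larger $\sigma$-field $\mathcal{G} = \sigma\bigl(\{X_i : i\in I\},\,\{X_j' : j\in I^c\}\bigr)$. Then $Y$ is $\mathcal{G}$-measurable, and for every extra monomial $X_iX_j'$ in $Z$ at least one of the two factors lies outside $\mathcal{G}$, is independent of $\mathcal{G}$, and has mean zero, so $\E[Z\mid\mathcal{G}]=0$ is immediate and conditional Jensen gives $F(Y)=F(\E[Y+Z\mid\mathcal{G}])\le \E[F(Y+Z)\mid\mathcal{G}]$. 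Conditioning only on $\sigma(Y)$ also works but requires the extra remark that the ``free'' factor is independent of $Y$ jointly with the other factor, not merely independent of $Y$.
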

We use the following bounds throughout our paper.
For any $x \in \R$, 
\ben 
\label{eq::elem}
e^x \le 1 + x + \half x^2 e^{\abs{x}}.
\een
Let $Z_i := \xi_i - p_i$.
Denote by $\sigma_i^2 = p_i(1-p_i)$.
For all  $Z_i$, we have $\abs{Z_i} \le 1$, $\E Z_i = 0$ and 
\ben
\label{eq::Z2}
\E Z_i^2 & = & (1-p_i)^2 p_i + p_i^2 (1-p_i) = p_i(1-p_i) =\sigma^2_i,
\\
\label{eq::Zabs}
\E \abs{Z_i} & = & (1-p_i) p_i + p_i(1-p_i) = 2p_i(1-p_i) = 2 \sigma^2_i.
\een
\begin{proofof}{Theorem~\ref{thm::Bernmgf}}
  Denote by $\breve{a}_{i} := \sum_{j\not=i} (a_{ij} + a_{ji}) p_j +
a_{ii} \le 2 D $.
We express the quadratic form as follows:  
\bens
\sum_{i=1}^m a_{ii} (\xi_i - p_i) 
+ \sum_{i\not=j} a_{i j} (\xi_i  \xi_{j}- p_i p_{j})
& = &
\sum_{i\not=j} a_{ij} Z_i Z_j + \sum_{j=1}^m Z_j \breve{a}_{i} =: S_1
+ S_2.
\eens
We will show the following bounds on  the moment generating functions
of $S_1$ and $S_2$: for every $\abs{\lambda} \le \inv{16 (\norm{A}_1 \vee 
  \norm{A}_{\infty})}$,
\ben
\label{eq::mgS1}
\quad \E \exp(\lambda 2S_1) \le  \exp \big(65\lambda^2 \norm{A}_{\infty}
 e^{8 \abs{\lambda} \norm{A}_{\infty} } \sum_{i\not=j} 
 \abs{a_{ij}} \sigma^2_i  \sigma^2_j \big) \; \text{ and} \\
 \label{eq::mgS2}
 \quad \quad
\E \exp(\lambda 2 S_2)  \le
 \exp\big(4 \lambda^2  D e^{4 \abs{\lambda} D}
\big(2 \sum_{i\not=j}^m \abs{a_{ij}} p_i p_j+\sum_{i=1}^m \abs{a_{ii}} \sigma_i^2\big)
\big).
\een
The estimate on the moment generating function for $\sum_{i,j} a_{ij}
\xi_i \xi_j$ then follows  immediately from the Cauchy-Schwartz
inequality. \\
\noindent{\bf Bounding  the moment generating function for $S_1$.}
In order to bound the moment generating function for $S_1$, 
we start by a decoupling step following
Theorem~\ref{thm::decouple}. Let  $Z'$ be an  independent copy of
$Z$. \\
\noindent{\bf Decoupling.} 
Now consider random variable $S_1  :=  \sum_{i\not=j } a_{ij} (\xi_i
-p_i)(\xi_j -p_j)  = \sum_{i\not=j } a_{ij} Z_i Z_j$ and
\bens 
S'_1 := \sum_{i \not=j} a_{ij} Z_i Z'_j, 
\; \text{ we have } \; \;
\E \exp(2\lambda S_1)  \le \E \exp(8\lambda S'_1) =: f
\eens
by~\eqref{eq::decoupled}. Thus we have by independence of $Z_i$,
\ben
\label{eq::precur}
\quad f  := \E_{Z'}\E_{Z}\exp\big(8 \lambda  \sum_{i=1}^m Z_i  \sum_{j
    \not=i} a_{ij} Z'_j\big)  =  \E_{Z'} \prod_{i=1}^m \E \big(\exp\big(8 \lambda Z_i \tilde{a}_{i} \big) \big),
\een
where $Z'_j, \forall j$ satisfies $\abs{Z'_j} \le 1$, and 
\ben
\label{eq::anorm}
\forall i, \quad \tilde{a}_{i} := \sum_{j\not=i} a_{ij} Z'_j \; \; \text{ and hence} \; \; 
\abs{\tilde{a}_{i}} \le \norm{A}_{\infty}. 
\een
First consider $Z'$ being fixed.
Recall $Z_i, \forall i$ satisfies: $\abs{Z_i} \le 1$, $\E Z_i = 0$ and 
$\E Z_i^2 = \sigma_i^2$. 
Then by~\eqref{eq::elem}, for all $\abs{\lambda} \le \inv{16
  \norm{A}_{\infty} }$ and $t_i := 8 \lambda \tilde{a}_i$,
\ben
\nonumber
\E\exp\big(8 \lambda  \tilde{a}_{i} Z_i\big)  
& := & \E\exp\big(t_i Z_i\big) \le 
1 + \inv{2} t_i^2 \E Z_i^2 e^{\abs{t_i}} \le
\exp\big(\inv{2} t_i^2 e^{\abs{t_i}} \E Z_i^2 \big) \\
  \nonumber
& \le &  \exp\big(32 \lambda^2 \norm{A}_{\infty} e^{8 \abs{\lambda} \norm{A}_{\infty}}
 \abs{\tilde{a}_{i}} \sigma^2_i  \big) \\
\label{eq::oneterm}
& =: &  \exp\big(\tau'\abs{ \tilde{a}_{i}} \sigma^2_i \big)
;\; \text{ for } \; \;
\tau'  :=  32 \lambda^2 \norm{A}_{\infty} e^{8 \abs{\lambda}
  \norm{A}_{\infty} } \ge 0,
\een
where by \eqref{eq::anorm}
\ben
\nonumber
\half t_i^2 e^{\abs{t_i}}\le 
\inv{2} (8)^2 \lambda^2 \tilde{a}_{i}^2  \sigma_i^2 
e^{\abs{8 \lambda \tilde{a}_{i}}} \le 
32 \lambda^2 \norm{A}_{\infty} \abs{\tilde{a}_i } e^{8
  \abs{\lambda}\norm{A}_{\infty} }; \\
\text{Denote by} \; \;
\label{eq::abar}
\abs{\bar{a}_{j}} := \sum_{i \not=j}
\abs{a_{ij}} \sigma^2_i \le \norm{A}_1/4, \; \; j =1, \ldots, m.
\een
Thus by \eqref{eq::precur} and \eqref{eq::oneterm},
\bens
\quad
f & \le &  
\E_{Z'} \prod_{i=1}^m
\exp\big(\tau' \abs{\tilde{a}_{i}}  \sigma^2_i\big) \le 
\E_{Z'} \exp\big(\tau'\sum_{i=1}^m \sigma^2_i \sum_{j\not=i} \abs{a_{ij}}\abs{Z'_j}\big) \\
& = & 
\prod_{j=1}^m \E \exp\big(\tau' \abs{Z'_j} 
\sum_{i \not=j}^m \abs{a_{ij}} \sigma^2_i \big) 
=: \prod_{j =1}^m \E\exp\big(\tau' \abs{\bar{a}_{j}}\abs{Z'_j} \big)
\eens
where $\E (Z'_j)^2 =\sigma^2_j$ and $\E \abs{Z'_j} = 2 \sigma^2_j$
following~\eqref{eq::Z2} and~\eqref{eq::Zabs}.
Denote by
\bens
\breve{t}_j
:= \tau' \abs{\bar{a}_{j}} = 
32 \lambda^2 \norm{A}_{\infty} e^{8 \abs{\lambda} \norm{A}_{\infty} }
\abs{\bar{a}_{j}} > 0,
\eens
we have by \eqref{eq::abar} and for $\abs{\lambda} \le1/{(16(\norm{A}_1 \vee \norm{A}_{\infty}))}$ ,
\bens
\breve{t}_j := 
32 \lambda^2 \norm{A}_{\infty} \abs{\bar{a}_{j}}  e^{8 \abs{\lambda} \norm{A}_{\infty} }
\le 
\frac{\norm{A}_{\infty} \norm{A}_1 e^{8 \abs{\lambda} \norm{A}_{\infty} } }{32 (\norm{A}_1 \vee \norm{A}_{\infty})^2}
\le \frac{e^{1/2}}{32} \approx 0.052;
\eens
Thus we have by the elementary approximation~\eqref{eq::elem},~\eqref{eq::mgS1} holds,
\bens
\lefteqn{
 \E\exp\big(\breve{t}_j \abs{Z'_j} \big)
\le  1 + \E \big(\breve{t}_j \abs{Z'_j} \big) + 
\inv{2} (\breve{t}_j)^2 \E (Z'_j)^2 e^{\abs{\breve{t}_j}} }\\
& \le &
 \exp\big(2 \breve{t}_j \sigma^2_j + \inv{2} (\breve{t}_j)^2  
\sigma^2_j e^{0.052}\big) \; \text{  using the inequality of $x \le e^x$,}\\ 
& \le &  \exp\big(2 \breve{t}_j \sigma^2_j + 0.026 \breve{t}_j \sigma^2_j  e^{0.052} \big) \le 
 \exp\big(2.03 \breve{t}_j \sigma^2_j\big)\\
& \le & \exp\big(65 \lambda^2 \norm{A}_{\infty} e^{8 \abs{\lambda}
    \norm{A}_{\infty} } \sum_{i \not=j} \abs{a_{ij}} \sigma^2_i
  \sigma^2_j \big) \; \; \text{ so long as } \abs{\lambda} \le 1/{16 (\norm{A}_1 \vee 
  \norm{A}_{\infty})}.
  \eens
\noindent{\bf Bounding  the moment generating function for $S_2$.} 
Recall
\bens
S_2 & := & 
\sum_{i=1}^m Z_i 
\big(\sum_{j\not=i} (a_{ij} + a_{ji}) p_{j} +
a_{ii}\big) =: \sum_{i=1}^m Z_i \breve{a}_{i}.
\eens
Let $a_{\infty} := \max_{i} \abs{\breve{a}_{i}} \le \norm{A}_{\infty} +
\norm{A}_1 \le 2 D$. 
Thus we have by Lemma~\ref{lemma::bern-sumlocal}, for all
$$\abs{\lambda} \le 1/{(16  (\norm{A}_{\infty} \vee \norm{A}_1))},$$
\bens
\E\exp\big(2\lambda\sum_{i=1}^m Z_i \breve{a}_{i}\big) 
\label{eq::s2bound}
\le \exp\big(2 \lambda^2   e^{2\abs{\lambda} a_{\infty}} 
\sum_{i=1}^m  \breve{a}_{i}^2 \sigma^2_i \big) \le  \exp\big(2 \lambda^2 a_{\infty}  e^{2\abs{\lambda} a_{\infty}} 
  \sum_{i=1}^m  \abs{\breve{a}_{i}} \sigma^2_i \big) 
\eens
where $\forall i, \abs{\breve{a}_{i}} =  \abs{\sum_{j\not=i} (a_{ij} +
  a_{ji}) p_{j} + a_{ii}}$ and hence
\bens
  \sum_{i=1}^m  \abs{\breve{a}_{i}} \sigma^2_i
  & \le &
  2  \sum_{i=1}^m \sum_{j \not=i}^m \abs{a_{ij}} p_i p_j+  \sum_{i=1}^m 
    \abs{a_{ii}} \sigma_i^2  \quad \text{ where } \; \ \sigma_i^2  =p_i(1-p_i)
\eens
Thus \eqref{eq::mgS2} holds for all $\abs{\lambda} \le 
\inv{16  (\norm{A}_{\infty} \vee \norm{A}_1)}$.
Hence by the Cauchy-Schwartz inequality, in view of \eqref{eq::mgS1}
and \eqref{eq::mgS2},
\bens
\lefteqn{
\E \exp\big(\lambda \big(\sum_{i=1}^m a_{ii} (\xi_i - p_i) 
+ \sum_{i\not= j} a_{ij}  (\xi_i \xi_j - p_i p_j )\big)\big)}\\
& =& \E \exp\big(\lambda ( S_1 + S_2)\big) \le
(\E\exp(2 \lambda  S_1))^{1/2} (\E\exp(2\lambda  S_2))^{1/2}
\eens
for all  $\abs{\lambda} \le \inv{16 (\norm{A}_{\infty} \vee  \norm{A}_1)} =
\inv{16 D}$.
The theorem is thus proved since $ \norm{A}_{1} \vee
\norm{A}_{\infty} =: D$.
\end{proofof}

\section{Proof of Lemma~\ref{lemma::pairwise}}
\label{sec::proofofpairwise}
We need some preliminary results.
Let $\EE_8^c$ and $\F_5^c$ be as in Lemma~\ref{lemma::unbiasedmask}.
We first define event $\F_9^c$ in Lemma~\ref{lemma::eventF9}, followed
by event $\F_8^c$ in Lemma~\ref{lemma::butterfly}.
The proof of Lemma~\ref{lemma::eventF9}
follows that of Lemma~\ref{lemma::W2devi} while 
properly adjusting constants and hence is omitted.
\begin{lemma}
  \label{lemma::eventF9}
  For all $i \not=k$, denote by
$T_{ik}:=  \abs{\tr\big(A_0 \diag(v^i \otimes v^k)\big)   -  \sum_{j=1}^m a_{jj} p_j^2}$.
Under the  conditions in Lemma~\ref{lemma::pairwise},
we have on event $\F_9^c$, which  holds with probability at least  $1- \frac{1}{2(m \vee n)^4}$, 
$T_{ik} \le C_2 \big(\log (m \vee n) a_{\infty} {\sum_{j=1}^m a_{jj}    p_j^2}\big)^{1/2}$.
\end{lemma}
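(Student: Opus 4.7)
}
The plan is to expand $T_{ik}$ as a sum of independent bounded random variables and then apply a Bernstein-type inequality; the scalar-Bernstein route is available here (as opposed to the more delicate quadratic-form arguments required elsewhere in the paper) precisely because the two rows $v^i, v^k$ of $U$ are independent when $i \neq k$.

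First I would unpack the trace: since $\diag(v^i \otimes v^k)$ is the $m \times m$ diagonal matrix with entries $v^i_j v^k_j$, one has
\begin{equation*}
\tr\bigl(A_0 \diag(v^i \otimes v^k)\bigr) \;=\; \sum_{j=1}^m a_{jj}\, v^i_j v^k_j,
\end{equation*}
so that, writing $W_j := a_{jj}(v^i_j v^k_j - p_j^2)$, we obtain $T_{ik} = \lvert \sum_{j=1}^m W_j \rvert$. Because $v^i$ and $v^k$ are independent in Definition~\ref{def::RMS}, and within each row the entries $v^i_j$ are mutually independent Bernoulli with mean $p_j$, the products $v^i_j v^k_j$ are independent across $j$ with $\E[v^i_j v^k_j] = p_j^2$. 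Hence the $W_j$ are independent, mean-zero, and satisfy
\begin{equation*}
\lvert W_j \rvert \le a_{jj} \le a_{\infty}, \qquad \var(W_j) = a_{jj}^2\, p_j^2(1-p_j^2) \le a_{\infty}\, a_{jj}\, p_j^2,
\end{equation*}
so that $\sum_j \var(W_j) \le a_{\infty} \sum_{j=1}^m a_{jj} p_j^2 =: \sigma_\star^2$.

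Second, I would invoke Bernstein's inequality to get
\begin{equation*}
\P\{T_{ik} > t\} \;\le\; 2\exp\!\Bigl(-\tfrac{t^2/2}{\sigma_\star^2 + a_{\infty} t/3}\Bigr)
\end{equation*}
and then set $t = C_2\bigl(a_{\infty}\, \log(m\vee n)\, \sum_j a_{jj} p_j^2\bigr)^{1/2}$. Under the hypothesis $\sum_j a_{jj} p_j^2 = \Omega(\twonorm{A_0} \log(m\vee n))$ of Lemma~\ref{lemma::pairwise} and the elementary bound $\twonorm{A_0} \ge a_{\infty}$ (which follows from $A_0 \succ 0$), we have $\sigma_\star^2 \ge a_{\infty}^2 \log(m\vee n)$, so the linear term $a_{\infty} t /3$ in the Bernstein denominator is dominated by $\sigma_\star^2$ for all $C_2$ of order $1$. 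With this domination, the exponent reduces to $-c C_2^2 \log(m\vee n)$, and choosing $C_2$ sufficiently large (specifically $c C_2^2 \ge 4 + \log 4/\log(m\vee n)$) yields $\P\{T_{ik} > t\} \le \frac{1}{2(m\vee n)^4}$, which is the claim.

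The calculation is essentially routine, so there is no real obstacle; the only thing to be careful about is verifying that the subgaussian regime of Bernstein's inequality is indeed the relevant one, which (as noted) is exactly where the sample-size condition from Lemma~\ref{lemma::pairwise} enters. This is why the author states the proof ``follows that of Lemma~\ref{lemma::W2devi} while properly adjusting constants,'' since the analogous step there presumably controls a structurally identical sum and the only change is tracking the factor $a_{\infty}\sum_j a_{jj} p_j^2$ in place of the corresponding variance proxy.
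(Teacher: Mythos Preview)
Your approach is correct and matches the paper's: the paper says the proof follows that of Lemma~\ref{lemma::W2devi}, which is exactly this Bernstein-type argument for a sum of independent bounded terms $a_{jj}(v^i_j v^k_j - p_j^2)$ (in Lemma~\ref{lemma::W2devi} the analogous sum has $a_{ss}^2$ in place of $a_{ss}$, hence ``adjusting constants'').

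One gap to fix: the event $\F_9^c$ is supposed to control $T_{ik}$ simultaneously for \emph{all} pairs $i\neq k$, but you have only bounded $\P\{T_{ik}>t\}$ for a single fixed pair. You need a union bound over the $\binom{n}{2}$ pairs, exactly as in the proof of Lemma~\ref{lemma::W2devi} where $\abs{S_2^\star}:=\max_{i\neq j}\abs{S_2^\star(i,j)}$ is controlled via the union bound. This only requires enlarging $C_2$ so that $cC_2^2 \ge 6$ (or so) rather than $4$, in order to absorb the extra factor of $n^2\le (m\vee n)^2$; the sample-size condition $\sum_j a_{jj}p_j^2=\Omega(\twonorm{A_0}\log(m\vee n))$ is unaffected.
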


\begin{lemma}
  \label{lemma::butterfly}
  Suppose all conditions in Lemma~\ref{lemma::pairwise} hold.
  Let $C_{10} = 2 e^{1/8} < 2.27$ and $C_{13} = 32.5 e^{1/4} + 4e^{1/8} < 46.27$.
  Let $W_4 := C_{10} \sum_{s=1}^m a^2_{ss} p^2_s + C_{13}\sum_{s\not=t} a_{st}^2  p_s^2 p_t^2$.
Denote by
 $$\abs{S_6^{\star}} := \max_{i \not=j}
  \abs{{S}_{\star}(i,j)},\;  \text{ where} \; S_{\star}(i,j) :=
  \sum_{s,t} a^2_{st} \xi_s \xi_t - \E \sum_{s,t} a^2_{st} \xi_s
  \xi_t$$
  for $\xi := (v^i \circ v^j), \forall i \not=j$.
  Then on event $\F_8^c$, which holds with probability at least
$1-c_2/{ (n \vee m)^4}$, for some absolute constants $C_8, C_{11}$ and $C_{12}$,
\ben
\nonumber
\abs{S_6^{\star}} & \le& \tau_6 \le C_8 \twonorm{A_0} \big( \twonorm{A_0} \log (n \vee m) 
\vee  \sqrt{\log (n \vee m) W_4}\big) \\
& &
\label{eq::tau6up}
\le  C_{11} W_4 + C_{12} \twonorm{A_0}^2 \log(m \vee n) 
\een
\end{lemma}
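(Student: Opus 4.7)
The plan is to apply Theorem~\ref{thm::Bernmgf} to the quadratic form $S_\star(i,j)$ with the matrix $A' := (a_{st}^2)_{s,t}$ and the Bernoulli vector $\xi = v^i \circ v^j$. For fixed $i \neq j$, since $v^i$ and $v^j$ are independent rows of the mask $U$ from Definition~\ref{def::RMS}, the componentwise product $\xi_s = v^i_s v^j_s$ is a Bernoulli random variable with parameter $\tilde p_s := p_s^2$, and the $\{\xi_s\}_{s=1}^m$ are mutually independent. Setting $\tilde\sigma_s^2 := \tilde p_s(1-\tilde p_s) \le p_s^2$, Theorem~\ref{thm::Bernmgf} applied to $A'$ yields, for all $|\lambda| \le 1/(16 D_{\max})$ with $D_{\max} := \|A'\|_\infty \vee \|A'\|_1$, the MGF bound
\begin{align*}
\E\exp(\lambda S_\star(i,j)) &\le \exp\Bigl(32.5\,\lambda^2 D_{\max}\,e^{8|\lambda|D_{\max}} \sum_{s\neq t} a_{st}^2 p_s^2 p_t^2\Bigr) \\
&\quad \cdot \exp\Bigl(2\lambda^2 D_{\max}\,e^{4|\lambda|D_{\max}}\bigl(\sum_s a_{ss}^2 p_s^2 + 2\sum_{s\neq t} a_{st}^2 p_s^2 p_t^2\bigr)\Bigr).
\end{align*}

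Next I would bound $D_{\max}$: since $A_0 \succ 0$, for each row $i$ one has $\sum_t a_{it}^2 = (A_0^2)_{ii} \le \|A_0^2\|_2 = \|A_0\|_2^2$, hence $D_{\max} \le \|A_0\|_2^2$. Choosing $|\lambda| \le 1/(32 \|A_0\|_2^2)$ keeps $e^{8|\lambda|D_{\max}} \le e^{1/4}$ and $e^{4|\lambda|D_{\max}} \le e^{1/8}$, which accounts for the constants $C_{10} = 2 e^{1/8}$ and $C_{13} = 32.5 e^{1/4} + 4 e^{1/8}$ appearing in $W_4$: the combined exponent is at most $\lambda^2 \|A_0\|_2^2 \cdot W_4$. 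A standard Chernoff-Bernstein optimization over $\lambda$ in the range $|\lambda| \le 1/(32\|A_0\|_2^2)$ then gives the one-sided tail bound
\begin{equation*}
\P(|S_\star(i,j)| > t) \le 2\exp\!\Bigl(-c\,\min\Bigl(\frac{t^2}{\|A_0\|_2^2\,W_4},\;\frac{t}{\|A_0\|_2^2}\Bigr)\Bigr),
\end{equation*}
valid for each fixed pair $i\neq j$.

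Taking a union bound over all $\binom{n}{2}$ unordered pairs (at most $n^2 \le (n\vee m)^2$) and choosing $t \asymp \|A_0\|_2 (\|A_0\|_2 \log(n\vee m) \vee \sqrt{W_4 \log(n \vee m)})$ makes the exponential factor absorb the cardinality term and leaves a residual probability of order $(n \vee m)^{-4}$, giving the first displayed inequality $\tau_6 \le C_8\,\|A_0\|_2(\|A_0\|_2 \log(n\vee m) \vee \sqrt{W_4 \log(n\vee m)})$. The second inequality of~\eqref{eq::tau6up} is a routine consequence of the AM--GM inequality $\sqrt{ab} \le (a+b)/2$ applied with $a = W_4$ and $b = \|A_0\|_2^2 \log(m \vee n)$, which splits the mixed term and yields the stated constants $C_{11}$, $C_{12}$ after rescaling $C_8$.

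The main obstacle is a careful bookkeeping of the constants so that the Bernstein-type exponent $\lambda^2 \|A_0\|_2^2 W_4 + \lambda t$ is optimized over $\lambda$ restricted to the MGF-valid range $|\lambda| \le 1/(32\|A_0\|_2^2)$, which forces the transition between the sub-Gaussian regime (giving $\sqrt{W_4 \log(n\vee m)}$) and the sub-exponential regime (giving $\|A_0\|_2^2\log(n\vee m)$). The remainder—bounding $D_{\max}$ and summing over $(i,j)$—is routine given the infrastructure already assembled in the paper, and largely mirrors the proof strategy used for Corollary~\ref{coro::offdn}.
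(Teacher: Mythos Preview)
Your proposal is correct and follows essentially the same route as the paper: both apply Theorem~\ref{thm::Bernmgf} to the matrix $A_0\circ A_0$ and the Bernoulli vector $\xi=v^i\circ v^j$ with success probabilities $p_s^2$, bound $D_{\max}\le \twonorm{A_0}^2$, restrict to $|\lambda|\le 1/(32\twonorm{A_0}^2)$ to obtain the MGF bound $\exp(\lambda^2\twonorm{A_0}^2 W_4)$, and then combine Chernoff optimization, a union bound over $\binom{n}{2}$ pairs, and the AM--GM inequality $2\sqrt{ab}\le a+b$ for \eqref{eq::tau6up}. Your bookkeeping of the constants $C_{10}$ and $C_{13}$ matches the paper's exactly.
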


\begin{proof}
Recall $\xi = (\xi_1, \ldots, \xi_m) \in \{0, 1\}^m$ is a random vector 
with independent Bernoulli random variables $\xi_k$ such that 
$\xi_k = 1$ with probability $p^2_k$ and $0$ otherwise.
Hence  $\sigma_k^2 = p^2_k(1-p_k^2) \le p_k^2$.
Denote by  $D_0 := \norm{A_0 \circ A_0}_{\infty} \vee \norm{A_0 \circ
  A_0}_1$. Then $D_0 = \max_{j=1}^m \sum_{i=1}^m a_{ij}^2 \le \twonorm{A_0}^2$.
Then by Theorem~\ref{thm::Bernmgf}, we have for
every $\abs{\lambda} \le 1/{(32 \twonorm{A_0}^2)}$, for all $i \not=j$,
\bens
\lefteqn{\E \exp(\lambda S_{\star}(i,j))
\le   \exp\big(32.5 \lambda^2 D_0 e^{8 \abs{\lambda} D_0}
  \sum_{s \not=t} a^2_{st} \sigma^2_s \sigma^2_t \big)  \cdot}\\
&& \exp\big(\lambda^2 D_0 e^{4\abs{\lambda} D_0} \big(2\sum_{s=1}^m
a^2_{ss} \sigma^2_s + 4 \sum_{s\not= t} a^2_{st} p^2_s p^2_t \big)  \big)
\le  \exp\big(\lambda^2 \twonorm{A_0}^2  W_4 \big)
\eens
Fix $t>0$. We have for $\lambda > 0$ and ${S}_{\star}: 
={S}_{\star}(i,j)$, by Markov’s inequality,
    \bens
  \prob{S_{\star} \ge t}
  & = &
  \prob{\lambda S_{\star} \ge \lambda t}   =   \prob{\exp(\lambda S_{\star} ) \ge \exp(\lambda t)}  \\
& \le & {\E \exp(\lambda S_{\star})}/{e^{\lambda t}}
\le \exp\big(-\lambda t +\lambda^2 \twonorm{A_0}^2 W_4 \big);
\eens
Optimizing over $\lambda$, we have for $t>0$, 
\bens
\prob{{S}_{\star} \ge t} & \le & \exp\big(-c_4\min \big(\frac{t^2}{\twonorm{A_0}^2 W_4},
    \frac{t}{\twonorm{A_0}^2}\big) \big) =: p_1
    \eens
Repeating the same arguments for $0< \lambda \le 1/{(32 \twonorm{A_0}^2)}$ and $t>0$,  we have
\bens 
\prob{{S}_{\star} < -t}
 =  \prob{\exp(\lambda(-{S}_{\star}) )> \exp(\lambda  t)} \le
{\E \exp(-\lambda {S}_{\star})}/{e^{\lambda t}} \le p_1;
\eens 
Hence $\prob{\abs{{S}_{\star}(i,j)} > t} \le 2p_1$.
Now set $\tau_6 = C_8 \twonorm{A_0} \big( \twonorm{A_0} \log (n \vee
m)   \vee  \sqrt{\log (n \vee m) W_4}\big)$; Then by the union bound,
for $C_8$ sufficiently large,
\bens 
\lefteqn{\prob{\abs{S_6^{\star}} > \tau_6}
=\prob{\exists i \not=j, \abs{{S}_{\star}(i, j)} \ge \tau_6}   =: \prob{\F_8}}\\
& \le &
{n \choose 2} 2\exp\big(- c\min\big(\frac{\tau_6^2}{\twonorm{A_0}^2
  W_4}, \frac{\tau_6}{\twonorm{A_0}^2} \big)\big) \le c_2/{ (n \vee m)^4}
\eens
Using the fact that $2\sqrt{ab} \le a + b$ for $a, b>0$, we have
\eqref{eq::tau6up}.
\end{proof}

\subsubsection{Proof of Lemma~\ref{lemma::pairwise}}
\begin{proofof2}
Throughout the rest of this section, denote by $Z \in \R^{mn}$ a
subgaussian random vector with independent components $Z_j$ that satisfy $\expct{Z_j} = 0$, $\expct{Z_j^2} = 1$, and $\norm{Z_j}_{\psi_2} \leq 1$. 
Denote by $A(i, j) = c_i c_j^T \otimes\big( A_0^{1/2} \diag(v^i \otimes v^j) A_0^{1/2} \big)$; 
Then $\ip{X^i \circ v^i, X^{j} \circ v^j} =  Z^T \big( c_i c_j^T \otimes\big( A_0^{1/2} \diag(v^i \otimes v^j) 
A_0^{1/2} \big) \big)  Z = Z^T A(i, j) Z$. We have $\forall i  \not=j$,
  \ben
     \label{eq::S1op}
     && \quad \quad { \twonorm{A(i, j)}}/{{\sqrt{b_{ii} b_{jj}}}}\le
     \twonorm{A_0},\; \text{ where} \; \; \shnorm{c_i c_j^T}_2 
     =\shnorm{c_i c_j^T}_F =      \sqrt{b_{ii}b_{jj}}, \\
 \label{eq::S1Fnorm}
 &&  \quad \text{ and on event $\F_8^c$ },  
{\fnorm{A(i, j)}}/{\sqrt{b_{ii} b_{jj}}} = \\
\nonumber
&  &
\norm{A_0^{1/2} \diag(v^i \otimes v^j)     A_0^{1/2} }_F =
O \big(\big(a_{\infty} \twonorm{A_0} \sum_{j=1}^m 
   p_j^2\big)^{1/2}\big) \; \text{by Lemma~\ref{lemma::butterfly}}.
   \een
\noindent{\bf  Step 1.} 
On event $\F_8^c$, \eqref{eq::S1Fnorm} holds $\forall i \not=j$ and
$\xi := (v^i \circ v^j)$, since by \eqref{eq::tau6up}
\ben 
\nonumber 
\lefteqn{
  \fnorm{A_0^{1/2}\diag(v^i \otimes v^j) A_0^{1/2}}^2
  = \sum_{i,j} a^2_{ij} \xi_i \xi_j \le \E \sum_{i,j} a^2_{ij} \xi_i \xi_j 
+  \tau_6  }\\
\label{eq::F1}
& & \quad \quad \quad \le \E \sum_{i,j} a^2_{ij} \xi_i \xi_j + C_{11} W_4
+ C_{12} \twonorm{A_0}^2  \log(m \vee n)   \le C_6 A_{\vecp}^2 \text{
  where} \\
\nonumber
&& \lefteqn{A_{\vecp}^2 := a_{\infty}\twonorm{A_0}  \sum_{i=1}^m p_i^2
=\Omega(\twonorm{A_0}^2  \log(m \vee n)) \; \text{ and } \;  \E
\sum_{i,j} a^2_{ij} \xi_i \xi_j =}\\
\nonumber
&&  \sum_{i=1}^m a^2_{ii} p^2_i +\sum_{i\not= j} a^2_{ij}  p^2_i p^2_j 
\le  \lambda_{\max}(A_0 \circ A_0) (\sum_{i=1}^m p_i^2) \le A_{\vecp}^2
\een
Hence by Theorem~\ref{thm::HW},~\eqref{eq::S1op},~\eqref{eq::S1Fnorm}, and~\eqref{eq::F1}, for any $\tau > 0$ and $i\not=j$,
\ben 
\nonumber 
\lefteqn{\prob{\left\{\abs{Z^T A(i,j) Z - \E(Z^T A(i, j) Z | U) }/{\sqrt{b_{ii} b_{jj}}} > \tau \right\} | U \in \F_8^c}  }\\
& &
\label{eq::neighbor}
\quad \quad \quad \le 2\exp \big(-  c\min\big(\frac{\tau^2}{C_6 A_{\vecp}^2 },\frac{\tau}{\twonorm{A_0}} \big)\big) 
\een 
\noindent{\bf  Step 2.}
On event $\F_8^c$, $A(i, j)$ can be treated as being deterministic
satisfying~\eqref{eq::S1op} and~\eqref{eq::S1Fnorm}. Hence we set $\tau_0  \asymp A_{\vecp} \log^{1/2} (m \vee n)$;
By {\bf Step 1},
\bens
\lefteqn{\prob{\exists i, j, i \not=j, \;\abs{Z^T A(i, j)
      Z - \E(Z^T A(i, j) Z | U) }/\sqrt{b_{ii}b_{jj}} > \tau_0} =: \prob{\F_7}} \\
& \leq &  \prob{\left\{\exists i, j, i \not=j,  \;\frac{\abs{Z^T A(i, j) Z - \E(Z^T 
        A(i, j) Z | U) }}{\sqrt{b_{ii}b_{jj}}} > \tau_0 \right\} \cap \F_8^c} +  \prob{\F_8}  \\
& \leq &  n^2 \exp \big(-c\min\big(\frac{\tau_0^2}{C_6 A_{\vecp}^2},
\frac{\tau_0}{\twonorm{A_0}}\big) \big) \prob{\F_8^c} + \prob{\F_8} \le {c_7}/{(m \vee n)^4}
%\quad {\;\;\scriptstyle\Box}
\eens
\noindent{\bf  Step 3.}
By Lemma~\ref{lemma::eventF9}, we have on
event $\F_9^c$, $\forall i \not=j$,
\bens
\lefteqn{{\abs{\E (Z^T A(i, j) Z) - \E(Z^T A(i, j) Z | U)
    }}/{\sqrt{b_{ii}b_{jj}}} \le  T_{ij}\abs{b_{ij}}/{\sqrt{b_{ii}b_{jj}}} }\\
& \le &
C_2 \big(a_{\infty} \log (m \vee n) \sum_{i=1}^m a_{jj} p_j^2\big)^{1/2} \text{ where } 
\E Z^T A(i, j) Z
= b_{ij} \sum_{i=1}^m a_{jj} p_j^2
\eens
\noindent{\bf  Step 4.}
Putting things together,~\eqref{eq::pairwise} follows from similar
steps in the proof of Lemma~\ref{lemma::S3main}.

\end{proofof2}

\section{Proof of Lemma~\ref{lemma::Gammabounds}}
\label{sec::proofofGamma}
Let $c, c_0, c_1, c_2, c', C, C', C_3, C_{\offd}, C_{\diag} \ldots$ be some absolute
constants.
First we have $\forall j \in [n]$,
\ben
\nonumber
 && \norm{\hat\Gamma^{(j)}    \beta^{j*} -\hat\gamma^{(j)} }_{\infty} =
\norm{ \big(\X_{\minus j}  \X_{\minus j}^T \oslash  \hat{\TM}_{\minus j,   {\minus j}} \big)\beta^{j*}-
  \X_{\minus j} \big(X^j \circ v^j\big)/{\shnorm{\hat{\TM}}_{\offd}}}_{\infty} \le \\
& &
\nonumber
 \max_{i \in [n], i\not=j}
 \big(\abs{e_i \big(\X_{\minus j}  \X_{\minus j}^T 
    \oslash \hat{\TM}_{\minus j, {\minus j}} \big)\beta^{j*} -b_{ij}}+
  \abs{b_{ij} -  e_i^T \X_{\minus j} (X^{j} \circ v^j )
    /{\shnorm{\hat{\TM}}_{\offd}}}\big) =:\\
  & &
\label{eq::lineareq}
\quad\quad\quad\quad
\max_{i \in [n], i\not=j} \left(\abs{(\hat{S}_2+\hat{S}_3)(e_i,
      \beta^{j*})} + \abs{\hat{S}_1(e_i, \beta^{j*})}\right),
  \een
where by Lemma~\ref{lemma::pairwise}, on event $\F_6^c \cap \F_5^c$
for $\ul{r_{\offd}}$ as defined in~\eqref{eq::paritydual},
\bens
  \label{eq::hatS1}
 \nonumber
\abs{\hat{S}_1(e_i, \beta^{j*}) }
:= \abs{b_{ij} - e_i^T \X_{\minus j} (X^{j} \circ v^j )/\shnorm{\hat{\TM}}_{\offd}} \le C_{\offd} (1+o(1)) \ul{r_{\offd}} {\sqrt{b_{ii}b_{jj}}}
\eens
while for the first component in
\eqref{eq::lineareq}, we will treat the diagonal and the off-diagonal components separately as follows:
\ben
\nonumber
\lefteqn{
  \abs{e_i  \big(\X_{\minus j}  \X_{\minus j}^T 
     \oslash  \hat{\TM}_{\minus j, {\minus j}} \big) \beta^{j*} -
     b_{ij}
   } \le 
 \abs{\big({e_i \diag(\X_{\minus j}  \X_{\minus j}^T)\beta^{j*}}/{\shnorm{\hat{\TM}}_{\diag}}\big) -  b_{ii} \beta_i^{j*} }+
} \\
&&
 \label{eq::hatS23}
 \abs{b_{ii} \beta_i^{j*} - b_{ij}+ \frac{e_i \offd(\X_{\minus j}  \X_{\minus 
       j}^T)\beta^{j*}}{\shnorm{\hat{\TM}}_{\offd}}} 
     =:\abs{\hat{S}_2(e_i, \beta^{j*} )} + \abs{\hat{S}_3(e_i, \beta^{j*}) }.
\een
To prove Lemma~\ref{lemma::Gammabounds}, 
we need Lemmas~\ref{lemma::S2main} and~\ref{lemma::S3main},
where we assume (A1) holds and $n$ is sufficiently large.
\begin{lemma}
  \label{lemma::S2main}
On event $\F_{\diag}^c \cap \event_8^c$, which is the same as defined in
Theorem~\ref{thm::main-coro},
we have for all $i, j \in [n], i\not=j$,
\ben
\label{eq::S2hatbounds}
\lefteqn{\abs{\hat{S}_2(e_i, \beta^{j*}) }
  :=  \abs{e_i^T \diag([\X \X^T  \oslash \hat{\TM}]_{\minus j, \minus
    j})\beta^{j*} - b_{ii} \beta_i^{j*}  } }\\
\nonumber
& \le & \big({C_{\diag}}/{(1- \delta_{m, \diag})} \big)\sqrt{b_{ii} b_{jj}}\abs{{\omega_{ij}}/{\omega_{jj}}}
\big(r_{\diag} + \delta_{m, \diag} \big)
\een
where $\omega_{jj} \ge 1$ and $\delta_{m, \diag} = O(r_{\diag}/\sqrt{r(B_0)})$ is as defined in Lemma~\ref{lemma::unbiasedmask}.
\end{lemma}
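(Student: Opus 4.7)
The plan is to exploit the fact that $\diag([\X\X^T\oslash\hat{\TM}]_{\minus j,\minus j})$ is a diagonal matrix, so the expression for $\hat{S}_2$ collapses to a single scalar product. Specifically, since $e_i^{\tran}\diag(D)\beta = D_{ii}\beta_i$ for any diagonal $D$, and since the $(i,i)$ entry of the submatrix equals $[\X\X^T\oslash\hat{\TM}]_{ii}=\X_i\X_i^{\tran}/\shnorm{\hat{\TM}}_{\diag}$, we obtain the clean identity
\begin{equation*}
\hat{S}_2(e_i,\beta^{j*}) \;=\; \beta_i^{j*}\,\bigl(\X_i\X_i^{\tran}/\shnorm{\hat{\TM}}_{\diag}-b_{ii}\bigr).
\end{equation*}
From here the proof splits into two independent scalar estimates which are then multiplied.

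For the coefficient $\beta_i^{j*}$, I would invoke Proposition~\ref{prop::projection} to write $\beta_i^{j*}=-\theta_{ij}/\theta_{jj}$, then pass through the inverse correlation matrix using (A1): since $\Omega=W\Theta_0 W$ with $W=\diag(b_{ii}^{1/2})$, one has $\theta_{ij}=\omega_{ij}/\sqrt{b_{ii}b_{jj}}$ and $\theta_{jj}=\omega_{jj}/b_{jj}$, giving $|\beta_i^{j*}|=|\omega_{ij}/\omega_{jj}|\sqrt{b_{jj}/b_{ii}}$. The bound $\omega_{jj}=b_{jj}\theta_{jj}\ge 1$ also follows immediately from Proposition~\ref{prop::projection}.

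For the diagonal deviation factor, I would use the standard ratio decomposition
\begin{equation*}
\frac{\X_i\X_i^{\tran}}{\shnorm{\hat{\TM}}_{\diag}}-b_{ii} \;=\; \frac{\shnorm{\M}_{\diag}}{\shnorm{\hat{\TM}}_{\diag}}\Bigl(\frac{\X_i\X_i^{\tran}}{\shnorm{\M}_{\diag}}-b_{ii}\Bigr)+\Bigl(\frac{\shnorm{\M}_{\diag}}{\shnorm{\hat{\TM}}_{\diag}}-1\Bigr)b_{ii}.
\end{equation*}
On $\F_{\diag}^c$, Theorem~\ref{thm::diagmain} (in its entrywise form, which follows from the component-wise convergence cited from~\cite{Zhou19} and used in Lemma~\ref{lemma::pairwise}) gives $|\X_i\X_i^{\tran}/\shnorm{\M}_{\diag}-b_{ii}|\le C_{\diag}\,b_{ii}\,r_{\diag}$, and on $\EE_8^c$ Lemma~\ref{lemma::unbiasedmask} yields $|\shnorm{\M}_{\diag}/\shnorm{\hat{\TM}}_{\diag}-1|\le \delta_{m,\diag}/(1-\delta_{m,\diag})$ together with $\shnorm{\M}_{\diag}/\shnorm{\hat{\TM}}_{\diag}\le 1/(1-\delta_{m,\diag})$. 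Combining these yields $|\X_i\X_i^{\tran}/\shnorm{\hat{\TM}}_{\diag}-b_{ii}|\le (C_{\diag}/(1-\delta_{m,\diag}))\,b_{ii}(r_{\diag}+\delta_{m,\diag})$.

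Multiplying the two scalar bounds gives precisely
\begin{equation*}
|\hat{S}_2(e_i,\beta^{j*})| \;\le\; \frac{C_{\diag}}{1-\delta_{m,\diag}}\,\sqrt{b_{ii}b_{jj}}\,\Bigl|\frac{\omega_{ij}}{\omega_{jj}}\Bigr|\,(r_{\diag}+\delta_{m,\diag}),
\end{equation*}
since the factor $\sqrt{b_{jj}/b_{ii}}\cdot b_{ii}=\sqrt{b_{ii}b_{jj}}$ matches up cleanly. The main obstacle here is not deep: it is primarily to pin down the entrywise (as opposed to $b_\infty$-flavored) version of Theorem~\ref{thm::diagmain}, so that the $b_{ii}$ dependence in the diagonal deviation combines with the $\sqrt{b_{jj}/b_{ii}}$ in $|\beta_i^{j*}|$ to produce the symmetric factor $\sqrt{b_{ii}b_{jj}}$ in the stated bound. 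Everything else is bookkeeping of scalar inequalities.
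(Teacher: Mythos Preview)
Your proposal is correct and follows essentially the same approach as the paper: both recognize that the diagonal structure collapses $\hat{S}_2$ to $\beta_i^{j*}\bigl(\X_i\X_i^{\tran}/\shnorm{\hat{\TM}}_{\diag}-b_{ii}\bigr)$, convert $b_{ii}|\beta_i^{j*}|$ to $\sqrt{b_{ii}b_{jj}}\,|\omega_{ij}|/\omega_{jj}$ via the $\Omega=W\Theta_0W$ relation, and then combine the diagonal concentration from Theorem~\ref{thm::diagmain} (indeed used entrywise with $b_{ii}$, exactly as you anticipated) with the mask error from Lemma~\ref{lemma::unbiasedmask}. You spell out the ratio decomposition explicitly, whereas the paper simply defers that step with ``the rest of the proof follows that of Theorem~\ref{thm::main-coro},'' but the content is the same.
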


\begin{lemma}
\label{lemma::S3main}
Suppose~\eqref{eq::forte3} holds. Then on event $\F_{10}^c \cap \F_5^c$,
which holds with probability at least $1- {c_3}/{(n \vee m)^4}$,
\ben
\nonumber
\forall i \not= j, \quad 
\abs{\hat{S}_3(e_i, \beta^{j*} ) } & :=  & \abs{ (b_{ii} \beta_i^{j*}
  -b_{ij}) + {e_i \offd(\X_{\minus j}  \X_{\minus
      j}^T)\beta^{j*}}/{\shnorm{\hat{\TM}}_{\offd}}} \\
\label{eq::S3hat}
  &\le& C_3 (1+o(1)) \kappa_\rho  \ul{r_{\offd}} {\sqrt{b_{ii} b_{jj}}}/{\omega_{jj} }
\een
where $\E (e_i \offd(\X_{\minus j}  \X_{\minus   j}^T) \beta^{j*} ) =
(b_{ij} - b_{ii} \beta_i^{j*} )\sum_{j} a_{jj} p_j^2$.
\end{lemma}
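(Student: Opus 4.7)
My plan is to split $\hat{S}_3$ into a mean-zero bilinear-form deviation plus a mask-normalization error, control each piece using a sparse bilinear Hanson–Wright argument combined with Lemma~\ref{lemma::unbiasedmask}, and then convert the resulting spectral bounds into the claimed rate by invoking (A1) together with the identity $\omega_{jj} = b_{jj}\theta_{jj}$ from Proposition~\ref{prop::projection}.

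First, I would embed $\beta^{j*}$ into $\R^n$ by setting $\tilde\beta_j = 0$ and $\tilde\beta_k = \beta^{j*}_k$ for $k\ne j$, so that $T := e_i^T\offd(\X_{\minus j}\X_{\minus j}^T)\beta^{j*} = e_i^T\offd(\X\X^T)\tilde\beta$. The normal equations $\sum_{k\ne j} b_{ik}\beta^{j*}_k = b_{ij}$ (which follow from $\cov(V_j,X^i)=0$ in Proposition~\ref{prop::projection}) give $\E T = (b_{ij}-b_{ii}\beta^{j*}_i)\shnorm{\M}_{\offd}$, and hence the decomposition
\[
\hat{S}_3 \;=\; (b_{ii}\beta^{j*}_i - b_{ij})\Bigl[1 - \frac{\shnorm{\M}_{\offd}}{\shnorm{\hat\TM}_{\offd}}\Bigr] \;+\; \frac{T - \E T}{\shnorm{\hat\TM}_{\offd}}.
\]
On $\F_5^c$, Lemma~\ref{lemma::unbiasedmask} gives $|1-\shnorm{\M}_{\offd}/\shnorm{\hat\TM}_{\offd}| \le \delta_{\mask}/(1-\delta_{\mask}) = O(\ul{r_{\offd}}/\sqrt{r(B_0)})$, so the first summand, after bounding $|b_{ii}\beta^{j*}_i-b_{ij}| \le \sqrt{b_{ii}}\twonorm{B_0}\twonorm{\tilde\beta}$, is absorbed as $o(\ul{r_{\offd}})$ times the target pre-factor.

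Second, I would write $T = Z^T A^\diamond_{e_i,\tilde\beta} Z$ with the symmetric matrix $A^\diamond_{e_i,\tilde\beta}$ constructed as in \eqref{eq::defineADM} from $(q,h) = (e_i,\tilde\beta)$, and split $T-\E T = [T - \E(T\mid U)] + [\E(T\mid U) - \E T]$ mirroring \eqref{eq::decomp}. Conditioning on $U$, the first part is handled by Theorem~\ref{thm::HW} (Hanson–Wright) once one has deterministic operator-norm and conditional Frobenius-norm estimates on $A^\diamond_{e_i,\tilde\beta}$ analogous to Theorems~\ref{thm::mainop2} and~\ref{thm::uninorm2}; because $e_i$ is $1$-sparse and $\tilde\beta$ is $d_0$-sparse, these bounds scale as $\twonorm{B_0}\twonorm{\tilde\beta}\twonorm{A_0}$ in operator norm and as $\twonorm{B_0}\twonorm{\tilde\beta}\twonorm{A_0}^{1/2}\bigl(\rho_{\max}(d_0,|B_0|)\sum a_{kk}p_k^2\bigr)^{1/2}$-type quantities in Frobenius norm, with the sharpened dependence on $d_0$ and $\rho_{\max}(d_0,|B_0|)/b_{\min}$ appearing in~\eqref{eq::forte3}. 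The second, $U$-driven part is controlled by the sparse Hanson–Wright bound in Corollary~\ref{coro::offdn} applied to the Bernoulli chaos underlying $A^\diamond_{e_i,\tilde\beta}$. Condition~\eqref{eq::forte3} is precisely what is needed so that, after a union bound over $i\ne j$, both deviations aggregate to $O\bigl(\twonorm{B_0}\twonorm{\tilde\beta}\shnorm{\M}_{\offd}\ul{r_{\offd}}\bigr)$ on an event of probability $\ge 1 - c/(n\vee m)^4$.

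Third, I would convert the spectral bound into the claimed form using (A1). Writing $\beta^{j*}_k = -\theta_{jk}/\theta_{jj} = -\omega_{jk}\sqrt{b_{jj}}/(\sqrt{b_{kk}}\,\omega_{jj})$, one gets $\twonorm{\tilde\beta} \le \sqrt{b_{jj}/b_{\min}}\,\twonorm{\Omega}/\omega_{jj}$, and combined with $\twonorm{B_0} \le b_{\max}\twonorm{\rho(B)}$ this yields $\twonorm{\tilde\beta}\twonorm{B_0} \le c_{\text{A1}}\,\kappa_\rho\sqrt{b_{ii}b_{jj}}/\omega_{jj}$, where $c_{\text{A1}}$ depends only on the bounded ratio $b_{\max}/b_{\min}$. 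Dividing by $\shnorm{\hat\TM}_{\offd} = (1+o(1))\shnorm{\M}_{\offd}$ on $\F_5^c$ delivers~\eqref{eq::S3hat}.

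The hard part will be the conditional Frobenius-norm estimate for $A^\diamond_{e_i,\tilde\beta}$: while $1$-sparsity of $e_i$ restricts the support of $A^\diamond$ to the $i$-th row/column slice in the spatial factor, proving that its Frobenius norm concentrates at a rate governed by $\rho_{\max}(d_0,|B_0|)$ (rather than the crude $d_0\twonorm{B_0}$) requires a tight sparse Hanson–Wright chaos argument tailored to the structured bilinear case — essentially the bilinear analogue of Theorem~\ref{thm::uninorm2} together with the $U$-chaos control of Theorem~\ref{thm::Bernmgf}. Once this Frobenius estimate is in place, the remaining steps are routine applications of Theorem~\ref{thm::HW} and Corollary~\ref{coro::offdn} followed by a union bound over $i,j \in [n]$, $i\ne j$.
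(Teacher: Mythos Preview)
Your high-level decomposition matches the paper's: split $\hat S_3$ into a mask-normalization error (handled on $\F_5^c$ via Lemma~\ref{lemma::unbiasedmask}) and an oracle deviation $T-\E T$, then split the latter into a Hanson--Wright part conditional on $U$ and a Bernoulli-chaos part. Your proof would go through, but the paper takes a noticeably more elementary route by exploiting the $1$-sparsity of $e_i$ rather than feeding $(e_i,\tilde\beta)$ into the general $A^\diamond_{q,h}$ machinery.

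Concretely, the paper writes
\[
A^\diamond(R_j,e_i)=\sum_{k\ne i,j}\beta_k^{j*}(c_ic_k^T)\otimes A_0^{1/2}\diag(v^i\circ v^k)A_0^{1/2},
\]
a sum of at most $d_0$ tensor terms, and bounds operator and Frobenius norms by the triangle inequality over this sum (Lemmas~\ref{lemma::ABsum} and~\ref{lemma::AFbounds}). The resulting Frobenius bound $u_f(i,j)\asymp(\twonorm{A_0}a_\infty\sum_s p_s^2)^{1/2}\sqrt{b_{ii}b_{jj}}\,\twonorm{\Omega^{(j)}}/\omega_{jj}$ carries no $\rho_{\max}(d_0,\abs{B_0})$ correction, so the ``hard part'' you anticipate never arises. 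For Part~II the paper also avoids Corollary~\ref{coro::offdn}: since $\E(Z^TA^\diamond(R_j,e_i)Z\mid U)=\sum_{k\ne i,j}\beta_k^{j*}b_{ik}\tr(A_0\diag(v^i\circ v^k))$, the $U$-fluctuation reduces to the \emph{linear} Bernoulli deviation $T_{ik}$ of Lemma~\ref{lemma::eventF9}, combined with the explicit identity $\sum_{k\ne i,j}\abs{\beta_k^{j*}}\abs{b_{ik}}\le(\sqrt{b_{ii}b_{jj}}/\omega_{jj})\twonorm{\rho(B)}\twonorm{\Omega}$ from~\eqref{eq::ABsum}. This identity also delivers the prefactor $\kappa_\rho\sqrt{b_{ii}b_{jj}}/\omega_{jj}$ directly, whereas your detour through $\twonorm{\tilde\beta}\twonorm{B_0}$ picks up an extra $b_\infty/b_{\min}$-type constant. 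In short: your plan is sound, but you are invoking the heavy sparse-$\RE$ apparatus where a direct triangle-inequality argument on $d_0$ terms suffices and yields cleaner bounds.
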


\begin{proofof}{Lemma~\ref{lemma::Gammabounds}}
Suppose event $\F_{\diag}^c \cap \event_{8}^c \cap \F_{5}^c \cap \F_{6}^c \cap \F_{10}^c =: \F_{22}^c$ holds.
Then we have by~\eqref{eq::lineareq},~\eqref{eq::hatS23},
~\eqref{eq::pairwise},~\eqref{eq::S2hatbounds}, and~\eqref{eq::S3hat},
for all $j \in [n]$, 
\bens 
\lefteqn{\norm{\hat\gamma^{(j)}  -  \hat\Gamma^{(j)} \beta^{j*}}_{\infty} \le
 \max_{i \in [n], i\not=j} \big(\abs{\hat{S}_1(e_i,  \beta^{j*})} +
   \abs{\hat{S}_2(e_i,  \beta^{j*})} +\abs{\hat{S}_3(e_i,
     \beta^{j*})} \big) }\\ 
 &\le &  (1+o(1)) \ul{r_{\offd}} ({\sqrt{b_{ii}b_{jj}}}/{\omega_{jj}}) \big(C_{\offd} \omega_{jj}
+ C_{\diag}  \abs{\omega_{ij}} +C_3  \kappa_\rho\big) \\
 &\le &
 C_4  \ul{r_{\offd}} b_{\infty} \kappa_{\rho} =:   c_{\gamma}
 b_{\infty} \kappa_\rho \ul{r_{\offd}}
\; \text{ where} \; \kappa_{\rho} \ge 1,
 \eens
 and $C_4 \le (C_{\offd} + C_{\diag} +C_3)(1+o(1))$.
 Finally,  by the union bound, $\prob{\F_{\diag}^c \cap \event_8^c \cap \F_{10}^c \cap
   \F_{6}^c \cap \F_5^c} \ge 1 - {c_9}/{(n \vee
   m)^4}$. The lemma thus holds.
We defer the proof of Lemma~\ref{lemma::S3main} to Section~\ref{sec::proofofS3main}. 
\end{proofof}

\begin{proofof}{Lemma~\ref{lemma::S2main}}
On event $\F_{\diag}^c$, by Theorem~\ref{thm::diagmain}, 
\bens
\lefteqn{\abs{S_2(e_i, \beta^{j*})}:= 
\abs{b_{ii}  \beta_i^{j*} - {\big(e_i \diag(\X_{\minus j}  \X_{\minus 
      j}^T)\beta^{j*}\big)}/{\norm{\M}_{\diag}} }} \\
& \le &  C_{\diag} b_{ii} \abs{\beta_i^{j*} } r_{\diag}
\le C_{\diag} r_{\diag} \big( \sqrt{b_{ii} b_{jj}}
{\abs{\omega_{ji} }}/{\omega_{jj}} \big) \; \;
\text{where }\;\; \\
b_{ii} \abs{\beta_i^{j*} } & := &
b_{ii} \abs{\theta_{ji}} /{\theta_{jj}}
  = (\abs{b_{ii}}/{\theta_{jj}})({\abs{\omega_{ji}}}/{\sqrt{b_{jj}
      b_{ii}}} )= \sqrt{b_{ii} b_{jj}}{\abs{\omega_{ji}
    }}/{\omega_{jj}}.
  \eens
  The rest of the proof follows that of Theorem~\ref{thm::main-coro}.
\end{proofof}

\subsection{Proof of Lemma~\ref{lemma::S3main}}
\label{sec::proofofS3main}
Denote by $Z \in \R^{mn}$ a subgaussian random vector 
with independent components $Z_j$ that satisfy $\expct{Z_j} = 0$, 
$\expct{Z_j^2} = 1$, and $\norm{Z_j}_{\psi_2} \leq 1$.
 Denote by
\bens
\label{eq::S3norm}
&& \abs{S_3(e_i, \beta^{j*} )} := {\abs{e_i \offd(\X_{\minus j}  \X_{\minus 
      j}^T) \beta^{j*} -    \E (e_i \offd(\X_{\minus j}  \X_{\minus 
      j}^T) \beta^{j*} )}}/{\norm{\M}_{\offd}} \\
&& \; \text{ where } \E\big(e_i \offd(\X_{\minus j}  \X_{\minus j}^T) \beta^{j*}\big) 
= \E \big( \sum_{k \not= i, j} \beta_k^{j*} \ip{X^i \circ v^i, X^{k}
  \circ v^k}\big) = \\
& &
\big(\sum_{k \not=i, j}- {b_{ik} \theta_{jk}}/{\theta_{jj}} \big)
\sum_{j=1}^m a_{jj} p_j^2 \; \; \text{ where } \; 
\sum_{k \not=i \not=j}^n-  b_{ik}{\theta_{jk}}/{\theta_{jj}}  = b_{ij}
+ b_{ii} {\theta_{ji}}/{\theta_{jj}}. 
\eens
First we rewrite $S_3(e_i, \beta^{j*})$ as
\bens
\forall i \not= j \in [n],  \; 
S_3(e_i, \beta^{j*}) & := & \big(Z^ T A^{\diamond}(R_j, e_i) Z -\big(b_{ij} +  b_{ii}{\theta_{ji}}/{\theta_{jj}}\big) \big)/ {\norm{\M}_{\offd}} \\
\;\text{ where} \; 
A^{\diamond}(R_j, e_i) 
& := &
\sum_{k \not= i, j} \beta_k^{j*} (c_i c_k^T)  \otimes 
A_0^{1/2} \diag(v^i \otimes v^k) A_0^{1/2}
\eens
Lemmas~\ref{lemma::ABsum}  and~\ref{lemma::AFbounds} show a
deterministic bound on the operator norm and a probabilistic uniform
bound for the Frobenius norm of matrix $A^{\diamond}(R_j, e_i)$ for
all $i \not=j$.
\begin{lemma}
  \label{lemma::ABsum}
Let $\Omega = \rho(B)^{-1}$ where $\rho(B) =
({b_{jk}}/{\sqrt{b_{jj} b_{kk}}})$, 
\ben
\label{eq::ARopnorm}
\forall i \not=j, \; \; 
\twonorm{A^{\diamond}(R_j, e_i)}
& \le & 
\twonorm{A_0} \sqrt{d_0} {\sqrt{b_{ii} b_{jj}}}
\twonorm{\Omega^{(j)}} /{\omega_{jj}}
\een
\end{lemma}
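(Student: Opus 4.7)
My plan is to treat the claim as a deterministic operator-norm inequality in which all randomness coming from the mask $U$ is absorbed by the trivial bound $\twonorm{D_{ik}} \le \twonorm{A_0}$, where $D_{ik} := A_0^{1/2}\diag(v^i\otimes v^k)A_0^{1/2}$. Indeed, because $v^i\circ v^k\in\{0,1\}^m$, we have $\diag(v^i\otimes v^k)\preceq I_m$, so $D_{ik}\preceq A_0$ uniformly in $U$ and in the indices $i,k$. No probabilistic input is required.

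First I would vectorize the action of $A^{\diamond}(R_j,e_i)$. The Kronecker-vec identity $((c_i c_k^T)\otimes D_{ik})\,\mvec{Y} = \mvec{D_{ik}\,Y\,c_k c_i^T}$ for $Y\in\R^{m\times n}$ yields
\bens
A^{\diamond}(R_j,e_i)\,\mvec{Y} = \mvec{\Big(\sum_{k\neq i,j}\beta_k^{j*}D_{ik}\,Y\,c_k\Big)c_i^T},
\eens
so that every image vector corresponds to an $m\times n$ rank-one matrix of the form $w(Y)c_i^T$ with $w(Y)\in\R^m$. Since $\|\mvec{Y}\|_2=\fnorm{Y}$ and $\fnorm{w c_i^T}=\twonorm{w}\twonorm{c_i}$, combining the triangle inequality with $\twonorm{D_{ik}}\le\twonorm{A_0}$ and $\twonorm{Yc_k}\le\fnorm{Y}\twonorm{c_k}$ gives
\bens
\twonorm{A^{\diamond}(R_j,e_i)} \le \twonorm{A_0}\,\twonorm{c_i}\,\sum_{k\neq i,j}\abs{\beta_k^{j*}}\twonorm{c_k}.
\eens

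Next I would convert this scalar sum into norms of the inverse correlation matrix $\Omega=W\Theta_0 W$. Using $\twonorm{c_k}=\sqrt{b_{kk}}$, Proposition~\ref{prop::projection}'s formula $\beta_k^{j*}=-\theta_{jk}/\theta_{jj}$, and the rescaling identities $\omega_{jk}=\sqrt{b_{jj}b_{kk}}\,\theta_{jk}$, $\omega_{jj}=b_{jj}\theta_{jj}$, a direct substitution shows
\bens
\abs{\beta_k^{j*}}\twonorm{c_k} = \abs{\omega_{jk}}\sqrt{b_{jj}}/\omega_{jj}.
\eens
Finally, since row $j$ of $\Theta_0$ has at most $d_0$ nonzero entries and $W$ is a positive diagonal matrix, the same sparsity is inherited by row $j$ of $\Omega$. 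Cauchy--Schwarz against this at-most-$d_0$ support then gives
\bens
\sum_{k\neq i,j}\abs{\omega_{jk}} \le \sqrt{d_0}\Big(\sum_{k\neq i,j}\omega_{jk}^2\Big)^{1/2} \le \sqrt{d_0}\,\twonorm{\Omega^{(j)}},
\eens
with $\Omega^{(j)}$ interpreted as carrying the $\ell_2$-norm of the relevant $j$-th row/column of $\Omega$ excluding the diagonal. Multiplying the three inequalities produces \eqref{eq::ARopnorm}. The only real bookkeeping care needed is keeping the $W$-scaling consistent when passing between $\Theta_0$ and $\Omega$, since the sparse regression coefficients $\beta_k^{j*}$ appear on the absolute scale while $\Omega$ lives on the correlation scale; aside from that, the proof is a short chain of Kronecker-vec factorization, submultiplicativity, and Cauchy--Schwarz.
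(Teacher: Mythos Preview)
Your proposal is correct and follows essentially the same approach as the paper. The paper obtains the same intermediate bound $\twonorm{A^{\diamond}(R_j,e_i)}\le\twonorm{A_0}\sum_{k\neq i,j}\abs{\beta_k^{j*}}\twonorm{c_i c_k^T}$ directly via the triangle inequality together with $\twonorm{A\otimes B}=\twonorm{A}\twonorm{B}$ and $\twonorm{A_0^{1/2}\diag(v^i\otimes v^k)A_0^{1/2}}\le\twonorm{A_0}$, whereas you reach the identical expression through the Kronecker--vec factorization and the rank-one image observation; the subsequent conversion $\abs{\beta_k^{j*}}\sqrt{b_{kk}}=\abs{\omega_{jk}}\sqrt{b_{jj}}/\omega_{jj}$ and the Cauchy--Schwarz step on the $d_0$-sparse row of $\Omega$ are the same in both proofs.
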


\begin{lemma}
  \label{lemma::AFbounds}
Let event $\event_2^c$, $\event_3^c$ and $\event_5^c$ and absolute  
constants  $C_{\alpha}$, $C_2$, and $C_5$ be as defined in  
Lemmas~\ref{lemma::W2devi},~\ref{lemma::S3devi} 
and~\ref{lemma::S5devi} respectively.  
Suppose
$${\sum_{j=1}^m a_{jj} p_j^2}/{ \twonorm{A_0}} \ge \big(C_8 {\rho_{\max}(d_0, \abs{B_0})}/{b_{\min}}\big)^2 
\frac{a_{\infty}}{a_{\min}} \log (m \vee n)$$ for $C_8 = 8 C_2 \vee C_5 \vee C_{\alpha}$.
Then on event $\F_{15}^c = \event_2^c \cap \event_3^c \cap \event_5^c$,
which holds with probability at least $1-{c_4}/{(n \vee m)^4}$,
$\fnorm{A^{\diamond}(R_j, e_i)} \le u_f(i,j)$ for all $i \not=j$, where
\ben
\label{eq::R2Fnormlocal}
u_f(i,j) := ({3}/{\sqrt{2}})
\big(\twonorm{A_0} a_{\infty} \sum_{j=1}^m 
  p_j^2 \big)^{1/2} \sqrt{b_{ii} b_{jj} } \twonorm{\Omega^{(j)}}/{\omega_{jj}}.
\een
\end{lemma}
The proof of Lemma~\ref{lemma::AFbounds} follows identical steps to the proof of 
Theorem~\ref{thm::uninorm-intro}, though simpler, and is thus omitted.
These two bounds lead to a uniform bound on $ S_I(R_j, e_i)$,
\ben
\nonumber
\lefteqn{\forall i \not=j, \quad
  S_I(R_j, e_i):=  \abs{Z^T A^{\diamond}({R}_j, e_i) Z - \E(Z^T 
  A^{\diamond}({R}_j, e_i) Z | U) } \le }\\
&  &
\label{eq::partAI}
 C_1   \sqrt{b_{ii} b_{jj}} (\shnorm{\Omega^{(j)}}_2/{\omega_{jj}})
\big(\twonorm{A_0} a_{\infty} \log(m \vee n)  \sum_{j=1}^m  p_j^2 \big)^{1/2}
\een
on event $\F_{14}^c \cap \F_{15}^c$, for some universal constant $C_1$.
We then allow $U$ to be random, and
obtain on event $\F_{9}^c$ as in Lemma~\ref{lemma::eventF9}, a uniform
bound on $S_{\star}(R_j, e_i)$ for all $i \not=j$ that is essentially
at the same order as~\eqref{eq::partAI}; cf.~\eqref{eq::partAII}.

\begin{proofof}{Lemma~\ref{lemma::ABsum}}
Recall $\omega_{jj} =b_{jj}  \theta_{jj} \ge 1$ for all $j$ and
$$\abs{\beta_k^{j*} } = \abs{\theta_{jk}}/{\theta_{jj}} =
\abs{\omega_{jk}}/\big(\theta_{jj}\sqrt{b_{jj} b_{kk}} \big)$$
by Proposition~\ref{prop::projection}.
Hence we have
\ben
\nonumber
\lefteqn{\sum_{k \not=i, j} \abs{\beta_k^{j*} }\abs{b_{ik}}
= 
\inv{\theta_{jj} \sqrt{b_{jj}} }\sum_{k \not=i, j}
\frac{\abs{b_{ik}  \omega_{jk}}}{\sqrt{b_{kk}}}
= \frac{\sqrt{b_{ii}}}{\theta_{jj} \sqrt{b_{jj}} }
\sum_{k \not=i, j}  
\frac{\abs{\omega_{jk} b_{ik}}}{\sqrt{b_{ii} b_{kk}}} =}\\
&&
\label{eq::ABsum}
\quad \quad ({\sqrt{b_{ii} b_{jj}}}/{\omega_{jj}})
\sum_{k \not=i, j} \abs{\omega_{jk}\rho_{ik}(B)} \le 
({\sqrt{b_{ii} b_{jj}}}/{\omega_{jj}})
\twonorm{\rho^{(i)}_{\minus \{i,j\}}(B)} \twonorm{\Omega^{(j)}_{\minus \{i,j\}}} 
\een
where we use the following relation:   for all $i \not=j$,
\bens
\label{eq::morph}
{\sqrt{b_{ii}}}/(\theta_{jj}  \sqrt{b_{jj}})
& = &
{\sqrt{b_{ii}}}/{\sqrt{\theta_{jj} \omega_{jj}}} ={\sqrt{b_{ii} b_{jj}}}/{\omega_{jj}}
\eens
Next we bound for $\twonorm{c_i c_k^T} =  \fnorm{c_i c_k^T} = \sqrt{\tr(c_i c_k^T c_k
  c_i)} = \sqrt{b_{ii} b_{kk}}$,
\ben
\label{eq::wsum}
&& \sum_{k \not= i, j} \abs{\beta_k^{j*}} \twonorm{c_i c_k^T} =\sum_{k
  \not=i, j}^n \frac{\abs{\omega_{jk}} }{\theta_{jj}}
\sqrt{\frac{b_{ii}}{b_{jj}}} \le\sqrt{d_0} \twonorm{\Omega^{(j)}}
\sqrt{b_{ii} b_{jj}}/\omega_{jj} \\
\nonumber
\forall i, k, &&
\twonorm{A_0^{1/2} \diag(v^i \otimes v^k) A_0^{1/2}} \le
\twonorm{A_0} \twonorm{\diag(v^i \otimes v^k)} \le \twonorm{A_0}  \\
\nonumber
\text{ and hence}
&& \twonorm{A^{\diamond}(R_j, e_i)} \le 
    \sum_{k \not= i, j} \abs{\beta_k^{j*}} \twonorm{ (c_i c_k^T)}
    \twonorm{A_0^{1/2} \diag(v^i \otimes v^k) A_0^{1/2}} \\
    & \le &
    \nonumber 
      \twonorm{A_0}  \sqrt{d_0} {\sqrt{b_{ii} b_{jj}}}
  \norm{\Omega^{(j)} }_2 /\omega_{jj}
  \een 
  where in \eqref{eq::wsum}, we use the fact that row vectors of 
$\Omega$ are $d_0$-sparse.
\end{proofof}

\begin{proofof}{Lemma~\ref{lemma::S3main}}
We now show on event $\F_{10}^c$, where $\prob{\F_{10}^c} \ge 1-{c_{10}}/{(n \vee
  m)^4}$, for all $i \not=j$,
\ben
\label{eq::S3}
\abs{S_3(e_i, \beta^{j*} )} \le 
C_3 \kappa_\rho \ul{r_{\offd}} {\sqrt{b_{ii} b_{jj}}}/{\omega_{jj} }
\text{ where} \; \omega_{jj} \ge 1.
\een
It remains to show  the proof on Part I and Part II before we put
things together. Let $c, C, C_1, \ldots$ be some absolute constants.\\
 \noindent{\bf  Part I.}
We show that~\eqref{eq::partAI} holds.
By Theorem~\ref{thm::HW},~\eqref{eq::ARopnorm},
Lemma~\ref{lemma::AFbounds},
we have on event $\F_{15}^c$ for $\tau =\Omega(u_f(i, j) \log^{1/2} (m
\vee n))$ and 
\bens
\label{eq::eventF10}
\lefteqn{\quad \quad \quad \prob{\exists i\not= j, \; \abs{S_I({R}_j, e_i) }     > C_1 u_f(i, j) \log^{1/2}
      (m \vee n) } =:    \prob{\F_{14}}} \\
  \nonumber
& &  \le \prob{\left\{\exists i\not=j,\; \abs{S_I({R}_j, e_i)} > C_1
    u_f(i, j) \log^{1/2} (m \vee n)  \right\}\cap \F_{15}^c} +  \prob{\F_{15}}  \\
  \nonumber
   & & \le 2n^2 \exp(- c_7 \log (m \vee n)) + \prob{\F_{15}} \le {c_9}/{(m \vee n)^4} \; \text{  by \eqref{eq::forte3}
     and~\eqref{eq::ARopnorm}, } \\
 &&\text{where  on } \; \F_{15}^c, \; 
 u_f(i, j) = \Omega \big(\twonorm{A^{\diamond}(R_j, e_i)} \log^{1/2} (m \vee n) 
 \vee \fnorm{A^{\diamond}(R_j, e_i)} \big) \\
  \nonumber
\lefteqn{\text{ and hence} \; \; \prob{\left\{\abs{Z^T 
      A^{\diamond}({R}_j, e_i) Z - \E(Z^T A^{\diamond}(R_j, e_i) 
      Z | U) } > \tau \right\} \cap \F_{15}^c}} \\
& \leq &
  \nonumber
2 \exp \big(-c \min\big(\frac{\tau^2}{u_f(i,j) },   \frac{\tau}{
  \twonorm{A^{\diamond}(R_j, e_i)} }\big)\big) \le 2\exp(- c_5\log (m \vee n))
 \eens
\text{ since} $\big(\twonorm{A_0} a_{\infty} \sum_{j=1}^m p_j^2  \big)^{1/2} \ge 
 12 \twonorm{A_0}  \sqrt{d_0} \log^{1/2} (m \vee n)$.
\noindent{\bf  Part II.}
Now on event $\F_9^c$ and for $T_{ik}$ as defined in
Lemma~\ref{lemma::eventF9}, by~\eqref{eq::ABsum},
\ben
\label{eq::partAII}
\lefteqn{\quad \quad \quad \quad \quad \quad \quad S_{\star}(R_j, e_i) :=\abs{\E( Z^T  A^{\diamond}(R_j, e_i) Z | U) - \E( Z^T 
    A^{\diamond}(R_j, e_i) Z )}}\\
\nonumber
& := & \abs{\tr\big(\sum_{k \not= i, j} \beta_k^{j*} c_i c_k^T 
\otimes A_0^{1/2}  \diag(v^i \otimes v^k) A_0^{1/2}\big) - \big(
b_{ij} +  b_{ii} {\theta_{ji}}/{\theta_{jj}}\big)\sum_{j=1}^m 
a_{jj} p_j^2} \\
\nonumber
& \le & 
\sum_{k \not=i, j} \abs{\beta_k^{j*} }\abs{b_{ik}} T_{ik} \le C_2
\twonorm{\rho(B)^{(i)}_{\minus \{i,j\}}}
\twonorm{\Omega} \frac{\sqrt{b_{ii}  b_{jj}}}{\omega_{jj}}  \big(\log (m \vee n) a^2_{\infty} {\sum_{j=1}^m  p_j^2}\big)^{1/2} 
\een
Finally, we have by~\eqref{eq::partAI} and~\eqref{eq::partAII}, on event
 $\F_{14}^c \cap \F_{9}^c \cap \F_{15}^c =: \F_{10}^c$,
\ben
\nonumber
\lefteqn{\forall i \not=j, \quad
  \abs{S_3(e_i, \beta^{j*} )} \le   \big(S_I(R_j, e_i) +S_{\star}(R_j, e_i) \big)/{\norm{\M}_{\offd}}} \\
& \le &
\nonumber
(C_1 \vee C_2) \big({\sqrt{b_{jj}b_{ii}}}/{\omega_{jj}} \big) \twonorm{\Omega}  \ul{r_{\offd}}
\big(1  +   \twonorm{\rho(B)^{(i)}_{\minus \{i,j\}}}\big)\\
\label{eq::S3norm}
&  & \quad \quad \quad \le  C_3 \twonorm{\Omega} \twonorm{\rho(B)} \ul{r_{\offd}}  {\sqrt{b_{jj}b_{ii}}}/{\omega_{jj}} 
\een
Thus~\eqref{eq::S3} holds on event $\F_{10}^c$ and $\prob{\F_{10}^c}
\ge 1 - {c_{10}}/{(m \vee n)^4}$ by the union bound. 
The rest of the proof for \eqref{eq::S3hat} follows that 
of Theorem~\ref{thm::main-coro}, in view of~\eqref{eq::S3norm}, and 
hence omitted.
\end{proofof}

\bibliography{subgaussian}
\end{document}